\tikzset{
	symbol/.style={
		draw=none,
		every to/.append style={
			edge node={node [sloped, allow upside down, auto=false]{$#1$}}}
	}
}
\newtheorem{theorem}{Theorem}[section]
\newtheorem{proposition}[theorem]{Proposition}
\newtheorem{lemma}[theorem]{Lemma}
\newtheorem{corollary}[theorem]{Corollary}
\theoremstyle{definition}
\theoremstyle{remark}
\newtheorem{remark}[theorem]{Remark}
\newtheorem{question}[theorem]{Question}
\numberwithin{equation}{section}
\begin{document}

\title[Equidistribution of horospheres on moduli spaces]{Equidistribution of horospheres on moduli spaces of hyperbolic surfaces}



\author{Francisco Arana--Herrera}

\email{farana@stanford.edu}

\address{Department of Mathematics, Stanford University, 450 Jane Stanford Way, Building 380, Stanford, CA 94305-2125, USA.}



\date{}

\begin{abstract}
Given a simple closed curve $\gamma$ on a connected, oriented, closed surface $S$ of negative Euler characteristic, Mirzakhani showed that the set of points in the moduli space of hyperbolic structures on $S$ having a simple closed geodesic of length $L$ of the same topological type as $\gamma$ equidistributes with respect to a natural probability measure as $L \to \infty$. We prove several generalizations of Mirzakhani's result and discuss some of the technical aspects ommited in her original work. The dynamics of the earthquake flow play a fundamental role in the arguments in this paper.
\end{abstract}

\maketitle


\thispagestyle{empty}

\tableofcontents

\section{Introduction}

$ $

Let $X$ be a connected, oriented, complete, finite area hyperbolic surface with $n \geq 1$ cusps. On the unit tangent bundle $T^1X$ one can find $n$ one-parameter families of periodic horocycle flow orbits, called \textit{horocycles}, one for each cusp of $X$; the orbits in each family are indexed by $\mathbf{R}_{>0}$ according to their period. In \cite{Sar81}, Sarnak showed that each one of these families of horocycles equidistributes with respect to the Liouville measure on $T^1X$ as the period of the horocycles goes to infinity. His methods, which involve the use of Einsenstein series, also give a precise description of the equidistribution rate.\\

Applying techniques introduced by Margulis in his thesis, see \cite{Mar04} for an English translation, one can use the exponential mixing property of the geodesic flow to obtain analogous equidistribution results for families of \textit{horospheres} on non-compact, finite volume, rank-$1$, locally symmetric spaces; see \S2 in \cite{KM96} for a detailed explanation of closely related ideas. If one is willing to forgo the information about the equidistribution rates of such families, the same results can also be proved using Dani's classification of all horospherical flow invariant and ergodic probability measures; see \cite{Dani81}. \\

Marguli's techniques have also been successfully applied to prove analogous equidistribution results on non-locally-symmetric, non-homogeneous spaces; see \cite{EMM19} for an example of such an application to moduli spaces of Riemann surfaces. Similar techniques have also been used to understand other (non-horosphere-like) equidistribution phenomena both in finite volume, rank-1, locally symmetric spaces, see \cite{EM93}, as well as in non-locally-symmetric, non-homogeneous spaces, see \cite{ABEM12} for an example in the context of moduli spaces of Riemann surfaces. \\

Surprisingly enough, similar equidistribution phenomena can also be observed on non-locally-symmetric, non-homogeneous spaces which are not known to carry a ``mixing geodesic flow" and for which no classification of the ``horospherical flow invariant and ergodic measures" is known. For example, in \cite{Mir07b}, Mirzakhani used the ergodicity and strong non-divergence properties of the earthquake flow, see \cite{Mir08a} and \cite{MW02}, respectively, to show that, given a simple closed curve $\gamma$ on a connected, oriented, closed surface $S$, the family of level sets on the moduli space of hyperbolic structures on $S$ on which the curve $\gamma$ has length $L$ equidistributes with respect to a natural probability measure as $L \to \infty $. A more general version of this result plays a fundamental role in Mirzakhani's work on counting problems for filling closed geodesics on hyperbolic surfaces, see \cite{Mir16}.  \\

The main goal of this paper is to prove several generalizations of Mirzakhani's result while also discussing some of the technical aspects that are ommited in her original work. Such generalizations play a crucial role in future work of the author, see \cite{AH19b} and Theorem \ref{theo:count} below.\\

\textit{Main results.} Let $g,n \in \mathbf{Z}_{\geq 0}$ be a pair of non-negative integers satisfying $2 - 2g - n < 0$.  For the rest of this paper consider a fixed connected, oriented surface $S_{g,n}$ of genus $g$ with $n$ punctures (and negative Euler characteristic). Unless otherwise specified, the term \textit{length} will always refer to \textit{hyperbolic length}.\\

Let $\mathcal{T}_{g,n}$ be the Teichmüller space of marked, oriented, complete, finite area hyperbolic structures on $S_{g,n}$. Let $\mathcal{ML}_{g,n}$ be the space of (compactly supported) measured geodesic laminations on $S_{g,n}$. Over $\mathcal{T}_{g,n}$ we consider the bundle $P^1 \mathcal{T}_{g,n}$ of unit length measured geodesic laminations. More concretely,
\[
P^1 \mathcal{T}_{g,n} := \{(X,\lambda) \in \mathcal{T}_{g,n} \times \mathcal{ML}_{g,n} \ | \ \ell_{\lambda}(X) = 1\},
\]
where $\ell_{\lambda}(X)>0$ denotes the hyperbolic length of the measured geodesic lamination $\lambda \in \mathcal{ML}_{g,n}$ on the marked hyperbolic structure $X \in \mathcal{T}_{g,n}$. The bundle $P^1 \mathcal{T}_{g,n}$ carries a natural measure $\nu_{\text{Mir}}$, called the \textit{Mirzakhani measure}, which disintegrates as the Weil-Petersson measure $\mu_{\text{wp}}$ of $\mathcal{T}_{g,n}$ on the base and as a coned-off version of the Thurston measure $\mu_{\text{Thu}}$ of $\mathcal{ML}_{g,n}$ on the fibers; see \S 2 for a precise definition.\\

The mapping class group of $S_{g,n}$, denoted $\text{Mod}_{g,n}$, acts diagonally on $P^1 \mathcal{T}_{g,n}$ in a properly discontinuous way preserving the Mirzakhani measure. The quotient $P^1 \mathcal{M}_{g,n} := P^1\mathcal{T}_{g,n}/\text{Mod}_{g,n}$ is the bundle of unit length measured geodesic laminations over the moduli space $\mathcal{M}_{g,n}$ of oriented, complete, finite area hyperbolic structures on $S_{g,n}$. Locally pushing forward the measure $\nu_{\text{Mir}}$ on $P^1\mathcal{T}_{g,n}$ under the quotient map $P^1\mathcal{T}_{g,n} \to P^1\mathcal{M}_{g,n}$ yields a natural measure $\widehat{\nu}_{\text{Mir}}$ on $P^1\mathcal{M}_{g,n}$, also called the Mirzakhani measure. The total mass of $P^1 \mathcal{M}_{g,n}$ with respect to $\widehat{\nu}_{\text{Mir}}$ is finite, see Theorem 3.3 in \cite{Mir08b}. We denote it by 
\begin{equation}
\label{eq:b_gn}
b_{g,n} := \widehat{\nu}_{\text{Mir}}(P^1 \mathcal{M}_{g,n}) < +\infty.
\end{equation}
$ $

Let $\gamma := (\gamma_1,\dots,\gamma_k)$ with $1 \leq k \leq 3g-3+n$ be an ordered tuple of pairwise non-isotopic, pairwise disjoint, simple closed curves on $S_{g,n}$, \textit{ordered simple closed multi-curve} for short, and $f \colon (\mathbf{R}_{\geq 0})^k \to \mathbf{R}_{\geq0}$ be a bounded, compactly supported, Borel measurable function with non-negative values and which is not almost everywhere zero with respect to the Lebesgue measure class. For every $L > 0$ we consider a \textit{horoball segment} $B_\gamma^{f,L} \subseteq \mathcal{T}_{g,n}$ given by 
\[
B_\gamma^{f,L} := \{X \in \mathcal{T}_{g,n} \ | \ (\ell_{\gamma_i}(X))_{i=1}^k \in L \cdot \text{supp}(f) \}.
\]
Every such horoball segment supports a natural \textit{horoball segment measure} $\mu_\gamma^{f,L}$ defined as 
\[
d \mu_\gamma^{f,L}(X) := f\left(\textstyle \frac{1}{L} \cdot (\ell_{\gamma_i}(X))_{i=1}^k\right) \ d\mu_{\text{wp}}(X).
\]  
To get a meaningful (locally finite) horoball segment measure on moduli space, we need to get rid of the redundancies that show up when taking pushforwards. A natural way to do this is to consider the intermediate cover
\[
\mathcal{T}_{g,n} \to \mathcal{T}_{g,n}/\text{Stab}(\gamma) \to \mathcal{M}_{g,n},
\]
where
\[
\text{Stab}(\gamma) = \bigcap_{i=1}^k \text{Stab}(\gamma_i) \subseteq \text{Mod}_{g,n}
\]
is the subgroup of all mapping classes of $S_{g,n}$ that preserves every component of $\gamma$ up to isotopy. The measure $\mu_\gamma^{f,L}$ on $\mathcal{T}_{g,n}$ is $\text{Stab}(\gamma)$-invariant. Let $\widetilde{\mu}_\gamma^{f,L}$ be the local pushforward of $\mu_\gamma^{f,L}$ to $\mathcal{T}_{g,n}/\text{Stab}(\gamma)$ and $\widehat{\mu}_{\gamma}^{f,L}$ be the pushforward of $\widetilde{\mu}_\gamma^{f,L}$ to $\mathcal{M}_{g,n}$. \\

Let $\mathcal{ML}_{g,n}(\mathbf{Q}) \subseteq \mathcal{ML}_{g,n}$ be the set of all unordered, positively rationally weighted, simple closed multi-curves on $S_{g,n}$. Let $\mathbf{a}:=(a_1,\dots,a_k) \in (\mathbf{Q}_{>0})^k$ be a vector of positive rational weights on the components of $\gamma$. Denote 
\begin{equation}
\label{eq:weight_curve}
\mathbf{a} \cdot \gamma := a_1\gamma_1 + \cdots + a_k \gamma_k \in \mathcal{ML}_{g,n}(\mathbf{Q}).
\end{equation}
The horoball segment measures $\mu_\gamma^{f,L}$ on $\mathcal{T}_{g,n}$ also give rise to horoball segment measures $\nu_{\gamma,\mathbf{a}}^{f,L}$ on the bundle $P^1 \mathcal{T}_{g,n}$ by considering the disintegration formula
\[
d \nu_{\gamma,\mathbf{a}}^{f,L}(X,\lambda) := d \delta_{\mathbf{a} \cdot \gamma/ \ell_{\mathbf{a} \cdot \gamma}(X)}(\lambda) \ d\mu_\gamma^{f,L}(X),
\]
where the symbol $\delta$ is used to denote point masses. In analogy with the case above, to get locally finite horoball segment measures on $P^1\mathcal{M}_{g,n}$, we consider the intermediate cover
\[
P^1\mathcal{T}_{g,n} \to P^1\mathcal{T}_{g,n}/\text{Stab}(\gamma) \to P^1\mathcal{M}_{g,n}.
\]
Let $\widetilde{\nu}_{\gamma,\mathbf{a}}^{f,L}$ be the local pushforward of $\nu_{\gamma,\mathbf{a}}^{f,L}$ to $P^1\mathcal{T}_{g,n}/\text{Stab}(\gamma)$ and  $\widehat{\nu}_{\gamma,\mathbf{a}}^{f,L}$ the pushforward of $\widetilde{\nu}_{\gamma,\mathbf{a}}^{f,L}$ to $P^1\mathcal{M}_{g,n}$.\\

One can check, see Proposition \ref{prop:total_hor_meas}, that the measures $\widehat{\mu}_\gamma^{f,L}$ and $\widehat{\nu}_{\gamma,\mathbf{a}}^{f,L}$ are finite. We denote by $m_\gamma^{f,L}$ the total mass of the measures $\widehat{\mu}_\gamma^{f,L}$ and $\widehat{\nu}_{\gamma,\mathbf{a}}^{f,L}$, i.e.,
\[
m_\gamma^{f,L} := \widehat{\mu}_\gamma^{f,L}(\mathcal{M}_{g,n}) = \widehat{\nu}_{\gamma,\mathbf{a}}^{f,L}(P^1\mathcal{M}_{g,n}) <+\infty.
\]
$ $

One of the main results of this paper is the following theorem, which shows that horoball segment measures on $P^1\mathcal{M}_{g,n}$ equidistribute with respect to the Mirzakhani measure $\widehat{\nu}_{\text{Mir}}$.\\

\begin{theorem}
	\label{theo:horoball_equid}
	In the weak-$\star$ topology for measures on $P^1\mathcal{M}_{g,n}$,
	\[
	\lim_{L \to \infty} \frac{\widehat{\nu}_{\gamma,\mathbf{a}}^{f,L}}{m_\gamma^{f,L}} = \frac{\widehat{\nu}_{\text{Mir}}}{b_{g,n}}.
	\]
\end{theorem}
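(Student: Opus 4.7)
The strategy is to follow and generalize Mirzakhani's approach in \cite{Mir07b}, combining three ingredients: the invariance of $\widehat{\nu}_{\gamma,\mathbf{a}}^{f,L}$ under the earthquake flow (via Wolpert's formula), the earthquake-flow ergodicity of \cite{Mir08a}, and the Mirzakhani--Wright strong non-divergence \cite{MW02}.

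First I would verify that $\widehat{\nu}_{\gamma,\mathbf{a}}^{f,L}/m_\gamma^{f,L}$ is a probability measure on $P^1\mathcal{M}_{g,n}$ invariant under the earthquake flow $\psi_t(X,\lambda) := (E_t^\lambda X,\lambda)$. On the support of $\nu_{\gamma,\mathbf{a}}^{f,L}$ one has $\lambda = \mathbf{a}\cdot\gamma/\ell_{\mathbf{a}\cdot\gamma}(X)$, so $\psi_t$ acts on $X$ by an earthquake along $\mathbf{a}\cdot\gamma$. This earthquake fixes each $\ell_{\gamma_i}$ (hence preserves $B_\gamma^{f,L}$ and the weight $f(\ell/L)$), preserves $\mu_{\text{wp}}$ by Wolpert's formula, and fixes the normalized lamination because $\ell_{\mathbf{a}\cdot\gamma}$ is flow-invariant. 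The invariance descends through the $\text{Stab}(\gamma)$-quotient and local pushforward to $P^1\mathcal{M}_{g,n}$. Taking any weak-$\star$ subsequential limit $\nu_\infty$ of the family as $L \to \infty$, I would then apply the strong non-divergence of the earthquake flow, used uniformly on the horoball segments $B_\gamma^{f,L}$, to prevent mass escape and conclude that $\nu_\infty$ is an earthquake-flow invariant probability measure on $P^1\mathcal{M}_{g,n}$.

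The crux is identifying $\nu_\infty$ with $\widehat{\nu}_{\text{Mir}}/b_{g,n}$. Ergodicity alone does not single out this measure among all invariant probability measures, so a direct unfolding is needed. I would use Fenchel--Nielsen coordinates adapted to a pants decomposition extending $\gamma$: for $\varphi \in C_c(P^1\mathcal{M}_{g,n})$ with $\text{Mod}_{g,n}$-equivariant lift $\tilde\varphi$,
\[
\int \varphi \, d\widehat{\nu}_{\gamma,\mathbf{a}}^{f,L} = \int_{F} \tilde\varphi\bigl(X,\, \mathbf{a}\cdot\gamma/\ell_{\mathbf{a}\cdot\gamma}(X)\bigr) \, f\bigl(\tfrac{1}{L}(\ell_{\gamma_i}(X))_{i=1}^{k}\bigr) \, d\mu_{\text{wp}}(X),
\]
where $F \subset \mathcal{T}_{g,n}$ is a fundamental domain for $\text{Stab}(\gamma)$. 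Using $d\mu_{\text{wp}} = \prod d\ell_i \, d\tau_i$ and the rescalings $\tau_{\gamma_i} = \ell_{\gamma_i} s_i$, $\ell_{\gamma_i} = L x_i$, the twist variables $(s_i)$ parametrize orbits of the earthquake flow whose periods tend to infinity with $L$. Applying the ergodic theorem to these orbit averages (pointwise in the transverse length and cut-surface parameters) identifies the limit of the $s$-average with $b_{g,n}^{-1}\int \varphi \, d\widehat{\nu}_{\text{Mir}}$. The correct normalization is read off from the analogous unfolding of $m_\gamma^{f,L}$ (the case $\varphi \equiv 1$), which is computable via Mirzakhani's integration formula.

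The main obstacle is making this unfolding rigorous: the earthquake ergodic theorem yields only almost-everywhere convergence in the transverse parameters, so one needs either a quantitative/effective strengthening or a careful Fubini-type argument combining ergodicity with strong non-divergence to upgrade the pointwise orbit averages into convergence of the full integrals uniformly in $L$, while simultaneously ruling out any loss of mass in the non-compact transverse directions of Teichmüller space.
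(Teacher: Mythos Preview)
Your treatment of earthquake-flow invariance and of no escape of mass is correct and essentially matches the paper. The gap is in the identification of $\nu_\infty$, and the obstacle you flag at the end is not a technicality but the heart of the matter. Birkhoff's ergodic theorem for $(P^1\mathcal{M}_{g,n},\widehat{\nu}_{\text{Mir}},\text{tw}^t)$ gives convergence of time averages only for $\widehat{\nu}_{\text{Mir}}$-almost every $(X,\lambda)$. In your unfolding the lamination coordinate is pinned to the projective class $[\mathbf{a}\cdot\gamma]$, which is a $\widehat{\nu}_{\text{Mir}}$-null set; the ergodic theorem therefore says nothing about these particular orbits. Equidistribution of earthquake orbits through such rational points is precisely the content of the theorem you are trying to prove, so it cannot be taken as input. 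No Fubini manoeuvre repairs this, because the measure you are integrating against in the transverse parameters is itself singular to the full-measure set on which Birkhoff applies.

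The paper circumvents this by replacing your ``direct unfolding'' with a different third ingredient: any weak-$\star$ limit $\nu_\infty$ is shown to be \emph{absolutely continuous} with respect to $\widehat{\nu}_{\text{Mir}}$. The mechanism is an upper bound of the form
\[
\limsup_{L\to\infty}\frac{\widehat{\nu}_{\gamma,\mathbf{a}}^{f,L}\bigl(\Pi(U_X(\epsilon)\times V)\bigr)}{m_\gamma^{f,L}}\le C\cdot\nu_{\text{Mir}}\bigl(U_X(\epsilon)\times V\bigr),
\]
obtained by counting the mapping-class translates $\phi\cdot(\mathbf{a}\cdot\gamma)$ of length at most a constant times $L$ and projective class in $V$, and comparing this count with the description of $\mu_{\text{Thu}}$ as a weak limit of integral-multicurve counting measures. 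Once $\nu_\infty$ is earthquake-invariant \emph{and} absolutely continuous, ergodicity of $\widehat{\nu}_{\text{Mir}}$ forces $\nu_\infty=c\cdot\widehat{\nu}_{\text{Mir}}$ for some constant $c$; your no-escape-of-mass step then pins down $c=1/b_{g,n}$. The absolute-continuity estimate is the key idea your proposal is missing.
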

$ $

Consider the function $B \colon \mathcal{M}_{g,n} \to \mathbf{R}_{>0}$ which to every hyperbolic structure $X \in \mathcal{M}_{g,n}$ assigns the value
\begin{equation}
\label{eq:mir_fn}
B(X):= \mu_{\text{Thu}}(\{\lambda \in \mathcal{ML}_{g,n} \ | \ \ell_\lambda(X) \leq 1\}).
\end{equation}
We refer to this function as the \textit{Mirzakhani function}. Roughly speaking, $B(X)$ measures the shortness of closed geodesics on $X$. By work of Mirzakhani, see \cite{Mir08b}, $B$ is continuous and proper. Moreover, see \cite{AA19}, one can give upper and lower bounds of the same order describing the behavior of $B$ near the cusp of $\mathcal{M}_{g,n}$. \\

Let $\widehat{\mu}_\text{wp}$ be the pushforward to $\mathcal{M}_{g,n}$ of the Weil-Petersson measure $\mu_\text{wp}$ on $\mathcal{T}_{g,n}$. Taking pushforwards under the bundle map $P^1\mathcal{M}_{g,n} \to \mathcal{M}_{g,n}$ in the statement of Theorem \ref{theo:horoball_equid}, we deduce the following important corollary, which shows that horoball segment measures on $\mathcal{M}_{g,n}$ equidistribute with respect to $B(X) \cdot d\widehat{\mu}_{wp}$. \\

\begin{corollary}
	\label{cor:horoball_equid}
	In the weak-$\star$ topology for measures on $\mathcal{M}_{g,n}$,
	\[
	\lim_{L \to \infty} \frac{\widehat{\mu}_\gamma^{f,L}}{m_\gamma^{f,L}} = \frac{B(X) \cdot d\widehat{\mu}_{\text{wp}}(X)}{b_{g,n}}.
	\]
\end{corollary}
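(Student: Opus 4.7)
The plan is to deduce the corollary from Theorem \ref{theo:horoball_equid} by pushing forward both sides of the weak-$\star$ limit under the bundle projection $\pi \colon P^1\mathcal{M}_{g,n} \to \mathcal{M}_{g,n}$, $(X,\lambda) \mapsto X$. The key point to verify is that $\pi$ is continuous and proper. Continuity is immediate from the definition of $P^1\mathcal{M}_{g,n}$; for properness, I would use that for each fixed $X$ the function $\lambda \mapsto \ell_\lambda(X)$ is continuous and proper on $\mathcal{ML}_{g,n}$, so that the unit sphere $\{\lambda : \ell_\lambda(X) = 1\}$ is compact, and that this compactness varies continuously with $X$. Once $\pi$ is proper, every $\phi \in C_c(\mathcal{M}_{g,n})$ lifts to $\phi \circ \pi \in C_c(P^1\mathcal{M}_{g,n})$, and integrating $\phi \circ \pi$ against the weak-$\star$ limit of Theorem \ref{theo:horoball_equid} yields the corresponding weak-$\star$ convergence for the pushed-forward measures on $\mathcal{M}_{g,n}$.

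Next I would identify the two pushforwards. For the horoball segment side, the disintegration formula $d\nu_{\gamma,\mathbf{a}}^{f,L}(X,\lambda) = d\delta_{\mathbf{a}\cdot\gamma/\ell_{\mathbf{a}\cdot\gamma}(X)}(\lambda)\, d\mu_\gamma^{f,L}(X)$ has a unit point mass on each fiber, so integrating out $\lambda$ gives $\pi_* \nu_{\gamma,\mathbf{a}}^{f,L} = \mu_\gamma^{f,L}$ on $\mathcal{T}_{g,n}$, and hence $\pi_* \widehat{\nu}_{\gamma,\mathbf{a}}^{f,L} = \widehat{\mu}_\gamma^{f,L}$ on $\mathcal{M}_{g,n}$ after passing through the intermediate cover. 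In particular the total masses agree, confirming the normalization $m_\gamma^{f,L}$ is consistent on both sides. For the Mirzakhani side, I would invoke the disintegration of $\nu_{\text{Mir}}$ as $\mu_{\text{wp}}$ on the base twisted by the coned-off Thurston measure on the fibers. By the construction of the coned-off Thurston measure (a polar decomposition of $\mu_{\text{Thu}}$ around the level set $\ell_\lambda(X) = 1$), the total mass of each fiber is precisely $\mu_{\text{Thu}}(\{\lambda : \ell_\lambda(X) \leq 1\}) = B(X)$, so that $\pi_* \nu_{\text{Mir}} = B(X)\, d\mu_{\text{wp}}(X)$, which descends to $\pi_* \widehat{\nu}_{\text{Mir}} = B(X)\, d\widehat{\mu}_{\text{wp}}(X)$ on $\mathcal{M}_{g,n}$.

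The hardest part is really the bookkeeping in the second step: one must check carefully that the coned-off Thurston measure is set up so that its total mass on the unit sphere in $\mathcal{ML}_{g,n}$ over $X$ matches the Mirzakhani function $B(X)$ under the normalization \eqref{eq:mir_fn}, and that this identity is compatible with the quotient $P^1\mathcal{T}_{g,n}/\mathrm{Mod}_{g,n} \to \mathcal{T}_{g,n}/\mathrm{Mod}_{g,n}$ used to define $\widehat{\nu}_{\text{Mir}}$ and $\widehat{\mu}_{\text{wp}}$. Granting the precise definition of $\nu_{\text{Mir}}$ from Section 2, these verifications amount to a change of variables in polar coordinates, and the corollary follows by applying the properness argument of the first paragraph to test functions in $C_c(\mathcal{M}_{g,n})$.
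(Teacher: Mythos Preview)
Your proposal is correct and follows essentially the same approach as the paper, which simply states that the corollary is obtained by taking pushforwards under the bundle map $P^1\mathcal{M}_{g,n} \to \mathcal{M}_{g,n}$ in Theorem~\ref{theo:horoball_equid}. The identification $\pi_*\widehat{\nu}_{\text{Mir}} = B(X)\,d\widehat{\mu}_{\text{wp}}(X)$ that you verify is recorded in \S2 as part of the definition of the Mirzakhani measure, and your properness argument supplies the justification (omitted in the paper) for why weak-$\star$ convergence is preserved under $\pi_*$.
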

$ $

We will also be concerned with the equidistribution on $\mathcal{M}_{g,n}$ of measures supported on codimension 1 analogues of horoball segments. Let $\gamma := (\gamma_1,\dots,\gamma_k)$ with $1 \leq k \leq 3g-3+n$ be an ordered simple closed multi-curve on $S_{g,n}$ and $\mathbf{a}:= (a_1,\dots,a_k) \in (\mathbf{Q}_{>0})^k$ be a vector of positive rational weights on the components of $\gamma$. Denote by $\Delta_\mathbf{a} \subseteq (\mathbf{R}_{\geq0})^k$ the compact, codimension 1 simplex 
\[
\Delta_\mathbf{a} := \{(x_1,\dots,x_k) \in (\mathbf{R}_{\geq0})^k \ | \ a_1x_1 + \cdots + a_k x_k = 1 \}.
\]
For every $L > 0$ we consider a \textit{horosphere}  $S_{\gamma,\mathbf{a}}^{L} \subseteq \mathcal{T}_{g,n}$ given by 
\[
S_{\gamma,\mathbf{a}}^{L} := \{X \in \mathcal{T}_{g,n} \ | \ (\ell_{\gamma_i}(X))_{i=1}^k \in L \cdot \Delta_{\mathbf{a}} \}.
\]
Every such horosphere supports a natural \textit{horosphere measure} $\eta_{\gamma,\mathbf{a}}^{L}$ defined in the following way. Let $\mathbf{a} \cdot \gamma \in \mathcal{ML}_{g,n}(\mathbf{Q})$ be as in (\ref{eq:weight_curve}) and
\[
\ell_{\mathbf{a} \cdot \gamma} \colon \mathcal{T}_{g,n} \to \mathbf{R}_{>0}
\]
be the function which to every marked hyperbolic structure $X \in \mathcal{T}_{g,n}$ assigns the total hyperbolic length of the unique multi-geodesic representative of $\mathbf{a} \cdot \gamma$ on $X$. The function $\ell_{\mathbf{a} \cdot \gamma} \colon \mathcal{T}_{g,n} \to \mathbf{R}_{>0}$ is smooth, surjective, and non-singular (i.e., it has no critical points). It follows that the horosphere
\[
S_{\gamma,\mathbf{a}}^{L} = \ell_{\mathbf{a} \cdot \gamma}^{-1}(L)
\]
is a smoothly embedded codimension $1$ real submanifold of $\mathcal{T}_{g,n}$. The Weil-Petersson volume form $\omega_\text{wp}$ on $\mathcal{T}_{g,n}$ induces a natural volume form $\omega_{\gamma,\mathbf{a}}^{L}$ on $S_{\gamma,\mathbf{a}}^{L}$ by contraction. More precisely, $\omega_{\gamma,\mathbf{a}}^{L}$ is the volume form obtained by restricting to $S_{\gamma,\mathbf{a}}^{L}$ the contraction of $\omega_{\text{wp}}$ by any vector field $F$ on $\mathcal{T}_{g,n}$ satisfying $d\ell_{\mathbf{a} \cdot \gamma}(F) \equiv 1$. This definition is independent of the choice of vector field $F$; see \S 2 for details. The horosphere measure $\eta_{\gamma,\mathbf{a}}^{L}$ is the measure $|\omega_{\gamma,\mathbf{a}}^{L}|$ induced by the volume form $\omega_{\gamma,\mathbf{a}}^{L}$ on $S_{\gamma,\mathbf{a}}^{L}$. We also denote by $\eta_{\gamma,\mathbf{a}}^{L}$ the extension by zero of this measure to $\mathcal{T}_{g,n}$. This measure is $\text{Stab}(\gamma)$-invariant.\\

Let $f \colon \Delta_{\mathbf{a}} \to \mathbf{R}_{\geq0}$ be a bounded, Borel measurable function with non-negative values and which is not almost everywhere zero with respect to the Lebesgue measure class. For every $L > 0$ we consider the \textit{horosphere sector} $S_{\gamma,\mathbf{a}}^{f,L} \subseteq S_{\gamma,\mathbf{a}}^L \subseteq \mathcal{T}_{g,n}$ given by 
\[
S_{\gamma,\mathbf{a}}^{f,L} := \{X \in \mathcal{T}_{g,n} \ | \ (\ell_{\gamma_i}(X))_{i=1}^k \in L \cdot (\Delta_{\mathbf{a}} \cap \text{supp}(f)) \}.
\]
Every such horosphere sector supports a natural \textit{horosphere sector measure} $\eta_{\gamma,\mathbf{a}}^{f,L}$ defined as 
\[
d \eta_{\gamma,\mathbf{a}}^{f,L}(X) := f\left(\textstyle \frac{1}{L} \cdot (\ell_{\gamma_i}(X))_{i=1}^k\right) \ d\eta_{\gamma,\mathbf{a}}^L(X).
\]  
This measure is $\text{Stab}(\gamma)$-invariant. In analogy with the case of horoball segment measures, let $\widetilde{\eta}_{\gamma,\mathbf{a}}^{f,L}$ be the local pushforward of $\eta_{\gamma,\mathbf{a}}^{f,L}$ to $\mathcal{T}_{g,n}/\text{Stab}(\gamma)$ and $\widehat{\eta}_{\gamma,\mathbf{a}}^{f,L}$ be the pushforward of $\widetilde{\eta}_{\gamma,\mathbf{a}}^{f,L}$ to $\mathcal{M}_{g,n}$.\\

The horosphere sector measures $\eta_{\gamma,\mathbf{a}}^{f,L}$ on $\mathcal{T}_{g,n}$ also give rise to horosphere sector measures $\zeta_{\gamma,\mathbf{a}}^{f,L}$ on the bundle $P^1 \mathcal{T}_{g,n}$ by considering the disintegration formula
\[
d \zeta_{\gamma,\mathbf{a}}^{f,L}(X,\lambda) := d \delta_{\mathbf{a} \cdot \gamma/ \ell_{\mathbf{a} \cdot \gamma}(X)}(\lambda) \ d\eta_{\gamma,\mathbf{a}}^{f,L}(X).
\]
In analogy with the case of horoball segment measures, let $\widetilde{\zeta}_{\gamma,\mathbf{a}}^{f,L}$ be the local pushforward of $\zeta_{\gamma,\mathbf{a}}^{f,L}$ to $P^1\mathcal{T}_{g,n}/\text{Stab}(\gamma)$ and  $\widehat{\zeta}_{\gamma,\mathbf{a}}^{f,L}$ the pushforward of $\widetilde{\zeta}_{\gamma,\mathbf{a}}^{f,L}$ to $P^1\mathcal{M}_{g,n}$.\\

One can check, see Proposition \ref{prop:total_horosphere_meas}, that the measures $\widehat{\eta}_{\gamma,\mathbf{a}}^{f,L}$ and $\widehat{\zeta}_{\gamma,\mathbf{a}}^{f,L}$ are finite. We denote by $n_{\gamma,\mathbf{a}}^{f,L}$ the total mass of the measures $\widehat{\eta}_{\gamma,\mathbf{a}}^{f,L}$ and $\widehat{\zeta}_{\gamma,\mathbf{a}}^{f,L}$, i.e.,
\[
n_{\gamma,\mathbf{a}}^{f,L} := \widehat{\eta}_{\gamma,\mathbf{a}}^{f,L}(\mathcal{M}_{g,n}) = \widehat{\zeta}_{\gamma,\mathbf{a}}^{f,L}(P^1\mathcal{M}_{g,n}) < +\infty.
\]
$ $

Another one of the main results of this paper is the following theorem, which shows that horosphere sector measures on $P^1\mathcal{M}_{g,n}$ equidistribute with respect to the Mirzakhani measure $\widehat{\nu}_{\text{Mir}}$.\\

\begin{theorem}
	\label{theo:horosphere_equid}
	In the weak-$\star$ topology for measures on $P^1\mathcal{M}_{g,n}$,
	\[
	\lim_{L \to \infty} \frac{\widehat{\zeta}_{\gamma,\mathbf{a}}^{f,L}}{n_{\gamma,\mathbf{a}}^{f,L}} = \frac{\widehat{\nu}_{\text{Mir}}}{b_{g,n}}.
	\]
\end{theorem}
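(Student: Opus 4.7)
My strategy is to derive Theorem \ref{theo:horosphere_equid} from Theorem \ref{theo:horoball_equid} via a co-area decomposition that exhibits each horoball sector measure as a one-parameter integral of horosphere sector measures along level sets of $\ell_{\mathbf{a} \cdot \gamma}$. Given the horosphere test function $f \colon \Delta_\mathbf{a} \to \mathbf{R}_{\geq 0}$ and a small parameter $\epsilon > 0$, I extend $f$ radially to a horoball test function
\[
F_\epsilon(x) := \tfrac{1}{\epsilon}\, \chi_{[1-\epsilon,1]}\bigl(\textstyle\sum_i a_i x_i\bigr)\, f\bigl(x / \textstyle\sum_j a_j x_j\bigr)
\]
supported in a thin annular shell near $\Delta_\mathbf{a}$. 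Since $\ell_{\mathbf{a} \cdot \gamma}$ is smooth and non-singular on $\mathcal{T}_{g,n}$ and the horosphere volume form is defined as the contraction of $\omega_{\text{wp}}$ by a vector field transverse to the level sets, the Weil-Petersson measure disintegrates as $d\mu_{\text{wp}} = d\eta_{\gamma,\mathbf{a}}^{\ell_{\mathbf{a}\cdot\gamma}(X)} \, d\ell_{\mathbf{a}\cdot\gamma}$. A direct computation using this disintegration then gives, for any continuous, compactly supported $\phi$ on $P^1\mathcal{M}_{g,n}$,
\[
\int \phi\, d\widehat{\nu}_{\gamma,\mathbf{a}}^{F_\epsilon,L} \;=\; \tfrac{1}{\epsilon}\int_{(1-\epsilon)L}^{L} \int \phi\, d\widehat{\zeta}_{\gamma,\mathbf{a}}^{f,s}\, ds,
\]
and analogously $m_\gamma^{F_\epsilon, L} = \tfrac{1}{\epsilon}\int_{(1-\epsilon)L}^L n_{\gamma,\mathbf{a}}^{f,s}\, ds$ for the total masses.

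Applying Theorem \ref{theo:horoball_equid} to the test function $F_\epsilon$ yields, for each fixed $\epsilon > 0$, the averaged convergence
\[
\frac{\int_{(1-\epsilon)L}^L \int \phi\, d\widehat{\zeta}_{\gamma,\mathbf{a}}^{f,s}\, ds}{\int_{(1-\epsilon)L}^L n_{\gamma,\mathbf{a}}^{f,s}\, ds} \;\xrightarrow{L\to\infty}\; \frac{1}{b_{g,n}}\int \phi\, d\widehat{\nu}_{\text{Mir}}.
\]
To pass from this averaged ratio to the pointwise statement of Theorem \ref{theo:horosphere_equid}, I would establish uniform polynomial asymptotics of the form $n_{\gamma,\mathbf{a}}^{f,L} \sim c_1\, L^{6g-7+2n}$ and $\int\phi\, d\widehat{\zeta}_{\gamma,\mathbf{a}}^{f,L} \sim c_2(\phi)\, L^{6g-7+2n}$ as $L \to \infty$. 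These asymptotics reflect the homogeneity of $\mu_{\text{Thu}}$, which scales as $L^{6g-6+2n}$; one power of $L$ is lost upon taking the co-area derivative. Given such asymptotics, replacing $s$ by $L$ throughout the ratio above introduces only $o(1)$ errors as $L \to \infty$, and sending $\epsilon \to 0$ afterwards completes the proof.

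The main obstacle is this last regularity step: verifying the polynomial asymptotics for horosphere sector measures as functions of $L$ and tracking the leading coefficient $c_2(\phi)$ uniformly over $\phi$. Mirzakhani's integration formulas relate $n_{\gamma,\mathbf{a}}^{f,L}$ to Weil-Petersson volumes of lower-complexity moduli spaces and yield the expected growth rate of the total mass, but the test-function-uniform version requires additional care. If this route proves too rigid, an alternative is to bypass Theorem \ref{theo:horoball_equid} and adapt its proof directly to horospheres: since the earthquake flow along $\mathbf{a} \cdot \gamma$ is Hamiltonian for $\ell_{\mathbf{a}\cdot\gamma}$ with respect to $\omega_{\text{wp}}$, it preserves both the level sets $S_{\gamma,\mathbf{a}}^L$ and the induced horosphere measures (by Liouville's theorem on the energy surface), so the same ergodicity and strong non-divergence inputs that drive Mirzakhani's horoball argument apply verbatim to horospheres.
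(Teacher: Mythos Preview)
Your primary route---deducing horosphere equidistribution from Theorem~\ref{theo:horoball_equid} by a thin-shell co-area decomposition---is not the paper's route and, as you yourself flag, the step from averaged to pointwise convergence is a genuine gap. The problem is circular: the total mass $n_{\gamma,\mathbf{a}}^{f,s}$ is indeed a polynomial in $s$ (this is Proposition~\ref{prop:total_horosphere_meas}), but the numerator $s \mapsto \int \phi \, d\widehat{\zeta}_{\gamma,\mathbf{a}}^{f,s}$ has no reason to enjoy polynomial asymptotics for a general test function $\phi$ on $P^1\mathcal{M}_{g,n}$. Establishing $\int\phi\, d\widehat{\zeta}_{\gamma,\mathbf{a}}^{f,L} \sim c_2(\phi)\, L^{6g-7+2n}$ with the correct constant $c_2(\phi)$ \emph{is} Theorem~\ref{theo:horosphere_equid}. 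Knowing that the $\epsilon$-averaged ratios converge for every fixed $\epsilon>0$ does not yield pointwise convergence without additional regularity (monotonicity, equicontinuity, or a Tauberian hypothesis) that you have not supplied and that does not follow from anything available. In fact the natural implication runs the other way: Theorem~\ref{theo:horoball_equid} could be deduced from Theorem~\ref{theo:horosphere_equid} by integrating in $s$, but not vice versa.

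Your fallback---rerunning the horoball argument directly on horospheres---is exactly what the paper does, but the claim that the ingredients ``apply verbatim'' misses the one genuinely new step. Earthquake-invariance and no escape of mass do transfer without difficulty (Propositions~\ref{prop:inv_meas_sph} and~\ref{prop:hor_meas_nem_1}). The absolute-continuity step, however, does not: in the horoball case the trivial bound $\nu_{\phi\cdot\gamma,\mathbf{a}}^{a,L}(U_X(\epsilon)\times V) \leq \mu_{\text{wp}}(U_X(\epsilon))$ suffices because the horoball measure is a restriction of $\mu_{\text{wp}}$. For horospheres one needs instead the bound $\eta_{\alpha,\mathbf{a}}^L(U_X(\epsilon)) \leq C\,\epsilon^{6g-7+2n}/L$ uniformly over $\alpha \in \text{Mod}_{g,n}\cdot\gamma$---this is Proposition~\ref{prop:mir_bd_og_0} (the Mirzakhani bound), and the paper devotes the end of \S4 to proving it via a transversal-flow comparison argument using the compactness of $P\mathcal{ML}_{g,n}$. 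Without this bound the analogue of Lemma~\ref{lemma:quot_bound} fails and the absolute-continuity step collapses.
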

$ $

Taking pushforwards under the bundle map $P^1\mathcal{M}_{g,n} \to \mathcal{M}_{g,n}$ in the statement of Theorem \ref{theo:horosphere_equid}, we deduce the following important corollary.\\

\begin{corollary}
	\label{cor:horosphere_equid}
	In the weak-$\star$ topology for measures on $\mathcal{M}_{g,n}$,
	\[
	\lim_{L \to \infty} \frac{\widehat{\eta}_{\gamma,\mathbf{a}}^{f,L}}{n_{\gamma,\mathbf{a}}^{f,L}} = \frac{B(X) \cdot d\widehat{\mu}_{\text{wp}}(X)}{b_{g,n}}.
	\]
\end{corollary}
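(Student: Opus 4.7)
The plan is to deduce Corollary \ref{cor:horosphere_equid} from Theorem \ref{theo:horosphere_equid} by pushing forward the equidistribution statement under the bundle projection $\pi \colon P^1\mathcal{M}_{g,n} \to \mathcal{M}_{g,n}$. First I would verify that $\pi$ is continuous and proper: properness follows because the fiber of $\pi$ over $X \in \mathcal{M}_{g,n}$ is (a finite quotient of) the compact set $\{\lambda \in \mathcal{ML}_{g,n} : \ell_\lambda(X) = 1\}$, using that $\ell_\lambda$ is a continuous proper function on $\mathcal{ML}_{g,n}$ and that lengths remain comparable over compact subsets of moduli space. Properness and continuity imply that $\pi_\ast$ preserves weak-$\star$ convergence of locally finite Borel measures, since the pullback of any compactly supported continuous function on $\mathcal{M}_{g,n}$ is again compactly supported and continuous on $P^1\mathcal{M}_{g,n}$.

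It then remains to identify the two relevant pushforwards. For the left-hand side, the disintegration
\[
d\zeta_{\gamma,\mathbf{a}}^{f,L}(X,\lambda) = d\delta_{\mathbf{a}\cdot\gamma/\ell_{\mathbf{a}\cdot\gamma}(X)}(\lambda) \, d\eta_{\gamma,\mathbf{a}}^{f,L}(X)
\]
makes it immediate that $\pi_\ast \zeta_{\gamma,\mathbf{a}}^{f,L} = \eta_{\gamma,\mathbf{a}}^{f,L}$ on $\mathcal{T}_{g,n}$, since the fiberwise Dirac mass integrates to one. Because the bundle map is $\text{Mod}_{g,n}$-equivariant, this identity passes under local pushforward to the intermediate cover $\mathcal{T}_{g,n}/\text{Stab}(\gamma)$ and then to $\mathcal{M}_{g,n}$, yielding $\pi_\ast \widehat{\zeta}_{\gamma,\mathbf{a}}^{f,L} = \widehat{\eta}_{\gamma,\mathbf{a}}^{f,L}$ and in particular the equality of total masses already recorded as $n_{\gamma,\mathbf{a}}^{f,L}$. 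For the right-hand side, I would use the defining disintegration of $\nu_{\text{Mir}}$ as $d\mu_{\text{wp}}$ on the base against the coned-off Thurston measure on the fiber $\{\ell_\lambda(X) = 1\}$; by the homogeneity of $\mu_{\text{Thu}}$ under positive scaling of $\mathcal{ML}_{g,n}$, polar integration along the fiber produces exactly $B(X) = \mu_{\text{Thu}}(\{\lambda : \ell_\lambda(X) \leq 1\})$, so $\pi_\ast \widehat{\nu}_{\text{Mir}} = B(X) \, d\widehat{\mu}_{\text{wp}}(X)$.

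Combining these two identities and applying $\pi_\ast$ to the weak-$\star$ limit in Theorem \ref{theo:horosphere_equid} yields the corollary. The only step requiring any care is the pushforward identity for $\widehat{\nu}_{\text{Mir}}$, which is essentially a bookkeeping exercise in the cone-off normalization of the Thurston measure fixed in \S 2; no analytic difficulty arises beyond what is already concentrated in Theorem \ref{theo:horosphere_equid} itself.
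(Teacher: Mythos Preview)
Your proposal is correct and follows exactly the approach indicated in the paper: the corollary is deduced from Theorem~\ref{theo:horosphere_equid} by taking pushforwards under the bundle map $P^1\mathcal{M}_{g,n} \to \mathcal{M}_{g,n}$. The paper records this in a single sentence preceding the statement, while you have unpacked the properness of the projection and the two pushforward identities $\pi_\ast \widehat{\zeta}_{\gamma,\mathbf{a}}^{f,L} = \widehat{\eta}_{\gamma,\mathbf{a}}^{f,L}$ and $\pi_\ast \widehat{\nu}_{\text{Mir}} = B(X)\,d\widehat{\mu}_{\text{wp}}(X)$ (the latter stated in \S2) that make the deduction go through.
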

$ $

\begin{remark}
	Mirzakhani's original equidistribution results in \cite{Mir07b} can be recovered from Theorem \ref{theo:horosphere_equid} and Corollary \ref{cor:horosphere_equid} by letting $\gamma$ be a connected simple closed curve on $S_{g,n}$, $\mathbf{a} := (1)$, and $f \colon \Delta_{(1)} \to \mathbf{R}_{>0}$ be the constant function taking the value $1$ on $\Delta_{(1)} := \{1\}$.
\end{remark}
$ $

Although Theorem \ref{theo:horoball_equid} can be proved using Theorem \ref{theo:horosphere_equid}, we do not take such approach in this paper; most of the important ideas in the proof of Theorem \ref{theo:horosphere_equid} are already present in the proof Theorem \ref{theo:horoball_equid} and in a much more clear way. We only give a brief overview of the proof of Theorem \ref{theo:horosphere_equid} and focus our attention on proving Theorem \ref{theo:horoball_equid} in complete detail.\\

There is one important idea exclusive to the proof of Theorem \ref{theo:horosphere_equid}. We denote by $U_X(\epsilon) \subseteq \mathcal{T}_{g,n}$ the open ball of radius $\epsilon > 0$ centered at $X \in \mathcal{T}_{g,n}$ with respect to the symmetric Thurston metric on Teichmüller space; see \S 2 for a definition. The following bound, which we refer to as the \textit{Mirzakhani bound}, is a key ingredient in the proof of Theorem \ref{theo:horosphere_equid}.\\
	
\begin{proposition}
	\label{prop:mir_bd_og_0}
	Let $K \subseteq \mathcal{T}_{g,n}$ be a compact subset. There exist constants $C > 0$ and $\epsilon_0 > 0$ such that for every $X \in K$, every $0 < \epsilon < \epsilon_0$, every $\alpha \in \text{Mod}_{g,n} \cdot \gamma$, and every $L >0$, 
	\[
	\eta_{\alpha,\mathbf{a}}^L(U_X(\epsilon)) \leq C \cdot \frac{\epsilon^{6g-6+2n-1}}{L}.
	\]
\end{proposition}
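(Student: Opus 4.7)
The strategy is to combine the coarea formula for the length function $\ell_{\mathbf{a}\cdot\alpha}$ with a uniform Weil-Petersson volume bound for Thurston balls, and to upgrade the resulting averaged inequality to a pointwise one using a scaling flow transverse to the horosphere.

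First I would reduce to the case $\alpha=\gamma$ fixed by mapping class group equivariance. Since the assignment $\alpha\mapsto \eta_{\alpha,\mathbf{a}}^L$ is $\mathrm{Mod}_{g,n}$-equivariant and $\mathrm{Mod}_{g,n}$ acts by symmetric Thurston isometries, for $\alpha=\phi\cdot\gamma$ one has $\eta_{\alpha,\mathbf{a}}^L(U_X(\epsilon))=\eta_{\gamma,\mathbf{a}}^L(U_{\phi^{-1}X}(\epsilon))$. Thus it suffices to bound $\eta_{\gamma,\mathbf{a}}^L(U_Y(\epsilon))$ uniformly over $Y$ whose image in $\mathcal{M}_{g,n}$ lies in the compact set $\pi(K)$; pick a lift $K'\subseteq\mathcal{T}_{g,n}$ of $\pi(K)$ that is compact, and it is enough to prove the estimate for $Y\in K'$.

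Next, I would apply the coarea formula to the smooth proper non-singular function $\ell_{\mathbf{a}\cdot\gamma}$:
\[
\mu_{\mathrm{wp}}(A)=\int_{0}^{\infty}\eta_{\gamma,\mathbf{a}}^{L}(A)\,dL\qquad(A\subseteq\mathcal{T}_{g,n}\text{ Borel}).
\]
The Thurston Lipschitz inequality $|\log(\ell_{\mathbf{a}\cdot\gamma}(Z)/\ell_{\mathbf{a}\cdot\gamma}(Y))|\leq d_{\mathrm{Thu}}(Y,Z)\leq\epsilon$ for $Z\in U_{Y}(\epsilon)$ forces $\eta^{L}(U_{Y}(\epsilon))=0$ outside the interval $I_Y:=[e^{-\epsilon}\ell_{\mathbf{a}\cdot\gamma}(Y),e^{\epsilon}\ell_{\mathbf{a}\cdot\gamma}(Y)]$, whose length is at most $C\epsilon L$ whenever $L\in I_Y$. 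Combined with the uniform Weil-Petersson volume bound $\mu_{\mathrm{wp}}(U_Y(\epsilon))\leq C_K\epsilon^{6g-6+2n}$ --- a standard consequence of the fact that on compact subsets of $\mathcal{M}_{g,n}$ the symmetric Thurston metric and the Weil-Petersson metric are bi-Lipschitz equivalent --- this already yields the bound in an averaged sense.

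To upgrade the averaged inequality to a pointwise one I would construct a smooth vector field $F$ on $\mathcal{T}_{g,n}$ with $d\ell_{\mathbf{a}\cdot\gamma}(F)\equiv 1$ whose flow $\phi_F^t$ (i) moves points Thurston-slowly, in the sense $d_{\mathrm{Thu}}(\phi_F^t(Z),Z)\leq \kappa_K |t|/L$ for $Z\in U_{Y}(\epsilon)\cap S_{\gamma,\mathbf{a}}^L$, and (ii) has bounded Jacobian relative to the level-set measures, $(\phi_F^t)_*\eta_{\gamma,\mathbf{a}}^{L}=J_F(\cdot,t)\,\eta_{\gamma,\mathbf{a}}^{L+t}$ with $J_F$ comparable to $1$ for $|t|\leq\kappa L$. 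A natural candidate, working in Fenchel-Nielsen coordinates adapted to a pants decomposition extending $\gamma$, is the radial scaling field $F=\sum_{i=1}^{k}\frac{\ell_{\gamma_i}}{\ell_{\mathbf{a}\cdot\gamma}}\,\partial_{\ell_{\gamma_i}}$, whose flow dilates $\ell_{\gamma_1},\ldots,\ell_{\gamma_k}$ by the common factor $(L+t)/L$ and leaves all other Fenchel-Nielsen coordinates unchanged, producing the explicit formula $(\phi_F^t)_*\eta^{L}=(L/(L+t))^{k-1}\eta^{L+t}$. Given such $F$, for $|t|\leq c\epsilon L$ one has $\phi_F^t(U_Y(\epsilon))\subseteq U_Y(2\epsilon)$, whence
\[
c\epsilon L\cdot \eta_{\gamma,\mathbf{a}}^{L}(U_Y(\epsilon))\leq C\int_{L-c\epsilon L}^{L+c\epsilon L}\eta_{\gamma,\mathbf{a}}^{L'}(U_Y(2\epsilon))\,dL'\leq C\,\mu_{\mathrm{wp}}(U_Y(2\epsilon))\leq C_K\epsilon^{6g-6+2n},
\]
which rearranges to the claimed bound.

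The main obstacle is verifying property (i) for the flow: one needs that the simultaneous length-rescaling in the $\gamma$-directions changes the length of \emph{every} simple closed curve (not just those in the chosen pants decomposition) by a log-factor of size $O(|t|/L)$, uniformly for $Y$ lying over $\pi(K)$. This is the place where collar lemma estimates, bounds on the derivatives of length functions along Fenchel-Nielsen directions, and the uniform geometry guaranteed by compactness in $\mathcal{M}_{g,n}$ must be combined; it is also the point where choosing the vector field $F$ cleverly matters, since the naive choice $F=\frac{1}{a_k}\partial_{\ell_{\gamma_k}}$ does not satisfy property (i) with the required $1/L$ rate.
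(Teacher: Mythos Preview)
Your overall strategy---coarea for $\ell_{\mathbf{a}\cdot\alpha}$, a transverse flow that is slow for $d_{\mathrm{Thu}}$, and the Weil--Petersson ball bound $\mu_{\mathrm{wp}}(U_Y(\epsilon))\leq C\epsilon^{6g-6+2n}$---is exactly the mechanism the paper uses. But two steps in your execution do not close.

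\textbf{The equivariance reduction is circular.} You correctly observe $\eta_{\phi\cdot\gamma,\mathbf{a}}^L(U_X(\epsilon))=\eta_{\gamma,\mathbf{a}}^L(U_{\phi^{-1}X}(\epsilon))$, so the problem becomes bounding $\eta_{\gamma,\mathbf{a}}^L(U_Y(\epsilon))$ for $Y$ ranging over the full orbit $\mathrm{Mod}_{g,n}\cdot K$, which is \emph{not} compact. Passing to a compact lift $K'$ of $\pi(K)$ requires another equivariance, which replaces $\gamma$ by an arbitrary element of its orbit again---you are back where you started. The paper avoids this by a different normalization: it rescales the \emph{lamination} rather than the surface, noting $\eta_{\lambda}^L=L^{-1}\eta_{\lambda/L}^1$, so it suffices to bound $\eta_\lambda^1(U_X(\epsilon))$ for $X$ in the original compact $K$ and $\lambda$ in $\{\lambda:\ S_\lambda^1\cap K(\epsilon_0)\neq\emptyset\}$, which \emph{is} a compact subset of $\mathcal{ML}_{g,n}$ (compactness of $P\mathcal{ML}_{g,n}$). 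This is the reduction you want.

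\textbf{The Fenchel--Nielsen scaling field does not come with the needed Thurston bound.} You flag property~(i) as the main obstacle, and it is a real one: there is no evident reason why dilating the $\ell_{\gamma_i}$ by $(L+t)/L$ in Fenchel--Nielsen coordinates moves every hyperbolic length by a log-factor $O(|t|/L)$ uniformly over the orbit of $\gamma$ (or equivalently over $Y\in\mathrm{Mod}_{g,n}\cdot K$). The paper sidesteps this entirely. After the normalization above, it takes $F^\lambda:=\phi\cdot\nabla\ell_\lambda/\langle\nabla\ell_\lambda,\nabla\ell_\lambda\rangle$ for any fixed Riemannian metric on $\mathcal{T}_{g,n}$ and a cutoff $\phi$ supported on $K(2\epsilon_0)$. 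Then property~(i) follows from the elementary Finsler estimate $d_{\mathrm{Thu}}(Z,\varphi_t Z)\leq t\cdot\sup_{K(2\epsilon_0)}\|\pm F^\lambda\|$, and the supremum is finite and depends continuously on $\lambda$ by Kerckhoff's theorem that $\lambda\mapsto\ell_\lambda$ is continuous into $C^\infty(\mathcal{T}_{g,n})$; uniformity then comes from the compactness of the set of relevant $\lambda$. Property~(ii) is automatic for this flow via the contraction description of $\omega_\lambda^L$. With these two substitutions your argument goes through verbatim.
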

$ $

Proposition \ref{prop:mir_bd_og_0} corresponds to Theorem 5.5 (b) in \cite{Mir07b}. In light of the author's inability to understand the proof there provided, see the footnote in \S 13.3 of \cite{Wri19} for a discussion on the matter, we give a conceptual proof different in nature from the one in Mirzakhani's original work. We actually prove a stronger result; see Proposition \ref{prop:mir_bd_og} below. The basic idea of the proof is that the measure of the intersection $S_{\alpha,\mathbf{a}}^L \cap U_X(\epsilon)$ can be compared to the measure of the ball $U_X(2\epsilon)$ by translating the intersection $S_{\alpha,\mathbf{a}}^L \cap U_X(\epsilon)$ along the gradient flow lines of the function $\ell_{\mathbf{a} \cdot \alpha} \colon \mathcal{T}_{g,n} \to \mathbf{R}_{>0}$ to generate a considerable amount of nearby parallel copies of such intersection that are contained in $U_X(2\epsilon)$. The compactness of the space $P\mathcal{ML}_{g,n}$ of projective measured geodesic laminations on $S_{g,n}$ provides the uniform control needed in the estimates.\\

\textit{Random hyperbolic surfaces.} Corollary \ref{cor:horoball_equid} together with Mirzakhani's integration formulas in \cite{Mir07a}, see \S 2 for a summary of the relevant results, yield a natural procedure for building random hyperbolic surfaces. \\

Let $\gamma := (\gamma_1,\dots,\gamma_k)$ with $1 \leq k \leq 3g-3+n$ be an ordered simple closed multi-curve on $S_{g,n}$ and $f \colon (\mathbf{R}_{\geq 0})^k \to \mathbf{R}_{\geq0}$ be a bounded, compactly supported, Borel measurable function with non-negative values and which is not almost everywhere zero with respect to the Lebesgue measure class. Let $S_{g,n}(\gamma)$ be the (potentially disconnected) surface with boundary obtained by cutting $S_{g,n}$ along the components of $\gamma$. Denote by $\Sigma_j$ with $j \in \{1,\dots,c\}$ the connected components of $S_{g,n}(\gamma)$. For every $j \in \{1,\dots,c\}$ let $b_j \in \mathbf{Z}_{\geq 0}$ be the number of boundary components of $\Sigma_j$. Given a vector $\mathbf{L}_j \in (\mathbf{R}_{>0})^{b_j}$ let $\mathcal{M}(\Sigma_j,\mathbf{L}_j)$ be the moduli space of oriented, complete, finite area hyperbolic structures on $\Sigma_j$ with labeled geodesic boundary components whose lengths are given by $\mathbf{L}_j$.\\

For every $L > 0$ build a random hyperbolic surfaces $X \in \mathcal{M}_{g,n}$ according to the following procedure:\\

\begin{enumerate}
	\item Pick a vector $\mathbf{L} := (\ell_i)_{i=1}^k \in (\mathbf{R}_{>0})^k$ at random according to the law
	\[
	f\left(\textstyle \frac{1}{L} \cdot (\ell_i)_{i=1}^k \right) \ d\ell_1 \cdots d\ell_k.
	\]
	For every $j \in \{1,\dots,c\}$ let $\mathbf{L}_j \in (\mathbf{R}_{>0})^{b_j}$ be the vector recording the entries of $\mathbf{L}$ corresponding to the boundary components of $\Sigma_j$.\\
	\item For every $j \in \{1,\dots,c\}$ pick a hyperbolic surface $X_j \in \mathcal{M}(\Sigma_j,\mathbf{L}_j)$ at random according to the Weil-Petersson volume form of such moduli space.\\
	\item Uniformly at random, pick a vector 
	\[
	\mathbf{t} := (t_i)_{i=1}^k \in\prod_{i=1}^k (\mathbf{R}/ \ell_i\mathbf{R}).
	\]
	Glue the hyperbolic surfaces $(X_j)_{j=1}^c$ along the components of $\gamma$ in an arbitrary way and then twist along such components according to the vector $\mathbf{t}$.\\
\end{enumerate}

By Theorem \ref{theo:cut_glue_fib}, this procedure yields a random hyperbolic surface $X \in \mathcal{M}_{g,n}$ whose probability law is precisely $\widehat{\mu}_\gamma^{f,L}$. According to Corollary \ref{cor:horoball_equid}, as $L \to \infty$, the probability law of such random hyperbolic surface converges in the weak-$\star$ topology to the probability law
\[
\frac{B(X) \cdot d\widehat{\mu}_{\text{wp}}(X)}{b_{g,n}}.
\]
$ $

Analogous intepretations also hold for horosphere sector measures.\\

\textit{Counting simple closed hyperbolic multi-geodesics with respect to the lengths of individual components.} In \cite{AH19b} we use Corollary \ref{cor:horoball_equid} together with a famous technique of Margulis to tackle several counting problems that generalize Mirzakhani's seminal work \cite{Mir08b}. The goal is to establish asymptotics for counting problems involving the lengths of individual components of simple closed hyperbolic multi-geodesics. In its most basic form, we prove a conjecture of Wolpert which we now describe.\\

Let $\gamma := (\gamma_1,\dots,\gamma_k)$ with $1 \leq k \leq 3g-3+n$ be an ordered simple closed multi-curve on $S_{g,n}$ and $\mathbf{b}:= (b_1,\dots,b_k) \in (\mathbf{R}_{>0})^k$ be arbitrary. For every $L > 0$ consider the counting function
\[
s(X,\gamma,\mathbf{b},L) := \# \{\alpha:=(\alpha_1,\dots,\alpha_k) \in \text{Mod}_{g,n} \cdot \gamma \ | \ \ell_{\alpha_i}(X) \leq b_i L, \ \forall i =1,\dots,k \}.
\]
Given $\mathbf{L}:=(\ell_1,\dots,\ell_k) \in (\mathbf{R}_{>0})^k$, let $W_{g,n}(\gamma,\mathbf{L})$ be the polynomial defined in \S 3; see the discussion preceding Proposition \ref{prop:total_hor_meas}. The following result was originally conjectured by Wolpert and is proved in \cite{AH19b}.\\

\begin{theorem}
	\label{theo:count}
	For every $\mathbf{b} := (b_1,\dots,b_k) \in (\mathbf{R}_{> 0})^k$,
	\[
	\lim_{L \to \infty} \frac{s(X,\gamma,\mathbf{b},L)}{L^{6g-6+2n}} = \frac{B(X)}{b_{g,n}} \cdot \int_{\prod_{i=1}^k [0,b_i]} W_{g,n}(\gamma,\mathbf{L}) \ d\mathbf{L},
	\]
	where $d\mathbf{L} := d\ell_1 \cdots d\ell_k$.
\end{theorem}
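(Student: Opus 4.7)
The plan is to apply Margulis's unfolding technique to reduce the pointwise counting asymptotic at $X$ to an integrated counting statement against a small bump near $X$, unfold that integral to a horoball segment mass to which Corollary \ref{cor:horoball_equid} applies directly, and then recover the pointwise statement via a bi-Lipschitz sandwich coming from the symmetric Thurston metric. Concretely, for any non-negative continuous compactly supported test function $\psi \colon \mathcal{M}_{g,n} \to \mathbf{R}$, I would consider
\[
I_\psi(L) := \int_{\mathcal{M}_{g,n}} s(Y,\gamma,\mathbf{b},L) \, \psi(Y) \, d\widehat{\mu}_{\text{wp}}(Y).
\]
Writing $s(Y,\gamma,\mathbf{b},L) = \sum_{\alpha \in \text{Mod}_{g,n}\cdot\gamma} f_{\mathbf{b}}\bigl(\tfrac{1}{L}(\ell_{\alpha_i}(Y))_{i=1}^{k}\bigr)$ with $f_{\mathbf{b}} := \mathbbm{1}_{\prod_{i=1}^{k}[0,b_i]}$, a standard unfolding over the orbit $\text{Mod}_{g,n}/\text{Stab}(\gamma)$ followed by pushforward back to $\mathcal{M}_{g,n}$ rewrites this exactly as $I_\psi(L) = \int_{\mathcal{M}_{g,n}} \psi \, d\widehat{\mu}_\gamma^{f_{\mathbf{b}},L}$.

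Next I would apply Corollary \ref{cor:horoball_equid}; since $f_{\mathbf{b}}$ is only Borel I would sandwich it between continuous compactly supported $f_{\mathbf{b}}^{\pm}$ and pass to the limit, using that $\partial\prod[0,b_i]$ is Lebesgue-null and that Proposition \ref{prop:total_hor_meas} computes
\[
m_\gamma^{f_{\mathbf{b}},L} = \int_{\prod_{i=1}^{k}[0,b_iL]} W_{g,n}(\gamma,\mathbf{L})\,d\mathbf{L}.
\]
The substitution $\mathbf{L} = L\mathbf{x}$ together with the homogeneity of the leading part of $W_{g,n}(\gamma,\cdot)$, of degree $6g-6+2n-k$, gives $m_\gamma^{f_{\mathbf{b}},L}/L^{6g-6+2n} \to \int_{\prod[0,b_i]} W_{g,n}(\gamma,\mathbf{L})\,d\mathbf{L}$, and therefore
\[
\lim_{L\to\infty} \frac{I_\psi(L)}{L^{6g-6+2n}} = \frac{1}{b_{g,n}}\left(\int_{\mathcal{M}_{g,n}}\psi(Y) B(Y)\, d\widehat{\mu}_{\text{wp}}(Y)\right)\left(\int_{\prod_{i=1}^{k}[0,b_i]} W_{g,n}(\gamma,\mathbf{L})\,d\mathbf{L}\right).
\]

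To upgrade this integrated asymptotic to the pointwise statement, I would use the basic property of the symmetric Thurston metric, $|\log(\ell_\alpha(Y)/\ell_\alpha(X))| \leq d_{\text{Thu}}(X,Y)$ for every simple closed curve $\alpha$. For $\epsilon > 0$ small enough that a Thurston ball $U_{\tilde X}(\epsilon)$ around a lift $\tilde X$ of $X$ embeds into $\mathcal{M}_{g,n}$, one has
\[
s(X,\gamma,\mathbf{b},e^{-\epsilon}L) \;\leq\; s(Y,\gamma,\mathbf{b},L) \;\leq\; s(X,\gamma,\mathbf{b},e^{\epsilon}L)
\]
for every $Y$ in that ball. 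Integrating this sandwich against a non-negative bump $\psi_\epsilon$ supported in the ball, dividing by $L^{6g-6+2n}$, and using the asymptotic of $I_{\psi_\epsilon}(L)/L^{6g-6+2n}$ from the previous step, pinches both $\liminf_L$ and $\limsup_L$ of $s(X,\gamma,\mathbf{b},L)/L^{6g-6+2n}$ between $e^{\mp \epsilon(6g-6+2n)} \cdot \bigl(\int \psi_\epsilon B\,d\widehat{\mu}_{\text{wp}} / \int \psi_\epsilon\,d\widehat{\mu}_{\text{wp}}\bigr) \cdot b_{g,n}^{-1}\int_{\prod[0,b_i]} W_{g,n}(\gamma,\mathbf{L})\,d\mathbf{L}$; sending $\epsilon \to 0$ and invoking continuity of $B$ at $X$ closes the argument.

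The main obstacle I anticipate is the clean application of Corollary \ref{cor:horoball_equid} to the discontinuous indicator $f_{\mathbf{b}}$: this requires both a boundary-negligibility argument (the limiting horoball-segment measure is absolutely continuous with respect to $W_{g,n}(\gamma,\mathbf{L})\,d\mathbf{L}$ on the $\mathbf{L}$-side and therefore does not charge the hyperplanes $\{\ell_i = b_i\}$) and uniform control in $L$ of the difference $m_\gamma^{f_{\mathbf{b}}^+, L} - m_\gamma^{f_{\mathbf{b}}^-, L}$ as the continuous approximations of $f_{\mathbf{b}}$ are refined. The unfolding step and the bi-Lipschitz sandwich are standard once this passage to indicator functions is justified.
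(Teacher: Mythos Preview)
The paper does not actually prove Theorem~\ref{theo:count}; it is stated there as a result proved in the companion paper \cite{AH19b}, with only the remark that the argument ``use[s] Corollary~\ref{cor:horoball_equid} together with a famous technique of Margulis.'' Your proposal is precisely an implementation of that outline: unfold the counting sum over $\mathrm{Mod}_{g,n}\cdot\gamma$ into an integral of $\psi$ against $\widehat{\mu}_\gamma^{f_{\mathbf b},L}$, apply the equidistribution corollary, and then localize via a symmetric-Thurston-metric sandwich. So as far as one can compare, your approach matches what the paper indicates.

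Two small corrections. First, your anticipated ``main obstacle'' is not one: the hypotheses in the paper on $f$ require only that it be bounded, compactly supported, Borel measurable, non-negative, and not a.e.\ zero, all of which the indicator $f_{\mathbf b}=\mathbbm{1}_{\prod_i[0,b_i]}$ satisfies. Corollary~\ref{cor:horoball_equid} therefore applies directly to $f_{\mathbf b}$, and since $\psi$ is continuous and compactly supported, weak-$\star$ convergence gives $\int \psi\, d\widehat{\mu}_\gamma^{f_{\mathbf b},L}/m_\gamma^{f_{\mathbf b},L}\to b_{g,n}^{-1}\int \psi\,B\,d\widehat{\mu}_{\mathrm{wp}}$ with no approximation needed. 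Second, the exact formula from Proposition~\ref{prop:total_hor_meas} involves $V_{g,n}(\gamma,\cdot)$, not $W_{g,n}(\gamma,\cdot)$; only after dividing by $L^{6g-6+2n}$ and letting $L\to\infty$ does the leading part $W_{g,n}$ appear. This is a slip of notation rather than a gap.
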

$ $

The proof of Theorem \ref{theo:count} in \cite{AH19b} is independent of Mirzakhani's work \cite{Mir08b} and recovers the counting results therein as a particular case. Letting $\mathbf{b} := (1,\dots,1) \in (\mathbf{R}_{> 0})^k$ one also obtains asymptotics for the counting functions
\[
m(X,\gamma,L) := \#\left\lbrace\alpha := (\alpha_1,\dots,\alpha_k) \in \text{Mod}_{g,n} \cdot \gamma \ \bigg\vert \ \max_{i=1,\dots,k} \ell_{\alpha_i}(X) \leq L \right\rbrace.
\]
$ $

\textit{Dynamics of the earthquake flow.} The dynamics of the earthquake flow 
\[
\{\text{tw}^t \colon P^1\mathcal{M}_{g,n} \to P^1\mathcal{M}_{g,n} \}_{t \in \mathbf{R}}
\]
play a crucial role in the proofs of Theorems \ref{theo:horoball_equid} and \ref{theo:horosphere_equid}; see \S 2 for a definition. By work of Mirzakhani, see \cite{Mir08a} or \cite{Wri18} for an expository account, the earthquake flow is ergodic in the following sense.\\

\begin{theorem}
	\label{theo:earth_erg}
	The measure $\widehat{\nu}_{\text{Mir}}$ on $P^1 \mathcal{M}_{g,n}$ is invariant and ergodic with respect to the earthquake flow.\\
\end{theorem}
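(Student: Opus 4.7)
The plan is to separate the two assertions. Invariance follows from a Hamiltonian/symplectic argument on Teichm\"uller space, while ergodicity requires Mirzakhani's measurable conjugacy between the earthquake flow and the Teichm\"uller horocycle flow on the moduli space of quadratic differentials.

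For invariance, I would lift to the cover $P^1\mathcal{T}_{g,n}$ and use Wolpert's extension of the classical twist-flow/length-function duality: for a fixed $\lambda \in \mathcal{ML}_{g,n}$, the earthquake deformation $t \mapsto E_{t\lambda}$ on $\mathcal{T}_{g,n}$ is the Hamiltonian flow of $\ell_\lambda$ with respect to the Weil--Petersson symplectic form $\omega_{\text{wp}}$. In particular it preserves $\mu_{\text{wp}}$ and the level set $\{\ell_\lambda = 1\}$. Since $\text{tw}^t(X,\lambda) = (E_{t\lambda}(X),\lambda)$, the fiber lamination is tautologically fixed and the disintegration of $\nu_{\text{Mir}}$ along the base is $\text{tw}^t$-invariant leaf by leaf. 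The invariance of $\widehat{\nu}_{\text{Mir}}$ on $P^1\mathcal{M}_{g,n}$ then descends by local pushforward under the $\text{Mod}_{g,n}$-quotient.

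For ergodicity, I would follow Mirzakhani by constructing a measurable map
\[
F \colon P^1\mathcal{M}_{g,n} \to Q^1\mathcal{M}_{g,n}
\]
to the moduli space of unit-area holomorphic quadratic differentials, assigning to $(X,\lambda)$ a quadratic differential whose horizontal measured foliation corresponds to $\lambda$ (using that for $\mu_{\text{Thu}}$-a.e.\ lamination the realization through period coordinates is well-defined). The two properties of $F$ to establish are: (i) $F$ intertwines the earthquake flow $\text{tw}^t$ with the Teichm\"uller horocycle flow $h_t$ on $Q^1\mathcal{M}_{g,n}$, and (ii) $F_*\widehat{\nu}_{\text{Mir}}$ is proportional (on the principal stratum) to the Masur--Veech measure $\mu_{\text{MV}}$. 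Given both, the ergodicity of $h_t$ with respect to $\mu_{\text{MV}}$, a classical theorem of Masur, transfers to ergodicity of $\text{tw}^t$ with respect to $\widehat{\nu}_{\text{Mir}}$.

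The hard part is constructing and analyzing $F$: it is only a measurable equivalence on a full-measure subset, not a homeomorphism, and the identification $F_*\widehat{\nu}_{\text{Mir}} \propto \mu_{\text{MV}}$ amounts to a delicate change-of-variables comparing the Weil--Petersson symplectic form on $\mathcal{T}_{g,n}$ to the Thurston symplectic form on $\mathcal{ML}_{g,n}$ in Dehn--Thurston (or train-track) coordinates, and identifying the resulting product structure with period coordinates on $Q^1\mathcal{M}_{g,n}$. Once this measure-theoretic identification is in place, the intertwining property (i) is essentially formal: an earthquake along $\lambda$ shears the transverse foliation while fixing $\lambda$, which under $F$ is precisely the action of the upper-triangular unipotent subgroup of $SL(2,\mathbf{R})$ on the quadratic differential, i.e., the horocycle flow.
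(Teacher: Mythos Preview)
The paper does not prove this theorem; it is stated with a citation to Mirzakhani's work \cite{Mir08a} (and the expository account \cite{Wri18}) and then used as a black box in the proof of Theorem~\ref{theo:horoball_equid}. Your outline is an accurate summary of Mirzakhani's original argument: invariance via Wolpert's duality (\ref{eq:hamilton}) making the earthquake a Hamiltonian flow for $\omega_{\text{wp}}$, and ergodicity via the measurable conjugacy to the Teichm\"uller horocycle flow together with Masur's ergodicity theorem for the Masur--Veech measure. So there is nothing to compare against in this paper, and your sketch is correct as a description of the cited proof.
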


Given $\epsilon > 0$, let $K_\epsilon$ be the \textit{$\epsilon$-thick part} of $\mathcal{M}_{g,n}$, i.e., the set of all oriented, complete, finite area hyperbolic structures on $S_{g,n}$ with no closed geodesics of length $<\epsilon$, and $P^1K_\epsilon$ be the \textit{$\epsilon$-thick part} of $P^1\mathcal{M}_{g,n}$, i.e., the union of all fibers of the bundle $P^1\mathcal{M}_{g,n}$ that lie above $K_\epsilon$. By Mumford's compactness criterion, see for example Theorem 12.6 in \cite{FM11}, these sets are compact. We denote by $\text{Leb}$ the Lebesgue measure on $\mathbf{R}$ and by $i(\cdot,\cdot)$ the geometric intersection pairing on $\mathcal{ML}_{g,n}$. By work of Minsky and Weiss, see Theorem E2 in \cite{MW02}, the earthquake flow on $P^1\mathcal{M}_{g,n}$ is non-divergent in the following strong sense.\\

\begin{theorem}
	\label{theo:mw_quant_rec}
	For every $\delta > 0$ there exists $\epsilon>0$ such that for any $(X,\lambda) \in P^1\mathcal{M}_{g,n}$ exactly one of the following holds:
	\begin{enumerate}
		\item $\displaystyle
		\liminf_{T \to \infty} \frac{\text{Leb}(\{t \in [0,T] \ | \ \text{tw}^t(X,\lambda) \in P^1K_\epsilon\})}{T} > 1 - \delta
		$. \\[-1.5pt]
		\item There is a simple closed curve $\gamma$ on $S_{g,n}$ with $i(\lambda, \gamma) = 0$ and $\ell_{\gamma}(X) < \epsilon$.
	\end{enumerate}
\end{theorem}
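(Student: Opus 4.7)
The plan is to follow the Kleinbock--Margulis quantitative non-divergence strategy, as adapted to the earthquake flow by Minsky and Weiss. First I would lift the problem to Teichm\"uller space and invoke the collar lemma together with Mumford's compactness criterion: $(X,\lambda)\notin P^1 K_\epsilon$ if and only if there is a simple closed curve $\gamma$ on $S_{g,n}$ with $\ell_\gamma(X)<\epsilon$, and once $\epsilon$ is below the Margulis constant such curves at a fixed $X$ form a set of at most $3g-3+n$ pairwise disjoint simple closed curves. The two alternatives are mutually exclusive because whenever $i(\gamma,\lambda)=0$ the length $\ell_\gamma$ is preserved by the earthquake along $\lambda$, so a point satisfying (2) never enters $P^1 K_\epsilon$. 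The task reduces to showing: if (2) fails, then for all sufficiently small $\epsilon>0$ the proportion of times $t\in[0,T]$ at which some simple closed curve has length less than $\epsilon$ on $\text{tw}^t(X,\lambda)$ is at most $\delta$ as $T\to\infty$.

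Next I would establish the key analytic input. By Kerckhoff's earthquake theorem, for each simple closed curve $\gamma$ the function $t\mapsto \ell_\gamma(\text{tw}^t(X,\lambda))$ is real-analytic and convex, being strictly convex whenever $i(\gamma,\lambda)>0$ and constant when $i(\gamma,\lambda)=0$. From convexity, together with compactness of $P\mathcal{ML}_{g,n}$ and continuity of the pairing $(X,\lambda)\mapsto \ell_\lambda(X)$, one extracts a uniform $(C,\alpha)$-good estimate in the sense of Kleinbock--Margulis: there exist constants $C,\alpha,\epsilon_0>0$ depending only on the topology of $S_{g,n}$ such that for any interval $B\subseteq \mathbf{R}$ on which $\sup_{t\in B}\ell_\gamma(\text{tw}^t(X,\lambda))\ge \epsilon_0$,
\[
\text{Leb}\bigl(\{t\in B:\ell_\gamma(\text{tw}^t(X,\lambda))<\epsilon\}\bigr)\le C\,(\epsilon/\epsilon_0)^\alpha\,\text{Leb}(B).
\]

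Third, I would combine these ingredients through a Besicovitch-type covering argument. Fix $\delta>0$ and assume alternative (2) fails for $(X,\lambda)$; then every simple closed curve $\gamma$ that ever becomes short along the orbit must have $i(\gamma,\lambda)>0$, and by strict convexity its length cannot remain below $\epsilon_0$ on arbitrarily long intervals. Applying the good-function estimate on each maximal subinterval of $[0,T]$ on which $\ell_\gamma(\text{tw}^t(X,\lambda))$ dips below $\epsilon_0$, and stitching the intervals together via a covering lemma, bounds the total time in the $\gamma$-thin part by a constant multiple of $(\epsilon/\epsilon_0)^\alpha T$. Summing over the finitely many mapping class group orbits of simple closed curves that can ever become short inside a Bers region and choosing $\epsilon$ sufficiently small then yields alternative (1).

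The main obstacle is the analytic second step: obtaining $(C,\alpha)$-good constants uniform in both $X\in\mathcal{T}_{g,n}$ and $\lambda\in\mathcal{ML}_{g,n}$. Unlike the homogeneous setting, the dependence of the earthquake flow on $\lambda$ is only H\"older, and $\ell_\gamma$ admits no closed-form polynomial expression along orbits. Kerckhoff's convexity provides the essential monotonicity, while compactness of $P\mathcal{ML}_{g,n}$ and joint continuity of the length pairing supply uniform control; making these compatible across all curves $\gamma$ and all basepoints $(X,\lambda)$ simultaneously is the heart of the Minsky--Weiss argument.
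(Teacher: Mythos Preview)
The paper does not give its own proof of this statement: it is quoted as a background result due to Minsky and Weiss, with the attribution ``see Theorem E2 in \cite{MW02}''. So there is no in-paper argument to compare against; the relevant comparison is with the original Minsky--Weiss proof.

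Your outline is a faithful high-level summary of that proof: the Kleinbock--Margulis $(C,\alpha)$-good machinery, Kerckhoff's convexity of $t\mapsto \ell_\gamma(\text{tw}^t_\lambda X)$ as the source of the good-function estimate, and a covering argument to assemble the local bounds into a global statement. You have also correctly isolated the mutual exclusivity of (1) and (2) via the invariance of $\ell_\gamma$ under earthquakes along $\lambda$ when $i(\gamma,\lambda)=0$.

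One point where your sketch is imprecise is the third step. The phrase ``summing over the finitely many mapping class group orbits of simple closed curves that can ever become short inside a Bers region'' does not capture what actually happens: along an earthquake trajectory infinitely many distinct curves (not just finitely many orbit representatives) can become short, and the multiplicity is exactly what must be controlled. Minsky and Weiss handle this not by a direct sum over curves but via a ``protection'' mechanism: the collection of length functions $\{\ell_\gamma\}$ forms a protecting family in the sense that whenever $\ell_\gamma$ drops below $\epsilon$ on a subinterval, the supremum of $\ell_\gamma$ on a suitable enlarged interval is bounded below by a fixed $\rho$, and distinct protecting curves on overlapping intervals are disjoint (via the collar lemma), hence at most $3g-3+n$ can be active simultaneously. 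It is this combinatorial structure, combined with an inductive Besicovitch-type covering, that yields the bound; your formulation skips this and would not close as written. If you want to promote the sketch to a proof, that protection lemma is the missing piece.
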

$ $

If condition (2) in Theorem \ref{theo:mw_quant_rec} holds, then the function 
\[
t \in \mathbf{R}_{\geq 0} \mapsto \ell_{\gamma}(\text{tw}^t(X,\lambda))
\] 
is constant and $< \epsilon$. In particular, $\text{tw}^t(X,\lambda) \notin P^1 K_\epsilon$ for every $t \geq 0$. Theorem \ref{theo:mw_quant_rec} precisely says that this is the only obstruction that could prevent the strong non-divergence property of the earthquake flow described by condition (1) from holding.\\

\textit{Sketch of the proof of Theorem \ref{theo:horoball_equid}.} Let $\gamma := (\gamma_1,\dots,\gamma_k)$ with $1 \leq k \leq 3g-3+n$ be an ordered simple closed multi-curve on $S_{g,n}$, $\mathbf{a}:=(a_1,\dots,a_k) \in (\mathbf{Q}_{>0})^k$ be a vector of positive rational weights on the components of $\gamma$, and $f \colon (\mathbf{R}_{\geq 0})^k \to \mathbf{R}_{\geq0}$ be a bounded, compactly supported, Borel measurable function with non-negative values and which is not almost everywhere zero with respect to the Lebesgue measure class. The following three propositions, describing the limit points of the sequence of probability measures $\{\widehat{\nu}_{\gamma,\mathbf{a}}^{f,L}/m_\gamma^{f,L}\}_{L>0}$ on $P^1\mathcal{M}_{g,n}$, together with Theorem \ref{theo:earth_erg} lead to a proof of Theorem \ref{theo:horoball_equid}.\\

\begin{proposition}
	\label{prop:earth_inv_0}
	Any weak-$\star$ limit point of the sequence of probability measures $\{\widehat{\nu}_{\gamma,\mathbf{a}}^{f,L}/m_\gamma^{f,L}\}_{L > 0}$ on $P^1\mathcal{M}_{g,n}$ is earthquake flow invariant.
\end{proposition}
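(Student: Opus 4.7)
The plan is to prove the strictly stronger statement that \emph{each} prelimit measure $\widehat{\nu}_{\gamma,\mathbf{a}}^{f,L}$ is itself earthquake flow invariant. Given this, the proposition is automatic: since the time-$t$ earthquake map is a homeomorphism of $P^1\mathcal{M}_{g,n}$, it preserves $C_c(P^1\mathcal{M}_{g,n})$, so the identity $\int \varphi\, d\widehat\nu_L = \int \varphi\circ\text{tw}^t\, d\widehat\nu_L$ passes through weak-$\star$ limits, even when the limit is only a sub-probability measure on the non-compact space.

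To establish the invariance of each $\widehat\nu_{\gamma,\mathbf{a}}^{f,L}$, I would first work upstairs on $P^1\mathcal{T}_{g,n}$. Since the earthquake flow commutes with $\text{Mod}_{g,n}$, invariance of $\nu_{\gamma,\mathbf{a}}^{f,L}$ descends through the local pushforward to $P^1\mathcal{T}_{g,n}/\text{Stab}(\gamma)$ and then to $P^1\mathcal{M}_{g,n}$. The key observation is that $\nu_{\gamma,\mathbf{a}}^{f,L}$ is supported on the graph $\Gamma := \{(X,\, \mathbf{a}\cdot\gamma/\ell_{\mathbf{a}\cdot\gamma}(X)) : X \in \mathcal{T}_{g,n}\}$. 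Because the earthquake flow fixes the lamination coordinate and, along $\Gamma$, acts on the $X$-coordinate by a composition of Fenchel-Nielsen twists along the pairwise disjoint components $\gamma_i$ — each of which preserves every $\ell_{\gamma_j}$, hence also $\ell_{\mathbf{a}\cdot\gamma}$ — the flow preserves $\Gamma$, and under the identification $\Gamma \cong \mathcal{T}_{g,n}$ it is generated by the vector field $V := \ell_{\mathbf{a}\cdot\gamma}(X)^{-1}\sum_i a_i T_{\gamma_i}$, where $T_{\gamma_i}$ is the twist vector field at $\gamma_i$.

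It then remains to check that $V$ preserves $d\mu_\gamma^{f,L} = f\bigl(\tfrac{1}{L}(\ell_{\gamma_i}(X))_i\bigr)\, d\mu_\text{wp}(X)$. The weight is $V$-invariant because $T_{\gamma_i}\ell_{\gamma_j}\equiv 0$. Denoting the Weil-Petersson symplectic $2$-form by $\Omega$, the Cartan identity $\mathcal{L}_{cT}\Omega = c\,\mathcal{L}_T\Omega + dc\wedge\iota_T\Omega$ combined with Wolpert's formulas $\mathcal{L}_{T_{\gamma_i}}\Omega = 0$ and $\iota_{T_{\gamma_i}}\Omega = d\ell_{\gamma_i}$ yields
\[
\mathcal{L}_V\Omega \;=\; -\ell_{\mathbf{a}\cdot\gamma}^{-2}\, d\ell_{\mathbf{a}\cdot\gamma} \wedge \sum_i a_i\, d\ell_{\gamma_i} \;=\; -\ell_{\mathbf{a}\cdot\gamma}^{-2}\, d\ell_{\mathbf{a}\cdot\gamma}\wedge d\ell_{\mathbf{a}\cdot\gamma} \;=\; 0,
\]
using $\sum_i a_i\, d\ell_{\gamma_i} = d\ell_{\mathbf{a}\cdot\gamma}$. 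Hence $V$ preserves $\Omega$, therefore preserves the WP volume form, and so preserves $\mu_\gamma^{f,L}$.

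I do not anticipate a serious obstacle: at heart the proposition reduces to Wolpert's symplectic description of the Weil-Petersson form, combined with the fact that the components of $\gamma$ are pairwise disjoint, which makes both of the relevant Wolpert identities available simultaneously for all of the $T_{\gamma_i}$'s. The main care required is bookkeeping in the two pushforward operations on the orbifold $P^1\mathcal{M}_{g,n}$ and the routine verification that weak-$\star$ limits preserve flow invariance on a non-compact space — both handled by the fact that $\text{tw}^t$ is a $\text{Mod}_{g,n}$-equivariant homeomorphism of $P^1\mathcal{M}_{g,n}$.
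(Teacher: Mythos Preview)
Your proposal is correct and follows the same overall architecture as the paper: first prove the stronger statement that each prelimit $\widehat{\nu}_{\gamma,\mathbf{a}}^{f,L}$ is itself earthquake-flow invariant (the paper's Proposition~\ref{prop:inv_meas}), then let continuity of the flow pass this through weak-$\star$ limits; and to prove invariance, work on $P^1\mathcal{T}_{g,n}$, identify the flow restricted to the graph $\Gamma$ with a rescaled twist flow on $\mathcal{T}_{g,n}$, and push down through the two quotients.

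The one genuine difference is in how you establish that $\mu_\gamma^{f,L}$ is preserved by the rescaled twist vector field $V=\ell_{\mathbf{a}\cdot\gamma}^{-1}\sum_i a_i T_{\gamma_i}$. The paper does this by disintegrating $\mu_{\text{wp}}$ along the level sets of $\ell_{\mathbf{a}\cdot\gamma}$ into the horosphere measures $\eta_{\mathbf{a}\cdot\gamma}^r$ (Proposition~\ref{prop:meas_disint}), so that on each slice the rescaling factor $\ell_{\mathbf{a}\cdot\gamma}^{-1}=1/r$ becomes constant and one is reduced to the familiar fact that constant-speed twists preserve both the weight $F$ and the induced slice measure. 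You instead compute directly $\mathcal{L}_V\Omega=0$ via Cartan's identity and Wolpert's duality $\iota_{T_{\gamma_i}}\Omega=\pm d\ell_{\gamma_i}$, exploiting that $\sum_i a_i\,d\ell_{\gamma_i}=d\ell_{\mathbf{a}\cdot\gamma}$ wedges to zero with itself. Your route is more self-contained and avoids introducing the horosphere disintegration here; the paper's route has the advantage that the horosphere measures $\eta_{\mathbf{a}\cdot\gamma}^r$ are already central objects in the rest of the argument (they reappear in \S4), so the disintegration is not extra machinery for its purposes. Both are clean; yours is arguably the more direct proof of this isolated proposition.
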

$ $

\begin{proposition}
	\label{prop:hor_meas_ac_0}
	Any weak-$\star$ limit point of the sequence of probability measures $\{\widehat{\nu}_{\gamma,\mathbf{a}}^{f,L}/m_\gamma^{f,L}\}_{L > 0}$ on $P^1\mathcal{M}_{g,n}$ is absolutely continuous with respect to $\widehat{\nu}_{\text{Mir}}$.
\end{proposition}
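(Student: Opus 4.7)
The plan is to establish a local upper bound of the form $\limsup_{L \to \infty} \widehat{\nu}_{\gamma,\mathbf{a}}^{f,L}(\Phi)/m_\gamma^{f,L} \leq C \cdot \widehat{\nu}_{\text{Mir}}(\Phi)$ for every continuous compactly supported $\Phi \geq 0$ on $P^1\mathcal{M}_{g,n}$, with a constant $C$ depending on the support of $\Phi$ but not on $\Phi$ itself. Passing to a weak-$\star$ limit along a convergent subsequence then yields $\widehat{\nu}_\infty \leq C \cdot \widehat{\nu}_{\text{Mir}}$ locally, which gives $\widehat{\nu}_\infty \ll \widehat{\nu}_{\text{Mir}}$ by Radon regularity.

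The first step is to unfold the pushforward. Using the $\text{Stab}(\gamma)$-invariance of $\nu_{\gamma,\mathbf{a}}^{f,L}$ and the intermediate cover $\mathcal{T}_{g,n}/\text{Stab}(\gamma) \to \mathcal{M}_{g,n}$, one obtains
\[
\widehat{\nu}_{\gamma,\mathbf{a}}^{f,L}(\Phi) = \int_{\mathcal{M}_{g,n}} \sum_{\alpha \in \text{Mod}_{g,n}\cdot \gamma} f\!\left(\tfrac{1}{L}(\ell_{\alpha_i}(X))_{i=1}^k\right) \Phi\!\left(X, \tfrac{\mathbf{a}\cdot\alpha}{\ell_{\mathbf{a}\cdot\alpha}(X)}\right) d\widehat{\mu}_{\text{wp}}(X),
\]
where the sum is over the $\text{Mod}_{g,n}$-orbit of the ordered multi-curve $\gamma$. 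For each fixed $X$, the inner sum is a Riemann sum indexed by orbit points in $\mathcal{ML}_{g,n}$, and Mirzakhani's counting theorem for simple closed multi-curves gives the equidistribution
\[
L^{-(6g-6+2n)} \sum_{\alpha \in \text{Mod}_{g,n}\cdot\gamma} G(\alpha/L) \longrightarrow \int_{\mathcal{ML}_{g,n}} G(\lambda)\, d\mu_\gamma^{\text{Thu}}(\lambda)
\]
for every continuous compactly supported $G$ on $\mathcal{ML}_{g,n}$, where $\mu_\gamma^{\text{Thu}}$ is a Thurston-type measure supported on the orbit closure of $\gamma$, with convergence locally uniform in $X$ on compact subsets of $\mathcal{M}_{g,n}$.

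Applying this with $G$ the integrand at fixed $X$ and decomposing $\mu_\gamma^{\text{Thu}}$ in polar coordinates $\lambda = t\mu$ relative to the length $\ell_{\mathbf{a}\cdot\lambda}(X)$---which yields $d\mu_\gamma^{\text{Thu}}(\lambda) = t^{6g-7+2n}\,dt \, d\sigma^X(\mu)$, with $\sigma^X$ the cone measure on the unit-length section that is by construction the fiber of $\widehat{\nu}_{\text{Mir}}$ at $X$---one derives an asymptotic identity
\[
\frac{\widehat{\nu}_{\gamma,\mathbf{a}}^{f,L}(\Phi)}{L^{6g-6+2n}} \longrightarrow \int_{\mathcal{M}_{g,n}} \int_{\ell_{\mathbf{a}\cdot\mu}(X)=1} \Phi(X,\mathbf{a}\cdot\mu)\, I_f(X,\mu)\, d\sigma^X(\mu)\, d\widehat{\mu}_{\text{wp}}(X),
\]
where $I_f(X,\mu) := \int_0^\infty f\!\left(t\,(\ell_{\mu_i}(X))_{i=1}^k\right) t^{6g-7+2n}\, dt$ is a positive function bounded on the projection of the support of $\Phi$. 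A parallel calculation with $\Phi \equiv 1$ shows $m_\gamma^{f,L} \sim c\, L^{6g-6+2n}$ for some $c > 0$ via Proposition \ref{prop:total_hor_meas}. Dividing and using the boundedness of $I_f$ yields the desired comparison.

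The main technical obstacle is interchanging the limit in the counting step with the integration over $X$ in the unfolding formula. This demands a uniform majorization legitimizing dominated convergence, together with uniformity of Mirzakhani's counting over compact subsets of $\mathcal{M}_{g,n}$. Because $\Phi$ is compactly supported on $P^1\mathcal{M}_{g,n}$, its projection to $\mathcal{M}_{g,n}$ is compact, and both requirements reduce to standard continuity properties of the leading constant in Mirzakhani's counting theorem (the continuity of the Mirzakhani function $B$) together with the boundedness of $I_f$ on the support of $\Phi$.
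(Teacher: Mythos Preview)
Your approach has a genuine gap and a circularity issue.

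\textbf{The gap.} The function $G_X$ you want to feed into the counting convergence,
\[
G_X(\lambda) = f\big((\ell_{\lambda_i}(X))_{i=1}^k\big)\,\Phi\big(X,\, \mathbf{a}\cdot\lambda/\ell_{\mathbf{a}\cdot\lambda}(X)\big),
\]
is not a continuous function on $\mathcal{ML}_{g,n}$: it requires $\lambda$ to split into $k$ ordered components $\lambda_1,\dots,\lambda_k$ so that the individual lengths $\ell_{\lambda_i}(X)$ and the weighted combination $\mathbf{a}\cdot\lambda$ make sense. A generic measured lamination has no such decomposition, so $G_X$ does not extend off the orbit of $\gamma$, and the weak-$\star$ counting statement you invoke (which is for continuous compactly supported test functions on $\mathcal{ML}_{g,n}$) cannot be applied to it. Your claim that the limiting measure $\mu_\gamma^{\text{Thu}}$ is ``supported on the orbit closure of $\gamma$'' is also incorrect: the orbit counting measures converge to a constant multiple of the full Thurston measure $\mu_{\text{Thu}}$, whose support is all of $\mathcal{ML}_{g,n}$.

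\textbf{The circularity.} Even setting that aside, the orbit-specific equidistribution
\[
L^{-(6g-6+2n)}\sum_{\alpha\in\text{Mod}_{g,n}\cdot\gamma}\delta_{\alpha/L}\ \longrightarrow\ c(\gamma)\,\mu_{\text{Thu}}
\]
is exactly the kind of statement this paper is building tools to prove: in the logical structure here, the counting asymptotics (Theorem~\ref{theo:count}) are \emph{deduced from} Corollary~\ref{cor:horoball_equid}, which in turn rests on the proposition you are trying to prove. Invoking it as a black box defeats the purpose.

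\textbf{What the paper does instead.} The paper avoids both problems by aiming only for an \emph{upper bound}, not the exact limit. After unfolding (as you do), the number of nonzero summands over a product set $\Pi(U_X(\epsilon)\times V)$ is bounded by the count $s(X,\mathbf{a}\cdot\gamma,L',V)$ of orbit points with $\ell_\alpha(X)\le L'$ and $[\alpha]\in V$. Since the orbit of $\mathbf{a}\cdot\gamma$ sits inside a rescaling of $\mathcal{ML}_{g,n}(\mathbf{Z})$, one gets for free
\[
\limsup_{L\to\infty}\frac{s(X,\mathbf{a}\cdot\gamma,L,V)}{L^{6g-6+2n}}\ \le\ C\cdot B_V(X)
\]
from nothing more than the definition of $\mu_{\text{Thu}}$ as a limit of integral multi-curve counting measures and Portmanteau's theorem. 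Combining this with $\mu_{\text{wp}}(U_X(\epsilon))\cdot B_V(X)\le C'\cdot\nu_{\text{Mir}}(U_X(\epsilon)\times V)$ gives the key estimate $\limsup_L \widehat{\nu}_{\gamma,\mathbf{a}}^{f,L}(\Pi(U_X(\epsilon)\times V))/m_\gamma^{f,L}\le C\cdot\nu_{\text{Mir}}(U_X(\epsilon)\times V)$, and a Vitali covering argument then shows every $\widehat{\nu}_{\text{Mir}}$-null set has zero limit measure. No orbit-specific counting, no extension of $G_X$, and no dominated-convergence interchange is needed.
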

$ $

\begin{proposition}
	\label{prop:hor_meas_nem_0}
	Any weak-$\star$ limit point of the sequence of probability measures $\{\widehat{\nu}_{\gamma,\mathbf{a}}^{f,L}/m_\gamma^{f,L}\}_{L > 0}$ on $P^1\mathcal{M}_{g,n}$ is a probability measure.
\end{proposition}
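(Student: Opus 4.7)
The claim that weak-$\star$ limit points are probability measures is equivalent to tightness of the family $\{\widehat{\nu}_{\gamma,\mathbf{a}}^{f,L}/m_\gamma^{f,L}\}_{L > 0}$. Since compact subsets of $P^1\mathcal{M}_{g,n}$ are exactly those contained in $P^1K_\epsilon$ for some $\epsilon > 0$ (by Mumford's criterion together with compactness of the length-$1$ ``unit sphere'' in each fiber of $P^1\mathcal{M}_{g,n}\to\mathcal{M}_{g,n}$), the plan is to show that for every $\delta > 0$ there is $\epsilon > 0$ with
\[
\widehat{\nu}_{\gamma,\mathbf{a}}^{f,L}(P^1\mathcal{M}_{g,n} \setminus P^1K_\epsilon) \leq \delta \cdot m_\gamma^{f,L}
\]
for all sufficiently large $L$. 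The two main ingredients will be the earthquake-flow invariance of $\widehat{\nu}_{\gamma,\mathbf{a}}^{f,L}$ and the Minsky--Weiss quantitative non-divergence (Theorem \ref{theo:mw_quant_rec}), combined with a direct estimate on thin-part mass via Mirzakhani's integration formula.

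First I would establish earthquake invariance: since $\gamma_1,\dots,\gamma_k$ are pairwise disjoint, earthquakes along $\mathbf{a}\cdot\gamma$ preserve each length function $\ell_{\gamma_i}$ and the Weil--Petersson form, hence fix $\ell_{\mathbf{a}\cdot\gamma}$ and preserve the normalized lamination $\mathbf{a}\cdot\gamma/\ell_{\mathbf{a}\cdot\gamma}(X)$. It follows that $\nu_{\gamma,\mathbf{a}}^{f,L}$ on $P^1\mathcal{T}_{g,n}$ is earthquake-flow invariant, and so is its pushforward $\widehat{\nu}_{\gamma,\mathbf{a}}^{f,L}$. Given $\delta > 0$, choose $\epsilon > 0$ as in Theorem \ref{theo:mw_quant_rec} and let $A \subseteq P^1\mathcal{M}_{g,n}$ be the subset on which alternative (1) holds. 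Averaging the indicator of $P^1K_\epsilon$ along earthquake orbits, applying Fubini, and then Fatou's lemma yields
\[
\widehat{\nu}_{\gamma,\mathbf{a}}^{f,L}(P^1K_\epsilon) \geq (1-\delta)\cdot\widehat{\nu}_{\gamma,\mathbf{a}}^{f,L}(A),
\]
so the problem reduces to bounding $\widehat{\nu}_{\gamma,\mathbf{a}}^{f,L}(A^c)$. On $\text{supp}(\widehat{\nu}_{\gamma,\mathbf{a}}^{f,L})$ the second coordinate is $\lambda = \mathbf{a}\cdot\gamma/\ell_{\mathbf{a}\cdot\gamma}(X)$, and $i(\lambda,\beta) = 0$ forces the short curve $\beta$ witnessing alternative (2) to be either a component of $\gamma$ or a simple closed curve on the cut surface $S_{g,n}(\gamma)$.

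Next I would unfold $\widehat{\mu}_\gamma^{f,L}$ via Mirzakhani's cut-and-glue integration formula into Fenchel--Nielsen coordinates along $\gamma$, writing it as an integral over length vectors $\mathbf{L}$, twists, and Weil--Petersson measures on $\mathcal{M}(\Sigma_j,\mathbf{L}_j)$. If $\beta = \gamma_j$, then $\ell_{\gamma_j}(X) < \epsilon$ restricts the length variable $\ell_j$ to $(0,\epsilon)$ while $(\ell_i/L)_{i=1}^k$ still ranges over $\text{supp}(f)$; after the change of variables $\mathbf{L} = L\mathbf{x}$, dominated convergence shows that the relative mass of this region is $o(1)$ as $L\to\infty$. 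If $\beta$ lies in some piece $\Sigma_j$, the relative contribution is bounded by the fraction of the $\epsilon$-thin part of $\mathcal{M}(\Sigma_j,\mathbf{L}_j)$, which should be $O(\epsilon^2)$ uniformly in $\mathbf{L}_j$. Summing over the finitely many topological types of $\beta$ gives $\widehat{\nu}_{\gamma,\mathbf{a}}^{f,L}(A^c) \leq (C\epsilon^2 + o_L(1))\cdot m_\gamma^{f,L}$, and choosing $\epsilon$ small and then $L$ large completes the tightness bound.

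The hardest step will be the uniform-in-$\mathbf{L}$ thin-part volume estimate on the bordered moduli spaces $\mathcal{M}(\Sigma_j,\mathbf{L}_j)$: in our regime $\mathbf{L}_j$ can be as large as $L$, so it is not immediate that the thin-to-total Weil--Petersson volume ratio remains $O(\epsilon^2)$ with a constant independent of $\mathbf{L}_j$. I plan to resolve this by a further application of Mirzakhani's integration formula, cutting $\Sigma_j$ along $\beta$ and using the explicit polynomial form of Mirzakhani's volumes together with the collar lemma, to obtain a bound whose constants depend only on the topology of $S_{g,n}$.
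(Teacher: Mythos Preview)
Your proposal is correct and follows the same overall architecture as the paper: reduce no-escape-of-mass to a thin-part estimate via Minsky--Weiss non-divergence, then control the two ``bad'' configurations (some $\gamma_i$ is short, or some curve on a piece $\Sigma_j$ is short) using Mirzakhani's integration formula and a bordered thin-part volume bound. The ingredients and the decomposition into cases match the paper's Proposition~\ref{prop:hor_meas_qnem} and its proof.

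There are three organizational differences worth noting. First, the paper reduces at the outset to $f=\mathbbm{1}_{[0,a]^k}$ via Lemma~\ref{lemma:hor_meas_comp}; you work with general $f$ throughout, which is fine since $f\le M\cdot\mathbbm{1}_{[0,a]^k}$ and Proposition~\ref{prop:total_hor_meas} gives $m_\gamma^{f,L}\sim c_f L^{6g-6+2n}$ with $c_f>0$. Second, and more interestingly, your use of Minsky--Weiss is cleaner: you average once along the earthquake flow in the single direction $\mathbf{a}\cdot\gamma$, exploiting the already-proved invariance of $\widehat{\nu}_{\gamma,\mathbf{a}}^{f,L}$ (Proposition~\ref{prop:inv_meas}), whereas the paper descends to the base, parametrizes the fiber of the cut-and-glue fibration as a $k$-torus, and applies Theorem~\ref{theo:mw_quant_rec} iteratively $k$ times after a linear change of coordinates (Proposition~\ref{prop:twist_tori}). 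Your route avoids that induction entirely. Third, for the bordered thin-part estimate you propose a second cut-and-glue application along $\beta$; the paper instead proves Proposition~\ref{prop:bad_pt_control} via Bers's theorem (Theorem~\ref{theo:bers}), bounding the thin part by Fenchel--Nielsen boxes. Both yield a bound of the form $C\epsilon^2\cdot M(\mathbf{L}_j)^{6g_j-6+2n_j+2b_j-2}$, and after integration this gives a contribution of order $\epsilon^2 L^{6g-6+2n-2}$, hence $o(m_\gamma^{f,L})$---stronger than the $O(\epsilon^2)\cdot m_\gamma^{f,L}$ you wrote, but your conclusion still follows. One small caution: phrasing the piece estimate as ``the \emph{fraction} of the thin part is $O(\epsilon^2)$ uniformly in $\mathbf{L}_j$'' is not literally what you need or what you get (the pointwise ratio can misbehave when some boundary lengths are small); what actually works is bounding the absolute thin-part volume and integrating, exactly as the paper does.
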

$ $

Proposition \ref{prop:earth_inv_0} follows from the fact that the Weil-Petersson volume form $\omega_{\text{wp}}$ on $\mathcal{T}_{g,n}$ is invariant under twist deformations; see (\ref{eq:hamilton}). Proposition \ref{prop:hor_meas_ac_0} follows from a series of estimates which rely in large part on the interpretation of the Thurston measure $\mu_\text{Thu}$ on $\mathcal{ML}_{g,n}$ as a limit of integral multi-curve counting measures; see (\ref{ML_counting_measure}). Proposition \ref{prop:hor_meas_nem_0} follows from Theorem \ref{theo:mw_quant_rec} and Mirzakhani's integration formulas in \cite{Mir07a}. See \S3 for detailed proofs of these propositions.\\

Let us prove Theorem \ref{theo:horoball_equid} using Propositions \ref{prop:earth_inv_0}, \ref{prop:hor_meas_ac_0}, and \ref{prop:hor_meas_nem_0}, and Theorem \ref{theo:earth_erg}.\\

\begin{proof}[Proof of Theorem \ref{theo:horoball_equid}]
	For the rest of this proof, convergence of measures on $P^1\mathcal{M}_{g,n}$ will always be considered with respect to the weak-$\star$ topology. To prove
	\[
	\lim_{L \to \infty} \frac{\widehat{\nu}_{\gamma,\mathbf{a}}^{f,L}}{m_\gamma^{f,L}} = \frac{\widehat{\nu}_{\text{Mir}}}{b_{g,n}}
	\]
	it is enough to show that every subsequence of $\{\widehat{\nu}_{\gamma,\mathbf{a}}^{f,L}/m_\gamma^{f,L}\}_{L>0}$ has a subsequence converging to $\widehat{\nu}_{\text{Mir}}/b_{g,n}$. Let $\{L_j\}_{j \in \mathbf{N}}$ be an arbitrary increasing sequence of positive real numbers converging to $+\infty$. By the Banach-Alouglu theorem, the space of measures of total mass $\leq 1$ on $P^1\mathcal{M}_{g,n}$ is compact with respect to the weak-$\star$ topology. In particular, we can find a subsequence $\{L_{j_k}\}_{k \in \mathbf{N}}$ of $\{L_j\}_{j \in \mathbf{N}}$ such that 
	\[
	\lim_{k \to \infty} \frac{\widehat{\nu}_{\gamma,\mathbf{a}}^{f,L_{j_k}}}{m_\gamma^{f,L_{j_k}}} = \widehat{\nu}_{\gamma,\mathbf{a}}^f
	\]
	for some measure $\widehat{\nu}_{\gamma,\mathbf{a}}^f$ on $P^1\mathcal{M}_{g,n}$ of total mass $\leq 1$. By Proposition \ref{prop:earth_inv_0}, $\widehat{\nu}_{\gamma,\mathbf{a}}^f$ is earthquake flow invariant. By Proposition \ref{prop:hor_meas_ac_0}, $\widehat{\nu}_{\gamma,\mathbf{a}}^f$ is absolutely continuous with respect to $\widehat{\nu}_{\text{Mir}}$. It follows from Theorem \ref{theo:earth_erg} that 
	\[
	\widehat{\nu}_{\gamma,\mathbf{a}}^f = c \cdot \widehat{\nu}_{\text{Mir}}
	\]
	for some constant $0 \leq c \leq 1/b_{g,n}$. By Proposition \ref{prop:hor_meas_nem_0}, it must be the case that $c = 1/b_{g,n}$. Putting things together we deduce
	\[
	\lim_{k \to \infty} \frac{\widehat{\nu}_{\gamma,\mathbf{a}}^{f,L_{j_k}}}{m_\gamma^{f,L_{j_k}}} =  \frac{\widehat{\nu}_{\text{Mir}}}{b_{g,n}}.
	\]
	As the sequence $\{L_j\}_{j \in \mathbf{N}}$ was arbitrary, this finishes the proof.
\end{proof}
$ $

\textit{Equidistribution of twist tori.} A natural follow-up question to Corollary \ref{cor:horosphere_equid} is whether similar equidistribution results hold for higher codimension analogues of horospheres. We now describe a particularly interesting instance of this question which remains open.\\

Fix an ordered pair of pants decomposition $\mathcal{P}:=(\gamma_1,\dots,\gamma_{3g-3+n})$ of $S_{g,n}$. Let $(\ell_i,\tau_i)_{i=1}^{3g-3+n} \in (\mathbf{R}_{>0} \times \mathbf{R})_{i=1}^{3g-3+n}$ be an arbitrary set of Fenchel-Nielsen coordinates of $\mathcal{T}_{g,n}$ adapted to $\mathcal{P}$. For every $L > 0$ consider the \textit{twist torus} $T_\mathcal{P}^L \subseteq \mathcal{T}_{g,n}$ given by
\[
T_\mathcal{P}^L := \{X \in \mathcal{T}_{g,n} \ | \ \ell_{\gamma_i}(X) = L, \ \forall i =1,\dots,3g-3+n\}.
\]
Every such twist torus supports a natural \textit{twist torus measure} $\tau_{\mathcal{P}}^L $ defined as
\[
d\tau_{\mathcal{P}}^L := d\tau_1 \cdots d\tau_{3g-3+n}.
\]
This definition is independent of the choice of Fenchel-Nielsen coordinates adapted to $\mathcal{P}$. We also denote by $\tau_{\mathcal{P}}^L$ the extension by zero of this measure to $\mathcal{T}_{g,n}$. This measure is $\text{Stab}(\mathcal{P})$-invariant. Let $\widetilde{\tau}_\mathcal{P}^L$ be the local pushforward of $\tau_{\mathcal{P}}^L$ to $\mathcal{T}_{g,n}/\text{Stab}(\mathcal{P})$ and $\widehat{\tau}_\mathcal{P}^L$ be the pushforward of $\widetilde{\tau}_\mathcal{P}^L$ to $\mathcal{M}_{g,n}$. One can check that the measures $\widehat{\tau}_\mathcal{P}^L$  on  $\mathcal{M}_{g,n}$ are finite. Denote by $t_\mathcal{P}^L$ the total mass of $\widehat{\tau}_\mathcal{P}^L$, i.e., $t_\mathcal{P}^L := \widehat{\tau}_\mathcal{P}^L(\mathcal{M}_{g,n}) < +\infty$.\\

\begin{question}
	Is it true that
	\[
	\lim_{L \to \infty} \frac{\widehat{\tau}_\mathcal{P}^L}{t_\mathcal{P}^L} = \frac{B(X) \cdot d\widehat{\mu}_{\text{wp}}(X)}{b_{g,n}}
	\]
	in the weak-$\star$ topology for measures on $\mathcal{M}_{g,n}$?
\end{question}
$ $

For other interesting open questions related to Mirzakhani's work see \cite{Wri19}.\\

\textit{Organization of the paper.} In \S 2 we present the background material necessary to understand the proofs of the main results. In \S 3 we present the proofs of Propositions \ref{prop:earth_inv_0}, \ref{prop:hor_meas_ac_0}, and \ref{prop:hor_meas_nem_0}, thus completing the proof of Theorem \ref{theo:horoball_equid}. In \S4 we give a brief overview of the proof of Theorem \ref{theo:horosphere_equid} focusing our attention on the proof of Proposition \ref{prop:mir_bd_og_0}. \\

\textit{Acknowledgments.} The author is very grateful to Alex Wright and Steven Kerckhoff for their invaluable advice, patience, and encouragement. The author would also like to thank Dat Pham Nguyen for very enlightening conversations.\\

\section{Background material}

$ $

\textit{Twist deformations and hyperbolic length functions.} Given a marked hyperbolic structure $X \in \mathcal{T}_{g,n}$, a measured geodesic lamination $\lambda \in \mathcal{ML}_{g,n}$, and a real number $t \in \mathbf{R}$, we denote by $\text{tw}_\lambda^t(X) \in \mathcal{T}_{g,n}$ the time $t$ \textit{twist deformation} (or \textit{earthquake deformation}) of $X$ along $\lambda$; see \S 2 in \cite{Ker83} for a definition. \\

By work of Wolpert, see \cite{Wol83}, twist deformations are the hamiltonian flows of hyperbolic length functions with respect to the Weil-Petersson symplectic form $\omega_{\text{wp}}$ on $\mathcal{T}_{g,n}$. More precisely, given an arbitrary measured geodesic lamination $\lambda \in \mathcal{ML}_{g,n}$, if $E_\lambda \colon \mathcal{T}_{g,n} \to T \mathcal{T}_{g,n}$ denotes the vector field induced by twist deformations along $\lambda$ and $\ell_\lambda \colon \mathcal{T}_{g,n} \to \mathbf{R}_{>0}$ denotes the hyperbolic length function of $\lambda$, then
\begin{equation}
\label{eq:hamilton}
\omega_{\text{wp}}(E_\lambda,\cdot) = -d\ell_\lambda.
\end{equation}
In particular, the Weil-Petersson symplectic form $\omega_{\text{wp}}$, the Weil-Petersson volume form $v_\text{wp}$, and the Weil-Petersson measure $\mu_\text{wp}$ on $\mathcal{T}_{g,n}$ are invariant under twist deformations.\\

By work of Kerchkoff, see \cite{Ker85}, hyperbolic length functions of measured geodesic laminations are a continuous family of smooth functions on $\mathcal{T}_{g,n}$.\\

\begin{theorem}
	\label{theo:smooth_length}
	The function $\ell \colon \mathcal{ML}_{g,n} \to \mathcal{C}^\infty(\mathcal{T}_{g,n})$ given by $\lambda \mapsto \ell_{\lambda}$ is continuous with respect to the $\mathcal{C}^\infty$-topology for functions on compact subsets of $\mathcal{T}_{g,n}$. 
\end{theorem}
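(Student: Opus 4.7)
The plan is to localize in $\mathcal{ML}_{g,n}$ via a train-track chart and to express both $\ell_\lambda$ and all its Teichmüller-space derivatives as integrals against the transverse measure of $\lambda$; continuity of $\lambda \mapsto \ell_\lambda$ in the $\mathcal{C}^\infty$-topology on a compact set will then follow from weak-$\star$ continuity of such integrals. Fix $\lambda_0 \in \mathcal{ML}_{g,n}$ and a compact $K \subseteq \mathcal{T}_{g,n}$; it suffices to exhibit a neighborhood $U$ of $\lambda_0$ on which the restriction $\lambda \mapsto \ell_\lambda|_K$ is continuous into $\mathcal{C}^\infty(K)$. I would choose a generic, transversely recurrent train track $\tau$ carrying $\lambda_0$ so that $U$ is identified with an open subset of the polytope of non-negative edge-weight vectors on $\tau$ satisfying the switch conditions, with $\lambda \mapsto (w_e(\lambda))_{e \in E(\tau)}$ a linear homeomorphism. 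This reduces all continuity statements to continuity in the edge weights.

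The base case is to express $\ell_\lambda(X)$ as the integral of the hyperbolic length form of the geodesic realization of $\tau$ in $X$ against the transverse measure of $\lambda$; this depends smoothly on $X \in K$ and continuously on $\lambda \in U$ in the weak-$\star$ topology, and it agrees with the classical formula when $\lambda$ is a weighted simple closed multi-curve (a dense subset of $U$). For derivatives in $X$, I would invoke Wolpert's cosine formula: for each infinitesimal earthquake vector $E_\mu$ with $\mu \in \mathcal{ML}_{g,n}$, the directional derivative $d\ell_\lambda(E_\mu)(X)$ is a transverse-product integral of $\cos \theta$ over the intersection locus of $\lambda$ and $\mu$ in $X$. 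Since every tangent vector to $\mathcal{T}_{g,n}$ at $X$ is of the form $E_\mu$, iteration of this formula writes each partial derivative $\partial^\alpha \ell_\lambda(X)$ of multi-order $|\alpha|=k$ as a finite sum of integrals against $k$-fold product measures $\lambda^{\times k}$ of explicit kernels that are smooth in $X$ and continuous in the leaf tangent directions at the integration points.

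The main obstacle is upgrading pointwise-in-$X$ continuity of each derivative to continuity in the $\mathcal{C}^\infty$-topology on $K$, which requires uniform control as $X$ ranges over $K$ and $\lambda$ over $U$. Here the crucial inputs are: a positive lower bound on the injectivity radius over $K$ (forcing the geometric kernels and all their $X$-derivatives to be uniformly bounded on $K$), and the train-track chart (giving a uniform bound on the transverse mass of every $\lambda \in U$ across a compact fundamental domain for the geodesic flow). Equicontinuity via Arzelà-Ascoli, combined with the density of weighted simple closed multi-curves in $U$ (for which the classical smooth derivative formulas apply directly), then yields continuity of $\lambda \mapsto \partial^\alpha \ell_\lambda$ into $\mathcal{C}^0(K)$ for every multi-index $\alpha$, which is precisely the $\mathcal{C}^\infty$-continuity claimed in the theorem.
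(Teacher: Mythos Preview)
The paper does not prove this theorem; it is quoted as a result of Kerckhoff with a citation to \cite{Ker85} and no argument is given. So there is no ``paper's proof'' to compare against, and your outline should be judged on its own.

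Your strategy---localize in a train-track chart, write $\ell_\lambda$ and its $X$-derivatives as integrals against the transverse measure, then pass to limits from weighted multi-curves---is the right shape, but two steps as written do not work. First, the claim that the $k$-th $X$-derivative of $\ell_\lambda$ is an integral against the product measure $\lambda^{\times k}$ is wrong: for fixed $X$ the map $\lambda \mapsto \ell_\lambda(X)$ is positively homogeneous of degree one, and differentiating in $X$ does not raise that degree. Concretely, Wolpert's first-variation formula gives $E_\mu \ell_\lambda = \int \cos\theta \, d\lambda \, d\mu$, so after fixing coordinate vector fields (say Fenchel--Nielsen twists $E_{\gamma_i}$) every higher $X$-derivative is still a single integral against $d\lambda$, with a kernel that now depends on the chosen directions $\gamma_{i_1},\dots,\gamma_{i_k}$ and on $X$. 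The iteration you describe produces extra copies of the \emph{direction} measures, not of $\lambda$. Second, and more seriously, those kernels are built from intersection angles and distances between leaves of the \emph{geodesic realization} of $\lambda$ on $X$, so they depend on $\lambda$ and not only on $X$; weak-$\star$ convergence of transverse measures alone does not immediately give convergence of such integrals. You need to control how the geodesic realization (hence the intersection data) varies with $\lambda$ in the train-track chart, uniformly over $X \in K$. That geometric continuity statement is exactly the substance of Kerckhoff's argument and is not supplied by the density-plus-Arzel\`a--Ascoli step you invoke at the end.
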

$ $

\textit{The earthquake flow.}  The \textit{earthquake flow} on the bundle $P^1 \mathcal{T}_{g,n}$ of unit length measured geodesic laminations over Teichmüller space
\[
\{\text{tw}^t \colon P^1\mathcal{T}_{g,n} \to P^1\mathcal{T}_{g,n}\}_{t \in \mathbf{R}}
\]
 is given by
\[
\text{tw}^t(X,\lambda) := (\text{tw}^t_\lambda(X),\lambda)
\]
for every $t \in \mathbf{R}$ and every $(X,\lambda) \in P^1 \mathcal{T}_{g,n}$. As the mapping class group action on $P^1 \mathcal{T}_{g,n}$ commutes with the earthquake flow, the bundle $P^1 \mathcal{M}_{g,n} = P^1 \mathcal{T}_{g,n}/\text{Mod}_{g,n}$ of unit length measured geodesic laminations over moduli space also carries an \textit{earthquake flow}
\[
\{\text{tw}^t \colon P^1\mathcal{M}_{g,n} \allowbreak \to P^1\mathcal{M}_{g,n}\}_{t \in \mathbf{R}}.
\]
$ $

\textit{The Thurston measure.}  Let $\mathcal{ML}_{g,n}(\mathbf{Z}) \subseteq \mathcal{ML}_{g,n}$ be the subset of all unordered, positively integrally weighted, simple closed multi-curves on $S_{g,n}$. For every $L > 0$ consider the counting measure $\mu^L$ on $\mathcal{ML}_{g,n}$ given by 
\begin{equation}
\label{ML_counting_measure}
\mu^L := \frac{1}{L^{6g-6+2n}} \sum_{\gamma \in \mathcal{ML}_{g,n}(\mathbf{Z})} \delta_{ \frac{1}{L} \cdot \gamma}.
\end{equation}
Using train track coordinates, see \cite{PH92}, one can check that, as $L \to \infty$, this sequence of counting measures converges, in the weak-$\star$ topology, to a non-zero, locally finite, $\text{Mod}_{g,n}$-invariant, Lebesgue class measure $\mu_{\text{Thu}}$ on $\mathcal{ML}_{g,n}$. We refer to this measure as the \textit{Thurston measure} of $\mathcal{ML}_{g,n}$. \\

\textit{The Mirzakhani measure.} For every marked hyperbolic structure $X \in \mathcal{T}_{g,n}$, consider the measure $\mu_{\text{Thu}}^X$ on the fiber  $P^1_X\mathcal{T}_{g,n}$ of the bundle $P^1\mathcal{T}_{g,n}$ above $X$, which to every Borel measurable subset $A \subseteq P^1_X\mathcal{T}_{g,n}$ assigns the value
\[
\mu_{\text{Thu}}^X(A) := \mu_{\text{Thu}}([0,1] \cdot A).
\]
On the bundle $P^1\mathcal{T}_{g,n}$ one obtains a natural measure $\nu_{\text{Mir}}$, called the \textit{Mirzakhani measure}, by considering the following disintegration formula
\[
d\nu_{\text{Mir}}(X,\lambda) := d\mu_{\text{Thu}}^X(\lambda) \  d\mu_{\text{wp}}(X).
\]
In \cite{Mir08a}, Mirzakhani showed that the measure $\nu_{\text{Mir}}$ is invariant under both the earthquake flow and the action of the mapping class group. We denote by $\widehat{\nu}_{\text{Mir}}$ the local pushforward to $P^1\mathcal{M}_{g,n}$ of the measure $\nu_{\text{Mir}}$ on $P^1\mathcal{T}_{g,n}$ under the quotient map $P^1\mathcal{T}_{g,n} \to P^1\mathcal{M}_{g,n}$; we also refer to this measure as the \textit{Mirzakhani measure}. This measure is also invariant under the earthquake flow and its pushforward under the bundle map $P^1\mathcal{M}_{g,n} \to \mathcal{M}_{g,n}$ is given by
\[
B(X) \ d\widehat{\mu}_\text{wp}(X),
\]
where $B \colon \mathcal{M}_{g,n} \to \mathbf{R}_{>0}$ is the \textit{Mirzakhani function} defined in (\ref{eq:mir_fn}). As mentioned in (\ref{eq:b_gn}), the measure $\widehat{\nu}_{\text{Mir}}$ on $P^1\mathcal{M}_{g,n}$ is finite.\\

\textit{Volume forms induced on level sets.} Let $M$ be a (not necessarily compact) real smooth manifold, $\omega$ be a smooth volume form on $M$, and $f \colon M \to \mathbf{R}$ be a smooth function with no critical points. By the regular value theorem, the level sets $f^{-1}(L) \subseteq M$ are smoothly embedded codimension 1 real submanifolds of $M$ for every $L \in \text{Im}(f)$.\\

Fix $L \in \text{Im}(f)$ and consider the smoothly embedded codimension 1 real submanifold $f^{-1}(L) \subseteq M$. The volume form $\omega$ on $M$ induces a volume form $\omega_f^L$ on $f^{-1}(L)$ by contraction. More precisely, $\omega_f^L$ is the volume form obtained by restricting to $f^{-1}(L)$ the contraction of $\omega$ by any vector field $X$ on $M$ satisfying $df(X) \equiv 1$. Such a vector field always exists: given any Riemannian metric $\langle \cdot,\cdot \rangle$ on $M$, the vector field $X:=\nabla f/ \langle \nabla f, \nabla f \rangle$ satisfies the desired condition. The definition of $\omega_f^L$ is a pointwise definition: for every point $x \in f^{-1}(L)$, the volume form $\omega_f^L$ at $x$ is completely determined by the volume form $\omega$ and the vector field $X$ at $x$.\\

The volume form $\omega_f^L$ on $f^{-1}(L)$ is well defined, i.e., it is independent of the choice of vector field $X$ on $M$ satisfying $df(X)\equiv 1$. Indeed, let $X_1,X_2$ be a pair of vector fields on $M$ satisfying $df(X_1) \equiv  df(X_2) \equiv 1$. It follows that $X_1-X_2 \in \ker(df)$. In particular, $(X_1 - X_2)_x \in T_x f^{-1}(L)$ for every $x \in f^{-1}(L)$. Contracting $\omega$ by $X_1-X_2$ and restricting to $f^{-1}(L)$ yields the zero volume form because volume forms are always zero when evaluated on linearly dependent vectors. We deduce that the volume forms $\omega_f^L$ obtained from $X_1$ and $X_2$ are the same.\\

Let $\mu:= |\omega|$ be the measure induced by the volume form $\omega$ on $M$. For every $L \in \text{Im}(f)$, let $\mu_f^L := |\omega_f^L|$ be the measure induced by the volume form $\omega_f^L$ on $f^{-1}(L)$. We also denote by $\mu_f^L$ the extension by zero of this measure to $M$. The measure $\mu$ can be disintegrated into the measures $\eta_f^L$ in the following way:\\

\begin{proposition}
	\label{prop:meas_disint}
	The following equality of measures on $M$ holds:
	\[
	\mu = \int_{\mathbf{R}} \mu_f^r \ dr.
	\]
\end{proposition}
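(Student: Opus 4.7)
The plan is to reduce the statement to an application of Fubini's theorem in local coordinates adapted to $f$, and then patch the local computation with a partition of unity. Both sides of the claimed equality are locally finite Borel measures on $M$, so it suffices to verify that they agree when integrated against an arbitrary compactly supported continuous function $h \colon M \to \mathbf{R}$, and, by a partition of unity argument, it is enough to do this when $h$ is supported in an arbitrarily small neighborhood of a given point $p \in M$.

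Since $f$ has no critical points, the submersion theorem provides coordinates $(x_1, \ldots, x_{n-1}, y)$ on an open neighborhood $U \ni p$ such that $f(x_1, \ldots, x_{n-1}, y) = y$. In these coordinates write $\omega = g(x,y)\, dx_1 \wedge \cdots \wedge dx_{n-1} \wedge dy$ for a smooth, nowhere vanishing $g$. The vector field $\partial / \partial y$ satisfies $df(\partial/\partial y) \equiv 1$, so contraction with $\partial/\partial y$ and restriction to the level set $\{y = r\}$ yields
\[
\omega_f^r = g(x, r) \, dx_1 \wedge \cdots \wedge dx_{n-1}.
\]
By the well-definedness statement proved immediately before the proposition, this formula computes $\omega_f^r$ regardless of the choice of defining vector field, so $\mu_f^r$ on $U \cap f^{-1}(r)$ is the measure $|g(x,r)|\, dx_1 \cdots dx_{n-1}$.

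With $h$ supported in $U$, Fubini's theorem on $\mathbf{R}^n$ gives
\[
\int_M h\, d\mu = \int h(x,y)\, |g(x,y)|\, dx\, dy = \int_{\mathbf{R}} \!\left( \int_{\{y=r\}} h(x,r)\, |g(x,r)|\, dx \right) dr = \int_{\mathbf{R}} \!\left(\int_M h\, d\mu_f^r\right) dr,
\]
which is exactly the asserted identity tested against $h$. Taking a locally finite partition of unity $\{\chi_\alpha\}$ subordinate to a cover by such adapted charts and writing $h = \sum_\alpha \chi_\alpha h$ extends the identity to an arbitrary compactly supported continuous $h$, completing the proof.

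The only mild subtlety is verifying the Fubini step with the correct interpretation of $\mu_f^r$ as the extension by zero of the level-set measure to $M$, but this is immediate once the local formula for $\mu_f^r$ has been identified. No serious obstacle is expected; the argument is entirely a coordinate computation followed by a standard patching.
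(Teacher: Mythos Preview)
Your argument is correct and is the standard proof of this disintegration identity. Note that the paper itself states Proposition~\ref{prop:meas_disint} without proof, so there is no paper argument to compare against; the result is treated as a routine fact about volume forms induced on level sets. Your approach via submersion coordinates, the local identification of $\omega_f^r$ as $g(x,r)\,dx_1\wedge\cdots\wedge dx_{n-1}$ (up to a sign that is irrelevant after taking absolute values), Fubini, and a partition of unity is exactly the natural way to verify it, and there are no gaps.
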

$ $

By a \textit{flow datum} of the function $f$ on $M$ we mean a triple $(K,K',X)$, where $K \subseteq K' \subseteq M$ are compact subsets of $M$ and $X$ is a vector field on $M$ such that $df(X) \equiv 1$ on $K$ and which vanishes outside of $K'$. Let $\{\varphi_t \colon M \to M\}_{t \in \mathbf{R}}$ be the one-parameter group of diffeomorphisms induced by $X$ on $M$ and $\varphi \colon M \times \mathbf{R} \to M$ be the smooth map given by $\varphi(x,t) := \varphi_t(x)$ for every $x \in M$ and every $t \in \mathbf{R}$. The diffeomorphisms $\varphi_t \colon M \to M$ induce nowhere vanishing determinant functions $\text{det}(\varphi_t) \colon M \to \mathbf{R}_{>0}$ which to every $x \in M$ assign the value 
\[
\text{det}(\varphi_t)(x)  := \frac{(\varphi_t^* \omega)_x}{\omega_x}.
\]
$ $

A triple $(L,V,I)$ with $L \in \text{Im}(f)$, $V \subseteq K \cap f^{-1}(L)$, and $I \subseteq \mathbf{R}$ an interval containing $0$ is said to have \textit{constant speed} with respect to the flow datum $(K,K',X)$ if $\varphi(V \times I) \subseteq K$. Notice that $\varphi_r(V) \subseteq f^{-1}(L+r)$ for every $r \in I$. In this setting, the variation of the volume forms $\omega_f^r$ with respect to the flow $\{\varphi_t \colon M \to M\}_{t \in \mathbf{R}}$ admits the following description. \\

\begin{proposition}
	\label{prop:vol_var}
	Let $(K,K',X)$ be a flow datum of $f$ on $M$ and $(L,V,I)$ be a constant speed triple with respect to this datum. Then, for every $r \in I$, the following equality of volume forms on $V$ holds: 
	\[
	\varphi_r^* \omega_f^{L+r}= \text{det}(\varphi_r) \cdot \omega_f^L.
	\]
\end{proposition}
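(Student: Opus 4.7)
The plan is to do the computation globally on $M$ using the $(n-1)$-form $\alpha := \iota_X \omega$, and only at the end restrict to the relevant level sets. The key point is that $X$ is invariant under its own flow and the interior product is natural under diffeomorphisms, so $\varphi_r^\ast \alpha$ is computable directly.

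First, I would observe that since $df(X) \equiv 1$ on $K$, the form $\alpha = \iota_X \omega$ on $M$ satisfies $i_L^\ast \alpha = \omega_f^L$ on $V \subseteq f^{-1}(L) \cap K$, where $i_L \colon f^{-1}(L) \hookrightarrow M$ denotes the inclusion, simply by the defining property of $\omega_f^L$ (any vector field with $df(\cdot) \equiv 1$ will do). Similarly, $i_{L+r}^\ast \alpha = \omega_f^{L+r}$ on $\varphi_r(V) \subseteq f^{-1}(L+r) \cap K$, using that $\varphi(V \times I) \subseteq K$ so the defining condition for the contraction still holds there.

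Next, I would carry out the main computation on $M$. Since $X$ is the infinitesimal generator of $\{\varphi_t\}$, it is invariant under its own flow, i.e.\ $\varphi_r^\ast X = X$. By naturality of the interior product under diffeomorphisms,
\[
\varphi_r^\ast \alpha \;=\; \varphi_r^\ast(\iota_X \omega) \;=\; \iota_{\varphi_r^\ast X}(\varphi_r^\ast \omega) \;=\; \iota_X(\varphi_r^\ast \omega).
\]
By the very definition of $\det(\varphi_r)$, we have $\varphi_r^\ast \omega = \det(\varphi_r) \cdot \omega$, and $\det(\varphi_r)$ is a scalar function, so it pulls out of the interior product:
\[
\varphi_r^\ast \alpha \;=\; \iota_X\bigl(\det(\varphi_r) \cdot \omega\bigr) \;=\; \det(\varphi_r) \cdot \iota_X \omega \;=\; \det(\varphi_r) \cdot \alpha.
\]

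Finally, restrict this identity to $V$. On the right, $\alpha|_V = i_L^\ast \alpha = \omega_f^L|_V$ by the first step. On the left, the composition $V \hookrightarrow M \xrightarrow{\varphi_r} M$ factors as $V \xrightarrow{\varphi_r|_V} f^{-1}(L+r) \xhookrightarrow{i_{L+r}} M$, hence
\[
\varphi_r^\ast \alpha \big|_V \;=\; (\varphi_r|_V)^\ast\bigl(i_{L+r}^\ast \alpha\bigr) \;=\; (\varphi_r|_V)^\ast \omega_f^{L+r},
\]
which is exactly the form denoted $\varphi_r^\ast \omega_f^{L+r}$ in the statement. Combining both restrictions yields the claimed equality on $V$.

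There is no real obstacle here; the argument is purely formal once one recognizes that the contraction $\iota_X \omega$ on all of $M$ provides a global $(n-1)$-form that specializes to the correct level-set volume form wherever $df(X) \equiv 1$. The only thing that needs care is checking that the flow datum hypothesis $\varphi(V \times I) \subseteq K$ is exactly what is needed to guarantee that the defining identity $df(X) \equiv 1$ holds along the entire trajectory, so that both $\omega_f^L|_V$ and $\omega_f^{L+r}|_{\varphi_r(V)}$ can be represented by the single form $\alpha$.
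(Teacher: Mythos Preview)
Your proof is correct. The paper states Proposition~\ref{prop:vol_var} without proof, so there is nothing to compare against; your argument via the global $(n-1)$-form $\alpha = \iota_X\omega$, naturality of the interior product under the flow, and the identity $\varphi_r^*X = X$ is exactly the standard and clean way to establish this.
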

$ $

\textit{Horospherical measures on Teichmüller space.} Let $\lambda \in \mathcal{ML}_{g,n}$ be a measured geodesic laminations on $S_{g,n}$. In the context of the previous discussion consider
\begin{align*}
M &:= \mathcal{T}_{g,n},\\
\omega &:= v_{\text{wp}}, \\
f &:= \ell_{\lambda} \colon \mathcal{T}_{g,n} \to \mathbf{R}_{>0}.
\end{align*}
By Theorem \ref{theo:smooth_length}, the function $\ell_{\lambda}$ is smooth. The duality described in (\ref{eq:hamilton}) and the fact that the Weil-Petersson symplectic form is non-degenerate ensure that $\ell_{\lambda}$ has no critical points. The function $\ell_{\lambda} \colon \mathcal{T}_{g,n} \to \mathbf{R}_{>0}$ is also surjective; see \cite{Thu98} for an argument using stretch paths. It follows from the previous discussion that, given $L > 0$, one can consider a \textit{horospherical volume form} and a \textit{horospherical measure}
\[
\omega_\lambda^L := \omega_{\ell_\lambda}^L \quad , \quad
\eta_\lambda^L := \mu_{\ell_\lambda}^L
\]
supported on the \textit{horosphere} $S_\lambda^L  \subseteq \mathcal{T}_{g,n}$ given by
\[
S_\lambda^L := \ell_\lambda^{-1}(L).
\]
As the function $\ell_\lambda$ is invariant under twists along $\lambda$ and as the Weil-Petersson volume form $v_{\text{wp}}$ is also invariant under such twists, see  (\ref{eq:hamilton}), the volume form $\omega_\lambda^L$ and the measure $\eta_\lambda^L$ on $\mathcal{T}_{g,n}$ are invariant under such twists as well.\\

Let $\gamma := (\gamma_1,\dots,\gamma_k)$ with $1 \leq k \leq 3g-3+n$ be an ordered simple closed multi-curve on $S_{g,n}$, $\mathbf{a}:= (a_1,\dots,a_k) \in (\mathbf{Q}_{>0})^k$ be a vector of positive rational weights on the components of $\gamma$, and $\mathbf{a} \cdot \gamma \in \mathcal{ML}_{g,n}(\mathbf{Q})$ be as in (\ref{eq:weight_curve}). As a particular example of the construction above, we recover the definitions introduced in \S 1: 
\[
S_{\mathbf{a}\cdot\gamma}^L = S_{\gamma,\mathbf{a}}^L \quad , \quad \omega_{\mathbf{a}\cdot \gamma}^L = \omega_{\gamma,\mathbf{a}}^L \quad , \quad \eta_{\mathbf{a}\cdot \gamma}^L = \eta_{\gamma,\mathbf{a}}^L.
\]
$ $

\textit{The symmetric Thurston metric.} Consider the \textit{asymmetric Thurston metric} $d_\text{Thu}'$ on $\mathcal{T}_{g,n}$ which to every pair of marked hyperbolic structures $X,Y \in \mathcal{T}_{g,n}$ assigns the distance
\[
d_\text{Thu}'(X,Y) := \sup_{\lambda \in \mathcal{ML}_{g,n}} \log \left(\frac{\ell_\lambda(Y)}{\ell_\lambda(X)} \right).
\]
This metric is proper, i.e., sets of bounded diameter with respect it are compact, Finsler, i.e., it is induced by a continuous family of fiberwise asymmetric norms $\|\cdot\|$ on $T\mathcal{T}_{g,n}$, and induces the usual topology on $\mathcal{T}_{g,n}$. Given the asymmetric nature of this metric, it is convenient to consider the \textit{symmetric Thurston metric} $d_\text{Thu}$ which to every pair of marked hyperbolic structures $X,Y \in \mathcal{T}_{g,n}$ assigns the distance
\[
d_\text{Thu}(X,Y) := \max\{d_\text{Thu}'(X,Y), d_\text{Thu}'(Y,X)\}.
\]
A pair of marked hyperbolic structures $X,Y \in \mathcal{T}_{g,n}$ satisfy $d_\text{Thu}(X,Y) \leq \epsilon$ if and only if
\[
e^{-\epsilon} \ell_\lambda(X) \leq \ell_\lambda(Y) \leq e^\epsilon  \ell_{\lambda}(X), \ \forall \lambda \in \mathcal{ML}_{g,n}.
\]
The symmetric Thurston metric $d_\text{Thu}$ is proper and induces the usual topology on $\mathcal{T}_{g,n}$. Denote by $U_X(\epsilon) \subseteq \mathcal{T}_{g,n}$ the open ball of radius $\epsilon > 0$ centered at $X \in \mathcal{T}_{g,n}$ with respect to $d_\text{Thu}$. For more details on the theory of the asymmetric and symmetric Thurston metrics, see \cite{Thu98} and \cite{Pap15}. \\

As a consequence of the fact that the symmetric Thurston metric $d_\text{Thu}$ on $\mathcal{T}_{g,n}$ is the maximum of two Finsler metrics, one can deduce the following estimates using standard compactness arguments.\\

\begin{lemma}
	\label{lemma:len_var}
	Let $\{\varphi_t \colon \mathcal{T}_{g,n} \to \mathcal{T}_{g,n}\}_{t \in \mathbf{R}}$ be a one-parameter group of diffeomorphisms of $\mathcal{T}_{g,n}$ induced by a smooth vector field $F$ supported on a compact subset $K \subseteq \mathcal{T}_{g,n}$. Let
	\[
	\|F\|_K := \sup_{Y \in K} \|\pm F_Y\|_Y < + \infty.
	\]
	Then, for every $X \in K$ and every $t \in \mathbf{R}$,
	\[
	d_\text{Thu}(X,\varphi_t(X)) \leq  \|F\|_K \cdot t.
	\]
\end{lemma}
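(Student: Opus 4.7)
The plan is to exploit the Finsler character of the symmetric Thurston metric: $d_{\text{Thu}}(X,\varphi_t(X))$ is bounded above by the Finsler length of the flow trajectory $s \mapsto \varphi_s(X)$, and this length is easy to estimate using $\|F\|_K$ once one observes that at every time $s$ the trajectory either lies in $K$ or has vanishing tangent vector.

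To set this up, I would recall from the discussion preceding the lemma that both asymmetric Thurston metrics $d'_{\text{Thu}}(\cdot,\cdot)$ and $(X,Y) \mapsto d'_{\text{Thu}}(Y,X)$ are length metrics induced by their respective continuous families of Finsler norms on $T\mathcal{T}_{g,n}$; this is part of the Finsler structure established in \cite{Thu98}. Consequently, for any piecewise smooth curve $\gamma:[0,t] \to \mathcal{T}_{g,n}$ with $\gamma(0)=X$ and $\gamma(t)=Y$, each of the two asymmetric distances is bounded above by the Finsler length of $\gamma$ in the corresponding direction. Taking the maximum of these two bounds and exchanging max with the integral yields
\[
d_{\text{Thu}}(X,Y) \leq \int_0^t \max\bigl\{\|\dot\gamma(s)\|_{\gamma(s)},\ \|-\dot\gamma(s)\|_{\gamma(s)}\bigr\}\, ds,
\]
which is precisely the integrated version of $\|\pm\dot\gamma(s)\|_{\gamma(s)}$ appearing in the statement of the lemma.

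I would then specialize to the flow curve $\gamma(s) := \varphi_s(X)$, whose tangent vector is $\dot\gamma(s) = F_{\varphi_s(X)}$. Because $F$ is supported in $K$, for every $s \in [0,t]$ one of two things happens: either $\varphi_s(X) \in K$, in which case $\max\{\|F_{\varphi_s(X)}\|_{\varphi_s(X)},\ \|-F_{\varphi_s(X)}\|_{\varphi_s(X)}\} \leq \|F\|_K$ by the very definition of $\|F\|_K$, or $\varphi_s(X) \notin K$, in which case $F_{\varphi_s(X)} = 0$ and the integrand vanishes outright. In both cases the integrand is bounded by $\|F\|_K$, and integrating over $[0,t]$ produces the asserted inequality $d_{\text{Thu}}(X,\varphi_t(X)) \leq \|F\|_K \cdot t$. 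For $t < 0$ the same reasoning applied to the time-reversed trajectory gives the analogous bound with $|t|$ on the right-hand side.

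The only step that is not entirely formal is the length-space inequality for the asymmetric Thurston metric, for which I would cite \cite{Thu98} (or \cite{Pap15}) where the identification of $d'_{\text{Thu}}$ with the length metric induced by the Thurston Finsler norm is proved. Once that input is granted, the rest is a routine one-line estimate, so I do not anticipate any serious obstacle.
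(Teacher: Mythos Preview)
Your argument is correct and is exactly the approach the paper has in mind: the paper does not write out a proof but simply says the lemma follows from the fact that $d_{\text{Thu}}$ is the maximum of two Finsler metrics together with standard compactness arguments, which is precisely your Finsler length estimate along the flow trajectory combined with the trivial bound on the integrand coming from the compact support of $F$. Your remark that for $t<0$ the right-hand side should be $|t|$ is also apt; the lemma is only applied in the paper for $t\geq 0$.
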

$ $

\begin{lemma}
	\label{lemma:thu_ball_bound}
	Let $K \subseteq \mathcal{T}_{g,n}$ be a compact subset. There exist constants $C >0$ and $\epsilon_0 >0$ such that for every $X \in K$ and every $0 < \epsilon < \epsilon_0$, 
	\[
	C^{-1} \cdot \epsilon^{6g-6+2n} \leq \mu_{wp}(U_X(\epsilon)) \leq C \cdot \epsilon^{6g-6+2n}.
	\]
\end{lemma}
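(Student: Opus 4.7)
The plan is to reduce the Thurston-metric ball volume estimate to the standard Euclidean one by comparing $d_{\text{Thu}}$ with a smooth Riemannian metric on $\mathcal{T}_{g,n}$ up to uniform multiplicative constants over $K$. First I would observe that $d_{\text{Thu}}$ is itself induced by a continuous family of genuine (symmetric) fiberwise norms on $T\mathcal{T}_{g,n}$: the asymmetric Thurston metric $d_{\text{Thu}}'$ is Finsler with an asymmetric norm $\|\cdot\|^+$, its reverse corresponds to $\|v\|^- := \|-v\|^+$, and setting $\|v\|_X^{\text{Thu}} := \max\{\|v\|_X^+, \|v\|_X^-\}$ produces the desired continuous family. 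I would then fix the Weil--Petersson Riemannian metric $g$ as the auxiliary metric, so that its Riemannian volume form is precisely $v_{\text{wp}}$.

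Next I would establish a uniform bi-Lipschitz comparison between $\|\cdot\|^{\text{Thu}}$ and the Riemannian norm $\|\cdot\|^g$ on a slight compact enlargement $K' \supseteq K$. Both fiberwise norms vary continuously in $X$, so a standard compactness argument on the unit sphere bundle over $K'$ produces a constant $C_0 > 0$ such that
\[
C_0^{-1} \|v\|_X^g \leq \|v\|_X^{\text{Thu}} \leq C_0 \|v\|_X^g
\]
for all $X \in K'$ and $v \in T_X \mathcal{T}_{g,n}$. Choosing $\epsilon_0 > 0$ small enough that every ball of radius $\leq 2C_0 \epsilon_0$ centered at a point of $K$ remains inside $K'$ (using properness of both metrics), this comparison integrates along geodesics to yield inclusions
\[
B_g(X, C_0^{-1}\epsilon) \subseteq U_X(\epsilon) \subseteq B_g(X, C_0 \epsilon)
\]
for every $X \in K$ and $0 < \epsilon < \epsilon_0$, where $B_g$ denotes Riemannian $g$-balls.

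Finally I would invoke the standard small-ball volume asymptotic: in normal coordinates at $X$, the $\mu_{\text{wp}}$-volume of $B_g(X,r)$ equals $\omega_N r^N (1 + O(r^2))$ with $N := 6g-6+2n$, and the $O(r^2)$ constant is bounded uniformly for $X \in K$ by continuity of the Weil--Petersson metric and its curvature data on $K'$. Combining this asymptotic with the ball inclusions above produces the two-sided bound with a single uniform constant $C$. The only non-routine step is arranging the uniform bi-Lipschitz comparison between $\|\cdot\|^{\text{Thu}}$ and $\|\cdot\|^g$; this is delicate because $\|v\|_X^{\text{Thu}}$ is defined through a supremum over the infinite-dimensional space $\mathcal{ML}_{g,n}$, but compactness of the projectivization $P\mathcal{ML}_{g,n}$ together with Theorem \ref{theo:smooth_length} (which gives joint continuity of $(\lambda, X) \mapsto \ell_\lambda(X)$ and its derivatives) reduces it to a routine argument.
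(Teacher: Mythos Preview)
Your proposal is correct and is precisely the ``standard compactness argument'' the paper alludes to (the paper does not give a detailed proof, only the one-line remark preceding the lemma that $d_{\text{Thu}}$ is the maximum of two Finsler metrics). One small caution: the maximum of two Finsler distances need not itself be the Finsler distance of the fiberwise maximum of the norms, so your claim that $d_{\text{Thu}}$ is \emph{induced} by $\|\cdot\|^{\text{Thu}}$ is not quite right as stated; however, this is harmless, since comparing $\|\cdot\|^+$ with $\|\cdot\|^g$ on $K'$ already gives $C_0^{-1} d_g \leq d_{\text{Thu}}' \leq C_0 d_g$ and the same for its reverse, hence $C_0^{-1} d_g \leq d_{\text{Thu}} \leq C_0 d_g$, and the ball inclusions follow directly.
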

$ $

\textit{Teichmüller and moduli spaces of hyperbolic surfaces with geodesic boundary.} Let $g',n',b' \in \mathbf{Z}_{\geq 0}$ be a triple of non-negative integers satisfying $2 - 2g' - n' - b' < 0$. Consider a fixed connected, oriented surface $S_{g',n'}^{b'}$ of genus $g'$ with $n'$ punctures and $b'$ labeled boundary components $\beta_1,\dots,\beta_{b'}$ and a vector $\mathbf{L}:= (L_i)_{i=1}^{b'} \in (\mathbf{R}_{>0})^{b'}$ of positive real numbers. \\

We denote by $\mathcal{T}_{g',n'}^{b'}(\mathbf{L})$ the \textit{Teichmüller space} of  marked, oriented, complete, finite area hyperbolic structures on $S_{g',n'}^{b'}$ with labeled geodesic boundary components whose lengths are given by $\mathbf{L}$. The \textit{mapping class group} of $S_{g',n'}^{b'}$, denoted $\text{Mod}_{g',n'}^{b'}$, is the group of isotopy classes of orientation preserving diffeomorphisms of $S_{g',n'}^{b'}$ that fix each puncture and set-wise fix each boundary component. The quotient $\mathcal{M}_{g',n'}^{b'}(\mathbf{L}) := \mathcal{T}_{g',n'}^{b'}(\mathbf{L}) / \text{Mod}_{g',n'}^{b'}$ is the \textit{moduli space} of oriented, complete, finite area hyperbolic structures on $S_{g',n'}^{b'}$ with labeled geodesic boundary components whose lengths are given by $\mathbf{L}$. \\

As a consequence of \textit{Wolpert's magic formula}, see \cite{Wol85}, the Weil-Petersson volume form $v_{\text{wp}}$ on $\mathcal{T}_{g',n'}^{b'}(\mathbf{L})$ can be expressed in any set of Fenchel-Nielsen coordinates  $(\ell_i,\tau_i)_{i=1}^{3g'-3+n'+b'} \in (\mathbf{R}_{>0} \times \mathbf{R})^{3g'-3+n'+b'}$ as
\[
v_{\text{wp}} = \prod_{i=1}^{3g'-3+n'+b'} d\ell_i \wedge d\tau_i.
\]
$ $

Of particular importance for us will be the \textit{total Weil-Petersson volume} of the moduli space $\mathcal{M}_{g',n'}^{b'}(\mathbf{L})$, which we denote by
\[
V_{g',n'}^{b'}(\mathbf{L}) := \text{Vol}_{\text{wp}}(\mathcal{M}_{g',n'}^{b'}(\mathbf{L})).
\]
The following remarkable theorem due to Mirzakhani, see \cite{Mir07a} and \cite{Mir07c}, shows that $V_{g',n'}^{b'}(\mathbf{L})$ behaves like a polynomial on the $\mathbf{L}$ variables.\\

\begin{theorem}
	\label{theo:vol_pol}
	The total Weil-Petersson volume
	\[
	V_{g',n'}^{b'}(L_1,\dots,L_{b'})
	\]
	is a polynomial of degree $3g'-3+n'+b'$ on the variables $L_1^2,\dots,L_{b'}^2$. Moreover, if we denote
	\[
	V_{g',n'}^{b'}(L_1,\dots,L_{b'}) = \sum_{\substack{\alpha \in (\mathbf{Z}_{\geq 0})^{b'}, \\ |\alpha| \leq 3g'-3+n'+b'} } c_\alpha \cdot L_1^{2\alpha_1} \cdots L_{b'}^{2\alpha_{b'}},
	\]
	where $|\alpha| := \alpha_1 + \cdots + \alpha_{b'}$ for every $\alpha \in (\mathbf{Z}_{\geq 0})^{b'}$, then $c_\alpha \in \mathbf{Q}_{>0} \cdot \pi^{6g'-6+2n' +2b' - 2|\alpha|}$. In particular, the leading coefficients of $V_{g',n'}^{b'}(L_1,\dots,L_{b'})$ are all in $\mathbf{Q}_{> 0}$.
\end{theorem}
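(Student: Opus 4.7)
The plan is to proceed by strong induction on the complexity $d := 3g'-3+n'+b'$ of $S_{g',n'}^{b'}$. The base case $d=0$ corresponds to the pair of pants $(g',n',b')=(0,0,3)$, whose Teichmüller space reduces to a single point, so $V_{0,0}^3(\mathbf{L}) \equiv 1$ trivially satisfies the statement with constant coefficient $1 \in \mathbf{Q}_{>0}$. For the inductive step, the central ingredient is Mirzakhani's generalization of McShane's identity. Fixing the boundary $\beta_1$ of length $L_1$, this identity decomposes $L_1$ as
\[
L_1 = \sum_{\{\alpha_1,\alpha_2\}} \mathcal{D}\bigl(L_1, \ell_{\alpha_1}(X), \ell_{\alpha_2}(X)\bigr) + \sum_{j=2}^{b'} \sum_{\alpha} \mathcal{R}\bigl(L_1, L_j, \ell_\alpha(X)\bigr),
\]
where the first sum runs over unordered pairs $\{\alpha_1,\alpha_2\}$ of interior simple closed geodesics cobounding an embedded pair of pants with $\beta_1$, the second sum runs over interior simple closed geodesics $\alpha$ cobounding an embedded pair of pants with $\beta_1$ and $\beta_j$, and $\mathcal{D}, \mathcal{R}$ are explicit real-analytic functions arising from hyperbolic trigonometry.

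The next step is to integrate this identity against $v_\text{wp}$ over $\mathcal{M}_{g',n'}^{b'}(\mathbf{L})$ and unfold each sum along the $\text{Mod}_{g',n'}^{b'}$-orbit. This groups the terms by the topological type of the pair of pants $P$ cut off by $\{\alpha_1,\alpha_2\}$ (respectively by $\alpha$ together with $\beta_1,\beta_j$), producing an integral over the moduli space of the complementary subsurface $S_{g',n'}^{b'} \setminus P$, which has strictly smaller complexity. By Wolpert's magic formula, the Weil--Petersson form in any Fenchel--Nielsen coordinate system containing $\ell_{\alpha_1}, \ell_{\alpha_2}$ factors as $\prod d\ell_i \wedge d\tau_i$; integrating out the twist parameters $\tau_{\alpha_i} \in [0, \ell_{\alpha_i}]$ produces factors of $\ell_{\alpha_1}\ell_{\alpha_2}$ (respectively $\ell_\alpha$), and the remaining integrand is a product of Weil--Petersson volumes of simpler moduli spaces. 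Schematically the recursion reads
\[
L_1 \cdot V_{g',n'}^{b'}(\mathbf{L}) = \sum_{[P]} \int_0^\infty \!\! \int_0^\infty \mathcal{D}(L_1, x_1, x_2)\, x_1 x_2 \prod_i V_{g_i',n_i'}^{b_i'}\bigl(\mathbf{L}^{(i)}(x_1,x_2,\mathbf{L})\bigr)\, dx_1 dx_2 + (\mathcal{R}\text{-terms}),
\]
and by the inductive hypothesis each inner volume is a polynomial in the squared variables with coefficients of the required form.

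The theorem then reduces to a kernel integration lemma asserting that for every integer $k \geq 0$, the moments
\[
\int_0^\infty x^{2k+1} \mathcal{D}(L_1, x, y) \, dx \quad \text{and} \quad \int_0^\infty x^{2k+1} \mathcal{R}(L_1, L_j, x)\, dx
\]
are polynomials of degree exactly $2k+2$ in the remaining squared variables, with leading coefficients lying in $\mathbf{Q}_{>0} \cdot \pi^{\ast}$ carrying the correct power of $\pi$. The main obstacle lies precisely in this lemma: the kernels are transcendental (involving logarithms of $\sinh$ and $\cosh$), so polynomiality of their odd moments is not evident a priori. The argument proceeds by differentiating the kernel in $L_1$ to reduce to a rational function of $\cosh$, performing the substitution $u=e^x$, and evaluating the resulting integral by partial fractions; the powers of $\pi$ enter through identities of the form
\[
\int_0^\infty \frac{x^{2k+1}}{e^x + 1}\, dx \in \mathbf{Q}_{>0} \cdot \pi^{2k+2},
\]
which follow from the values of the Riemann zeta function at positive even integers. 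Combining this lemma with the recursion and the inductive hypothesis propagates the polynomial structure, the correct total degree $3g'-3+n'+b'$, the correct $\pi$-power $6g'-6+2n'+2b'-2|\alpha|$ at each multi-index $\alpha$, and positivity of the leading coefficients (which is preserved since each ingredient in the recursion---the twist factors $x_i$, the inductive leading coefficients, and the kernel moments---is positive), completing the induction.
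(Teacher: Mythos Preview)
The paper does not supply its own proof of this statement; it is quoted as background material and attributed to Mirzakhani via the citations \cite{Mir07a} and \cite{Mir07c}. Your proposal is a faithful outline of the recursive argument from \cite{Mir07a} based on the generalized McShane identity and the unfolding (integration) technique, and the overall logic is sound.

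Two points where your sketch should be tightened. First, the induction needs an additional base case: for the once-holed torus $(g',n',b')=(1,0,1)$ the complement of the pair of pants bounded by $\beta_1$ and $\alpha$ is empty, so the recursion collapses to a single kernel integral and $V_{1,0}^1(L_1)$ must be computed directly (it equals $\tfrac{1}{24}(L_1^2+4\pi^2)$, up to the usual normalization conventions). Second, the theorem asserts that \emph{every} coefficient $c_\alpha$ lies in $\mathbf{Q}_{>0}\cdot\pi^{6g'-6+2n'+2b'-2|\alpha|}$, not only the leading ones; your final paragraph only argues positivity of the top-degree terms. To propagate positivity of all coefficients through the recursion you need the stronger fact that the polynomials produced by the kernel-moment lemma have \emph{all} of their coefficients in $\mathbf{Q}_{>0}\cdot\pi^{*}$, which does follow from the explicit Bernoulli-number evaluation you mention but should be stated.

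It is also worth noting that the second reference \cite{Mir07c} gives an independent proof by symplectic reduction: the coefficients $c_\alpha$ are identified with intersection numbers of $\psi$- and $\kappa$-classes on $\overline{\mathcal{M}}_{g',n'+b'}$, from which the polynomial structure, the $\pi$-grading, and positivity are read off directly. Either route suffices; the paper simply invokes the result.
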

$ $

\begin{remark}
	If the surface $S_{g',n'}^{b'}$ is a pair of pants, i.e., if $g' = 0$ and $n'+b' = 3$, then, for any $\mathbf{L} := (L_i)_{i=1}^{b'} \in (\mathbf{R}_{> 0})^{b'}$, the moduli space $\mathcal{M}_{g',n'}^{b'}(\mathbf{L})$ has exactly one point. We will adopt the convention
	\[
	V_{g',n'}^{b'}(\mathbf{L}) := 1.
	\]
\end{remark}
$ $

We will also make use of the following version of Bers's theorem for surfaces with boundary; see Theorem 12.8 in \cite{FM11} for a proof. Denote
\[
M(\mathbf{L}) := \max_{i=1,\dots,b'} L_i.
\]
$ $

\begin{theorem}
	\label{theo:bers}
	There exist constants $A_{g',n'}^{b'}, B_{g',n'}^{b'} > 0$ such that for every $\mathbf{L}:= (L_i)_{i=1}^{b'} \in (\mathbf{R}_{>0})^{b'}$, every 
	$X \in \mathcal{M}_{g',n'}^{b'}(\mathbf{L})$, and every simple closed geodesic $\alpha_1$ on $X$, there exists a geodesic pair of pants decomposition $\{\alpha_j\}_{j=1}^{3g'-3+b'+n'} $of $X$ containing $\alpha_1$ such that
	\[
	\ell_{\alpha_j}(X) \leq A_{g',n'}^{b'} + B_{g',n'}^{b'} \cdot \max\{ M(\mathbf{L}),\ell_{\alpha_1}(X)\}, \ \forall j = 1,\dots,3g'-3+n'+b'.
	\]
\end{theorem}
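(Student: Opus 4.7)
The plan is to reduce Theorem \ref{theo:bers} to a standard Bers-type bound for hyperbolic surfaces with geodesic boundary, applied separately to the pieces obtained after cutting $X$ along the prescribed geodesic $\alpha_1$. Concretely, given $X \in \mathcal{M}_{g',n'}^{b'}(\mathbf{L})$ and the simple closed geodesic $\alpha_1$, I would first cut $X$ along $\alpha_1$ to obtain one or two hyperbolic subsurfaces $Y_1, \dots, Y_m$ with geodesic boundary, each having strictly smaller complexity and boundary lengths drawn from $\{L_1, \dots, L_{b'}, \ell_{\alpha_1}(X)\}$. In particular, every $Y_j$ satisfies $M(\partial Y_j) \leq M^* := \max\{M(\mathbf{L}), \ell_{\alpha_1}(X)\}$.

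I would then establish the following auxiliary statement by induction on $-\chi(Y)$: for each topological type there exist constants $A, B > 0$ such that every hyperbolic surface $Y$ of that type with geodesic boundary admits a geodesic pair of pants decomposition $\{\beta_j\}$ satisfying $\ell_{\beta_j}(Y) \leq A + B \cdot M(\partial Y)$. The base case is a pair of pants, which has no interior curves. For the inductive step, I would produce a single simple closed geodesic $\beta$ on $Y$ with $\ell_\beta(Y) \leq A_0 + B_0 \cdot M(\partial Y)$, cut along it, and apply the inductive hypothesis to each component. The linear form of the bound is preserved through the recursion because $\max\{M(\partial Y), A_0 + B_0 \cdot M(\partial Y)\}$ is itself linear in $M(\partial Y)$, and the recursion terminates after a number of steps depending only on $-\chi(Y)$.

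To finish, I would apply the auxiliary statement to each $Y_j$ and take the union of the resulting pants decompositions together with $\alpha_1$ itself. Since $\ell_{\alpha_1}(X) \leq M^*$ and each interior curve of each $Y_j$ has length at most $A + B \cdot M^*$, the resulting geodesic pants decomposition of $X$ contains $\alpha_1$ and all of its curves have lengths bounded by a linear function of $\max\{M(\mathbf{L}), \ell_{\alpha_1}(X)\}$, with constants depending only on $g', n', b'$, exactly as required.

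The main obstacle is the inductive step, i.e., producing the short simple closed geodesic $\beta$. By Gauss--Bonnet the area of $Y$ equals $-2\pi \chi(Y)$, a topological constant. If all boundary lengths of $Y$ are small, the collar lemma gives wide half-collars around $\partial Y$, but the complementary thick part still has diameter bounded purely in terms of topology (via the Margulis lemma and the area bound), so one finds a non-peripheral simple closed geodesic of uniformly bounded length through a point of bounded injectivity radius in that region. If on the other hand some boundary component is long, the surface must be thin transversally to that component, which again produces a non-peripheral simple closed geodesic whose length is controlled linearly in $M(\partial Y)$. Making these two regimes quantitative and uniform via careful hyperbolic trigonometry is precisely the content of the classical argument (as in Buser or Theorem 12.8 of \cite{FM11}), and is the only non-formal ingredient in the proposal.
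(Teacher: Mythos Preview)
Your proposal is correct and follows the standard inductive approach to Bers's theorem; the paper itself does not supply a proof of this statement but merely cites Theorem~12.8 in \cite{FM11}, whose argument is precisely the cut-and-induct scheme you describe. There is nothing to compare.
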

$ $

\textit{The cut and glue fibration.} Given a simple closed curve $\alpha$ on $S_{g,n}$, let
\[
\text{Stab}_0(\alpha) \subseteq \text{Mod}_{g,n}
\]
be the subgroup of all mapping classes of $S_{g,n}$ that fix $\alpha$ (up to isotopy) together with its orientation (although $\alpha$ is unoriented, it admits two possible orientations; what is being asked is that the mapping class sends each such orientation back to itself). More generally, given an ordered simple closed multi-curve $\gamma := (\gamma_1,\dots,\gamma_k)$ on $S_{g,n}$ with $1 \leq k \leq 3g-3+n$, let
\[
\text{Stab}_0(\gamma) := \bigcap_{i=1}^k \text{Stab}_0(\gamma_i) \subseteq \text{Mod}_{g,n}
\]
be the subgroup of all mapping classes of $S_{g,n}$ that fix each component of $\gamma$ (up to isotopy) together with their respective orientations.\\

For the rest of this discussion fix an ordered simple closed multi-curve $\gamma := (\gamma_1,\dots,\gamma_k)$ on $S_{g,n}$ with $1 \leq k \leq 3g-3+n$. The quotient space
\[
\mathcal{T}_{g,n}/\text{Stab}_0(\gamma)
\]
fibers in a natural way over a product of moduli spaces of surfaces with boundary of less complexity than $S_{g,n}$. This fibration, which we refer to as the \textit{cut and glue fibration} of $\mathcal{T}_{g,n}/\text{Stab}_0(\gamma)$, is particularly useful for computing integrals over $\mathcal{M}_{g,n}$ with respect to the Weil-Petersson volume form. We now describe such fibration.\\

Let $S_{g,n}(\gamma)$ be the (potentially disconnected) oriented topological surface with boundary obtained by cutting $S_{g,n}$ along the components of $\gamma$. Let $c \in \mathbf{Z}_{>0}$ be the number of components of such surface. Fixing an orientation on each component of $\gamma$, we can keep track of which components of $S_{g,n}(\gamma)$ lie to the left and to the right of each component of $\gamma$, so we can label the components of $S_{g,n}(\gamma)$ in a consistent way, say $\Sigma_j$ with $j \in \{1,\dots,c\}$. As the components of $\gamma$ are ordered, this induces a labeling of the boundary components of each $\Sigma_j$. Let $g_j,n_j,b_j \in \mathbf{Z}_{\geq 0}$ with $2 - 2g_j - n_j - b_j < 0$ be the triple of non-negative integers such that $\Sigma_j$ is homeomorphic to $S_{g_j,n_j}^{b_j}$. Fix a homeomorphism between these surfaces respecting the labeling of their boundary components. \\

We consider the space $\Omega_{g,n}(\gamma)$ given by all pairs
\[
((\ell_i)_{i=1}^k, (X_j)_{j=1}^c),
\]
with
\begin{align*}
(\ell_i)_{i=1}^k &\in (\mathbf{R}_{>0})^k,\\
(X_j)_{j=1}^c &\in \prod_{j=1}^c \mathcal{M}_{g_j,n_j}^{b_j}(\mathbf{L}_j),
\end{align*}
where $\mathbf{L}_j \in (\mathbf{R}_{>0})^{b_j}$ is defined using the vector $(\ell_i)_{i=1}^k \in (\mathbf{R}_{>0})^k$ and the correpondence between the labeling of the boundary components of $\Sigma_j$ and the order of the components of $\gamma$. \\

The quotient $\mathcal{T}_{g,n} /\text{Stab}_0(\gamma)$ can be identified with the space of all pairs $(X,\alpha)$ where $X \in \mathcal{M}_{g,n}$ and $\alpha := (\alpha_1,\dots,\alpha_k)$ is an ordered simple closed multi-geodesic on $X$ of the same topological type as $\gamma$ with a choice of orientation on each component, modulo the equivalence relation $(X',\alpha') \sim (X'',\alpha'')$ if and only if there exists an orientation-preserving isometry $I \colon X' \to X''$ sending the components of $\alpha'$ to the components of $\alpha''$ respecting their ordering and orientations. The \textit{cut and glue fibration} of $\mathcal{T}_{g,n} /\text{Stab}_0(\gamma)$ is the map $\Psi \colon \mathcal{T}_{g,n} /\text{Stab}_0(\gamma) \to \Omega_{g,n}(\gamma)$ which to every pair $(X,\alpha) \in \mathcal{T}_{g,n} /\text{Stab}_0(\gamma)$ assigns the pair
\[
((\ell_i)_{i=1}^k, (X_j)_{j=1}^c) \in \Omega_{g,n}(\gamma),
\]
with
\begin{align*}
(\ell_i)_{i=1}^k &:= (\ell_{\alpha_i}(X))_{i=1}^k,\\
(X_j)_{j=1}^c &:= (X(\alpha)_j)_{j=1}^c,
\end{align*}
where $X(\alpha)_j \in \mathcal{M}_{g_j,n_j}^{b_j}(\mathbf{L}_j)$ denotes the $j$-th component (according to the labeling and orientation of the components of $\alpha$) of the hyperbolic surface with geodesic boundary obtained by cutting $X$ along the components of $\alpha$. The fiber above any pair $((\ell_i)_{i=1}^k, (X_j)_{j=1}^c) \in \Omega_{g,n}(\gamma)$ is given by all possible ways of glueing the pieces $(X_j)_{j=1}^c$ respecting the labelings. Given any $(X,\alpha)$ in such a fiber, the whole fiber can be recovered by considering all possible twists of $X$ along the components of $\alpha$.\\

We make two additional important observations about the fibers of $\Psi$. First, given a fiber of $\Psi$ and a pair $(X,\alpha)$ in such fiber, even though the amount of twist of the hyperbolic surface $X$ along the components of $\alpha$ is not well defined, it is defined up to a choice of base point. In particular, there are well defined $1$-forms $d\tau_{\alpha_i}$ on such fiber measuring the infinitesimal twist along the components $\alpha_i$ of $\alpha$. Second, if we measure the size of the fibers using the volume form obtained by wedging all such $1$-forms $d\tau_{\alpha_i}$, then, generically (i.e, for every set of lengths and generically with respect to Weil-Petersson volume forms), the volume of the fiber of $\Psi$ above the pair  $((\ell_i)_{i=1}^k, (X_j)_{j=1}^c) \in \Omega_{g,n}(\gamma)$ is 
\[
2^{-\rho_{g,n}(\gamma)} \cdot \ell_1 \cdots \ell_k,
\]
where $\rho_{g,n}(\gamma)$ is the number of components of $\gamma$ that bound (on any of its sides) a torus with one boundary component; we will refer to such pairs $((\ell_i)_{i=1}^k, (X_j)_{j=1}^c) \in \Omega_{g,n}(\gamma)$ as \textit{generic pairs}. The factor $2^{-\rho_{g,n}(\gamma)}$ represents the fact that every hyperbolic tori with one geodesic boundary component has a non-trivial isometric involution preserving the boundary component.\\

It follows from Wolpert's magic formula that the Weil-Petersson volume form $v_\text{wp}$ on $\mathcal{T}_{g,n} /\text{Stab}_0(\gamma)$ can be written locally, up to sign and orbifold factors, in terms of the cut and glue fibration $\Psi \colon \mathcal{T}_{g,n} /\text{Stab}_0(\gamma) \to \Omega_{g,n}(\gamma)$ as
\[
v_{\text{wp}} = \left(\prod_{i=1}^k d\tau_{\alpha_i} \right) \wedge \left(\prod_{i=1}^c v_{\text{wp}}^j \right)  \wedge \left(\prod_{i=1}^k d\ell_i \right),
\]
where $v_\text{wp}^j$ denotes the Weil-Petersson volume form on $\mathcal{M}_{g_j,n_j}^{b_j}(\mathbf{L}_j)$. To take care of the orbifold factors we incorporate the \textit{automorphism discrepancy factor} $\sigma_{g,n}(\gamma) \in \mathbf{Q}_{>0}$ given by
\[
\sigma_{g,n}(\gamma) := \frac{\prod_{j=1}^c |K_{g_j,n_j}^{b_j}|}{|\text{Stab}_0(\gamma)\cap K_{g,n}|},
\]
where $K_{g_j,n_j}^{b_j} \triangleleft \text{Mod}_{g_j,n_j}^{b_j}$ is the kernel of the mapping class group action on $\mathcal{T}_{g_j,n_j}^{b_j}$ and $K_{g,n} \triangleleft \text{Mod}_{g,n}$ is the kernel of the mapping class group action on $\mathcal{T}_{g,n}$. Indeed, $|K_{g,n}^b|$ is generically (with respect to the Weil-Petersson volume form) the number of automorphisms of a point in the orbifold $\mathcal{M}_{g,n}^b(\mathbf{L})$ and analogously for $|K_{g,n}|$. For example, if $g =2$, $n=0$, and $\gamma$ is a separating simple closed curve on $S_{2,0}$, then $\sigma_{2,0}(\gamma) = 4/2 = 2$.\\

We record the discussion above in the following theorem; this is a reformulation of Mirzakhani's integration formulas in \cite{Mir07a}.\\

\begin{theorem}
	\label{theo:cut_glue_fib}
	In terms of the cut and glue fibration
	\[
	\Psi \colon \mathcal{T}_{g,n} /\text{Stab}_0(\gamma) \to \Omega_{g,n}(\gamma),
	\]
	the Weil-Petersson volume form $v_{\text{wp}}$ on $\mathcal{T}_{g,n} /\text{Stab}_0(\gamma)$ can be written locally up to sign as
	\[
	v_{\text{wp}} = \sigma_{g,n}(\gamma) \cdot \left(\prod_{i=1}^k d\tau_{\alpha_i} \right) \wedge \left(\prod_{i=1}^c v_{\text{wp}}^j \right)  \wedge \left(\prod_{i=1}^k d\ell_i \right),
	\]
	where $v_\text{wp}^j$ denotes the Weil-Petersson volume form on $\mathcal{M}_{g_j,n_j}^{b_j}(\mathbf{L}_j)$. Generically, the volume of the fiber of $\Psi$ above the pair  $((\ell_i)_{i=1}^k, (X_j)_{j=1}^c) \in \Omega_{g,n}(\gamma)$ is equal to
	\[
	2^{-\rho_{g,n}(\gamma)} \cdot \ell_1 \cdots \ell_k.
	\]
\end{theorem}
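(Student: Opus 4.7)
The plan is to deduce the theorem from Wolpert's magic formula applied in Fenchel--Nielsen coordinates adapted to $\gamma$, together with a careful accounting of the orbifold factors arising from the quotient by $\text{Stab}_0(\gamma)$. First, complete the ordered multi-curve $\gamma = (\gamma_1,\dots,\gamma_k)$ to an ordered pants decomposition $\mathcal{P} = (\gamma_1,\dots,\gamma_{3g-3+n})$ of $S_{g,n}$ by adjoining curves contained in the interior of the pieces $\Sigma_j$ of $S_{g,n}(\gamma)$, so that $\mathcal{P}\setminus \gamma$ restricts to a pants decomposition of each $\Sigma_j$. Let $(\ell_i,\tau_i)_{i=1}^{3g-3+n}$ be the associated Fenchel--Nielsen coordinates on $\mathcal{T}_{g,n}$. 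By Wolpert's magic formula, $v_{\text{wp}} = \prod_{i=1}^{3g-3+n} d\ell_i \wedge d\tau_i$. Splitting this product into the $k$ factors associated to the components of $\gamma$ and the $3g-3+n-k$ factors associated to the completing curves, then grouping the latter by piece and reorganizing (introducing only a global sign), gives the local expression
\[
v_{\text{wp}} = \pm \left(\prod_{i=1}^k d\tau_{\alpha_i}\right) \wedge \left(\prod_{j=1}^c \widetilde{v}_{\text{wp}}^{\,j}\right) \wedge \left(\prod_{i=1}^k d\ell_i\right),
\]
where $\widetilde{v}_{\text{wp}}^{\,j}$ is Wolpert's expression for the Weil--Petersson volume form on $\mathcal{T}_{g_j,n_j}^{b_j}(\mathbf{L}_j)$ in the induced Fenchel--Nielsen coordinates.

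Next, I would push this expression forward along the cut and glue fibration $\Psi\colon \mathcal{T}_{g,n}/\text{Stab}_0(\gamma) \to \Omega_{g,n}(\gamma)$. The target records each piece only up to its full mapping class group $\text{Mod}_{g_j,n_j}^{b_j}$, whereas the source has been quotiented by $\text{Stab}_0(\gamma)$. Since the Fenchel--Nielsen coordinates for curves interior to $\Sigma_j$ are acted on by $\text{Mod}_{g_j,n_j}^{b_j}$ via its action on $\mathcal{T}_{g_j,n_j}^{b_j}(\mathbf{L}_j)$, the descent of $\widetilde{v}_{\text{wp}}^{\,j}$ to $\mathcal{M}_{g_j,n_j}^{b_j}(\mathbf{L}_j)$ agrees with $v_{\text{wp}}^{\,j}$ away from the orbifold locus. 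The discrepancy in generic orbifold multiplicities between $\mathcal{T}_{g,n}/\text{Stab}_0(\gamma)$ and $\prod_j \mathcal{M}_{g_j,n_j}^{b_j}(\mathbf{L}_j)$ times a twist torus is exactly the ratio of the kernels of the two mapping class group actions, namely
\[
\sigma_{g,n}(\gamma) = \frac{\prod_{j=1}^c |K_{g_j,n_j}^{b_j}|}{|\text{Stab}_0(\gamma)\cap K_{g,n}|},
\]
which is then the correct multiplicative correction. This step is the main obstacle, as verifying it amounts to identifying the generic stabilizers on both sides of the comparison and checking that they reduce to these kernels; a careful local argument at a generic point of $\mathcal{T}_{g,n}/\text{Stab}_0(\gamma)$ (where no extra symmetries exchange distinct pieces or flip components of $\gamma$) suffices.

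Finally, for the generic fiber volume, fix a generic pair $((\ell_i)_{i=1}^k,(X_j)_{j=1}^c) \in \Omega_{g,n}(\gamma)$ and observe that any preimage under $\Psi$ is obtained by gluing the $X_j$'s along the boundary components corresponding to $\gamma$, with the gluings parametrized by twist parameters $\tau_{\alpha_i} \in \mathbf{R}/\ell_i\mathbf{R}$. Measured against $\prod_{i=1}^k d\tau_{\alpha_i}$, the resulting flat torus has volume $\ell_1\cdots \ell_k$. The only redundancy comes from components $\gamma_i$ that bound a one-holed torus piece: the hyperelliptic involution of such a torus preserves its geodesic boundary setwise (with orientation, after composing with an overall relabeling consistent with $\text{Stab}_0$) and acts nontrivially on the twist parameter, identifying pairs of distinct fiber points. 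This halves the fiber volume once for each such curve, yielding the factor $2^{-\rho_{g,n}(\gamma)}$. Combining the local expression for $v_{\text{wp}}$, the discrepancy factor $\sigma_{g,n}(\gamma)$, and this fiber volume computation gives both assertions of the theorem.
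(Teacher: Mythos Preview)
Your proposal is correct and matches the paper's approach: the paper does not give a formal proof but records the theorem as a summary of the preceding discussion, which invokes Wolpert's magic formula in Fenchel--Nielsen coordinates adapted to $\gamma$, identifies the orbifold discrepancy factor $\sigma_{g,n}(\gamma)$ via the kernels of the relevant mapping class group actions, and attributes the factor $2^{-\rho_{g,n}(\gamma)}$ to the isometric involution of a hyperbolic one-holed torus. Your write-up is in fact more detailed than the paper's own discussion; one minor point is that the hyperelliptic involution of a one-holed torus preserves the boundary orientation directly (it acts as a half-rotation on the boundary circle), so no auxiliary relabeling is needed for it to lie in the relevant $\text{Stab}_0$-type symmetry.
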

$ $

\section{Equidistribution of horoballs}

$ $

\textit{Setting.} For the rest of this section, fix an ordered simple closed multi-curve $\gamma := (\gamma_1,\dots,\gamma_k)$ on $S_{g,n}$ with $1 \leq k \leq 3g-3+n$, a vector $\mathbf{a}:=(a_1,\dots,a_k) \in (\mathbf{Q}_{>0})^k$ of positive rational weights on the components of $\gamma$, and a bounded, compactly supported, Borel measurable function $f \colon (\mathbf{R}_{\geq 0})^k \to \mathbf{R}_{\geq0}$ with non-negative values and which is not almost everywhere zero with respect to the Lebesgue measure class. The goal of this section is to prove Propositions \ref{prop:earth_inv_0}, \ref{prop:hor_meas_ac_0}, and \ref{prop:hor_meas_nem_0}, and thus complete the proof of Theorem \ref{theo:horoball_equid}. \\

\textit{Total mass.} Before proceeding any further, it is important to ascertain that the measures $\widehat{\mu}_\gamma^{f,L}$ on $\mathcal{M}_{g,n}$ and $\widehat{\nu}_{\gamma,\mathbf{a}}^{f,L}$ on $P^1\mathcal{M}_{g,n}$ are finite. We actually derive explicit formulas for the total mass $m_\gamma^{f,L}$ of these measures and study their asymptotics as $L \to \infty$. These formulas play an important role in the proof of Theorem \ref{theo:horoball_equid} as well as in the applications in \cite{AH19b}. \\

Following the notation of Theorem \ref{theo:cut_glue_fib}, given $\mathbf{L} := (\ell_i)_{i=1}^k \in (\mathbf{R}_{>0})^k$, let
\[
V_{g,n}(\gamma,\mathbf{L}) := \frac{1}{[\text{Stab}(\gamma):\text{Stab}_0(\gamma)]} \cdot \sigma_{g,n}(\gamma) \cdot 2^{-\rho_{g,n}{(\gamma)}} \cdot \prod_{j=1}^c V_{g_j,n_j}^{b_j}(\mathbf{L}_j) \cdot \ell_1,\dots,\ell_k.
\]
By Theorem \ref{theo:vol_pol}, $V_{g,n}(\gamma,\mathbf{L})$ is a polynomial of degree $6g-6+2n-k$ in the $\mathbf{L}$ variables with non-negative coefficients and rational leading coefficients. Denote by $W_{g,n}(\gamma,\mathbf{L})$ the polynomial obtained by adding up all the leading (maximal degree) monomials of $V_{g,n}(\gamma,\mathbf{L})$.\\

\begin{proposition}
	\label{prop:total_hor_meas}
	For every $L > 0$,
	\[
	m_\gamma^{f,L} = \int_{\mathbf{R}^k} f(\mathbf{L}) \cdot V_{g,n}(\gamma,L \cdot \mathbf{L}) \cdot L^k \  d \mathbf{L},
	\]
	where $d\mathbf{L} := d\ell_1 \cdots d\ell_k$. In particular,
	\[
	\lim_{L \to \infty} \frac{m_\gamma^{f,L}}{L^{6g-6+2n}} = \int_{\mathbf{R}^k} f(\mathbf{L}) \cdot W_{g,n}(\gamma,\mathbf{L}) \ d \mathbf{L}.
	\]
\end{proposition}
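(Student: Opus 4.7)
\textbf{Proof plan for Proposition \ref{prop:total_hor_meas}.} My plan is to unfold the pushforward, apply the cut and glue integration formula from Theorem \ref{theo:cut_glue_fib}, and then perform a standard rescaling in the length variables. First I would observe that, because $\mu_\gamma^{f,L}$ is $\mathrm{Stab}(\gamma)$-invariant and the map $\mathcal{T}_{g,n}/\mathrm{Stab}(\gamma) \to \mathcal{M}_{g,n}$ is a countable covering to which the local pushforward construction applies, the total mass on $\mathcal{M}_{g,n}$ is the same as the total mass of $\widetilde{\mu}_\gamma^{f,L}$ on the intermediate quotient, namely
\[
m_\gamma^{f,L} = \int_{\mathcal{T}_{g,n}/\mathrm{Stab}(\gamma)} f\!\left(\tfrac{1}{L}(\ell_{\gamma_i}(X))_{i=1}^k\right) d\mu_{\mathrm{wp}}(X).
\]
Analogously, the disintegration formula defining $\nu_{\gamma,\mathbf{a}}^{f,L}$ places a normalized point mass on each fiber of $P^1\mathcal{T}_{g,n} \to \mathcal{T}_{g,n}$, so the fiber integrals are trivial and the total mass of $\widehat{\nu}_{\gamma,\mathbf{a}}^{f,L}$ agrees with that of $\widehat{\mu}_\gamma^{f,L}$; this justifies using one formula for both.

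Next I would lift to the oriented-stabilizer cover: the natural projection $\mathcal{T}_{g,n}/\mathrm{Stab}_0(\gamma) \to \mathcal{T}_{g,n}/\mathrm{Stab}(\gamma)$ has degree $[\mathrm{Stab}(\gamma):\mathrm{Stab}_0(\gamma)]$, so
\[
m_\gamma^{f,L} = \frac{1}{[\mathrm{Stab}(\gamma):\mathrm{Stab}_0(\gamma)]} \int_{\mathcal{T}_{g,n}/\mathrm{Stab}_0(\gamma)} f\!\left(\tfrac{1}{L}(\ell_{\gamma_i}(X))_{i=1}^k\right) d\mu_{\mathrm{wp}}(X).
\]
Now I apply the cut and glue fibration $\Psi$. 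The integrand depends only on $\Psi(X)$ (in fact only on the base coordinates $(\ell_i)_{i=1}^k$), so I can integrate out the twist torus fibers of $\Psi$, which by Theorem \ref{theo:cut_glue_fib} contribute a volume of $2^{-\rho_{g,n}(\gamma)} \ell_1 \cdots \ell_k$, and then integrate out each factor $\mathcal{M}_{g_j,n_j}^{b_j}(\mathbf{L}_j)$, which contributes $V_{g_j,n_j}^{b_j}(\mathbf{L}_j)$. Together with the automorphism discrepancy factor $\sigma_{g,n}(\gamma)$, this produces exactly the polynomial $V_{g,n}(\gamma,\mathbf{L})$ from the statement and yields
\[
m_\gamma^{f,L} = \int_{(\mathbf{R}_{>0})^k} f\!\left(\tfrac{1}{L}\mathbf{L}\right) V_{g,n}(\gamma,\mathbf{L}) \, d\mathbf{L}.
\]

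Finally I would substitute $\mathbf{L} \mapsto L \cdot \mathbf{L}$, which introduces a Jacobian $L^k$ and gives the first formula of the proposition. For the asymptotic, Theorem \ref{theo:vol_pol} tells me that $V_{g,n}(\gamma,\mathbf{L})$ is a polynomial of degree $6g-6+2n-k$ in $\mathbf{L}$ with non-negative coefficients, so
\[
V_{g,n}(\gamma,L\cdot\mathbf{L}) \cdot L^k = L^{6g-6+2n}\bigl(W_{g,n}(\gamma,\mathbf{L}) + O(L^{-1})\bigr)
\]
uniformly on the (compact) support of $f$. Dividing by $L^{6g-6+2n}$ and applying dominated convergence, which is valid since $f$ is bounded and compactly supported, produces the stated limit. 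The only step requiring real care is the bookkeeping in the second paragraph, where one must correctly account for the stabilizer index, the automorphism discrepancy factor $\sigma_{g,n}(\gamma)$, and the generic twist-fiber volume $2^{-\rho_{g,n}(\gamma)} \ell_1\cdots\ell_k$ so that they assemble precisely into the defined polynomial $V_{g,n}(\gamma,\mathbf{L})$; once that accounting is right, the rest is essentially a change of variables.
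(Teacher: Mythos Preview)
Your proposal is correct and follows essentially the same route as the paper's proof: unfold the pushforward to $\mathcal{T}_{g,n}/\mathrm{Stab}(\gamma)$, pass to the finite cover $\mathcal{T}_{g,n}/\mathrm{Stab}_0(\gamma)$ picking up the index factor, apply the cut and glue fibration of Theorem~\ref{theo:cut_glue_fib} to reduce to an integral over length parameters, change variables, and conclude with dominated convergence. Your extra remark that the fiberwise point masses make the total masses of $\widehat{\nu}_{\gamma,\mathbf{a}}^{f,L}$ and $\widehat{\mu}_\gamma^{f,L}$ agree is a welcome clarification that the paper only states without justification.
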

$ $

\begin{proof}
	Let $L > 0$ be arbitrary. Recall that by definition
	\[
	m_\gamma^{f,L} := \widehat{\mu}_\gamma^{f,L}(\mathcal{M}_{g,n}).
	\]
	As $\widehat{\mu}_\gamma^{f,L}$ is the pushforward to $\mathcal{M}_{g,n}$ of the measure $\widetilde{\mu}_\gamma^{f,L}$ on $\mathcal{T}_{g,n}/\text{Stab}(\gamma)$, 
	\[
	\widehat{\mu}_\gamma^{f,L}(\mathcal{M}_{g,n}) = \widetilde{\mu}_\gamma^{f,L}(\mathcal{T}_{g,n}/\text{Stab}(\gamma)).
	\]
	Let $\dot{\mu}_{\gamma}^{f,L}$ be the local pushforward to  $\mathcal{T}_{g,n}/\text{Stab}_0(\gamma)$ of the measure $\mu_{\gamma}^{f,L}$ on $\mathcal{T}_{g,n}$. As $[\text{Stab}(\gamma) : \text{Stab}_0(\gamma)] < \infty$,
	\[
	\widetilde{\mu}_\gamma^{f,L}(\mathcal{T}_{g,n}/\text{Stab}(\gamma)) = \frac{1}{[\text{Stab}(\gamma):\text{Stab}_0(\gamma)]} \cdot \dot{\mu}_\gamma^{f,L}(\mathcal{T}_{g,n}/\text{Stab}_0(\gamma)).
	\]
	Recall that by definition
	\[
	d \mu_\gamma^{f,L}(X) := f\left(\textstyle \frac{1}{L} \cdot (\ell_{\gamma_i}(X))_{i=1}^k\right) \ d\mu_{\text{wp}}(X).
	\]  
	In particular, identifying points in $\mathcal{T}_{g,n}/\text{Stab}_0(\gamma)$ with pairs $(X,\alpha)$ as in the discussion preceding Theorem \ref{theo:cut_glue_fib},
	\[
	d \dot{\mu}_\gamma^{f,L}(X,\alpha) = f(\textstyle \frac{1}{L} \cdot (\ell_{\alpha_i}(X))_{i=1}^k) \ d\dot{\mu}_{\text{wp}}(X),
	\]
	where $\dot{\mu}_{\text{wp}}$ is the local pushforward to $\mathcal{T}_{g,n}/\text{Stab}_0(\gamma)$ of the Weil-Petersson measure $\mu_{\text{wp}}$ on $\mathcal{T}_{g,n}$ . It follows from Theorem \ref{theo:cut_glue_fib} that
	\[
	\widetilde{\mu}_\gamma^{f,L}(\mathcal{T}_{g,n}/\text{Stab}(\gamma)) = \int_{\mathbf{R}^k} f(\textstyle\frac{1}{L} \cdot \mathbf{L}) \cdot V_{g,n}(\gamma,\mathbf{L}) \ d\mathbf{L}.
	\]
	The change of variable $\mathbf{u} := \frac{1}{L} \cdot \mathbf{L}$ yields
	\[
	m_\gamma^{f,L} = \int_{\mathbf{R}^k} f(\mathbf{u}) \cdot V_{g,n}(\gamma,L \cdot \mathbf{u}) \cdot L^k \  d \mathbf{u}.
	\]
	As the function $f$ is compactly supported and bounded, and as $V_{g,n}(\gamma,\mathbf{u})$ is a polynomial of degree $6g-6+2n-k$ on the $\mathbf{u}$ variables, the dominate convergence theorem ensures
	\begin{align*}
	\lim_{L \to \infty} \frac{m_\gamma^{f,L}}{L^{6g-6+2n}} = \int_{\mathbf{R}^k} f(\mathbf{u}) \cdot W_{g,n}(\gamma,\mathbf{u}) \ d \mathbf{u}.
	\end{align*}
	This finishes the proof.
\end{proof}
$ $

\textit{Earthquake flow invariance.} Proposition \ref{prop:earth_inv_0} is a  direct consequence of the continuity of the earthquake flow on $P^1\mathcal{M}_{g,n}$ and the following result.\\

\begin{proposition}
	\label{prop:inv_meas}
	The measures $\widehat{\nu}_{\gamma,\mathbf{a}}^{f,L}$ on $P^1 \mathcal{M}_g$ are earthquake flow invariant.\\
\end{proposition}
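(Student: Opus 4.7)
The plan is to lift the question to $P^1\mathcal{T}_{g,n}$, establish invariance of $\nu_{\gamma,\mathbf{a}}^{f,L}$ there, and descend to $P^1\mathcal{M}_{g,n}$. The key observation is that $\nu_{\gamma,\mathbf{a}}^{f,L}$ is concentrated on the section
\[
\Sigma := \{(X, \mathbf{a}\cdot\gamma/\ell_{\mathbf{a}\cdot\gamma}(X)) \ | \ X \in B_\gamma^{f,L}\} \subseteq P^1\mathcal{T}_{g,n},
\]
and under the obvious identification $\Sigma \leftrightarrow B_\gamma^{f,L} \subseteq \mathcal{T}_{g,n}$ the measure $\nu_{\gamma,\mathbf{a}}^{f,L}$ corresponds to $\mu_\gamma^{f,L}$.

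First I would check that the earthquake flow on $P^1\mathcal{T}_{g,n}$ preserves $\Sigma$. At a point $(X,\lambda) \in \Sigma$ with $\lambda = \mathbf{a}\cdot\gamma/\ell_{\mathbf{a}\cdot\gamma}(X)$, the second coordinate is fixed by the flow, while the first is twisted along a scalar multiple of $\mathbf{a}\cdot\gamma$. Because the components $\gamma_i$ of $\gamma$ are pairwise disjoint, twisting along $\mathbf{a}\cdot\gamma$ preserves every $\ell_{\gamma_i}$ and in particular $\ell_{\mathbf{a}\cdot\gamma}$, so the flowed point stays on $\Sigma$ and remains inside $B_\gamma^{f,L}$. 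Under the identification $\Sigma \leftrightarrow B_\gamma^{f,L}$ the earthquake flow is conjugated to the reparametrized twist flow
\[
\Phi_t(X) := \text{tw}_{\mathbf{a}\cdot\gamma}^{t/\ell_{\mathbf{a}\cdot\gamma}(X)}(X),
\]
so the task becomes showing that $\Phi_t$ preserves $\mu_\gamma^{f,L}$.

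The density $f\bigl(\tfrac{1}{L}(\ell_{\gamma_i})_{i=1}^k\bigr)$ is constant along $\Phi_t$-orbits by the same length-invariance, so it suffices to show that $\mu_{\text{wp}}$ is $\Phi_t$-invariant. By (\ref{eq:hamilton}), $\mu_{\text{wp}}$ is invariant under each honest twist flow $\text{tw}_{\mathbf{a}\cdot\gamma}^s$; since the reparametrizing function $1/\ell_{\mathbf{a}\cdot\gamma}$ is constant on twist orbits, a Cartan-formula check yields
\[
\mathcal{L}_{E_{\mathbf{a}\cdot\gamma}/\ell_{\mathbf{a}\cdot\gamma}}\, v_{\text{wp}} = d(1/\ell_{\mathbf{a}\cdot\gamma}) \wedge \iota_{E_{\mathbf{a}\cdot\gamma}} v_{\text{wp}} = 0,
\]
using $E_{\mathbf{a}\cdot\gamma}(\ell_{\mathbf{a}\cdot\gamma}) = 0$ together with $\mathcal{L}_{E_{\mathbf{a}\cdot\gamma}} v_{\text{wp}} = 0$. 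Finally, since the mapping class group action on $P^1\mathcal{T}_{g,n}$ commutes with the earthquake flow, invariance descends through the intermediate cover $P^1\mathcal{T}_{g,n} \to P^1\mathcal{T}_{g,n}/\text{Stab}(\gamma) \to P^1\mathcal{M}_{g,n}$ to give invariance of $\widehat{\nu}_{\gamma,\mathbf{a}}^{f,L}$. The main obstacle is the reparametrization step: one must verify that dividing the twist vector field by the orbit-constant function $\ell_{\mathbf{a}\cdot\gamma}$ preserves $\mu_{\text{wp}}$. Everything else is bookkeeping.
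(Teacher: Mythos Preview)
Your argument is correct. Both you and the paper lift to $P^1\mathcal{T}_{g,n}$, observe that on the support of $\nu_{\gamma,\mathbf{a}}^{f,L}$ the earthquake flow acts by twisting along $\mathbf{a}\cdot\gamma$ (with the orbit-constant reparametrization $1/\ell_{\mathbf{a}\cdot\gamma}$), note that the density $f$ is twist-invariant, and then descend through the quotient using equivariance. The difference lies in how the invariance of $\mu_{\text{wp}}$ under the \emph{reparametrized} twist is established.

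The paper avoids the reparametrization issue altogether by first disintegrating $\mu_{\text{wp}}$ along the horospheres $S_{\mathbf{a}\cdot\gamma}^r$ via Proposition~\ref{prop:meas_disint}, obtaining the formula $d\nu_{\gamma,\mathbf{a}}^{f,L}(X,\lambda)=F(X)\,d\eta_{\mathbf{a}\cdot\gamma}^r(X)\,d\delta_{\mathbf{a}\cdot\gamma/r}(\lambda)\,dr$; on each slice the lamination is constant, so one only needs the invariance of the horospherical measures $\eta_{\mathbf{a}\cdot\gamma}^r$ under the \emph{un}reparametrized twist along $\mathbf{a}\cdot\gamma$, which follows directly from (\ref{eq:hamilton}). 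You instead compute $\mathcal{L}_{E_{\mathbf{a}\cdot\gamma}/\ell_{\mathbf{a}\cdot\gamma}}v_{\text{wp}}$ directly: the term $(1/\ell)\,\mathcal{L}_{E}v_{\text{wp}}$ vanishes by (\ref{eq:hamilton}), and $d(1/\ell)\wedge\iota_{E}v_{\text{wp}} = E(1/\ell)\cdot v_{\text{wp}}=0$ by the top-form identity $\alpha\wedge\iota_X\omega=\alpha(X)\,\omega$ together with $E_{\mathbf{a}\cdot\gamma}(\ell_{\mathbf{a}\cdot\gamma})=0$. Your route is shorter and does not need the horosphere-measure machinery of \S2; the paper's route has the advantage of recycling the same disintegration that is later used for the horosphere analogue (Proposition~\ref{prop:inv_meas_sph}).
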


\begin{proof}
	Fix $L > 0$. We first check that the measure $\nu_{\gamma,\mathbf{a}}^{f,L}$ on $P^1 \mathcal{T}_{g,n}$ is earthquake flow invariant. To simplify the notation,  consider the function $F \colon \mathcal{T}_{g,n} \to \mathbf{R}_{\geq 0}$ which to every $X \in \mathcal{T}_{g,n}$ assigns the value
	\[
	F(X) := f\left(\textstyle \frac{1}{L} \cdot (\ell_{\gamma_i}(X))_{i=1}^k\right),
	\]
	so that 
	\[
	d \mu_\gamma^{f,L}(X) =  F(X) \ d\mu_{\text{wp}}(X).
	\]  
	Notice that $F$ is invariant under twists along $\mathbf{a} \cdot \gamma \in \mathcal{ML}_{g,n}(\mathbf{Q})$ because the length functions $\ell_{\gamma_i} \colon \mathcal{T}_{g,n} \allowbreak \to \mathbf{R}_{>0}$ are invariant under such twists; this is the only fact about the function $F$ we will need. \\
	
	Recall the disintegration formula
	\[
	d \nu_{\gamma,\mathbf{a}}^{f,L}(X,\lambda) := d \delta_{\mathbf{a} \cdot \gamma/ \ell_{\mathbf{a} \cdot\gamma}(X)}(\lambda) \ d\mu_\gamma^{f,L}(X) = d \delta_{\mathbf{a} \cdot \gamma/ \ell_{\mathbf{a} \cdot \gamma}(X)}(\lambda) \  F(X) \ d\mu_{\text{wp}}(X).
	\]
	By Proposition \ref{prop:meas_disint},
	\[
	d \mu_{\text{wp}}(X) = d \eta_{\mathbf{a}\cdot\gamma}^r(X) \ dr,
	\]
	where $\eta_{\mathbf{a}\cdot\gamma}^r$ is the horospherical measure on $\mathcal{T}_{g,n}$ corresponding to $\mathbf{a} \cdot \gamma \in \mathcal{ML}_{g,n}(\mathbf{Q})$ and $r > 0$. It follows that
	\[
	d \delta_{\mathbf{a} \cdot \gamma/ \ell_{\mathbf{a} \cdot \gamma}(X)}(\lambda) \  F(X) \ d\mu_{\text{wp}}(X) = d \delta_{\mathbf{a} \cdot \gamma/ \ell_{\mathbf{a} \cdot \gamma}(X)}(\lambda) \  F(X) \ d\eta_{\mathbf{a}\cdot\gamma}^r(X) \ dr. 
	\]
	As $\eta_{\mathbf{a}\cdot\gamma}^r$ is supported on the horosphere $S_{\mathbf{a} \cdot \gamma}^r := \ell_{\mathbf{a} \cdot \gamma}^{-1}(r)$,
	\[
	d \delta_{\mathbf{a} \cdot \gamma/ \ell_{\mathbf{a} \cdot \gamma}(X)}(\lambda) \  F(X) \ d\eta_{\mathbf{a}\cdot\gamma}^r(X) \ dr  = d \delta_{\mathbf{a} \cdot \gamma/r}(\lambda) \  F(X) \ d\eta_{\mathbf{a}\cdot\gamma}^r(X) \ dr.
	\]
	By Fubini's theorem,
	\[
	d \delta_{\mathbf{a} \cdot \gamma/r}(\lambda) \  F(X) \ d\eta_{\mathbf{a}\cdot\gamma}^r(X) \ dr = F(X) \ d\eta_{\mathbf{a}\cdot\gamma}^r(X) \ d \delta_{\mathbf{a} \cdot \gamma/r}(\lambda) \  dr.
	\]
	Putting things together we deduce
	\begin{equation}
	\label{eq:conv_disint}
	d \nu_{\gamma,\mathbf{a}}^{f,L}(X,\lambda) = F(X) \ d\eta_{\mathbf{a}\cdot\gamma}^r(X) \ d \delta_{\mathbf{a} \cdot \gamma/r}(\lambda) \  dr.
	\end{equation}
	$ $
	
	Let $t \in \mathbf{R}$ be arbitrary. From (\ref{eq:conv_disint}) and the fact that the function $F$ and the measures $\eta_{\mathbf{a}\cdot\gamma}^r$ are invariant under twists along $\mathbf{a} \cdot \gamma \in \mathcal{ML}_{g,n}(\mathbf{Q})$ we deduce 
	\begin{align*}
	\text{tw}^t_*(d \nu_{\gamma,\mathbf{a}}^{f,L})(X,\lambda) &= d \nu_{\gamma,\mathbf{a}}^{f,L}(\text{tw}^{-t}(X,\lambda)) \\
	&= d \nu_{\gamma,\mathbf{a}}^{f,L}(\text{tw}_\lambda^{-t}(X),\lambda) \\
	&=F(\text{tw}_\lambda^{-t}(X)) \ d\eta_{\mathbf{a}\cdot\gamma}^r(\text{tw}_\lambda^{-t}(X)) \ d \delta_{\mathbf{a} \cdot \gamma/r}(\lambda) \  dr \\
	&=F(\text{tw}_{\mathbf{a} \cdot \gamma/r}^{-t}(X)) \ d\eta_{\mathbf{a}\cdot\gamma}^r(\text{tw}_{\mathbf{a} \cdot \gamma/r}^{-t}(X)) \ d \delta_{\mathbf{a} \cdot \gamma/r}(\lambda) \  dr\\
	&=F(X) \ d\eta_{\mathbf{a}\cdot\gamma}^r(X) \ d \delta_{\mathbf{a} \cdot \gamma/r}(\lambda) \  dr \\
	&= d \nu_{\gamma,\mathbf{a}}^{f,L}(X,\lambda).
	\end{align*}
	We have thus shown that the measure $\nu_{\gamma,\mathbf{a}}^{f,L}$ on $P^1 \mathcal{T}_{g,n}$ is earthquake flow invariant. \\
	
	Recall that the action of $\text{Mod}_{g,n}$ on $P^1 \mathcal{T}_{g,n}$ commutes with the earthquake flow. As $\widetilde{\nu}_{\gamma,\mathbf{a}}^{f,L}$ is the local pushforward to $\mathcal{T}_{g,n}/\text{Stab}(\gamma)$ of the measure $\nu_{\gamma,\mathbf{a}}^{f,L}$ on $\mathcal{T}_{g,n}$, it follows that $\widetilde{\nu}_{\gamma,\mathbf{a}}^{f,L}$ is earthquake flow invariant. As the earthquake flow commutes with the quotient map $P^1 \mathcal{T}_{g,n}/\text{Stab}(\gamma) \to P^1\mathcal{M}_{g,n}$ and as $\widehat{\nu}_{\gamma,\mathbf{a}}^{f,L}$ is the pushforward of the measure $\widetilde{\nu}_{\gamma,\mathbf{a}}^{f,L}$ under this map, it follows that $\widehat{\nu}_{\gamma,\mathbf{a}}^{f,L}$ is earthquake flow invariant. This finishes the proof. 
\end{proof}
$ $

\textit{Comparing horoball segment measures.} Many of the proofs that follow can be reduced to the study of the horoball segment measures associated to the indicator functions 
\[
\mathbbm{1}_{B_a} \colon \left(\mathbf{R}_{\geq 0}\right)^k \to \mathbf{R}_{\geq 0}
\]
of boxes $B_a \subseteq (\mathbf{R}_{\geq 0})^k$ of the form
\[
B_a := [0,a]^k,
\]
where $a > 0$ is arbitrary. For every $L > 0$, let $B_\gamma^{a,L} \subseteq \mathcal{T}_{g,n}$ be the corresponding horoball segment, $\mu_\gamma^{a,L}$ be the corresponding horoball segment measure on $\mathcal{T}_{g,n}$, $\widetilde{\mu}_\gamma^{a,L}$ be the local pushforward of $\mu_\gamma^{a,L}$ to $\mathcal{T}_{g,n}/\text{Stab}(\gamma)$, and $\widehat{\mu}_\gamma^{a,L}$ be the pushforward to $\mathcal{M}_{g,n}$ of $\widetilde{\mu}_\gamma^{a,L}$. Analogously, for every $L > 0$, let  $\nu_{\gamma,\mathbf{a}}^{a,L}$ be the corresponding horoball segment measure on $P^1\mathcal{T}_{g,n}$, $\widetilde{\nu}_{\gamma,\mathbf{a}}^{a,L}$ be the local pushforward of $\nu_{\gamma,\mathbf{a}}^{a,L}$ to $P^1\mathcal{T}_{g,n}/\text{Stab}(\gamma)$, and $\widehat{\nu}_{\gamma,\mathbf{a}}^{a,L}$ be the pushforward to $P^1\mathcal{M}_{g,n}$ of $\widetilde{\nu}_{\gamma,\mathbf{a}}^{a,L}$. Let $m_\gamma^{a,L}$ be the total mass of the measures $\widehat{\mu}_\gamma^{a,L}$ and $\widehat{\nu}_{\gamma,\mathbf{a}}^{a,L}$.\\

The following lemma will allow us to carry out the reductions mentioned above.\\

\begin{lemma}
	\label{lemma:hor_meas_comp}
	There exist constants $a>0$ and $C > 0$ such that for every Borel measurable subset $A \subseteq P^1 \mathcal{M}_{g,n}$,
	\[
	\limsup_{L \to \infty} \frac{\widehat{\nu}_{\gamma,\mathbf{a}}^{f,L}(A)}{m_{\gamma}^{f,L}} \leq C \cdot 	\limsup_{L \to \infty} \frac{\widehat{\nu}_{\gamma,\mathbf{a}}^{a,L}(A)}{m_{\gamma}^{a,L}}.
	\]
\end{lemma}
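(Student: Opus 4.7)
The plan is to dominate $f$ pointwise by a constant multiple of the indicator function of a large coordinate box and propagate this majorization through every layer of the construction defining $\widehat{\nu}_{\gamma,\mathbf{a}}^{f,L}$; the comparison of the normalizing total masses will then follow directly from Proposition \ref{prop:total_hor_meas}.

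First I would choose $a > 0$ large enough that $\mathrm{supp}(f) \subseteq [0,a]^k$, which is possible because $f$ is compactly supported. Set $M := \|f\|_\infty < \infty$; then $f \leq M \cdot \mathbbm{1}_{[0,a]^k}$ pointwise on $(\mathbf{R}_{\geq 0})^k$. Since $\mu_\gamma^{f,L}$ and $\mu_\gamma^{a,L}$ are obtained from $\mu_{\mathrm{wp}}$ by multiplication by the functions $X \mapsto f\bigl(\tfrac{1}{L}(\ell_{\gamma_i}(X))_{i=1}^k\bigr)$ and $X \mapsto \mathbbm{1}_{[0,a]^k}\bigl(\tfrac{1}{L}(\ell_{\gamma_i}(X))_{i=1}^k\bigr)$ respectively, this pointwise inequality immediately yields $\mu_\gamma^{f,L} \leq M \cdot \mu_\gamma^{a,L}$ on $\mathcal{T}_{g,n}$. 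Because $\nu_{\gamma,\mathbf{a}}^{f,L}$ and $\nu_{\gamma,\mathbf{a}}^{a,L}$ arise from these $\mu$'s via the common disintegration rule $d\nu(X,\lambda) = d\delta_{\mathbf{a}\cdot\gamma/\ell_{\mathbf{a}\cdot\gamma}(X)}(\lambda)\,d\mu(X)$, the inequality persists on $P^1\mathcal{T}_{g,n}$; both the local pushforward to $P^1\mathcal{T}_{g,n}/\mathrm{Stab}(\gamma)$ and the subsequent pushforward to $P^1\mathcal{M}_{g,n}$ then preserve inequalities between non-negative measures, giving
\[
\widehat{\nu}_{\gamma,\mathbf{a}}^{f,L}(A) \leq M \cdot \widehat{\nu}_{\gamma,\mathbf{a}}^{a,L}(A)
\]
for every Borel $A \subseteq P^1\mathcal{M}_{g,n}$.

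Next I would compare the total masses using Proposition \ref{prop:total_hor_meas}: it gives $m_\gamma^{f,L}/L^{6g-6+2n} \to I_f := \int f(\mathbf{L}) \cdot W_{g,n}(\gamma,\mathbf{L})\,d\mathbf{L}$ and $m_\gamma^{a,L}/L^{6g-6+2n} \to I_a := \int_{[0,a]^k} W_{g,n}(\gamma,\mathbf{L})\,d\mathbf{L}$. Both constants are finite, and both are strictly positive: by Theorem \ref{theo:vol_pol} the polynomial $W_{g,n}(\gamma,\cdot)$ has positive coefficients, hence is strictly positive on $(\mathbf{R}_{>0})^k$, while the assumption that $f \geq 0$ is not almost everywhere zero forces $\{f>0\}$ to meet $(\mathbf{R}_{>0})^k$ in a set of positive Lebesgue measure. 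Thus $\kappa := I_a/I_f \in (0,\infty)$ and $m_\gamma^{a,L}/m_\gamma^{f,L} \to \kappa$.

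Combining the two estimates,
\[
\limsup_{L\to\infty} \frac{\widehat{\nu}_{\gamma,\mathbf{a}}^{f,L}(A)}{m_\gamma^{f,L}} \leq M \cdot \limsup_{L\to\infty} \frac{\widehat{\nu}_{\gamma,\mathbf{a}}^{a,L}(A)}{m_\gamma^{f,L}} = M\kappa \cdot \limsup_{L\to\infty} \frac{\widehat{\nu}_{\gamma,\mathbf{a}}^{a,L}(A)}{m_\gamma^{a,L}},
\]
where the last equality uses the existence of the genuine limit $m_\gamma^{a,L}/m_\gamma^{f,L} \to \kappa > 0$. Setting $C := M\kappa$ completes the argument. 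I do not expect any serious obstacle here; the only delicate point is verifying $I_f > 0$, which rests on the strict positivity of $W_{g,n}(\gamma,\cdot)$ on the open positive orthant combined with the non-triviality hypothesis on $f$.
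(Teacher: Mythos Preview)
Your proof is correct and follows essentially the same approach as the paper: dominate $f$ by $M\cdot\mathbbm{1}_{[0,a]^k}$, push the resulting measure inequality down to $P^1\mathcal{M}_{g,n}$, and use Proposition~\ref{prop:total_hor_meas} to show the total masses have a positive finite limiting ratio. You are in fact slightly more careful than the paper in justifying why $I_f>0$, via the positivity of $W_{g,n}(\gamma,\cdot)$ on $(\mathbf{R}_{>0})^k$ and the non-triviality hypothesis on $f$.
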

$ $

\begin{proof}
	Let $a > 0$ be such that the support of $f$ is contained in the box $B_a \subseteq (\mathbf{R}_{\geq0})^k$ and $M > 0$ be such that $f \leq M$. It follows directly from the definitions that
	\[
	\nu_{\gamma,\mathbf{a}}^{f,L} \leq M \cdot \nu_{\gamma,\mathbf{a}}^{a,L}.
	\]
	In particular, after considering the relevant local pushforwards and pushforwards,
	\[
	\widehat{\nu}_{\gamma,\mathbf{a}}^{f,L} \leq M \cdot \widehat{\nu}_{\gamma,\mathbf{a}}^{a,L}.
	\]
	$ $
	As a consequence of Proposition \ref{prop:total_hor_meas},
	\[
	\lim_{L \to \infty} \frac{m_\gamma^{f,L}}{m_\gamma^{a,L}} = c
	\]
	for some positive constant $c > 0$. It follows that, for every Borel measurable subset $A \subseteq P^1\mathcal{M}_{g,n}$,
	\[
	\limsup_{L \to \infty} \frac{\widehat{\nu}_{\gamma,\mathbf{a}}^{f,L}(A)}{m_{\gamma}^{f,L}} \leq M \cdot 	\limsup_{L \to \infty} \frac{\widehat{\nu}_{\gamma,\mathbf{a}}^{a,L}(A)}{m_{\gamma}^{f,L}} \leq M \cdot c \cdot 	\limsup_{L \to \infty} \frac{\widehat{\nu}_{\gamma,\mathbf{a}}^{a,L}(A)}{m_{\gamma}^{a,L}}.
	\]
	Letting $C:= M \cdot c$ finishes the proof.
\end{proof}
$ $

\textit{Absolute continuity with respect to the Mirzakhani measure.} Recall that $U_X(\epsilon) \subseteq \mathcal{T}_{g,n}$ denotes the open ball of radius $\epsilon > 0$ centered at $X \in \mathcal{T}_{g,n}$ with respect to the symmetric Thurston metric $d_\text{Thu}$. A subset of a topological space is said to be a \textit{continuity subset} of a given measure class if its boundary has measure zero with respect to such measure class. To prove Proposition \ref{prop:hor_meas_ac_0} we make use the following technical result, which is a consequence of the outer regularity of the measure $\nu_\text{Mir}$ on $P^1 \mathcal{T}_{g,n}$, the Vitali covering lemma, Lemma \ref{lemma:thu_ball_bound}, and the fact that the Thurston measure $\mu_{\text{Thu}}$ on $\mathcal{ML}_{g,n}$ is, up to a constant, the Lebesgue measure on train track coordinates.\\

\begin{lemma}
	\label{lemma:approx}
	Let $K \subseteq P^1\mathcal{T}_{g,n}$ be a compact subset. There exists a constant $\epsilon_0 > 0$ such that for every Borel measurable subset $A \subseteq K$ with $\nu_{\text{Mir}}(A) = 0$ and every $\delta> 0$, there exists a countable cover
	\[
	A \subseteq \bigcup_{i\in \mathbf{N}} U_{X_i}(\epsilon_i) \times V_i
	\]
	with $X_i \in K$, $0 < \epsilon_i < \epsilon_0$, $V_i \subseteq P\mathcal{ML}_{g,n}$ open continuity subset of the Lebesgue measure class, and such that
	\[
	\sum_{i\in \mathbf{N}} \nu_{\text{Mir}}(U_{X_i}(\epsilon_i) \times V_i)< \delta.
	\]
\end{lemma}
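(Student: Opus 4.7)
I propose the following approach, guided by the hint in the statement. Identify $P^1\mathcal{T}_{g,n}$ with $\mathcal{T}_{g,n} \times P\mathcal{ML}_{g,n}$ via $(X, \lambda) \mapsto (X, [\lambda])$, so that one may speak of product neighborhoods $U_X(\epsilon) \times V$ with $V \subseteq P\mathcal{ML}_{g,n}$. Since $\mathcal{ML}_{g,n}$ admits train track charts in which $\mu_\text{Thu}$ agrees up to a positive constant with Lebesgue measure, $P\mathcal{ML}_{g,n}$ has a basis of open neighborhoods (e.g.\ open boxes in projective train track coordinates) whose boundaries are piecewise-linear and hence Lebesgue-null. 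These will be the $V_i$ in the desired cover.

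Next, I would establish a two-sided estimate for $\nu_\text{Mir}$ on product neighborhoods. The characterization $d_\text{Thu}(X, Y) < \epsilon \Leftrightarrow e^{-\epsilon}\ell_\lambda(X) \leq \ell_\lambda(Y) \leq e^\epsilon \ell_\lambda(X)$ for every $\lambda$, together with the homogeneity of $\mu_\text{Thu}$ of degree $6g-6+2n$, yields
\[
e^{-(6g-6+2n)\epsilon} \mu_\text{Thu}^X(V) \leq \mu_\text{Thu}^Y(V) \leq e^{(6g-6+2n)\epsilon} \mu_\text{Thu}^X(V)
\]
for all $Y \in U_X(\epsilon)$ and all $V \subseteq P\mathcal{ML}_{g,n}$. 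Combining this with the disintegration $d\nu_\text{Mir}(X,\lambda) = d\mu_\text{Thu}^X(\lambda) \, d\mu_\text{wp}(X)$ and Lemma \ref{lemma:thu_ball_bound}, for any compact neighborhood $K' \subseteq \mathcal{T}_{g,n}$ of the projection of $K$ there exist $\epsilon_0, C_1, C_2 > 0$ such that for all $X \in K'$, $0 < \epsilon < \epsilon_0$, and all $V \subseteq P\mathcal{ML}_{g,n}$,
\[
C_1 \epsilon^{6g-6+2n} \mu_\text{Thu}^X(V) \leq \nu_\text{Mir}(U_X(\epsilon) \times V) \leq C_2 \epsilon^{6g-6+2n} \mu_\text{Thu}^X(V).
\]
Using the piecewise-Euclidean scaling of $\mu_\text{Thu}$ in train track coordinates, this upgrades to a uniform doubling estimate $\nu_\text{Mir}(U_X(5\epsilon) \times V(5\epsilon)) \leq C_3 \, \nu_\text{Mir}(U_X(\epsilon) \times V(\epsilon))$, where $V(r)$ denotes an open box of coordinate size $r$ about a fixed projective class.

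Finally, combine outer regularity with Vitali. Given $A$ and $\delta$, by outer regularity of the locally finite Borel measure $\nu_\text{Mir}$ find an open $U \supseteq A$ contained in a compact thickening of $K$ with $\nu_\text{Mir}(U) < \delta/C_3$. For each $p = (X, [\lambda]) \in A$, choose a product ball $B_p = U_X(\epsilon_p) \times V_p$ containing $p$ and contained in $U$, with $V_p$ an open box in projective train track coordinates and $0 < \epsilon_p < \epsilon_0/5$. The Vitali covering lemma, applicable by the uniform doubling established above, produces a countable disjoint subcollection $\{B_i\}$ whose $5$-fold enlargements $\{5 B_i = U_{X_i}(5\epsilon_i) \times V_i\}$ still cover $A$; each $V_i$ is again a box (hence a Lebesgue continuity set), each $5\epsilon_i < \epsilon_0$, and
\[
\sum_i \nu_\text{Mir}(5 B_i) \leq C_3 \sum_i \nu_\text{Mir}(B_i) = C_3 \, \nu_\text{Mir}\Big(\bigsqcup_i B_i\Big) \leq C_3 \, \nu_\text{Mir}(U) < \delta.
\]

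The main obstacle is verifying the uniform doubling property for product balls with respect to $\nu_\text{Mir}$, which is what legitimizes the invocation of Vitali. This reduces, via the sandwich estimate above, to standard Euclidean doubling in train track coordinates combined with Lemma \ref{lemma:thu_ball_bound}; both ingredients are elementary but care is required to extract the constants uniformly over the compact thickening $K'$. A secondary technical point is that the boxes $V_i$ must remain Lebesgue continuity sets after enlargement, which is automatic for open boxes in train track coordinates.
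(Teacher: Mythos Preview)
Your proposal is correct and follows essentially the same approach the paper indicates: the paper does not give a detailed proof of this lemma but explicitly names as ingredients the outer regularity of $\nu_{\text{Mir}}$, the Vitali covering lemma, Lemma~\ref{lemma:thu_ball_bound}, and the fact that $\mu_{\text{Thu}}$ is Lebesgue in train track coordinates, which is precisely the combination you assemble. Your identification of the doubling estimate as the key technical point, and your derivation of it from the length comparison $e^{-\epsilon}\ell_\lambda(X)\le \ell_\lambda(Y)\le e^{\epsilon}\ell_\lambda(X)$ together with the homogeneity of $\mu_{\text{Thu}}$ and Lemma~\ref{lemma:thu_ball_bound}, is exactly what the paper's hint is pointing toward.
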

$ $

Let $\Pi \colon P^1 \mathcal{T}_{g,n} \to P^1 \mathcal{M}_{g,n}$ be the quotient map induced by the mapping class group action on $P^1\mathcal{T}_{g,n}$. Using Lemma \ref{lemma:approx} we will reduce the proof of Proposition \ref{prop:hor_meas_ac_0} to the following estimate.\\

\begin{proposition}
	\label{prop:ac_key_estimate}
	Let $K \subseteq \mathcal{T}_{g,n}$ be a compact subset and $\epsilon_0 > 0$ be fixed. There exists a constant $C > 0$ such that for every $X \in K$, every $0 < \epsilon < \epsilon_0$, and every $V \subseteq P \mathcal{ML}_{g,n}$ open continuity subset of the Lebesgue measure class,
	\[
	\limsup_{L \to \infty} \frac{\widehat{\nu}_{\gamma,\mathbf{a}}^{f,L}(\Pi(U_X(\epsilon) \times V))}{m_\gamma^{f,L}} \leq C \cdot \nu_{\text{Mir}}(U_X(\epsilon) \times V).
	\]
\end{proposition}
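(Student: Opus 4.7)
The plan is (i) to reduce to the case of a box indicator via Lemma \ref{lemma:hor_meas_comp}, (ii) to unfold the pushforward $\Pi_*$ on $P^1\mathcal{M}_{g,n}$ into an integral over $U_X(\epsilon) \subseteq \mathcal{T}_{g,n}$ of a mapping class group orbital count, and (iii) to bound that count using the convergence of integer multi-curve counting measures to the Thurston measure.

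By Lemma \ref{lemma:hor_meas_comp} it suffices to prove the bound with $f$ replaced by $\mathbbm{1}_{B_a}$ for $B_a := [0,a]^k$ containing $\text{supp}(f)$; Proposition \ref{prop:total_hor_meas} gives $\lim_{L\to\infty} m_\gamma^{a,L}/L^{6g-6+2n} = c > 0$. Since $\nu_{\gamma,\mathbf{a}}^{a,L}$ is $\text{Stab}(\gamma)$-invariant and $\Pi^{-1}(\Pi(U_X(\epsilon)\times V))$ equals the $\text{Mod}_{g,n}$-saturation of $U_X(\epsilon)\times V$, a standard unfolding argument gives
\[
\widehat{\nu}_{\gamma,\mathbf{a}}^{a,L}(\Pi(U_X(\epsilon)\times V)) \leq \int_{U_X(\epsilon)} N_L(Y)\, d\mu_\text{wp}(Y),
\]
where
\[
N_L(Y) := \#\{\alpha \in \text{Mod}_{g,n}\cdot\gamma \,:\, [\mathbf{a}\cdot\alpha] \in V,\; L^{-1}(\ell_{\alpha_i}(Y))_{i=1}^k \in B_a\}.
\]

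For $Y \in U_X(\epsilon)$, the defining inequality of the symmetric Thurston metric gives $\ell_\lambda(Y) \leq e^\epsilon\ell_\lambda(X)$ for every $\lambda \in \mathcal{ML}_{g,n}$, so $N_L(Y) \leq N_L'(X)$, where $N_L'(X)$ is the analogous count with $B_a$ enlarged by the factor $e^{\epsilon_0}$. Fixing a common denominator $q$ of $\mathbf{a}$, the assignment $\alpha \mapsto q\mathbf{a}\cdot\alpha$ embeds $\text{Mod}_{g,n}\cdot\gamma$ into $\mathcal{ML}_{g,n}(\mathbf{Z})$; under this embedding, the constraints defining $N_L'(X)$ correspond to membership of $q\mathbf{a}\cdot\alpha/L$ in a compact Lebesgue-class continuity subset $U_X^V \subseteq \mathcal{ML}_{g,n}$ with projective image contained in $V$. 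The convergence $\mu^L \to \mu_\text{Thu}$ of \eqref{ML_counting_measure} tested against $\mathbbm{1}_{U_X^V}$ yields
\[
\limsup_{L\to\infty}\frac{N_L'(X)}{L^{6g-6+2n}} \leq q^{6g-6+2n}\mu_\text{Thu}(U_X^V) \leq C_1\cdot\mu_\text{Thu}^X(V),
\]
the last inequality following from the definition $\mu_\text{Thu}^X(V) = \mu_\text{Thu}([0,1]\cdot V)$ together with the scaling of $\mu_\text{Thu}$ under homotheties and the length bound implicit in the definition of $U_X^V$, with $C_1$ uniform in $X \in K$ by continuity of length functions on $K \times P\mathcal{ML}_{g,n}$ and the compactness of both factors.

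Combining these estimates with the disintegration $d\nu_\text{Mir}(X,\lambda) = d\mu_\text{Thu}^X(\lambda)\, d\mu_\text{wp}(X)$ produces
\[
\limsup_{L\to\infty}\frac{\widehat{\nu}_{\gamma,\mathbf{a}}^{a,L}(\Pi(U_X(\epsilon)\times V))}{L^{6g-6+2n}} \leq C_1\cdot\mu_\text{wp}(U_X(\epsilon))\cdot\mu_\text{Thu}^X(V) = C_1\cdot\nu_\text{Mir}(U_X(\epsilon)\times V),
\]
and dividing by $m_\gamma^{a,L}$ yields the stated bound with $C := C_1/c$. The hardest part of the argument is securing $C$ uniformly in $X \in K$, $0 < \epsilon < \epsilon_0$, and $V$: uniformity in $\epsilon$ is automatic from $e^\epsilon \leq e^{\epsilon_0}$; uniformity in $V$ is inherent since the bound depends on $V$ only through $\mu_\text{Thu}^X(V)$; and uniformity in $X$ relies on continuity of length functions combined with the compactness of $K$ and $P\mathcal{ML}_{g,n}$ to bound $\mu_\text{Thu}(U_X^V)$ in terms of $\mu_\text{Thu}^X(V)$ with a $K$-uniform constant.
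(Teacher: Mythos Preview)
Your approach is essentially the same three-step argument the paper gives: reduce to a box indicator (Lemma~\ref{lemma:hor_meas_comp}), unfold over $\text{Mod}_{g,n}/\text{Stab}(\gamma)$ to bound the measure by a lattice-point count times $\mu_{\text{wp}}(U_X(\epsilon))$ (Lemma~\ref{lemma:quot_bound}), and use the weak-$\star$ convergence of the counting measures $\mu^L$ to $\mu_{\text{Thu}}$ to bound the count by $B_V(X)$ (Lemma~\ref{lemma:count_bd_ac}).

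There is, however, one genuine slip in your final line. You write
\[
\mu_{\text{wp}}(U_X(\epsilon))\cdot\mu_{\text{Thu}}^X(V) \;=\; \nu_{\text{Mir}}(U_X(\epsilon)\times V),
\]
invoking the disintegration $d\nu_{\text{Mir}}(Y,\lambda) = d\mu_{\text{Thu}}^Y(\lambda)\, d\mu_{\text{wp}}(Y)$. But that disintegration gives
\[
\nu_{\text{Mir}}(U_X(\epsilon)\times V) \;=\; \int_{U_X(\epsilon)} \mu_{\text{Thu}}^Y(V)\, d\mu_{\text{wp}}(Y),
\]
and the fiber measure $\mu_{\text{Thu}}^Y(V) = B_V(Y)$ genuinely depends on $Y$, so the equality is false. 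What you need is the inequality $\mu_{\text{wp}}(U_X(\epsilon))\cdot B_V(X) \leq C'\cdot \nu_{\text{Mir}}(U_X(\epsilon)\times V)$ with $C'$ uniform in $X\in K$, $0<\epsilon<\epsilon_0$, and $V$; this is exactly the content of the paper's Lemma~\ref{lemma:bv_measure_comparison}. The fix is easy and in fact follows from the same Thurston-metric control you already used: for $Y \in U_X(\epsilon)$ one has $\{\lambda : \ell_\lambda(X)\leq 1,\ [\lambda]\in V\} \subseteq \{\lambda : \ell_\lambda(Y)\leq e^{\epsilon},\ [\lambda]\in V\}$, whence $B_V(X) \leq e^{(6g-6+2n)\epsilon_0} B_V(Y)$ by the homogeneity of $\mu_{\text{Thu}}$, and integrating over $U_X(\epsilon)$ yields the required bound with $C' = e^{(6g-6+2n)\epsilon_0}$.
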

$ $

Let us prove Proposition \ref{prop:hor_meas_ac_0} using Lemma \ref{lemma:approx} and Proposition \ref{prop:ac_key_estimate}.\\

\begin{proof}[Proof of Proposition \ref{prop:hor_meas_ac_0}]
	Let $L_j \nearrow +\infty$ be an increasing sequence of positive real numbers such that for some finite measure $\widehat{\nu}_{\gamma,\mathbf{a}}^f$ on $P^1\mathcal{M}_{g,n}$,
	\[
	\lim_{j \to \infty} \frac{\widehat{\nu}_{\gamma,\mathbf{a}}^{f,L_j}}{m_{\gamma}^{f,L_j}} = \widehat{\nu}_{\gamma,\mathbf{a}}^f
	\] 
	in the weak-$\star$ topology. Let $\widehat{A} \subseteq P^1 \mathcal{M}_{g,n}$ be a Borel measurable subset such that $\widehat{\nu}_{\text{Mir}}(\widehat{A})=0$. Our goal is to show that $\widehat{\nu}_{\gamma,\mathbf{a}}^f(\widehat{A}) = 0$. As $P^1 \mathcal{M}_{g,n}$ admits a countable exhaustion by compact sets and as the limit measure $\widehat{\nu}_{\gamma,\mathbf{a}}^f$ is continuous with respect to increasing limits of sets, we can assume without loss of generality that $\widehat{A} \subseteq \widehat{K}$ for some compact subset $\widehat{K} \subseteq P^1 \mathcal{M}_{g,n}$. Let $K \subseteq P^1 \mathcal{T}_{g,n}$ be a compact subset covering $\widehat{K}$ and $A \subseteq K$ be a subset covering $\widehat{A}$. Notice that $\nu_{\text{Mir}}(A) = 0$.\\
	
	Let $\delta > 0$ be arbitrary. Lemma \ref{lemma:approx} provides a countable cover
	\[
	A \subseteq \bigcup_{i \in \mathbf{N}} U_{X_i}(\epsilon_i) \times V_i,
	\]
	with $X_i \in K$, $0 < \epsilon_i < \epsilon_0$, $V_i \subseteq P\mathcal{ML}_{g,n}$ open continuity subset of the Lebesgue measure class, and such that
	\[
	\sum_{i\in \mathbf{N}} \nu_{\text{Mir}}(U_{X_i}(\epsilon_i) \times V_i)< \delta.
	\]
	Monotonicity of the limit measure $\widehat{\nu}_{\gamma,\mathbf{a}}^f$ ensures
	\[
	\widehat{\nu}_{\gamma,\mathbf{a}}^f(\widehat{A}) \leq \widehat{\nu}_{\gamma,\mathbf{a}}^f \left(\bigcup_{i\in \mathbf{N}} \Pi(U_{X_i}(\epsilon_i) \times V_i)\right).
	\]
	As the limit measure $\widehat{\nu}_{\gamma,\mathbf{a}}^f$ is finite and continuous with respect to increasing limits of sets, we can find a finite subset $I \subseteq\mathbf{N}$ such that
	\[
	\widehat{\nu}_{\gamma,\mathbf{a}}^f \left(\bigcup_{i\in \mathbf{N}} \Pi(U_{X_i}(\epsilon_i) \times V_i)\right) \leq \widehat{\nu}_{\gamma,\mathbf{a}}^f \left(\bigcup_{i\in I} \Pi(U_{X_i}(\epsilon_i) \times V_i)\right) + \delta.
	\]
	Portmanteau's theorem applied to open sets with compact closure ensures 
	\[
	\widehat{\nu}_{\gamma,\mathbf{a}}^f \left(\bigcup_{i \in I} \Pi(U_{X_i}(\epsilon_i) \times V_i)\right)  \leq \liminf_{j \to \infty} \ \frac{\widehat{\nu}_{\gamma,\mathbf{a}}^{f,L_j}}{m_\gamma^{f,L_j}} \left(\bigcup_{i\in I} \Pi(U_{X_i}(\epsilon_i) \times V_i)\right).
	\]
	By subadditivity of the measures $\widehat{\nu}_{\gamma,\mathbf{a}}^{f,L_j}/m_\gamma^{f,L_j}$,
	\[
	\frac{\widehat{\nu}_{\gamma,\mathbf{a}}^{f,L_j}}{m_\gamma^{f,L_j}} \left(\bigcup_{i\in I} \Pi(U_{X_i}(\epsilon_i) \times V_i)\right)  \leq \sum_{i\in I} \frac{\widehat{\nu}_{\gamma,\mathbf{a}}^{f,L_j}}{m_\gamma^{f,L_j}} \left( \Pi(U_{X_i}(\epsilon_i) \times V_i)\right).
	\]
	Proposition \ref{prop:ac_key_estimate} provides a constant $C > 0$ such that for every $i \in I$,
	\[
	\limsup_{j \to \infty} \frac{\widehat{\nu}_{\gamma,\mathbf{a}}^{f,L_j}}{m_\gamma^{f,L_j}} \left( \Pi(U_{X_i}(\epsilon_i) \times V_i )\right) \leq C \cdot \nu_{\text{Mir}}(U_{X_i}(\epsilon_i) \times V_i).
	\]
	In particular, 
	\[
	\limsup_{j \to \infty} \sum_{i\in I} \frac{\widehat{\nu}_{\gamma,\mathbf{a}}^{f,L_j}}{m_\gamma^{f,L_j}} \left( \Pi(U_{X_i}(\epsilon_i) \times V_i)\right)  \leq C \cdot \sum_{i \in I} \nu_{\text{Mir}}(U_{X_i}(\epsilon_i) \times V_i).
	\]
	Putting things together we deduce
	\[
	\widehat{\nu}_{\gamma,\mathbf{a}}^f(\widehat{A}) \leq (1 + C) \cdot \delta.
	\]
	As $\delta  > 0$ is arbitrary,  $\widehat{\nu}_{\gamma,\mathbf{a}}^f(\widehat{A})  = 0$. This finishes the proof.
\end{proof}
$ $

We now prove Proposition \ref{prop:ac_key_estimate}. By Lemma \ref{lemma:hor_meas_comp}, it is enough to show this proposition holds for the sequence of probability measures $\{\widehat{\nu}_{\gamma,\mathbf{a}}^{a,L}/m_\gamma^{a,L}\}_{L > 0}$ with $a > 0$ arbitrary. For the rest of this discussion fix $a>0$.\\

We prove Proposition \ref{prop:ac_key_estimate} in three steps. Given $\alpha \in \mathcal{ML}_{g,n}$, denote by $[\alpha]$ its equivalence class in $P\mathcal{ML}_{g,n}$. Let $X \in \mathcal{T}_{g,n}$ and $V \subseteq P \mathcal{ML}_{g,n}$ be a Borel measurable subset. For every $L > 0$ consider the counting function
\[
s(X,\mathbf{a} \cdot \gamma,L,V) := \# \{\alpha \in \text{Mod}_{g,n} \cdot (\mathbf{a} \cdot \gamma) \ | \ \ell_{\alpha}(X) \leq L, \ [\alpha] \in V \},
\]
where $\mathbf{a} \cdot \gamma \in \mathcal{ML}_{g,n}(\mathbf{Q})$ is as in (\ref{eq:weight_curve}). Notice that
\[
s(X,\mathbf{a} \cdot \gamma,L,V) = \# \{\phi \in \text{Mod}_{g,n}/\text{Stab}(\mathbf{a} \cdot \gamma) \ | \ \ell_{\phi \cdot (\mathbf{a} \cdot \gamma)}(X) \leq L, \ [\phi \cdot (\mathbf{a} \cdot \gamma)] \in V \}.
\]
Denote
\[
S(\mathbf{a}) := a_1+\cdots+a_k.
\]
The following lemma is the first step in the proof of Proposition \ref{prop:ac_key_estimate}.\\

\begin{lemma}
	\label{lemma:quot_bound}
	For every $X \in \mathcal{T}_{g,n}$, every $\epsilon > 0$, every Borel measurable subset $V \subseteq P\mathcal{ML}_{g,n}$, and every $L > 0$, 
	\[
	\widehat{\nu}_{\gamma,\mathbf{a}}^{a,L}(\Pi(U_X(\epsilon) \times V)) \leq [\text{Stab}(\mathbf{a} \cdot \gamma):\text{Stab}(\gamma)]\cdot s(X,\gamma,S(\mathbf{a})e^\epsilon aL,V) \cdot \mu_{\text{wp}}(U_X(\epsilon)).
	\]
\end{lemma}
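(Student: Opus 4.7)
The plan is to unwind the two-step pushforward
\[
P^1\mathcal{T}_{g,n} \xrightarrow{q_\gamma} P^1\mathcal{T}_{g,n}/\text{Stab}(\gamma) \xrightarrow{q_M} P^1\mathcal{M}_{g,n}
\]
and bound the resulting lifted integral by a counting argument in $\text{Mod}_{g,n}\cdot(\mathbf{a}\cdot\gamma)$, using the distortion estimate coming from the symmetric Thurston metric. Writing $\Pi=q_M\circ q_\gamma$ and fixing a Borel fundamental domain $\mathcal{F}_\gamma\subseteq P^1\mathcal{T}_{g,n}$ for the $\text{Stab}(\gamma)$-action, the definitions of local pushforward and pushforward give, for any Borel set $E\subseteq P^1\mathcal{M}_{g,n}$,
\[
\widehat{\nu}_{\gamma,\mathbf{a}}^{a,L}(E)\;=\;\nu_{\gamma,\mathbf{a}}^{a,L}\bigl(\Pi^{-1}(E)\cap\mathcal{F}_\gamma\bigr).
\]
For $E=\Pi(U_X(\epsilon)\times V)$ I decompose $\Pi^{-1}(E)=\bigcup_{\phi\in\text{Mod}_{g,n}}U_{\phi X}(\epsilon)\times\phi V$ and group the indices by right cosets of $\text{Stab}(\gamma)$: choosing representatives $R$ for $\text{Stab}(\gamma)\backslash\text{Mod}_{g,n}$, the $\text{Stab}(\gamma)$-invariance of $\nu_{\gamma,\mathbf{a}}^{a,L}$, the fundamental-domain property of $\mathcal{F}_\gamma$, and subadditivity combine to give
\[
\widehat{\nu}_{\gamma,\mathbf{a}}^{a,L}(E)\;\le\;\sum_{\phi\in R}\nu_{\gamma,\mathbf{a}}^{a,L}\bigl(U_{\phi X}(\epsilon)\times\phi V\bigr).
\]

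Next I evaluate each summand with the disintegration formula for $\nu_{\gamma,\mathbf{a}}^{a,L}$, which concentrates the measure on the graph $\{(Y,\mathbf{a}\cdot\gamma/\ell_{\mathbf{a}\cdot\gamma}(Y))\}$. Because the projective class $[\mathbf{a}\cdot\gamma]$ does not depend on $Y$, this yields
\[
\nu_{\gamma,\mathbf{a}}^{a,L}\bigl(U_{\phi X}(\epsilon)\times\phi V\bigr)\;=\;\mathbbm{1}_V\!\bigl([\phi^{-1}(\mathbf{a}\cdot\gamma)]\bigr)\cdot\mu_\gamma^{a,L}\bigl(U_{\phi X}(\epsilon)\bigr).
\]
For the summand to be non-zero the set $U_{\phi X}(\epsilon)\cap B_\gamma^{a,L}$ must be non-empty; picking any $Y$ in this intersection, using $d_{\text{Thu}}(Y,\phi X)<\epsilon$ to get $\ell_{\gamma_i}(\phi X)\le e^\epsilon\ell_{\gamma_i}(Y)\le e^\epsilon aL$, and invoking the mapping-class invariance of length functions ($\ell_{\gamma_i}(\phi X)=\ell_{\phi^{-1}\gamma_i}(X)$), I obtain
\[
\ell_{\phi^{-1}(\mathbf{a}\cdot\gamma)}(X)=\sum_{i=1}^{k}a_i\,\ell_{\phi^{-1}\gamma_i}(X)\le S(\mathbf{a})\,e^\epsilon aL.
\]
Each non-zero term is then bounded above by $\mu_{\text{wp}}(U_{\phi X}(\epsilon))=\mu_{\text{wp}}(U_X(\epsilon))$ via the $\text{Mod}_{g,n}$-invariance of the Weil--Petersson measure.

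Finally I count the surviving indices. As $\phi$ ranges over $R$, $\phi^{-1}$ ranges over a transversal for $\text{Mod}_{g,n}/\text{Stab}(\gamma)$, and the orbit map $\phi^{-1}\mapsto\phi^{-1}(\mathbf{a}\cdot\gamma)$ factors through $\text{Mod}_{g,n}/\text{Stab}(\mathbf{a}\cdot\gamma)$ with every fibre of cardinality $[\text{Stab}(\mathbf{a}\cdot\gamma):\text{Stab}(\gamma)]$. Hence the number of $\phi\in R$ producing a non-zero summand is exactly
\[
[\text{Stab}(\mathbf{a}\cdot\gamma):\text{Stab}(\gamma)]\cdot s\bigl(X,\mathbf{a}\cdot\gamma,\,S(\mathbf{a})\,e^\epsilon aL,\,V\bigr),
\]
which multiplied by $\mu_{\text{wp}}(U_X(\epsilon))$ yields the claimed inequality. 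The only delicate point is the very first bookkeeping step: one must verify that the passage from $\nu_{\gamma,\mathbf{a}}^{a,L}$ to $\widehat{\nu}_{\gamma,\mathbf{a}}^{a,L}$ through the intermediate quotient $P^1\mathcal{T}_{g,n}/\text{Stab}(\gamma)$ is compatible with summing over $\text{Stab}(\gamma)$-cosets of $\text{Mod}_{g,n}$ when paired with the fundamental-domain restriction; the remaining steps are direct computations.
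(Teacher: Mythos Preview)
Your proof is correct and follows essentially the same approach as the paper: both unwind the two-step pushforward, sum over coset representatives of $\text{Stab}(\gamma)$ in $\text{Mod}_{g,n}$, use the Thurston-metric distortion to bound $\ell_{\phi^{-1}(\mathbf{a}\cdot\gamma)}(X)$, and count via the index $[\text{Stab}(\mathbf{a}\cdot\gamma):\text{Stab}(\gamma)]$. The only cosmetic differences are that you work with an explicit fundamental domain $\mathcal{F}_\gamma$ where the paper manipulates the local pushforward $\widetilde{\nu}_{\gamma,\mathbf{a}}^{a,L}$ directly, and that you keep $\gamma$ fixed and translate the set while the paper does the reverse; one small slip is that the number of non-zero summands is \emph{at most} (not ``exactly'') the displayed count, but this is what the inequality needs.
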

$ $

\begin{proof}
	Consider the quotient map $P \colon P^1 \mathcal{T}_{g,n} / \text{Stab}(\mathcal{\gamma}) \to P^1 \mathcal{M}_{g,n}$. Recall that $\widehat{\nu}_{\gamma,\mathbf{a}}^{a,L}$ is the pushforward of the measure $\widetilde{\nu}_{\gamma,\mathbf{a}}^{a,L}$ on $P^1 \mathcal{T}_{g,n}/ \text{Stab}(\gamma)$ under the map $P$. It follows that
	\[
	\widehat{\nu}_{\gamma,\mathbf{a}}^{a,L}(\Pi(U_X(\epsilon) \times V)) = \widetilde{\nu}_{\gamma,\mathbf{a}}^{a,L}(P^{-1}(\Pi(U_X(\epsilon) \times V))).
	\]
	Consider the quotient map $Q \colon P^1 \mathcal{T}_{g,n} \to P^1 \mathcal{T}_{g,n}/\text{Stab}(\gamma)$. Notice that $\Pi = P \circ Q$. It follows that
	\[
	P^{-1}(\Pi(U_X(\epsilon) \times V)) = Q(\text{Mod}_{g,n} \cdot (U_X(\epsilon) \times V)).
	\]
	To describe $Q(\text{Mod}_{g,n} \cdot (U_X(\epsilon) \times V))$ it is enough to consider one mapping class in every left coset of $\text{Stab}(\mathcal{\gamma}) \backslash \text{Mod}_{g,n}$ rather than all mapping classes in $\text{Mod}_{g,n}$. It follows that
	\[
	P^{-1}(\Pi(U_X(\epsilon) \times V)) = \bigcup_{[\phi] \in \text{Stab}(\mathcal{\gamma}) \backslash \text{Mod}_{g,n}} Q(\phi \cdot (U_X(\epsilon) \times V)).
	\]
	Using the subadditivity of the measure $\widetilde{\nu}_{\gamma,\mathbf{a}}^{a,L}$ we deduce
	\[
	\widetilde{\nu}_{\gamma,\mathbf{a}}^{a,L}(P^{-1}(\Pi(U_X(\epsilon) \times V))) \leq \sum_{[\phi] \in \text{Stab}(\gamma) \backslash \text{Mod}_{g,n}} \widetilde{\nu}_{\gamma,\mathbf{a}}^{a,L}(Q(\phi \cdot (U_X(\epsilon) \times V))).
	\]
	Using the bijection $\text{Stab}(\gamma) \backslash \text{Mod}_{g,n} \to \text{Mod}_{g,n}/\text{Stab}(\gamma)$ given by $[\phi] \mapsto [\phi^{-1}]$, we can rewrite the above inequality as
	\begin{equation}
	\label{eq:measure_sum}
	\widetilde{\nu}_{\gamma,\mathbf{a}}^{a,L}(P^{-1}(\Pi(U_X(\epsilon) \times V))) \leq \sum_{[\phi] \in \text{Mod}_{g,n}/\text{Stab}(\gamma) } \widetilde{\nu}_{\gamma,\mathbf{a}}^{a,L}(Q(\phi^{-1} \cdot (U_X(\epsilon) \times V))).
	\end{equation}
	Recall that $\widetilde{\nu}_{\gamma,\mathbf{a}}^{a,L}$ is the local pushforward of the measure $\nu_{\gamma,\mathbf{a}}^{a,L}$ on $P^1\mathcal{T}_{g,n}$ under the quotient map $Q \colon P^1 \mathcal{T}_{g,n} \to P^1 \mathcal{T}_{g,n}/\text{Stab}(\gamma)$. It follows that
	\[
	\widetilde{\nu}_{\gamma,\mathbf{a}}^{a,L}(Q(\phi^{-1} \cdot (U_X(\epsilon) \times V))) \leq \nu_{\gamma,\mathbf{a}}^{a,L}(\phi^{-1} \cdot (U_X(\epsilon) \times V))
	\]
	for every $[\phi] \in \text{Mod}_g/\text{Stab}(\gamma)$. Directly from the definition of the measures $\nu_{\gamma,\mathbf{a}}^{a,L}$ and the fact that the Weil-Petersson measure $\mu_{\text{wp}}$ on $\mathcal{T}_{g,n}$ is $\text{Mod}_{g,n}$-invariant, one can show that
	\[
	\nu_{\gamma,\mathbf{a}}^{a,L}(A) = \nu_{\phi \cdot \gamma,\mathbf{a}}^{a,L}(\phi \cdot A)
	\]
	for every Borel measurable set $A \subseteq P^1 \mathcal{T}_{g,n}$ and every $\phi \in \text{Mod}_{g,n}$. In particular,
	\[
	\nu_{\gamma,\mathbf{a}}^{a,L}(\phi^{-1} \cdot (U_X(\epsilon) \times V)) = \nu_{\phi \cdot \gamma, \mathbf{a}}^{a,L}( U_X(\epsilon) \times V)
	\]
	for every $[\phi] \in \text{Mod}_g/\text{Stab}(\gamma)$. Notice that if $\nu_{\phi \cdot \gamma,\mathbf{a}}^{a,L}(U_X(\epsilon) \times V) \neq 0$ then $[\phi \cdot (\mathbf{a} \cdot \gamma)] \in V$ and $B_{\phi \cdot \gamma}^{a,L} \cap U_X(\epsilon) \neq \emptyset$. By the definition of the horoball segment $B_{\phi \cdot \gamma}^{a,L}$ and of the symmetric Thurston metric $d_{\text{Thu}}$, the second condition implies
	\[
	\ell_{\phi \cdot \gamma_i}(X) \leq e^\epsilon a L, \ \forall i = 1 , \dots, k.
	\]
	In particular, $\ell_{\phi \cdot (\mathbf{a} \cdot \gamma)}(X) \leq S(\mathbf{a})e^\epsilon aL$. It follows that in the sum on the right hand side of (\ref{eq:measure_sum}) at most $[\text{Stab}(\mathbf{a} \cdot \gamma):\text{Stab}(\gamma)] \cdot s(X,\gamma,S(\mathbf{a})e^\epsilon aL,V)$ summands are non-zero. For each one of these summands, 
	\[
	\nu_{\phi \cdot \gamma,\mathbf{a}}^{a,L}(U_X(\epsilon) \times V) \leq \mu_{\text{wp}}(U_X(\epsilon))
	\]
	because $d\nu_{\phi \cdot \gamma,\mathbf{a}}^{a,L} (X,\lambda) = d \delta_{\mathbf{a} \cdot (\phi \cdot \gamma)/\ell_{\mathbf{a} \cdot (\phi \cdot \gamma)}(X)}(\lambda) \ d\mu_{\phi \cdot \gamma}^{a,L}(X)$ and $\mu_{\phi \cdot \gamma}^{a,L}$ is the restriction of $\mu_\text{wp}$ to the horoball $H_{\phi \cdot \gamma}^{a,L}$. Putting everything together we deduce
	\[
	\widehat{\nu}_{\gamma,\mathbf{a}}^{a,L}(\Pi(U_X(\epsilon) \times V)) \leq  [\text{Stab}(\mathbf{a} \cdot \gamma):\text{Stab}(\gamma)] \cdot s(X,\mathbf{a} \cdot \gamma,S(\mathbf{a})e^\epsilon aL,V) \cdot \mu_{wp}(U_X(\epsilon)).
	\]
	This finishes the proof.
\end{proof}
$ $

Given a Borel measurable subset $V \subseteq P\mathcal{ML}_{g,n}$, let $B_V \colon \mathcal{T}_{g,n} \to \mathbf{R}_{\geq 0}$ be the function which to every $X \in \mathcal{T}_{g,n}$ assigns the value
\begin{equation}
\label{eq:bv_def_0}
B_V(X) := \mu_{\text{Thu}}\left( \{\lambda \in \mathcal{ML}_{g,n} \colon \ell_\lambda(X) \leq L, \ [\lambda] \in V\}\right).
\end{equation}
The following lemma is the second step in the proof of Proposition \ref{prop:ac_key_estimate}. It is a consequence of Portmanteau's theorem applied to the definition of the Thurston measure $\mu_{\text{Thu}}$ on $\mathcal{ML}_{g,n}$ in (\ref{ML_counting_measure}); it is important for this step that the positive weights $\mathbf{a} \in (\mathbf{Q}_{>0})^k$ are rational.\\

\begin{lemma}
	\label{lemma:count_bd_ac}
	There exists a constant $C >0$ such that for every $X \in \mathcal{T}_{g,n}$ and every $V \subseteq P \mathcal{ML}_{g,n}$ open continuity subset of the Lebesgue measure class, 
	\[
	\limsup_{L \to \infty} \frac{s(X,\mathbf{a} \cdot \gamma,L,V)}{L^{6g-6+2n}} \leq C \cdot B_V(X).
	\]
\end{lemma}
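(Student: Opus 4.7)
The plan is to exploit the rationality of $\mathbf{a}$ to embed the orbit $\mathrm{Mod}_{g,n} \cdot (\mathbf{a} \cdot \gamma)$ into $\mathcal{ML}_{g,n}(\mathbf{Z})$ after clearing denominators, and then invoke the definition of the Thurston measure as the weak-$\star$ limit of the counting measures $\mu^T$ in (\ref{ML_counting_measure}) together with Portmanteau's theorem.

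First, I would pick $N \in \mathbf{Z}_{>0}$ with $N a_i \in \mathbf{Z}_{>0}$ for every $i = 1, \dots, k$, which is possible because $\mathbf{a} \in (\mathbf{Q}_{>0})^k$. For any $\alpha = \phi \cdot (\mathbf{a} \cdot \gamma) = \sum_{i=1}^k a_i \, \phi \cdot \gamma_i \in \mathrm{Mod}_{g,n} \cdot (\mathbf{a} \cdot \gamma)$, the rescaled lamination $N\alpha = \sum_{i=1}^k (Na_i)\, \phi \cdot \gamma_i$ lies in $\mathcal{ML}_{g,n}(\mathbf{Z})$. Because $\alpha \mapsto N\alpha$ is injective, preserves projective classes, and rescales hyperbolic length functions by $N$, I obtain the domination
\[
s(X, \mathbf{a} \cdot \gamma, L, V) \leq \#\bigl\{\beta \in \mathcal{ML}_{g,n}(\mathbf{Z}) : \ell_\beta(X) \leq NL,\ [\beta] \in V\bigr\} = (NL)^{6g-6+2n} \cdot \mu^{NL}(A),
\]
where $A := \{\lambda \in \mathcal{ML}_{g,n} : \ell_\lambda(X) \leq 1,\ [\lambda] \in V\}$ and the last equality is just the definition of $\mu^{NL}$ applied to $A$.

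Next, I would verify that $A$ is a $\mu_{\mathrm{Thu}}$-continuity subset of $\mathcal{ML}_{g,n}$. The boundary $\partial A$ is contained in the union of the level set $\{\lambda : \ell_\lambda(X) = 1\}$ and the radial preimage $\{\lambda : [\lambda] \in \partial V,\ \ell_\lambda(X) \leq 1\}$. The first piece is a smooth codimension-one submanifold by the non-singularity of $\ell_\cdot(X)$, hence has $\mu_{\mathrm{Thu}}$-measure zero since $\mu_{\mathrm{Thu}}$ is in the Lebesgue class (via train track coordinates). The second piece has $\mu_{\mathrm{Thu}}$-measure zero by the conical disintegration of $\mu_{\mathrm{Thu}}$ together with the assumption that $V$ is a continuity subset of the Lebesgue measure class on $P\mathcal{ML}_{g,n}$. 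With $A$ a continuity subset, the weak-$\star$ convergence $\mu^T \to \mu_{\mathrm{Thu}}$ and Portmanteau's theorem yield
\[
\lim_{L \to \infty} \mu^{NL}(A) = \mu_{\mathrm{Thu}}(A) = B_V(X),
\]
so combining with the domination above gives
\[
\limsup_{L \to \infty} \frac{s(X, \mathbf{a} \cdot \gamma, L, V)}{L^{6g-6+2n}} \leq N^{6g-6+2n} \cdot B_V(X).
\]
Setting $C := N^{6g-6+2n}$ completes the proof; crucially $C$ depends only on the fixed data $\mathbf{a}$, and neither on $X$ nor on $V$.

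The only delicate point I expect is the verification that $A$ is a $\mu_{\mathrm{Thu}}$-continuity set: the angular nullity of $\partial V$ must be transported through the radial disintegration of $\mu_{\mathrm{Thu}}$, and the nullity of the level set $\{\ell_\lambda(X) = 1\}$ must be read off from train track coordinates. Both are routine once the Lebesgue-class description of $\mu_{\mathrm{Thu}}$ is in hand, but they are the places where the rationality of $\mathbf{a}$ plays no role and where the hypothesis on $V$ is used essentially.
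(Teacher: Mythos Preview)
Your argument is correct and follows exactly the approach the paper indicates: clear denominators using the rationality of $\mathbf{a}$ to reduce to integral multi-curves, then apply Portmanteau's theorem to the weak-$\star$ convergence $\mu^T \to \mu_{\text{Thu}}$ on the continuity set $A$. The paper does not give a detailed proof beyond this sketch, so your write-up in fact supplies more detail than the original, particularly the verification that $A$ is a $\mu_{\text{Thu}}$-continuity set.
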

$ $

The last step in the proof of Proposition \ref{prop:ac_key_estimate} corresponds to the following lemma, which follows from standard compactness arguments and the fact that the Thurston measure $\mu_{\text{Thu}}$ on $\mathcal{ML}_{g,n}$ is, up to a constant, the Lebesgue measure on train track coordinates.\\

\begin{lemma}
	\label{lemma:bv_measure_comparison}
	Let $K \subseteq \mathcal{T}_{g,n}$ be a compact subset and $\epsilon_0 > 0$ be fixed. There exists a constant $C > 0$ such that for every $X \in K$, every $0 < \epsilon < \epsilon_0$, and every Borel measurable subset $V \subseteq P \mathcal{ML}_{g,n}$, 
	\[
	\mu_{\text{wp}}(U_X(\epsilon)) \cdot B_V(X) \leq C \cdot \nu_{\text{Mir}}(U_X(\epsilon) \times V).
	\]
\end{lemma}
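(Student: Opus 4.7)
The plan is to reduce the inequality to a scaling identity for the Thurston measure combined with the disintegration formula defining $\nu_{\text{Mir}}$. Set $d := 6g-6+2n$. First I would unfold the disintegration $d\nu_{\text{Mir}}(Y,\lambda) = d\mu_{\text{Thu}}^Y(\lambda)\, d\mu_{\text{wp}}(Y)$ to write
\[
\nu_{\text{Mir}}(U_X(\epsilon) \times V) = \int_{U_X(\epsilon)} \mu_{\text{Thu}}^Y(V \cap P^1_Y\mathcal{T}_{g,n}) \, d\mu_{\text{wp}}(Y),
\]
and then identify the integrand with $B_V(Y)$ by expanding the definition $\mu_{\text{Thu}}^Y(A) = \mu_{\text{Thu}}([0,1] \cdot A)$ and using that the preimage of $V$ under the projectivization $\mathcal{ML}_{g,n} \setminus \{0\} \to P\mathcal{ML}_{g,n}$ is invariant under positive scaling.

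The crux of the argument is a pointwise lower bound $B_V(Y) \geq e^{-\epsilon d} B_V(X)$ valid for every $Y \in U_X(\epsilon)$, uniformly in $X$, $Y$, and $V$. This is where the symmetric Thurston metric enters: by the defining property of $U_X(\epsilon)$, one has $\ell_\lambda(Y) \leq e^\epsilon \ell_\lambda(X)$ for every $\lambda \in \mathcal{ML}_{g,n}$, and hence
\[
\{\lambda : \ell_\lambda(Y) \leq 1,\ [\lambda] \in V\} \supseteq \{\lambda : \ell_\lambda(X) \leq e^{-\epsilon},\ [\lambda] \in V\} = e^{-\epsilon} \cdot \{\lambda : \ell_\lambda(X) \leq 1,\ [\lambda] \in V\},
\]
using the linearity of $\ell_{\bullet}(X)$ along rays and the scaling invariance of the cone over $V$. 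The Thurston measure is homogeneous of degree $d$ on such cones, an immediate consequence of its counting definition \eqref{ML_counting_measure}, so applying $\mu_{\text{Thu}}$ gives $B_V(Y) \geq e^{-\epsilon d} B_V(X) \geq e^{-\epsilon_0 d} B_V(X)$.

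Inserting this uniform lower bound into the integral above yields
\[
\nu_{\text{Mir}}(U_X(\epsilon) \times V) \geq e^{-\epsilon_0 d} \cdot B_V(X) \cdot \mu_{\text{wp}}(U_X(\epsilon)),
\]
which is precisely the claimed inequality with $C := e^{\epsilon_0 d}$. I do not anticipate any substantive obstacle: the compactness of $K$ does not actually enter this step, since the constant depends only on $\epsilon_0$ and on the topological type of $S_{g,n}$; the only mildly delicate point is the routine identification of $V \subseteq P\mathcal{ML}_{g,n}$ with its positive cone in $\mathcal{ML}_{g,n}$ before invoking the homogeneity of $\mu_{\text{Thu}}$.
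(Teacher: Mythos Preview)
Your argument is correct. The paper does not spell out a proof of this lemma; it only offers the hint ``standard compactness arguments and the fact that the Thurston measure $\mu_{\text{Thu}}$ on $\mathcal{ML}_{g,n}$ is, up to a constant, the Lebesgue measure on train track coordinates.'' Your route is more direct than what that hint suggests: instead of working in charts, you use two global facts---the degree-$d$ homogeneity of $\mu_{\text{Thu}}$ under scaling (which is indeed what the train-track description encodes) and the defining inequality $\ell_\lambda(Y)\le e^\epsilon\ell_\lambda(X)$ of the symmetric Thurston metric---to obtain the pointwise bound $B_V(Y)\ge e^{-\epsilon d}B_V(X)$ for all $Y\in U_X(\epsilon)$, and then integrate. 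Your observation that the compactness of $K$ plays no role is accurate and a mild sharpening: the explicit constant $C=e^{\epsilon_0(6g-6+2n)}$ depends only on $\epsilon_0$ and the topological type of $S_{g,n}$. The paper's hint presumably envisions a chart-by-chart comparison on a finite train-track cover of the relevant region of $\mathcal{ML}_{g,n}$, which would also work but is less clean and would introduce a spurious dependence on $K$.
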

$ $

Proposition \ref{prop:ac_key_estimate} now follows directly from Lemmas \ref{lemma:quot_bound}, \ref{lemma:count_bd_ac}, and \ref{lemma:bv_measure_comparison}, and Proposition \ref{prop:total_hor_meas}.\\

\textit{No escape of mass.} Recall that, for every $\epsilon > 0$, $K_\epsilon \subseteq \mathcal{M}_{g,n}$ denotes the $\epsilon$-thick part of $\mathcal{M}_{g,n}$ and $P^1 K_\epsilon \subseteq P^1\mathcal{M}_{g,n}$ denotes the $\epsilon$-thick part of $P^1 \mathcal{M}_{g,n}$; as mentioned in \S 1, these sets are compact. To prove Proposition \ref{prop:hor_meas_nem_0}
we show that the sequence of probability measures $\{\widehat{\nu}_{\gamma,\mathbf{a}}^{f,L}/m_\gamma^{f,L}\}_{L > 0}$ on $P^1\mathcal{M}_{g,n}$ exhibits no escape of mass; this could potentially happen given that $P^1\mathcal{M}_{g,n}$ is not compact. More specifically, Proposition \ref{prop:hor_meas_nem_0} can be deduced directly from the following result.\\

\begin{proposition}
	\label{prop:hor_meas_qnem}
	For every $\delta > 0$ there exists $\epsilon > 0$ such that
	\[
	\liminf_{L \to \infty} \frac{\widehat{\nu}_{\gamma,\mathbf{a}}^{f,L}(P^1 K_\epsilon)}{m_\gamma^{f,L}} \geq 1 - \delta.
	\]
\end{proposition}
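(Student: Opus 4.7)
The plan is to combine the earthquake flow invariance of $\widehat{\nu}_{\gamma,\mathbf{a}}^{f,L}$ from Proposition~\ref{prop:inv_meas} with the Minsky--Weiss quantitative recurrence of Theorem~\ref{theo:mw_quant_rec} to reduce the problem to bounding the mass of a specific ``obstruction set'', which is then estimated directly using Mirzakhani's integration formulas. First, by Lemma~\ref{lemma:hor_meas_comp} it suffices to treat the case $f = \mathbbm{1}_{B_a}$ for an arbitrary fixed $a > 0$.

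Given $\delta > 0$, fix an auxiliary parameter $\delta_0 > 0$ (to be chosen below) and let $\epsilon > 0$ be the constant supplied by Theorem~\ref{theo:mw_quant_rec} for $\delta_0$. Denote by $A_\epsilon \subseteq P^1 \mathcal{M}_{g,n}$ the set of points at which alternative (2) of that theorem holds. Using the earthquake flow invariance of $\widehat{\nu}_{\gamma,\mathbf{a}}^{f,L}$ and Fubini's theorem, for every $T > 0$ one writes
\[
\widehat{\nu}_{\gamma,\mathbf{a}}^{f,L}(P^1 K_\epsilon) = \int_{P^1\mathcal{M}_{g,n}} \frac{\text{Leb}(\{t \in [0,T] \ | \ \text{tw}^t(X,\lambda) \in P^1 K_\epsilon\})}{T} \ d\widehat{\nu}_{\gamma,\mathbf{a}}^{f,L}(X,\lambda).
\]
Taking $\liminf_{T\to\infty}$ and applying Fatou's lemma together with alternative (1) of Minsky--Weiss on $(P^1\mathcal{M}_{g,n}) \setminus A_\epsilon$ yields the key inequality
\[
\frac{\widehat{\nu}_{\gamma,\mathbf{a}}^{f,L}(P^1 K_\epsilon)}{m_\gamma^{f,L}} \geq (1-\delta_0)\left(1 - \frac{\widehat{\nu}_{\gamma,\mathbf{a}}^{f,L}(A_\epsilon)}{m_\gamma^{f,L}}\right).
\]

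It remains to bound $\widehat{\nu}_{\gamma,\mathbf{a}}^{f,L}(A_\epsilon)/m_\gamma^{f,L}$ from above. The crucial observation is that for points $(X,\lambda)$ in the support of $\widehat{\nu}_{\gamma,\mathbf{a}}^{f,L}$ the lamination $\lambda$ is supported on $\gamma$, so any simple closed curve $\alpha$ with $i(\alpha,\lambda)=0$ is either one of the components $\gamma_i$ of $\gamma$, or lies in the interior of one of the complementary pieces $\Sigma_j$ of $S_{g,n}(\gamma)$. In the first subcase, the cut-and-glue formula of Theorem~\ref{theo:cut_glue_fib} together with the polynomial asymptotics of Theorem~\ref{theo:vol_pol} shows that the contribution of $\{\ell_{\gamma_i}(X) < \epsilon\}$ to $\widehat{\nu}_{\gamma,\mathbf{a}}^{f,L}(A_\epsilon)$ is $O(L^{6g-6+2n-1}\epsilon)$, which after division by $m_\gamma^{f,L} = \Theta(L^{6g-6+2n})$ vanishes as $L \to \infty$. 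In the second subcase, Theorem~\ref{theo:cut_glue_fib} reduces the bound to integrating the Weil--Petersson volume $V_j^\epsilon(\mathbf{L}_j)$ of the interior-$\epsilon$-thin part of $\mathcal{M}(\Sigma_j, \mathbf{L}_j)$ against the remaining factors of $V_{g,n}(\gamma, L\mathbf{u})$. Using Bers's theorem (Theorem~\ref{theo:bers}) to extend any short interior curve to a pants decomposition of $\Sigma_j$ with all lengths polynomially bounded in $\mathbf{L}_j$, followed by Fenchel--Nielsen integration as in Wolpert's magic formula, I would obtain a uniform bound $V_j^\epsilon(\mathbf{L}_j) \leq C \cdot \epsilon^2 \cdot P(\mathbf{L}_j)$ for some polynomial $P$. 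Plugging this into the integral formula from Proposition~\ref{prop:total_hor_meas} gives $\limsup_L \widehat{\nu}_{\gamma,\mathbf{a}}^{f,L}(A_\epsilon)/m_\gamma^{f,L} \leq C\epsilon^2$.

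Combining these two estimates yields $\liminf_L \widehat{\nu}_{\gamma,\mathbf{a}}^{f,L}(P^1 K_\epsilon)/m_\gamma^{f,L} \geq (1-\delta_0)(1 - C\epsilon^2)$, and one concludes by choosing $\delta_0$ small enough that both $\delta_0 < \delta/2$ and $C\epsilon(\delta_0)^2 < \delta/2$; the Minsky--Weiss threshold $\epsilon(\delta_0)$ can be assumed monotone in and tending to $0$ with $\delta_0$. The hard part will be the quadratic-in-$\epsilon$ bound on $V_j^\epsilon(\mathbf{L}_j)$ uniformly in $\mathbf{L}_j$: it requires combinatorial control over the topological types of short-curve extensions produced by Bers's theorem, combined with the polynomial growth of Weil--Petersson volumes of the smaller moduli spaces obtained after further cutting, itself a consequence of Theorem~\ref{theo:vol_pol}.
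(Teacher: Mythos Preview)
Your approach is correct and genuinely different from the paper's. The paper never invokes the earthquake invariance of $\widehat{\nu}_{\gamma,\mathbf{a}}^{f,L}$ here: it passes immediately to the base $\mathcal{M}_{g,n}$, applies the cut-and-glue fibration, and controls the twist-torus fibers via an \emph{inductive} application of Minsky--Weiss (Proposition~\ref{prop:twist_tori}), twisting along one $\gamma_i$ at a time. Your route---averaging along the earthquake flow using Proposition~\ref{prop:inv_meas}, applying Theorem~\ref{theo:mw_quant_rec} once, and then bounding the obstruction set $A_\epsilon$---replaces that whole inductive lemma with a single line, at the cost of needing $\widehat{\nu}_{\gamma,\mathbf{a}}^{f,L}$ to be exactly flow-invariant (which it is). The two treatments of the obstruction set are then identical: your second subcase is the paper's set $A_2$, and your $V_j^\epsilon$ bound is exactly Proposition~\ref{prop:bad_pt_control}.

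One small point to tighten: you do not actually need the claim that $\epsilon(\delta_0)\to 0$, which is not quite immediate from Theorem~\ref{theo:mw_quant_rec} as stated. If you keep track of degrees in the Bers argument, the polynomial $P(\mathbf{L}_j)$ bounding $V_j^\epsilon(\mathbf{L}_j)$ has degree $6g_j-6+2n_j+2b_j-2$, strictly below that of $V_{g_j,n_j}^{b_j}(\mathbf{L}_j)$; this is precisely Proposition~\ref{prop:bad_pt_control}. Consequently the contribution of the second subcase to $\widehat{\nu}_{\gamma,\mathbf{a}}^{f,L}(A_\epsilon)/m_\gamma^{f,L}$ is $O_\epsilon(L^{-2})$ and vanishes as $L\to\infty$, just like your first subcase. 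Then $\limsup_L \widehat{\nu}_{\gamma,\mathbf{a}}^{f,L}(A_\epsilon)/m_\gamma^{f,L}=0$ for every fixed $\epsilon$, and you conclude with $\liminf \geq 1-\delta_0$ by choosing $\delta_0=\delta$ directly.
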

$ $

We devote the rest of this section to proving Proposition \ref{prop:hor_meas_qnem}. By Lemma \ref{lemma:hor_meas_comp}, it is enough to show this proposition holds 
for the sequence of probability measures $\{\widehat{\nu}_{\gamma,\mathbf{a}}^{a,L}/{m_\gamma^{a,L}}\}_{L>0}$ with $a > 0$ arbitrary. For the rest of this discussion fix $a > 0$. \\

Directly form the definitions we see that
\[
\widehat{\nu}_{\gamma,\mathbf{a}}^{a,L}(P^1 K_\epsilon) = \widehat{\mu}_\gamma^{a,L}(K_\epsilon)
\]
for every $\epsilon > 0$ and every $L > 0$. It is enough then to prove that for every $\delta > 0$ there exists $\epsilon > 0$ such that
\[
\liminf_{L \to \infty} \frac{\widehat{\mu}_\gamma^{a,L}( K_\epsilon)}{m_\gamma^{a,L}} \geq 1 - \delta,
\]
i.e., we can reduce ourselves to working on the base $\mathcal{M}_{g,n}$ of the bundle $P^1\mathcal{M}_{g,n}$.\\

To estimate $\widehat{\mu}_\gamma^{a,L}(K_\epsilon)$ we use the cut and glue fibration 
\[
\Psi \colon \mathcal{T}_{g,n}/\text{Stab}_0(\gamma) \to \Omega_{g,n}(\gamma);
\]
see Theorem \ref{theo:cut_glue_fib}. To control the measure along the fibers of $\Psi$ we use Theorem \ref{theo:mw_quant_rec}. We refer the reader to Theorem \ref{theo:cut_glue_fib} and the discussion preceding it for the notation and terminology that will be used in the arguments that follow. \\

Given a triple of non-negative integers $g',n',b' \in \mathbf{Z}_{\geq 0}$ satisfying $2 - 2g' - n' - b' < 0$ and a vector $\mathbf{L}:= (L_i)_{i=1}^{b'} \in (\mathbf{R}_{>0})^{b'}$ of positive real numbers, denote by 
\[
\mathcal{M}_{g',n'}^{b'}(\mathbf{L})_\epsilon \subseteq \mathcal{M}_{g',n'}^{b'}(\mathbf{L})
\]
the subset of all hyperbolic surfaces in $\mathcal{M}_{g',n'}^{b'}(\mathbf{L})$ that have no (non boundary parallel) geodesics of length $< \epsilon$. We refer to $\mathcal{M}_{g',n'}^{b'}(\mathbf{L})_\epsilon$ as the \textit{$\epsilon$-thick part} of $\mathcal{M}_{g',n'}^{b'}(\mathbf{L})$.  \\

Given $(X,\alpha) \in \mathcal{T}_{g,n}/\text{Stab}_0(\gamma)$, to every component $\alpha_i$ of $\alpha$ we associate a \textit{tori separation parameter} $\epsilon_i$ which is set to be $1/2$ if any of the components of $X(\alpha)$ bounded by $\alpha_i$ is a torus with one boundary component and $1$ in any other case. \\

The following consequence of Theorem \ref{theo:mw_quant_rec} will allow us control the measure along the fibers of the cut and glue fibration.\\

\begin{proposition}
	\label{prop:twist_tori}
	For every $\delta > 0$ there exists $\epsilon > 0$ such that 
	if $(X,\alpha) \in \mathcal{T}_{g,n}/\text{Stab}_0(\gamma)$ is a point on the fiber of the cut and glue fibration
	\[
	\Psi \colon \mathcal{T}_{g,n}/\text{Stab}_0(\gamma) \to \Omega_{g,n}(\gamma)
	\]
	above a generic pair $((\ell_i)_{i=1}^k,(X(\alpha)_j)_{j=1}^c) \in \Omega_{g,n}(\gamma)$ satisfying
	\begin{enumerate}
		\item $\ell_i \geq \epsilon$ for all $i \in \{1,\dots,k\}$,
		\item $X(\alpha)_j \in \mathcal{M}_{g_j,n_j}^{b_j}(\mathbf{L}_j)_\epsilon$ for all $j \in \{1,\dots,c\}$,
	\end{enumerate}
	then
	\[
	\text{Leb}\left(\left\lbrace (t_1,\dots,t_k) \in \prod_{i=1}^k \left( \mathbf{R}/\epsilon_i \ell_i \mathbf{Z} \right) \ \bigg\vert \ \text{tw}_{\alpha_1}^{t_1} \circ \dots \circ \text{tw}_{\alpha_k}^{t_k}(X) \notin K_\epsilon\right\rbrace\right) < \delta \cdot \prod_{i=1}^k \epsilon_i \ell_i,
	\]
	where $\text{Leb}$ denotes the standard Lebesgue measure on the torus $\prod_{i=1}^k \left( \mathbf{R}/\epsilon_i \ell_i \mathbf{Z} \right)$.
\end{proposition}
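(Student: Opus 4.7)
The plan is to reduce the $k$-parameter measure estimate to a single application of Theorem \ref{theo:mw_quant_rec} via a carefully chosen earthquake flow orbit, and then pass from the resulting one-dimensional time-average statement to the desired $k$-dimensional Lebesgue bound using unique ergodicity of irrational linear flows on tori. Given $\delta > 0$, I would apply Theorem \ref{theo:mw_quant_rec} with parameter $\delta/2$ to obtain some $\epsilon > 0$, and claim this $\epsilon$ works. Fix $(X,\alpha)$ on the fiber above a generic pair satisfying (1) and (2), and pick positive weights $c_1, \dots, c_k > 0$ such that $c_1/(\epsilon_1\ell_1), \dots, c_k/(\epsilon_k\ell_k)$ are $\mathbf{Q}$-linearly independent (a Baire-generic choice). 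Set $\lambda := c_1\alpha_1 + \cdots + c_k\alpha_k \in \mathcal{ML}_{g,n}$. Using that twist deformations along disjoint simple closed curves commute together with the homogeneity of the earthquake deformation, one obtains the identification
\[
\text{tw}^u_\lambda(X) = \text{tw}^{uc_1}_{\alpha_1} \circ \cdots \circ \text{tw}^{uc_k}_{\alpha_k}(X) \qquad \forall u \in \mathbf{R}.
\]

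The first key step is to show that alternative (2) of Theorem \ref{theo:mw_quant_rec} fails for $([X], [\lambda/\ell_\lambda(X)]) \in P^1\mathcal{M}_{g,n}$. Indeed, any simple closed curve $\beta$ with $i(\beta, \lambda) = 0$ satisfies $i(\beta, \alpha_i) = 0$ for every $i$ because all $c_i > 0$, so $\beta$ is disjoint from every component of $\alpha$. Such a $\beta$ is either isotopic to some $\alpha_i$ (and then $\ell_\beta(X) = \ell_i \geq \epsilon$ by hypothesis (1)), or it descends to a non-boundary-parallel simple closed geodesic on one of the pieces $X(\alpha)_j$ (and then $\ell_\beta(X) \geq \epsilon$ by hypothesis (2)). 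Consequently alternative (1) applies to this orbit, which after the change of variable $u = s/\ell_\lambda(X)$ reads
\[
\liminf_{U \to \infty} \frac{1}{U}\int_0^U \mathbbm{1}_{K_\epsilon}\!\left(\text{tw}^{uc_1}_{\alpha_1} \circ \cdots \circ \text{tw}^{uc_k}_{\alpha_k}(X)\right) du > 1 - \delta/2.
\]

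The second key step promotes this time-average on a single orbit to the Lebesgue measure on the full twist torus. Consider the continuous map $\Phi \colon T := \prod_{i=1}^k \mathbf{R}/\epsilon_i\ell_i\mathbf{Z} \to \mathcal{M}_{g,n}$ sending $\mathbf{t}:=(t_1,\dots,t_k)$ to the equivalence class of $\text{tw}^{t_1}_{\alpha_1}\circ\cdots\circ\text{tw}^{t_k}_{\alpha_k}(X)$; this is well-defined because in moduli space a full twist $\text{tw}^{\ell_i}_{\alpha_i}$ is realized by a Dehn twist (with an additional halving from the isometric involution when $\alpha_i$ bounds a one-holed torus, exactly accounted for by the factor $\epsilon_i$). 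The orbit $u \mapsto \Phi(u\mathbf{c})$ is precisely the linear flow on $T$ in direction $\mathbf{c} := (c_1,\dots,c_k)$, which by our rational independence condition is uniquely ergodic with respect to the normalized Lebesgue measure on $T$. Since $K_\epsilon \subseteq \mathcal{M}_{g,n}$ is closed, $\mathbbm{1}_{K_\epsilon} \circ \Phi$ is upper semi-continuous on $T$, so approximating from above by continuous functions yields
\[
\limsup_{U \to \infty} \frac{1}{U}\int_0^U (\mathbbm{1}_{K_\epsilon}\circ\Phi)(u\mathbf{c})\, du \leq \frac{1}{\text{Leb}(T)}\, \text{Leb}(\{\mathbf{t} \in T : \Phi(\mathbf{t}) \in K_\epsilon\}).
\]
Combined with the bound from the first step, this shows $\text{Leb}(\{\mathbf{t} \in T : \Phi(\mathbf{t}) \notin K_\epsilon\}) < (\delta/2) \prod_{i=1}^k \epsilon_i\ell_i$, which is stronger than required.

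The main obstacle is that Theorem \ref{theo:mw_quant_rec} is intrinsically a one-parameter recurrence statement, whereas the conclusion is a $k$-dimensional measure estimate. The resolution is to pick a single irrational direction in the twist torus along which Minsky--Weiss recurrence can be applied once, and then exploit unique ergodicity of the induced linear torus flow to reconstruct the full Lebesgue bound from a one-dimensional time average; extra care is required in the semi-continuity step because the indicator of the thick part is not continuous.
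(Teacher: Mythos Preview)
Your proof is correct and takes a genuinely different route from the paper's argument.

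The paper proceeds by an \emph{iterated} application of Theorem~\ref{theo:mw_quant_rec}, once for each of the $k$ twist directions. Concretely, it introduces the linear change of coordinates on the torus whose first direction is the diagonal $(\epsilon_1\ell_1,\dots,\epsilon_k\ell_k)$ and whose remaining directions are the standard basis vectors $e_2,\dots,e_k$. The hypotheses (1)--(2) ensure that the Minsky--Weiss obstruction fails along the diagonal direction, so a $(1-\delta)$-fraction of that first coordinate lands in $K_\epsilon$. Crucially, once a point lies in $K_\epsilon$ it has \emph{no} short curves at all, so the obstruction automatically fails for the subsequent one-dimensional twists along $\alpha_2,\dots,\alpha_k$; one then applies Theorem~\ref{theo:mw_quant_rec} successively and uses Fubini to get a $(1-\delta)^k$ lower bound for the good set, which suffices since $\delta$ is arbitrary.

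Your argument instead makes a \emph{single} application of Theorem~\ref{theo:mw_quant_rec} along an irrational direction $\mathbf{c}$ and exploits unique ergodicity of the resulting linear flow on the twist torus to pass from a one-dimensional time average to the $k$-dimensional Lebesgue bound. The upper-semicontinuity step is handled correctly: since $K_\epsilon$ is closed, $\mathbbm{1}_{K_\epsilon}\circ\Phi$ is upper semicontinuous on the compact torus, and approximation from above by continuous functions combined with unique ergodicity yields $\limsup_U \frac{1}{U}\int_0^U (\mathbbm{1}_{K_\epsilon}\circ\Phi)(u\mathbf{c})\,du \le \tfrac{1}{\mathrm{Leb}(T)}\mathrm{Leb}(\Phi^{-1}(K_\epsilon))$; chaining this with the Minsky--Weiss $\liminf$ bound gives the conclusion. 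The well-definedness and continuity of $\Phi$ follow from the description of the generic fiber of $\Psi$ in Theorem~\ref{theo:cut_glue_fib}.

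What each approach buys: the paper's inductive argument is more elementary---it needs only Fubini and the observation that landing in the thick part kills all obstructions at once---and makes the bookkeeping of short-curve obstructions explicit. Your route is slicker, invoking Minsky--Weiss only once and yielding a slightly sharper constant, at the cost of importing unique ergodicity of irrational linear flows and a semicontinuity argument.
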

$ $

\begin{proof}
	Let $\delta > 0$ be arbitrary and $\epsilon > 0$ be as in Theorem \ref{theo:mw_quant_rec}. Consider a point $(X,\alpha) \in \mathcal{T}_{g,n}/\text{Stab}_0(\gamma)$ satisfying the conditions in the statement of Proposition \ref{prop:twist_tori}. We can parametrize the tori of all twists of $X$ along the components of $\alpha$ by considering the map
	\[
	(t_1,\dots,t_k) \in \prod_{i=1}^k \left( \mathbf{R}/\epsilon_i \ell_i \mathbf{Z} \right) \mapsto \text{tw}_{\alpha_1}^{t_1} \circ \dots \circ \text{tw}_{\alpha_k}^{t_k}(X).
	\]
	It is convenient to consider the linear change of coordinates
	\[
	(t_1,\dots,t_k) := u_1 \cdot (\epsilon_1 \ell_1, \dots, \epsilon_k \ell_k) 
	+ u_2 \cdot (0,\epsilon_2\ell_2,0,\dots,0) + \cdots + u_k \cdot (0,\dots,0,\epsilon_k\ell_k),
	\]
	with $(u_1,\dots,u_k) \in (\mathbf{R}/\mathbf{Z})^k$. The Jacobian $J := \partial  t/\partial u$ of this change of coordinates is given by
	\[
	J = \prod_{i=1}^k \epsilon_i \ell_i.
	\]
	$ $
	
	Notice that the closed path
	\[
	u_1 \in (\mathbf{R}/\mathbf{Z}) \mapsto (u_1,0,\dots,0)
	\]
	in the $(u_1,\dots,u_k)$ coordinates corresponds to the closed path 
	\[
	u_1 \in (\mathbf{R}/\mathbf{Z}) \mapsto \text{tw}_{\lambda_1}^{u_1}(X)
	\]
	in $\mathcal{M}_{g,n}$ given by twisting $X$ along the unordered, weighted, simple closed multi-curve
	\[
	\lambda_1 := \epsilon_1 \ell_1 \alpha_1 + \cdots + \epsilon_k \ell_k \alpha_k \in \mathcal{ML}_{g,n}.
	\] 
	The conditions in the statement of Proposition \ref{prop:twist_tori} precisely ensure that	the obstruction given by condition (2) in Theorem \ref{theo:mw_quant_rec} does not hold for the pair $(X,\lambda_1/(\epsilon_1\ell_1^2 + \cdots + \epsilon_k \ell_k^2)) \in P^1\mathcal{M}_{g,n}$. It follows from Theorem \ref{theo:mw_quant_rec} that
	\[
	\text{Leb}(\{u_1 \in (\mathbf{R}/\mathbf{Z})  \ | \ \text{tw}_{\lambda_1}^{u_1}(X) \in K_\epsilon\}) \geq 1 - \delta.
	\]
	$ $
	
	Notice that, for every $(w_1,\dots,w_k) \in (\mathbf{R}/\mathbf{Z})^k$, the closed path
	\[
	u_2 \in (\mathbf{R}/\mathbf{Z}) \mapsto (w_1,\dots,w_k) + (0,u_2,0,\dots,0)
	\]
	in the $(u_1,\dots,u_k)$ coordinates corresponds to the closed path 
	\[
	u_2 \in (\mathbf{R}/\mathbf{Z}) \mapsto \text{tw}_{\lambda_2}^{u_2}(Y)
	\]
	in $\mathcal{M}_{g,n}$ given by twisting a hyperbolic surface $Y$ on the same $\Psi$-fiber as $X$ along the unordered, weighted, simple closed multi-curve
	\[
	\lambda_2 := \epsilon_2 \ell_2\alpha_2 \in \mathcal{ML}_{g,n}.
	\]  
	For the hyperbolic surfaces $\text{tw}_{\lambda_1}^{u_1}(X) \in K_\epsilon$ the obstruction given by condition (2) in Theorem \ref{theo:mw_quant_rec} does not hold for the pair $(\text{tw}_{\lambda_1}^{u_1}(X),\lambda_2/\epsilon_2\ell_2^2) \in P^1 \mathcal{ML}_{g,n}$ because by the definition of $K_\epsilon$ such hyperbolic surfaces have no simple closed geodesics of length $< \epsilon$. Theorem \ref{theo:mw_quant_rec} together with Fubini's theorem imply
	\[
	\text{Leb}(\{ (u_1,u_2) \in (\mathbf{R}/\mathbf{Z})^2 \ | \ \text{tw}_\lambda^{u_1}(X) \circ \text{tw}_{\lambda_2}^{u_2}(X) \in K_\epsilon\}) \geq (1 - \delta)^2.
	\]
	$ $
	
	For every $i =3,\dots,k$ consider the unordered, weighted, simple closed multi-curve
	\[
	\lambda_i := \epsilon_i \ell_i \alpha_i \in \mathcal{ML}_{g,n}.
	\]
	Proceeding inductively  using the same ideas as above we deduce
	\[
	\text{Leb}\left(\left\lbrace (u_1,\dots,u_k) \in (\mathbf{R}/ \mathbf{Z})^k \ \bigg\vert \ \text{tw}_{\lambda_1}^{u_1} \circ \dots \circ \text{tw}_{\lambda_k}^{u_k}(X) \in K_\epsilon\right\rbrace\right) \geq (1-\delta)^k.
	\]
	Taking complements it follows that
	\[
	\text{Leb}\left(\left\lbrace (u_1,\dots,u_k) \in (\mathbf{R}/ \mathbf{Z})^k \ \bigg\vert \ \text{tw}_{\lambda_1}^{u_1} \circ \dots \circ \text{tw}_{\lambda_k}^{u_k}(X) \notin K_\epsilon\right\rbrace\right) < 1 - (1-\delta)^k.
	\]
	Reversing the change of coordinates we get
	\begin{align*}
	&\text{Leb}\left(\left\lbrace (t_1,\dots,t_k) \in \prod_{i=1}^k \left( \mathbf{R}/\epsilon_i \ell_i \mathbf{Z} \right) \ \bigg\vert \ \text{tw}_{\alpha_1}^{t_1} \circ \dots \circ \text{tw}_{\alpha_k}^{t_k}(X) \notin K_\epsilon\right\rbrace\right) \\
	&< \left(1 -(1-\delta)^k\right) \cdot \prod_{i=1}^k \epsilon_i \ell_i.
	\end{align*}
	As $\delta > 0$ is arbitrary, this finishes the proof.
\end{proof}
$ $

To actually make use of Proposition \ref{prop:twist_tori}, we need to control the volume of the set of pairs $((\ell_i)_{i=1}^k,(X(\alpha)_j)_{j=1}^c) \in \Omega_{g,n}(\gamma)$ that do not satisfy the conditions in the statement of Proposition \ref{prop:twist_tori}. This control is achieved using the following estimate, which is a consequence of Bers's Theorem; see Theorem \ref{theo:bers}.\\

\begin{proposition}
	\label{prop:bad_pt_control}
	Let $g',n',b' \in \mathbf{Z}_{\geq 0}$ be a triple of non-negative integers satisfying $2 - 2g' - n' - b' < 0$ and $\epsilon_0 > 0$ be fixed. There exist constants $C>0$ and $L_0 > 0$ such that for every $\mathbf{L}:= (L_i)_{i=1}^{b'} \in (\mathbf{R}_{>0})^{b'}$ satisfying $M(\mathbf{L})\geq L_0$ and every $0 < \epsilon < \epsilon_0$,
	\[
	\text{Vol}_{wp}(\mathcal{M}_{g',n'}^{b'}(\mathbf{L}) \setminus \mathcal{M}_{g',n'}^{b'}(\mathbf{L})_\epsilon)\leq C \cdot \epsilon^2 \cdot M(\mathbf{L})^{6g'-6+2n'+2b'-2}.
	\]
\end{proposition}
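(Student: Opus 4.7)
The plan is to cover the bad set $\mathcal{M}_{g',n'}^{b'}(\mathbf{L}) \setminus \mathcal{M}_{g',n'}^{b'}(\mathbf{L})_\epsilon$ by finitely many pieces, each indexed by the topological type of a Bers pair of pants decomposition, and then bound the Weil--Petersson volume of each piece in Fenchel--Nielsen coordinates via Wolpert's magic formula. First I would unpack the definition: $X \notin \mathcal{M}_{g',n'}^{b'}(\mathbf{L})_\epsilon$ if and only if $X$ carries a non-boundary-parallel simple closed geodesic $\alpha_1$ with $\ell_{\alpha_1}(X) < \epsilon$. Setting $L_0 := \epsilon_0$, the hypotheses $M(\mathbf{L}) \geq L_0$ and $\epsilon < \epsilon_0$ give $\max\{M(\mathbf{L}),\ell_{\alpha_1}(X)\} = M(\mathbf{L})$, so Theorem \ref{theo:bers} extends $\alpha_1$ to a pair of pants decomposition $\{\alpha_j\}_{j=1}^{N}$ of $X$ (with $N := 3g'-3+n'+b'$) whose lengths all satisfy $\ell_{\alpha_j}(X) \leq \Lambda(\mathbf{L}) := A_{g',n'}^{b'} + B_{g',n'}^{b'} \cdot M(\mathbf{L})$.

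Next I would invoke the standard topological fact that $\text{Mod}_{g',n'}^{b'}$ has only finitely many orbits on pairs (pair of pants decomposition, distinguished component); such a pair is classified up to the mapping class group action by its dual graph with a marked edge, and there are only finitely many such graphs for fixed topology. Pick representatives $\{(\mathcal{P}^{(m)},\gamma_1^{(m)})\}_{m=1}^{M}$. For each $m$ let $U_m \subseteq \mathcal{T}_{g',n'}^{b'}(\mathbf{L})$ be the set of marked structures where $\ell_{\gamma_1^{(m)}} < \epsilon$ and $\ell_{\gamma_j^{(m)}} \leq \Lambda(\mathbf{L})$ for $j = 2,\dots,N$. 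By the previous paragraph the bad set is contained in $\bigcup_m \pi(U_m)$, where $\pi \colon \mathcal{T}_{g',n'}^{b'}(\mathbf{L}) \to \mathcal{M}_{g',n'}^{b'}(\mathbf{L})$ is the projection.

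For each $m$ I would estimate $\text{Vol}_\text{wp}(\pi(U_m))$ using Fenchel--Nielsen coordinates $(\ell_j,\tau_j)_{j=1}^{N}$ adapted to $\mathcal{P}^{(m)}$, in which the Weil--Petersson volume form reads $\prod_j d\ell_j \wedge d\tau_j$ by Wolpert's magic formula. The set $U_m$ is invariant under the Dehn twists along the components of $\mathcal{P}^{(m)}$, which act on FN coordinates by $\tau_j \mapsto \tau_j + \ell_j$. Since the quotient by this Dehn-twist subgroup surjects onto moduli space, choosing a fundamental domain with $\tau_j \in [0,\ell_j)$ yields
\[
\text{Vol}_\text{wp}(\pi(U_m)) \leq \int_{0}^{\epsilon} \ell_1 \, d\ell_1 \cdot \prod_{j=2}^{N} \int_{0}^{\Lambda(\mathbf{L})} \ell_j \, d\ell_j = \frac{\epsilon^2}{2} \cdot \left(\frac{\Lambda(\mathbf{L})^2}{2}\right)^{N-1}.
\]
Since $2(N-1) = 6g' - 6 + 2n' + 2b' - 2$ and $\Lambda(\mathbf{L})$ is at most a constant multiple of $M(\mathbf{L})$ whenever $M(\mathbf{L}) \geq L_0$, summing over the finitely many $m = 1,\dots,M$ delivers a bound of the required form.

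The main obstacle is the bookkeeping in passing from Teichm\"uller space to the quotient moduli space: the finite $\text{Mod}_{g',n'}^{b'}$-orbit structure of marked pants decompositions is what keeps the cover finite (so that we avoid summing over infinitely many isotopy classes of short curves), while the Dehn-twist subgroup of $\text{Stab}(\mathcal{P}^{(m)})$ is what absorbs the $d\tau_j$ integrations into the factors $\ell_j$ that produce the $\epsilon^2$ and $M(\mathbf{L})^{2(N-1)}$ contributions. Once these two points are in place, the estimate reduces to a routine combination of Wolpert's formula and Bers's theorem.
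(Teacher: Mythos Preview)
Your proposal is correct and follows essentially the same strategy as the paper: apply Bers's theorem to extend the short curve to a bounded pants decomposition, use finiteness of topological types of pants decompositions modulo the mapping class group, and bound each piece in Fenchel--Nielsen coordinates via Wolpert's magic formula together with a Dehn-twist fundamental domain $\tau_j \in [0,\ell_j)$. The only cosmetic differences are that the paper takes $L_0 := \max\{\epsilon_0, A_{g',n'}^{b'}/B_{g',n'}^{b'}\}$ to get the clean bound $\ell_{\alpha_j}(X) \leq 2B_{g',n'}^{b'} M(\mathbf{L})$ (your choice $L_0=\epsilon_0$ works equally well, with the constant absorbing $A/\epsilon_0$), and that your bookkeeping with pairs (pants decomposition, distinguished short curve) is slightly more explicit than the paper's, which simply counts unordered pants decompositions.
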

$ $

\begin{proof}
	Let $A_{g',n'}^{b'}, B_{g',n'}^{b'} > 0$ be as in Theorem \ref{theo:bers}. Consider an arbitrary $\mathbf{L}:= (L_i)_{i=1}^{b'} \in (\mathbf{R}_{>0})^{b'}$ and an arbitrary $0 < \epsilon < \epsilon_0$. As a consequence of Theorem \ref{theo:bers}, for every
	$X \in \mathcal{M}_{g',n'}^{b'}(\mathbf{L}) \backslash \mathcal{M}_{g',n'}^{b'}(\mathbf{L})_\epsilon$ we can find a geodesic pair of pants decomposition $\{\alpha_j\}_{j=1}^{3g'-3+n'+b'}$ of $X$ satisfying
	\begin{enumerate}
		\item $\ell_{\alpha_1}(X) < \epsilon$,
		\item $\ell_{\alpha_j}(X) \leq A_{g',n'}^{b'} + B_{g',n'}^{b'} \cdot \max\{M(\mathbf{L}),\epsilon\}$ for all $j \in \{ 2,\dots,3g'-3+n'+b'\}$.
	\end{enumerate}
	Let $L_0 := \max\{\epsilon_0, A_{g',n'}^{b'}/B_{g',n'}^{b'}\}$ so that
	\[
	A_{g',n'}^{b'} + B_{g',n'}^{b'} \cdot \max\{L,\epsilon\} \leq  2B_{g',n'}^{b'} L
	\]
	for every $L \geq L_0$. For the rest of this proof we assume $M(\mathbf{L}) \geq L_0$. Notice there are finitely many pair of pants decompositions of $S_{g',n'}^{b'}$ up to the action of $\text{Mod}_{g',n'}^{b'}$; let $\kappa_{g',n'}^{b'} \in \mathbf{N}$ be the number of such equivalence classes. It follows that $\mathcal{M}_{g',n'}^{b'}(\mathbf{L}) \setminus \mathcal{M}_{g',n'}^{b'}(\mathbf{L})_\epsilon$ can be covered by $\kappa_{g',n'}^{b'}$ subsets of $\mathcal{T}_{g',n'}^{b'}(\mathbf{L})$ which in appropriate Fenchel-Nielsen coordinates $(\ell_j,\tau_j)_{j=1}^{3g'-3+n'+b'} \in (\mathbf{R}_{>0} \times \mathbf{R})^{3g'-3+n'+b'}$ correspond to the set $	\mathcal{A}_{g',n',b'}^{\epsilon,M(\mathbf{L})}$ given by
	\[
	\left\lbrace
	\begin{array}{l | l}
	
	(\ell_j,\tau_j)_{j=1}^{3g'-3+n'+b'} 
	& \ 0 \leq \tau_j < \ell_j, \ \forall j=1,\dots,3g'-3+n'+b',\\
	& \ 0 <\ell_1 \leq \epsilon,\\
	& \ 0 < \ell_j \leq 2B_{g',n'}^{b'}M(\mathbf{L}), \ \forall j=2,\dots,3g'-3+n'+b'.\\
	\end{array} \right\rbrace.
	\]
	Notice that
	\[
	\text{Leb}\left(\mathcal{A}_{g',n',b'}^{\epsilon,M(\mathbf{L})}\right) = \frac{\epsilon^2}{2} \cdot \frac{(2B_{g',n'}^{b'}M(\mathbf{L}))^{6g'-6+2n'+2b'-2}}{2^{3g'-3+n'+b'-1}},
	\]
	where $\text{Leb}$ denotes the standard Lebesgue measure on $(\mathbf{R}_{>0} \times \mathbf{R})^{3g'-3+n'+b'}$. By Wolpert's magic formula,
	\[
	\text{Vol}_{wp}(\mathcal{M}_{g',n'}^{b'}(\mathbf{L}) \setminus \mathcal{M}_{g',n'}^{b'}(\mathbf{L})_\epsilon)\leq  C \cdot \epsilon^2 \cdot M(\mathbf{L})^{6g'-6+2n'+2b'-2},
	\]
	where
	\[
	C := \kappa_{g',n'}^{b'}\cdot 2^{3g'-3+n'+b'-2} \cdot (B_{g',n'}^{b'})^{6g'-6+2n'+2b'-2}.
	\]
	This finishes the proof.
\end{proof}
$ $

We are now ready to prove Proposition \ref{prop:hor_meas_qnem}.\\

\begin{proof}[Proof of Proposition \ref{prop:hor_meas_qnem}]
	It remains to show that for every $\delta > 0$ there exists $\epsilon > 0$ such that
	\[
	\liminf_{L \to \infty} \frac{\widehat{\mu}_\gamma^{a,L}( K_\epsilon)}{m_\gamma^{a,L}} \geq 1 - \delta.
	\]
	We prove the complementary statement: for every $\delta > 0$ there exists $\epsilon > 0$ such that
	\[
	\limsup_{L \to \infty} \frac{\widehat{\mu}_\gamma^{a,L}( \mathcal{M}_{g,n} \setminus K_\epsilon)}{m_\gamma^{a,L}} \leq \delta.
	\]
	$ $
	
	Let $\delta > 0$ be arbitrary and $\epsilon > 0$ be as in Proposition \ref{prop:twist_tori}. Denote by $\widetilde{K_\epsilon} \subseteq \mathcal{T}_{g,n}/\text{Stab}(\gamma)$ and $\dot{K_\epsilon} \subseteq \mathcal{T}_{g,n}/\text{Stab}_0(\gamma)$ the subsets covering $K_\epsilon \subseteq \mathcal{M}_{g,n}$ under the corresponding quotient maps. As the measure $\widehat{\mu}_\gamma^{a,L}$ on $\mathcal{M}_{g,n}$ is the pushforward of the measure $\widetilde{\mu}_\gamma^{a,L}$ on $\mathcal{T}_{g,n}/\text{Stab}(\gamma)$,
	\[
	\widehat{\mu}_\gamma^{a,L}(\mathcal{M}_{g,n} \setminus K_\epsilon) = \widetilde{\mu}_\gamma^{a,L}\left((\mathcal{T}_{g,n}/\text{Stab}(\gamma)) \setminus \widetilde{K_\epsilon}\right).
	\]
	Let $\dot{\mu}_{\gamma}^{a,L}$ be the local pushforward of the measure $\mu_{\gamma}^{a,L}$ on $\mathcal{T}_{g,n}$ to the quotient $\mathcal{T}_{g,n}/\text{Stab}_0(\gamma)$. As $[\text{Stab}(\gamma) : \text{Stab}_0(\gamma)] < \infty$,
	\[
	\dot{\mu}_\gamma^{a,L}\left((\mathcal{T}_{g,n}/\text{Stab}_0(\gamma)) \setminus \dot{K_\epsilon}\right) = 	[\text{Stab}(\gamma) : \text{Stab}_0(\gamma)]  \cdot \widetilde{\mu}_\gamma^{a,L}\left((\mathcal{T}_{g,n}/\text{Stab}(\gamma)) \setminus \widetilde{K_\epsilon}\right). 
	\]
	It is enough then to estimate
	\[
	\frac{\dot{\mu}_\gamma^{a,L}\left((\mathcal{T}_{g,n}/\text{Stab}_0(\gamma)) \setminus \dot{K_\epsilon}\right)}{m_\gamma^{a,L}}.
	\]
	$ $
	
	To do so we use the cut and glue fibration. As we only need an upper bound, we disregard the constant factors (depending only on $g$, $n$, and $\gamma$) in the statement of Theorem \ref{theo:cut_glue_fib}. We also assume that $L \geq 1$. Notice that a point $(X,\alpha) \in \mathcal{T}_{g,n}/\text{Stab}_0(\gamma)$ is not in $\dot{K_\epsilon}$ if and only if $X$ has a simple closed geodesic $\beta$ of length $< \epsilon$. Such a geodesic $\beta$ can show up in one of three ways:
	\begin{enumerate}
		\item $\beta$ is one of the components of $\alpha$.
		\item $\beta$ is disjoint from every component of $\alpha$.
		\item $\beta$ intersects one of the components of $\alpha$.
	\end{enumerate}
	We can thus bound
	\[
	\dot{\mu}_\gamma^{a,L}\left((\mathcal{T}_{g,n}/\text{Stab}_0(\gamma)) \setminus \dot{K_\epsilon}\right) \leq \dot{\mu}_\gamma^{a,L}(A_1) + \dot{\mu}_\gamma^{a,L}(A_2) + \dot{\mu}_\gamma^{a,L}(A_3),
	\]
	where 
	\begin{enumerate}
		\item $A_1 \subseteq \mathcal{T}_{g,n}/\text{Stab}_0(\gamma)$ is the set of all pairs $(X,\alpha) \in \mathcal{T}_{g,n}/\text{Stab}_0(\gamma)$ such that $\ell_{\alpha_i}(X) < \epsilon$ for some $i \in \{1,\dots,k\}$.
		\item $A_2 \subseteq \mathcal{T}_{g,n}/\text{Stab}_0(\gamma)$ is the set of all pairs $(X,\alpha) \in \mathcal{T}_{g,n}/\text{Stab}_0(\gamma)$ such that $\ell_{\beta}(X) < \epsilon$ for some simple closed geodesic $\beta$ on $X$ disjoint from every component of $\alpha$.
		\item $A_3 \subseteq \mathcal{T}_{g,n}/\text{Stab}_0(\gamma)$ is the set of all pairs $(X,\alpha) \in \mathcal{T}_{g,n}/\text{Stab}_0(\gamma)$ satisfying the conditions of Proposition \ref{prop:twist_tori} and such that $(X,\alpha) \notin \dot{K_\epsilon}$.\\
	\end{enumerate}
	
	We first estimate $\dot{\mu}_\gamma^{a,L}(A_1)$. Notice that
	\[
	\dot{\mu}_\gamma^{a,L}(A_1) \leq \dot{\mu}_\gamma^{a,L}(A_1^1) + \cdots + \dot{\mu}_\gamma^{a,L}(A_1^k),
	\]
	where, for every $i = 1, \dots, k$, $A_1^i \subseteq \mathcal{T}_{g,n}/\text{Stab}_0(\gamma)$ is the set of all pairs $(X,\alpha) \in \mathcal{T}_{g,n}/\text{Stab}_0(\gamma)$ such that $\ell_{\alpha_i}(X) < \epsilon$. Let us estimate $\dot{\mu}_\gamma^{a,L}(A_1)$. By Theorem \ref{theo:cut_glue_fib},
	\[
	\dot{\mu}_\gamma^{a,L}(A_1^1)  \leq  \int_0^\epsilon \int_0^{aL} \cdots \int_0^{aL}  \left(\prod_{j=1}^c V_{g_j,n_j}^{b_j}(\mathbf{L}_j)\right) \cdot \ell_1 \cdots \ell_k \ d\ell_1 \cdots d\ell_k.
	\]
	Notice that the integrand
	\[
	\left(\prod_{j=1}^c V_{g_j,n_j}^{b_j}(\mathbf{L}_j)\right) \cdot \ell_1 \cdots \ell_k
	\]
	is a polynomial of degree $6g-6+2n-k$ on the integrating variables $\ell_1,\dots,\ell_k$ all of whose non-zero monomials have positive degree for the $\ell_1$ variable. It follows that
	\[
	\dot{\mu}_\gamma^{a,L}(A_1^1) \leq C_1^1 \cdot \epsilon^2 \cdot L^{6g-6+2n -2},
	\]
	where $C_1^1 > 0$ is a constant depending only on $g$, $n$, $\gamma$, and $a$. Analogously, for every $i = 2, \dots, k$,
	\[
	\dot{\mu}_\gamma^{a,L}(A_1^i) \leq C_1^i \cdot \epsilon^2 \cdot L^{6g-6+2n -2},
	\] 
	where $C_1^i > 0$ is a constant depending only on $g$, $n$, $\gamma$, and $a$. It follows that
	\[
	\dot{\mu}_\gamma^{a,L}(A_1) \leq C_1 \cdot\epsilon^2 \cdot  L^{6g-6+2n -2},
	\]
	where
	\[
	C_1 := C_1^1 + \cdots + C_1^k.
	\]
	$ $
	
	We now estimate $\dot{\mu}_\gamma^{a,L}(A_2)$. Notice that
	\[
	\dot{\mu}_\gamma^{a,L}(A_2) \leq \dot{\mu}_\gamma^{a,L}(A_2^1) + \dots + \dot{\mu}_\gamma^{a,L}(A_2^c),
	\]
	where, for every $j=1,\dots,c$, $A_2^j \subseteq \mathcal{T}_{g,n}/\text{Stab}_0(\gamma)$ is the set of all pairs $(X,\alpha) \in \mathcal{T}_{g,n}/\text{Stab}_0(\gamma)$ such that $\ell_{\beta}(X) < \epsilon$ for some (non-boundary parallel) simple closed geodesic $\beta$ on $X(\alpha)_j$, the $j$-th component of the hyperbolic surface $X(\alpha)$ we get by cutting $X$ along the components of $\alpha$. Let us estimate $\dot{\mu}_\gamma^{a,L}(A_2^1)$. By Theorem \ref{theo:cut_glue_fib}, 
	\[
	\dot{\mu}_\gamma^{a,L}(A_2^1) =  \int_0^{aL} \cdots \int_0^{aL}  V_{g_1,n_1}^{b_1}(\mathbf{L}_1)_\epsilon \cdot \left(\prod_{j=2}^c V_{g_j,n_j}^{b_j}(\mathbf{L}_j)\right) \ \ell_1 \cdots \ell_k \cdot d\ell_1 \cdots d\ell_k,
	\]
	where 
	\[
	V_{g_1,n_1}^{b_1}(\mathbf{L}_1)_\epsilon  := \text{Vol}_{\text{wp}}(\mathcal{M}_{g_1,n_1}^{b_1}(\mathbf{L}_1) \setminus \mathcal{M}_{g_1,n_1}^{b_1}(\mathbf{L}_1)_\epsilon).
	\]
	By Proposition \ref{prop:bad_pt_control},
	\[
	V_{g_1,n_1}^{b_1}(\mathbf{L}_1)_\epsilon \leq C_{g_1,n_1}^{b_1} \cdot \epsilon^2 \cdot M(\mathbf{L}_1)^{6g_1-6 +2n_1 + 2b_1}.
	\]
	It follows that
	\begin{align*}
	&\dot{\mu}_\gamma^{a,L}(A_2^1) \leq  C_{g_1,n_1}^{b_1} \cdot \epsilon^2 \cdot \\
	&\int_0^{aL} \cdots \int_0^{aL} M(\mathbf{L}_1)^{6g_1-6 +2n_1 + 2b_1-2} \cdot \left(\prod_{j=2}^c V_{g_j,n_j}^{b_j}(\mathbf{L}_j)\right) \cdot \ell_1 \cdots \ell_k \ d\ell_1 \cdots d\ell_k.
	\end{align*}
	As 
	\[
	M(\mathbf{L}_1) \leq (\mathbf{L}_1)_1 + \cdots + (\mathbf{L}_1)_{b_1},
	\]
	this integral can be bounded by the sum of $b_1$ integrals of polynomials of degree $6g-6+2n-k-2$ on the integrating variables $\ell_1,\dots,\ell_k$. It follows that
	\[
	\dot{\mu}_\gamma^{a,L}(A_2^1)  \leq C_2^1 \cdot \epsilon^2 \cdot L^{6g-6+2n-2},
	\]
	where $C_2^1 > 0$ is a constant depending only on $g$, $n$, $\gamma$, and $a$. Analogously, for every $j = 2, \dots, c$,
	\[
	\dot{\mu}_\gamma^{a,L}(A_2^j) \leq C_2^j \cdot \epsilon^2 \cdot L^{6g-6+2n-2},
	\]
	where $C_2^j > 0$ is a constant depending only on $g$, $n$, $\gamma$, and $a$. It follows that 
	\[
	\dot{\mu}_\gamma^{a,L}(A_2) \leq C_2 \cdot \epsilon^2 \cdot L^{6g-6+2n-2},
	\]
	where
	\[
	C_2 := C_2^1 + \cdots + C_2^c.
	\]
	$ $
	
	Finally, we estimate $\dot{\mu}_\gamma^{a,L}(A_3)$. By Theorem \ref{theo:cut_glue_fib} and Proposition \ref{prop:twist_tori},
	\[
	\dot{\mu}_\gamma^{a,L}(A_3)\leq \delta \cdot \int_0^{aL} \cdots \int_0^{aL}  \left(\prod_{j=1}^c V_{g_j,n_j}^{b_j}(\mathbf{L}_j)\right) \cdot \ell_1 \cdots \ell_k \ d\ell_1 \cdots d\ell_k.
	\]
	As the integrand
	\[
	\left(\prod_{j=1}^c V_{g_j,n_j}^{b_j}(\mathbf{L}_j)\right) \cdot \ell_1 \cdots \ell_k
	\]
	is a polynomial of degree $6g-6+2n-k$ on the integrating variables $\ell_1,\dots,\ell_k$, 
	\[
	\dot{\mu}_\gamma^{a,L}(A_3)\leq C_3 \cdot \delta \cdot L^{6g-6+2n},
	\]
	where $C_3 > 0$ is a constant depending only on $g$, $n$, $\gamma$, and $a$.\\

	Adding up all the contributions we deduce
	\[
	\dot{\mu}_\gamma^{a,L}\left((\mathcal{T}_{g,n}/\text{Stab}_0(\gamma)) \setminus \dot{K_\epsilon}\right) \leq (C_1+ C_2) \cdot\epsilon^2 \cdot  L^{6g-6+2n -2} + C_3 \cdot \delta \cdot L^{6g-6+2n}.
	\]
	Dividing by total mass $m_\gamma^{a,L}$ we get
	\[
	\frac{\dot{\mu}_\gamma^{a,L}\left((\mathcal{T}_{g,n}/\text{Stab}_0(\gamma)) \setminus \dot{K_\epsilon}\right)}{m_\gamma^{a,L}} \leq (C_1+ C_2) \cdot\epsilon^2 \cdot  \frac{L^{6g-6+2n -2}}{m_\gamma^{a,L}} + C_3 \cdot \delta \cdot \frac{L^{6g-6+2n}}{m_\gamma^{a,L}}.
	\]
	By Proposition \ref{prop:total_hor_meas},
	\[
	\lim_{L \to \infty} \frac{m_\gamma^{a,L}}{L^{6g-6+2n}} = c,
	\]
	where $c > 0$ is a positive constant depending only on $g$, $n$, $\gamma$ and $a$. It follows that
	\[
	\limsup_{L \to \infty} \frac{\dot{\mu}_\gamma^{a,L}((\mathcal{T}_{g,n}/\text{Stab}_0(\gamma)) \setminus \dot{K_\epsilon})}{m_\gamma^{a,L}} \leq \frac{C_3}{c} \cdot \delta.
	\]
	As $\delta > 0$ is arbitrary, this finishes the proof.
\end{proof}
$ $

\section{Equidistribution of horospheres}

$ $

\textit{Setting.} For the rest of this section, fix an ordered simple closed multi-curve $\gamma := (\gamma_1,\dots,\gamma_k)$ on $S_{g,n}$ with $1 \leq k \leq 3g-3+n$, a vector $\mathbf{a}:= (a_1,\dots,a_k) \in (\mathbf{Q}_{>0})^k$ of positive rational weights on the components of $\gamma$, and a bounded, Borel measurable function $f \colon \Delta_a \to \mathbf{R}_{\geq0}$ with non-negative values and which is not almost everywhere zero with respect to the Lebesgue measure class. \\

The purpose of this section is to give a brief overview of the proof of Theorem \ref{theo:horosphere_equid}. We mimic the outline of the proof of Theorem \ref{theo:horoball_equid} in \S 1. In particular, we discuss analogues of Propositions \ref{prop:earth_inv_0}, \ref{prop:hor_meas_ac_0},  \ref{prop:hor_meas_nem_0}, and \ref{prop:total_hor_meas} for horosphere sector measures. As mentioned in \S 1, a completely new idea that needs to be introduced in the proof of Theorem \ref{theo:horosphere_equid} is the Mirzakhani bound; see Proposition \ref{prop:mir_bd_og_0} and Proposition \ref{prop:mir_bd_og} below. This bound is used in the proof of the absolute continuity with respect to the Mirzakhani measure of the limit points of the horosphere sector measures.\\

\textit{Total mass.} Consider $(\mathbf{R}_{\geq 0})^k$ as a smooth manifold with coordinates $x_1,\dots,x_k$ and endow it with the volume form
\[
\omega := dx_1 \wedge \dots \wedge dx_k.
\] 
Let $f_\mathbf{a} \colon (\mathbf{R}_{\geq0})^k \to \mathbf{R}_{\geq0}$ be the function which to every $(x_1,\dots,x_k) \in (\mathbf{R}_{\geq0})^k$ assigns the value
\[
f_\mathbf{a}(x_1,\dots,x_k) := a_1x_1 + \cdots + a_k x_k.
\]
For every $L>0$, the volume form $\omega$ induces a volume form $\omega_\mathbf{a}^L := \omega_{f_\mathbf{a}}^L$ on the level set
$
f_\mathbf{a}^{-1}(L) = \Delta_{L \cdot \mathbf{a}}
$
by contraction; see \S 2. Let $\eta_\mathbf{a} := |\omega_\mathbf{a}^1|$ be the measure induced by the volume form $\omega_\mathbf{a}^1$ on the level set $f_\mathbf{a}^{-1}(1) = \Delta_{\mathbf{a}}$. We also denote by $\eta_\mathbf{a}$ the extension by zero of this measure to all $(\mathbf{R}_{\geq 0})^k$. The following result is an analogue of Proposition \ref{prop:total_hor_meas} for horosphere sector measures; it can be proved using the cut and glue fibration, see Theorem \ref{theo:cut_glue_fib}, in a similar way to Proposition \ref{prop:total_hor_meas}.\\

\begin{proposition}
	\label{prop:total_horosphere_meas}
	For every $L > 0$,
	\[
	n_{\gamma,\mathbf{a}}^{f,L} = \int_{\Delta_\mathbf{a}} f(\mathbf{L}) \cdot V_{g,n}(\gamma,L \cdot \mathbf{L}) \cdot L^{k-1} \  d \eta_{\mathbf{a}}(\mathbf{L}),
	\]
	where $\mathbf{L} := (\ell_1,\dots,\ell_k) \in \Delta_a$. In particular,
	\[
	\lim_{L \to \infty} \frac{n_{\gamma,\mathbf{a}}^{f,L}}{L^{6g-6+2n-1}} = \int_{\Delta_\mathbf{a}} f(\mathbf{L}) \cdot W_{g,n}(\gamma,\mathbf{L}) \ d\eta_{\mathbf{a}}(\mathbf{L}).
	\]
\end{proposition}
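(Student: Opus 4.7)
The plan is to mirror the proof of Proposition \ref{prop:total_hor_meas}, replacing the Weil--Petersson measure by the horospherical measure $\eta_{\gamma,\mathbf{a}}^L$ supported on the horosphere $S_{\gamma,\mathbf{a}}^L$. The only new ingredient is an explicit local expression for $\omega_{\gamma,\mathbf{a}}^L$ in the coordinates provided by the cut and glue fibration of Theorem \ref{theo:cut_glue_fib}.

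First I would unwind the definitions exactly as in the horoball case: identify $n_{\gamma,\mathbf{a}}^{f,L} = \widetilde\eta_{\gamma,\mathbf{a}}^{f,L}(\mathcal{T}_{g,n}/\text{Stab}(\gamma))$ and pass to the intermediate cover $\mathcal{T}_{g,n}/\text{Stab}_0(\gamma)$, which costs a factor $[\text{Stab}(\gamma):\text{Stab}_0(\gamma)]^{-1}$. In the Fenchel--Nielsen coordinates of Theorem \ref{theo:cut_glue_fib} one has $\ell_{\mathbf{a}\cdot\gamma} = \sum_{i=1}^k a_i \ell_i$, so the vector field $F := a_k^{-1}\partial_{\ell_k}$ satisfies $d\ell_{\mathbf{a}\cdot\gamma}(F) \equiv 1$. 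Contracting the local expression for $v_{\text{wp}}$ in Theorem \ref{theo:cut_glue_fib} by $F$ gives, up to sign,
\[
\omega_{\gamma,\mathbf{a}}^L = \sigma_{g,n}(\gamma)\cdot a_k^{-1} \cdot \Bigl(\prod_{i=1}^k d\tau_{\alpha_i}\Bigr) \wedge \Bigl(\prod_{j=1}^c v_{\text{wp}}^j\Bigr) \wedge \Bigl(\prod_{i=1}^{k-1} d\ell_i\Bigr),
\]
on $S_{\gamma,\mathbf{a}}^L$, where $\ell_k$ is determined by the constraint $\sum_i a_i\ell_i = L$; the well-definedness of this restriction independent of $F$ is the content of the discussion in \S 2.

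Next I would integrate $f\bigl(L^{-1}(\ell_{\alpha_i})_{i=1}^k\bigr)$ against this volume form along the fibers of the cut and glue fibration. The twist integration generically contributes $2^{-\rho_{g,n}(\gamma)}\ell_1\cdots\ell_k$; the factors $v_{\text{wp}}^j$ integrate over $\mathcal{M}_{g_j,n_j}^{b_j}(\mathbf{L}_j)$ to $\prod_j V_{g_j,n_j}^{b_j}(\mathbf{L}_j)$; and what remains, $a_k^{-1}\, d\ell_1\cdots d\ell_{k-1}$, is by inspection the contraction measure $\eta_{L\cdot\mathbf{a}}$ on the simplex $\Delta_{L\cdot\mathbf{a}} = \{\sum a_i \ell_i = L\}$. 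Grouping factors via the definition of $V_{g,n}(\gamma,\mathbf{L})$ and cancelling the index gives
\[
n_{\gamma,\mathbf{a}}^{f,L} = \int_{\Delta_{L\cdot\mathbf{a}}} f(\mathbf{L}/L)\cdot V_{g,n}(\gamma,\mathbf{L}) \, d\eta_{L\cdot\mathbf{a}}(\mathbf{L}).
\]

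Finally, the linear rescaling $\mathbf{L}=L\mathbf{u}$ with $\mathbf{u}\in\Delta_{\mathbf{a}}$ satisfies $d\eta_{L\cdot\mathbf{a}}(\mathbf{L}) = L^{k-1}\, d\eta_\mathbf{a}(\mathbf{u})$, as one checks directly in the slice coordinates $(x_1,\dots,x_{k-1})$ where $\eta_\mathbf{a} = a_k^{-1}\, dx_1\cdots dx_{k-1}$. This transforms the identity above into the one in the statement. The asymptotic then follows by dominated convergence: $f$ is bounded on the compact simplex $\Delta_\mathbf{a}$, and $L^{k-1}\cdot V_{g,n}(\gamma,L\mathbf{u})$ is polynomial in $L$ of degree $6g-6+2n-1$ whose top-order coefficient in $\mathbf{u}$ is $W_{g,n}(\gamma,\mathbf{u})$. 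I expect the only delicate point to be the contraction computation, where one must carefully match signs and reconcile the two a priori independent descriptions of $\eta_{L\cdot\mathbf{a}}$ (as the contraction of $v_{\text{wp}}$ pushed down by the cut and glue fibration, and as the intrinsic contraction measure on $\Delta_{L\cdot\mathbf{a}} \subseteq (\mathbf{R}_{\geq 0})^k$); everything else is a routine modification of the argument for Proposition \ref{prop:total_hor_meas}.
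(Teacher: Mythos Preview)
Your proposal is correct and is precisely the approach the paper indicates: the paper does not give a detailed proof of this proposition but states that it ``can be proved using the cut and glue fibration, see Theorem \ref{theo:cut_glue_fib}, in a similar way to Proposition \ref{prop:total_hor_meas}.'' You have filled in exactly that outline, and the one genuinely new step---identifying the residual factor $a_k^{-1}\,d\ell_1\cdots d\ell_{k-1}$ on the level set $\{\sum a_i\ell_i = L\}$ with the contraction measure $|\omega_{\mathbf{a}}^L|$, and then rescaling to $\eta_{\mathbf{a}}$---is handled correctly.
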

$ $

\textit{Earthquake flow invariance.} The following result is an analogue of Proposition \ref{prop:earth_inv_0} for horosphere sector measures.\\

\begin{proposition}
	\label{prop:earth_inv_1}
	Any weak-$\star$ limit point of the sequence of probability measures $\{\widehat{\zeta}_{\gamma,\mathbf{a}}^{f,L}/n_{\gamma,\mathbf{a}}^{f,L}\}_{L > 0}$ on $P^1\mathcal{M}_{g,n}$ is earthquake flow invariant.
\end{proposition}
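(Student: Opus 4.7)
The plan is to follow the same two-stage strategy used for Proposition \ref{prop:earth_inv_0}: first establish that each measure $\zeta_{\gamma,\mathbf{a}}^{f,L}$ on $P^1\mathcal{T}_{g,n}$ is itself earthquake flow invariant, then descend to $P^1\mathcal{M}_{g,n}$ and combine with continuity of the earthquake flow to conclude invariance of any weak-$\star$ limit point. The second stage is identical to the corresponding part of the proof of Proposition \ref{prop:earth_inv_0}: the mapping class group action on $P^1\mathcal{T}_{g,n}$ commutes with $\{\text{tw}^t\}_{t\in\mathbf{R}}$, so invariance descends to $\widetilde{\zeta}_{\gamma,\mathbf{a}}^{f,L}$ on $P^1\mathcal{T}_{g,n}/\text{Stab}(\gamma)$ and then to $\widehat{\zeta}_{\gamma,\mathbf{a}}^{f,L}$ on $P^1\mathcal{M}_{g,n}$; and continuity of $\text{tw}^t$ together with the weak-$\star$ continuity of pushforwards transfers invariance from the prelimit sequence to any limit point.

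The main work is therefore the invariance of $\zeta_{\gamma,\mathbf{a}}^{f,L}$ on $P^1\mathcal{T}_{g,n}$, and this is even cleaner than in the horoball case because there is no longer a disintegration against $dr$ to perform. Indeed, by definition
\[
d \zeta_{\gamma,\mathbf{a}}^{f,L}(X,\lambda) = d \delta_{\mathbf{a} \cdot \gamma/ \ell_{\mathbf{a} \cdot \gamma}(X)}(\lambda) \cdot f\!\left(\tfrac{1}{L}(\ell_{\gamma_i}(X))_{i=1}^k\right) d\eta_{\gamma,\mathbf{a}}^L(X),
\]
and $\eta_{\gamma,\mathbf{a}}^L$ is supported on the level set $S_{\gamma,\mathbf{a}}^L = \ell_{\mathbf{a}\cdot\gamma}^{-1}(L)$, so on this support we may replace $\mathbf{a}\cdot\gamma/\ell_{\mathbf{a}\cdot\gamma}(X)$ by the constant $\mathbf{a}\cdot\gamma/L$. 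Writing $F(X) := f\!\left(\tfrac{1}{L}(\ell_{\gamma_i}(X))_{i=1}^k\right)$, we thus get
\[
d \zeta_{\gamma,\mathbf{a}}^{f,L}(X,\lambda) = d \delta_{\mathbf{a}\cdot\gamma/L}(\lambda)\, F(X)\, d\eta_{\gamma,\mathbf{a}}^L(X).
\]

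Now I would verify invariance pointwise on this formula. For arbitrary $t\in\mathbf{R}$, the earthquake flow acts as $\text{tw}^t(X,\lambda) = (\text{tw}_\lambda^t(X),\lambda)$. Three facts combine: (i) the $\delta$-factor is concentrated at $\lambda = \mathbf{a}\cdot\gamma/L$, which is fixed in the second coordinate; (ii) on this support the flow becomes $X \mapsto \text{tw}_{\mathbf{a}\cdot\gamma/L}^t(X) = \text{tw}_{\mathbf{a}\cdot\gamma}^{t/L}(X)$, and $\eta_{\gamma,\mathbf{a}}^L = \eta_{\mathbf{a}\cdot\gamma}^L$ was observed in \S2 to be invariant under twists along $\mathbf{a}\cdot\gamma$, a consequence of the twist invariance of both $\ell_{\mathbf{a}\cdot\gamma}$ and $v_\text{wp}$ (the latter via (\ref{eq:hamilton})); (iii) the function $F$ is invariant under twists along $\mathbf{a}\cdot\gamma$, because the individual length functions $\ell_{\gamma_i}$ are: since the components of $\gamma$ are pairwise disjoint, $i(\gamma_i,\mathbf{a}\cdot\gamma)=0$, and by Wolpert's formula (\ref{eq:hamilton}) this is exactly the statement that $\ell_{\gamma_i}$ is constant along the Hamiltonian flow of $\ell_{\mathbf{a}\cdot\gamma}$. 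Putting these together gives $\text{tw}^t_*\zeta_{\gamma,\mathbf{a}}^{f,L} = \zeta_{\gamma,\mathbf{a}}^{f,L}$.

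There is no serious obstacle here: the only subtle point is recognizing that the $\delta$-factor $\delta_{\mathbf{a}\cdot\gamma/\ell_{\mathbf{a}\cdot\gamma}(X)}$ effectively becomes a constant point mass on the support of the horosphere measure, so that the argument collapses to the twist invariance of $\eta_{\gamma,\mathbf{a}}^L$ and of $F$, both of which are already established. After this, descending to $P^1\mathcal{M}_{g,n}$ and passing to weak-$\star$ limits is routine and follows the final two paragraphs of the proof of Proposition \ref{prop:inv_meas} verbatim.
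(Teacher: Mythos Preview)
Your proof is correct and follows essentially the same approach as the paper, which deduces Proposition~\ref{prop:earth_inv_1} from the earthquake flow invariance of the individual measures $\widehat{\zeta}_{\gamma,\mathbf{a}}^{f,L}$ (Proposition~\ref{prop:inv_meas_sph}) together with continuity of the flow, just as Proposition~\ref{prop:earth_inv_0} was deduced from Proposition~\ref{prop:inv_meas}. Your observation that the disintegration against $dr$ is unnecessary here, since $\eta_{\gamma,\mathbf{a}}^L$ is already supported on the level set $\ell_{\mathbf{a}\cdot\gamma}^{-1}(L)$ and the $\delta$-factor is therefore constant, is exactly the simplification the paper has in mind when it says Proposition~\ref{prop:inv_meas_sph} ``can be proved using similar arguments.''
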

$ $

Just as in the case of Proposition \ref{prop:earth_inv_0}, Proposition \ref{prop:earth_inv_1} is a  direct consequence of the continuity of the earthquake flow on $P^1\mathcal{M}_{g,n}$ and the following analogue of Proposition \ref{prop:inv_meas} for horosphere sector measures; it can be proved using similar arguments.\\

\begin{proposition}
	\label{prop:inv_meas_sph}
	The measures $\widehat{\zeta}_{\gamma,\mathbf{a}}^{f,L}$ on $P^1 \mathcal{M}_g$ are earthquake flow invariant.
\end{proposition}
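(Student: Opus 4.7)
The plan is to mirror the proof of Proposition \ref{prop:inv_meas}, exploiting a structural simplification that is specific to the horosphere sector setting: the measure $\eta_{\gamma,\mathbf{a}}^{f,L}$ on $\mathcal{T}_{g,n}$ is supported on the single horosphere $S_{\mathbf{a}\cdot\gamma}^L = \ell_{\mathbf{a}\cdot\gamma}^{-1}(L)$, where $\ell_{\mathbf{a}\cdot\gamma} \equiv L$. Consequently, on the support of $\zeta_{\gamma,\mathbf{a}}^{f,L}$ the disintegration
\[
d\zeta_{\gamma,\mathbf{a}}^{f,L}(X,\lambda) = d\delta_{\mathbf{a}\cdot\gamma/\ell_{\mathbf{a}\cdot\gamma}(X)}(\lambda) \, d\eta_{\gamma,\mathbf{a}}^{f,L}(X)
\]
collapses to $d\delta_{\mathbf{a}\cdot\gamma/L}(\lambda) \cdot F(X) \, d\eta_{\mathbf{a}\cdot\gamma}^L(X)$, where $F(X) := f\bigl(\tfrac{1}{L}(\ell_{\gamma_i}(X))_{i=1}^k\bigr)$. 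In particular there is no need to invoke Proposition \ref{prop:meas_disint} to slice the Weil-Petersson measure into horospherical measures, as was done in the horoball case.

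First I would establish earthquake flow invariance of $\zeta_{\gamma,\mathbf{a}}^{f,L}$ on $P^1\mathcal{T}_{g,n}$. Three ingredients suffice: (i) on the support, every lamination coordinate equals $\mathbf{a}\cdot\gamma/L$, so the earthquake flow acts as $\text{tw}^t(X,\lambda) = (\text{tw}^t_{\mathbf{a}\cdot\gamma/L}(X), \mathbf{a}\cdot\gamma/L)$; (ii) the function $F$ is invariant under twist deformations along $\mathbf{a}\cdot\gamma$ since each length function $\ell_{\gamma_i}$ is invariant under such twists; (iii) the horospherical measure $\eta_{\mathbf{a}\cdot\gamma}^L$ on $\mathcal{T}_{g,n}$ is also invariant under twists along $\mathbf{a}\cdot\gamma$, as recorded in the discussion of horospherical measures in \S 2. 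Chaining (i)–(iii) with the change of variables $X \mapsto \text{tw}^{-t}_{\mathbf{a}\cdot\gamma/L}(X)$ gives $\text{tw}^t_* \zeta_{\gamma,\mathbf{a}}^{f,L} = \zeta_{\gamma,\mathbf{a}}^{f,L}$ directly.

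The descent to $P^1\mathcal{T}_{g,n}/\text{Stab}(\gamma)$ and then to $P^1\mathcal{M}_{g,n}$ then proceeds verbatim as in the last paragraph of the proof of Proposition \ref{prop:inv_meas}: the earthquake flow commutes with the $\text{Mod}_{g,n}$-action on $P^1\mathcal{T}_{g,n}$, hence with the quotient maps $P^1\mathcal{T}_{g,n} \to P^1\mathcal{T}_{g,n}/\text{Stab}(\gamma) \to P^1\mathcal{M}_{g,n}$, so local pushforwards and pushforwards transport invariance along each step. There is no substantial obstacle here: the only bookkeeping point is making sure that the lamination appearing in the point mass really is $\mathbf{a}\cdot\gamma/L$ (and not $\mathbf{a}\cdot\gamma/\ell_{\mathbf{a}\cdot\gamma}(X)$ for a variable $X$), which is guaranteed by the fact that $\eta_{\gamma,\mathbf{a}}^{f,L}$ is supported on $S_{\mathbf{a}\cdot\gamma}^L$. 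I expect this proof to be strictly simpler than the proof of Proposition \ref{prop:inv_meas}, since the extra integration over $r$ that was needed there is absent.
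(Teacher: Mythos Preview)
Your proposal is correct and follows the approach the paper itself indicates (it simply says the result ``can be proved using similar arguments'' to Proposition~\ref{prop:inv_meas}). Your observation that the horosphere case is strictly simpler---because $\eta_{\gamma,\mathbf{a}}^{f,L}$ is already supported on the single level set $\ell_{\mathbf{a}\cdot\gamma}^{-1}(L)$, so the slicing via Proposition~\ref{prop:meas_disint} is unnecessary and the point mass is $\delta_{\mathbf{a}\cdot\gamma/L}$ from the outset---is valid and is exactly the structural simplification one should expect; the remaining ingredients (twist-invariance of $F$, of $\eta_{\mathbf{a}\cdot\gamma}^L$, and the descent through the quotient maps) are identical to those in the proof of Proposition~\ref{prop:inv_meas}.
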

$ $

\textit{Comparing horosphere sector measures.} Recall the definition of the horosphere measures $\eta_{\gamma,\mathbf{a}}^L$ on $\mathcal{T}_{g,n}$ introduced in \S 1. These measures induce horosphere measures $\zeta_{\gamma,\mathbf{a}}^L$ on the bundle $P^1\mathcal{T}_{g,n}$ by considering the disintegration formula
\[
d \zeta_{\gamma,\mathbf{a}}^{L}(X,\lambda) := d \delta_{\mathbf{a}\cdot\gamma/ \ell_{\mathbf{a}\cdot\gamma}(X)}(\lambda) \ d\eta_{\gamma,\mathbf{a}}^{L}(X).
\]
Let $\widetilde{\eta}_{\gamma,\mathbf{a}}^L$ be the local pushforward of $\eta_{\gamma,\mathbf{a}}$ to $\mathcal{T}_{g,n}/\text{Stab}(\gamma)$ and $\widehat{\eta}_{\gamma,\mathbf{a}}^L$ be the pushforward of $\widetilde{\eta}_{\gamma,\mathbf{a}}^L$ to $\mathcal{M}_{g,n}$. Let $\widetilde{\zeta}_{\gamma,\mathbf{a}}^L$ be the local pushforward of $\zeta_{\gamma,\mathbf{a}}$ to $P^1\mathcal{T}_{g,n}/\text{Stab}(\gamma)$ and $\widehat{\zeta}_{\gamma,\mathbf{a}}^L$ be the pushforward of $\widetilde{\zeta}_{\gamma,\mathbf{a}}^L$ to $P^1\mathcal{M}_{g,n}$. Denote by $n_{\gamma,\mathbf{a}}^L$ the total mass of the measures $\widehat{\eta}_{\gamma,\mathbf{a}}^L$ and $\widehat{\zeta}_{\gamma,\mathbf{a}}^L$, i.e.,
\[
n_{\gamma,\mathbf{a}}^L = \widehat{\eta}_{\gamma,\mathbf{a}}^L(\mathcal{M}_{g,n}) = \widehat{\zeta}_{\gamma,\mathbf{a}}^L(P^1 \mathcal{M}_{g,n}).
\]
$ $

Much like in the case of Theorem \ref{theo:horoball_equid}, many steps in the proof of Theorem \ref{theo:horosphere_equid} can be reduced to the study of the sequence of  probability measures $\{\widehat{\zeta}_{\gamma,\mathbf{a}}^L/n_{\gamma,\mathbf{a}}^L\}_{L>0}$ on $P^1\mathcal{M}_{g,n}$. The following result is an analogue of Lemma \ref{lemma:hor_meas_comp} for horosphere sector measures; it can be proved using similar arguments. \\

\begin{lemma}
	\label{lemma:hor_meas_comp_1}
	There exists a constant $C > 0$ such that for every Borel measurable subset $A \subseteq P^1\mathcal{M}_{g,n}$,
	\[
	\limsup_{L \to \infty} \frac{\widehat{\zeta}_{\gamma,\mathbf{a}}^{f,L}(A)}{n_{\gamma,\mathbf{a}}^{f,L}} \leq C \cdot \limsup_{L \to \infty} \frac{\widehat{\zeta}_{\gamma,\mathbf{a}}^{L}(A)}{n_{\gamma,\mathbf{a}}^{L}}.
	\]
\end{lemma}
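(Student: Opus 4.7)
The plan is to mirror the proof of Lemma \ref{lemma:hor_meas_comp} exactly, with the minor simplification that $f$ is already defined on the compact simplex $\Delta_{\mathbf{a}}$, so there is no need to introduce an auxiliary box as was done with the parameter $a>0$ in the horoball case. The comparison measure will simply be the full horosphere measure $\widehat{\zeta}_{\gamma,\mathbf{a}}^L$ (corresponding to $f \equiv 1$ on $\Delta_{\mathbf{a}}$).

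First, let $M > 0$ be a uniform upper bound for $f$ on $\Delta_{\mathbf{a}}$, which exists since $f$ is bounded. Directly from the definition of the horosphere sector measures $\eta_{\gamma,\mathbf{a}}^{f,L}$ as $f(\frac{1}{L}\cdot(\ell_{\gamma_i})_i) \cdot d\eta_{\gamma,\mathbf{a}}^L$, the pointwise inequality $f \leq M$ yields
\[
\eta_{\gamma,\mathbf{a}}^{f,L} \leq M \cdot \eta_{\gamma,\mathbf{a}}^{L}
\]
as measures on $\mathcal{T}_{g,n}$. Applying the disintegration formula defining $\zeta_{\gamma,\mathbf{a}}^{f,L}$ and $\zeta_{\gamma,\mathbf{a}}^{L}$ fiberwise (both disintegrate the same point-mass $d\delta_{\mathbf{a}\cdot\gamma/\ell_{\mathbf{a}\cdot\gamma}(X)}(\lambda)$ over the base measure on $\mathcal{T}_{g,n}$), we deduce $\zeta_{\gamma,\mathbf{a}}^{f,L} \leq M \cdot \zeta_{\gamma,\mathbf{a}}^{L}$ on $P^1\mathcal{T}_{g,n}$. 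The inequality is preserved by the local pushforward to $P^1\mathcal{T}_{g,n}/\text{Stab}(\gamma)$ and by the subsequent pushforward to $P^1\mathcal{M}_{g,n}$, giving $\widehat{\zeta}_{\gamma,\mathbf{a}}^{f,L} \leq M \cdot \widehat{\zeta}_{\gamma,\mathbf{a}}^{L}$ on $P^1\mathcal{M}_{g,n}$.

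Next I compare total masses. By Proposition \ref{prop:total_horosphere_meas} applied once to $f$ and once to the constant function $1$,
\[
\lim_{L \to \infty} \frac{n_{\gamma,\mathbf{a}}^{f,L}}{L^{6g-6+2n-1}} = \int_{\Delta_\mathbf{a}} f(\mathbf{L}) \cdot W_{g,n}(\gamma,\mathbf{L})\, d\eta_{\mathbf{a}}(\mathbf{L}), \qquad \lim_{L \to \infty} \frac{n_{\gamma,\mathbf{a}}^{L}}{L^{6g-6+2n-1}} = \int_{\Delta_\mathbf{a}} W_{g,n}(\gamma,\mathbf{L})\, d\eta_{\mathbf{a}}(\mathbf{L}).
\]
By Theorem \ref{theo:vol_pol}, $W_{g,n}(\gamma,\mathbf{L})$ has strictly positive coefficients, hence is strictly positive on the interior of $\Delta_{\mathbf{a}} \subseteq (\mathbf{R}_{\geq 0})^k$. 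Since $f$ is non-negative and not a.e. zero with respect to Lebesgue measure on $\Delta_{\mathbf{a}}$, the first integral is strictly positive; the second is trivially strictly positive. It follows that the ratio $n_{\gamma,\mathbf{a}}^{L}/n_{\gamma,\mathbf{a}}^{f,L}$ converges to a finite positive constant $c^{-1} > 0$.

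Combining, for every Borel $A \subseteq P^1\mathcal{M}_{g,n}$ and every $L > 0$,
\[
\frac{\widehat{\zeta}_{\gamma,\mathbf{a}}^{f,L}(A)}{n_{\gamma,\mathbf{a}}^{f,L}} \leq M \cdot \frac{n_{\gamma,\mathbf{a}}^{L}}{n_{\gamma,\mathbf{a}}^{f,L}} \cdot \frac{\widehat{\zeta}_{\gamma,\mathbf{a}}^{L}(A)}{n_{\gamma,\mathbf{a}}^{L}},
\]
and taking $\limsup_{L\to\infty}$ yields the claim with $C := M/c$. There is no real obstacle here; the only point requiring attention is the positivity of $\int_{\Delta_{\mathbf{a}}} f \cdot W_{g,n}\, d\eta_{\mathbf{a}}$, which follows from the positivity of the leading coefficients of the Weil--Petersson volume polynomials and the assumption that $f$ is not a.e. zero.
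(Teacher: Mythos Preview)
Your proof is correct and follows exactly the approach the paper intends: the paper does not spell out a proof of this lemma but merely states it ``can be proved using similar arguments'' to Lemma~\ref{lemma:hor_meas_comp}, and your argument does precisely that, with the natural simplification that no auxiliary box is needed since $f$ is already supported on the compact simplex $\Delta_{\mathbf{a}}$.
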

$ $

\textit{Absolute continuity with respect to the Mirzakhani measure.} The following result is an analogue of Proposition \ref{prop:hor_meas_ac_0} for horosphere sector measures.\\

\begin{proposition}
	\label{prop:hor_meas_ac_1}
	Any weak-$\star$ limit point of the sequence of probability measures $\{\widehat{\zeta}_{\gamma,\mathbf{a}}^{f,L}/n_{\gamma,\mathbf{a}}^{f,L}\}_{L > 0}$ on $P^1\mathcal{M}_{g,n}$ is absolutely continuous with respect to $\widehat{\nu}_{\text{Mir}}$.
\end{proposition}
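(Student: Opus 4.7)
The plan is to mirror the four step scheme used for Proposition \ref{prop:hor_meas_ac_0}, replacing the horoball ingredients with their horosphere counterparts. First, Lemma \ref{lemma:hor_meas_comp_1} reduces everything to proving the statement for the unweighted sequence $\{\widehat{\zeta}_{\gamma,\mathbf{a}}^L/n_{\gamma,\mathbf{a}}^L\}_{L>0}$. Given a limit point $\widehat{\zeta}_{\gamma,\mathbf{a}}$ along some sequence $L_j \nearrow \infty$ and a Borel set $\widehat{A} \subseteq P^1\mathcal{M}_{g,n}$ with $\widehat{\nu}_{\text{Mir}}(\widehat{A}) = 0$, I would pull back $\widehat A$ to $A \subseteq P^1\mathcal{T}_{g,n}$ (after reducing to a compact subset $K$ using the countable exhaustion of $P^1\mathcal{M}_{g,n}$), apply Lemma \ref{lemma:approx} to obtain a covering $A \subseteq \bigcup_i U_{X_i}(\epsilon_i) \times V_i$ whose total Mirzakhani measure is arbitrarily small, and then reduce to Portmanteau's theorem plus a key estimate.

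The key estimate plays the role of Proposition \ref{prop:ac_key_estimate}: for $X$ in a compact $K \subseteq \mathcal{T}_{g,n}$, $0 < \epsilon < \epsilon_0$, and $V \subseteq P\mathcal{ML}_{g,n}$ an open continuity set for the Lebesgue class, I want to show
\[
\limsup_{L \to \infty} \frac{\widehat{\zeta}_{\gamma,\mathbf{a}}^{L}(\Pi(U_X(\epsilon) \times V))}{n_{\gamma,\mathbf{a}}^{L}} \leq C \cdot \nu_{\text{Mir}}(U_X(\epsilon) \times V).
\]
To establish it I would unfold through the mapping class group exactly as in the proof of Lemma \ref{lemma:quot_bound}: write $\Pi = P \circ Q$, decompose by left cosets of $\text{Stab}(\gamma) \backslash \text{Mod}_{g,n}$, and use the $\text{Mod}_{g,n}$-equivariance of the horosphere construction to rewrite each summand as $\zeta_{\phi \cdot \gamma, \mathbf{a}}^L(U_X(\epsilon) \times V)$. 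The crucial difference with the horoball case is in the non-vanishing condition: because the disintegration forces $\lambda = \mathbf{a}\cdot(\phi\cdot\gamma)/\ell_{\mathbf{a}\cdot(\phi\cdot\gamma)}(X)$ and the horosphere $S_{\phi\cdot\gamma,\mathbf{a}}^L$ lives on a codimension-1 level set, only mapping classes $\phi$ with $[\phi\cdot(\mathbf{a}\cdot\gamma)] \in V$ and $e^{-\epsilon}L \leq \ell_{\phi\cdot(\mathbf{a}\cdot\gamma)}(X) \leq e^\epsilon L$ contribute.

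This yields a bound of the form
\[
\widehat{\zeta}_{\gamma,\mathbf{a}}^{L}(\Pi(U_X(\epsilon) \times V)) \leq [\text{Stab}(\mathbf{a}\cdot\gamma):\text{Stab}(\gamma)] \cdot s'(X, \mathbf{a}\cdot\gamma, L, \epsilon, V) \cdot \sup_{\phi} \eta_{\phi \cdot \gamma, \mathbf{a}}^L(U_X(\epsilon)),
\]
where $s'$ is the shell count $\#\{\alpha \in \text{Mod}_{g,n}\cdot(\mathbf{a}\cdot\gamma) : e^{-\epsilon}L \leq \ell_\alpha(X) \leq e^\epsilon L,\ [\alpha] \in V\}$. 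Subtracting two instances of Lemma \ref{lemma:count_bd_ac} controls this shell count by $C \cdot \epsilon \cdot L^{6g-6+2n} \cdot B_V(X)$ (up to a harmless constant factor from $e^{\epsilon(6g-6+2n)} - e^{-\epsilon(6g-6+2n)} \lesssim \epsilon$). The horosphere factor is controlled by the Mirzakhani bound (Proposition \ref{prop:mir_bd_og_0}), which gives $\eta_{\phi\cdot\gamma,\mathbf{a}}^L(U_X(\epsilon)) \leq C \epsilon^{6g-7+2n}/L$ uniformly in $\phi$. Combining and dividing by $n_{\gamma,\mathbf{a}}^L \sim c \cdot L^{6g-7+2n}$ from Proposition \ref{prop:total_horosphere_meas}, the factors of $L$ cancel and the $\epsilon$ powers collapse to $\epsilon^{6g-6+2n} \cdot B_V(X)$, which by the lower bound of Lemma \ref{lemma:thu_ball_bound} and Lemma \ref{lemma:bv_measure_comparison} is dominated by $C\,\nu_{\text{Mir}}(U_X(\epsilon) \times V)$, as desired. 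The main obstacle is unambiguously the Mirzakhani bound itself: without Proposition \ref{prop:mir_bd_og_0} the naive estimate for the horosphere factor would lose the crucial $1/L$ saving that makes the $L$-powers cancel, so the whole argument rests on having that proposition (whose conceptual proof via gradient flow is deferred to \S 4).
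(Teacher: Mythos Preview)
Your proposal is correct and follows essentially the same route as the paper: reduce via Lemma~\ref{lemma:hor_meas_comp_1} and Lemma~\ref{lemma:approx} to the key estimate (Proposition~\ref{prop:ac_key_estimate_1}), then unfold over $\text{Mod}_{g,n}/\text{Stab}(\gamma)$, apply the Mirzakhani bound for the horosphere factor and the shell-count bound (the paper packages these as Lemmas~\ref{lemma:quot_bound_1} and~\ref{lemma:count_bd_ac_1}), and finish with Lemma~\ref{lemma:bv_measure_comparison_1}, which is exactly your combination of Lemmas~\ref{lemma:thu_ball_bound} and~\ref{lemma:bv_measure_comparison}. The only imprecision is the phrase ``subtracting two instances of Lemma~\ref{lemma:count_bd_ac}'': since that lemma is a $\limsup$ upper bound you cannot literally subtract, but your parenthetical makes clear you mean a direct Portmanteau argument on the annulus $\{e^{-\epsilon}L \leq \ell_\alpha(X) \leq e^\epsilon L\}$, which is precisely how the paper proves Lemma~\ref{lemma:count_bd_ac_1}.
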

$ $

To prove Proposition \ref{prop:hor_meas_ac_1} we mimic the arguments in the proof of Proposition \ref{prop:hor_meas_ac_0}; see \S 3. After applying Lemma \ref{lemma:approx}, the proof reduces to the following analogue of Proposition \ref{prop:ac_key_estimate}. \\

\begin{proposition}
	\label{prop:ac_key_estimate_1}
	Let $K \subseteq \mathcal{T}_{g,n}$ be a compact subset. There exist  constants $C > 0$ and $\epsilon_0 > 0$ such that for every $X \in K$, every $0 < \epsilon < \epsilon_0$, and every $V \subseteq P \mathcal{ML}_{g,n}$ open continuity subset of the Lebesgue measure class,
	\[
	\limsup_{L \to \infty} \frac{\widehat{\zeta}_{\gamma,\mathbf{a}}^{f,L}(\Pi(U_X(\epsilon) \times V))}{n_{\gamma,\mathbf{a}}^{f,L}} \leq C \cdot \nu_{\text{Mir}}(U_X(\epsilon) \times V).
	\]
\end{proposition}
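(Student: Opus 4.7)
The plan is to follow the three-step architecture of the proof of Proposition \ref{prop:ac_key_estimate}, but with two crucial substitutions: the Weil-Petersson volume bound $\mu_{\text{wp}}(U_X(\epsilon))$ is replaced by the Mirzakhani bound of Proposition \ref{prop:mir_bd_og_0}, and the coarse counting $s(X,\mathbf{a}\cdot\gamma, cL, V)$ over a full length interval is replaced by a narrow length-shell count that contributes an extra factor of $\epsilon$. By Lemma \ref{lemma:hor_meas_comp_1} it suffices to prove the estimate for the unweighted measures $\widehat{\zeta}_{\gamma,\mathbf{a}}^L / n_{\gamma,\mathbf{a}}^L$.

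First, I would carry out the exact analogue of Lemma \ref{lemma:quot_bound}: unfolding the quotient maps $P^1\mathcal{T}_{g,n} \to P^1\mathcal{T}_{g,n}/\text{Stab}(\gamma) \to P^1\mathcal{M}_{g,n}$ and summing over cosets $[\phi] \in \text{Mod}_{g,n}/\text{Stab}(\gamma)$ gives
\[
\widehat{\zeta}_{\gamma,\mathbf{a}}^L(\Pi(U_X(\epsilon)\times V)) \leq [\text{Stab}(\mathbf{a}\cdot\gamma):\text{Stab}(\gamma)] \cdot \sum_{[\phi]} \zeta_{\phi\cdot\gamma,\mathbf{a}}^L(U_X(\epsilon)\times V).
\]
A summand is nonzero only when $[\mathbf{a}\cdot(\phi\cdot\gamma)]\in V$ and $S_{\phi\cdot\gamma,\mathbf{a}}^L\cap U_X(\epsilon)\neq\emptyset$; the latter forces $\ell_{\mathbf{a}\cdot(\phi\cdot\gamma)}(X) \in [e^{-\epsilon}L, e^{\epsilon}L]$ by the defining property of the symmetric Thurston metric. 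On each nonzero summand, Proposition \ref{prop:mir_bd_og_0} yields $\zeta_{\phi\cdot\gamma,\mathbf{a}}^L(U_X(\epsilon)\times V) \leq \eta_{\phi\cdot\gamma,\mathbf{a}}^L(U_X(\epsilon)) \leq C\epsilon^{6g-6+2n-1}/L$, uniformly over the compact set $K$.

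Second, I would count the surviving cosets by introducing the length-shell
\[
W_\epsilon := \{\lambda \in \mathcal{ML}_{g,n} : \ell_\lambda(X) \in [e^{-\epsilon}, e^{\epsilon}],\ [\lambda] \in V\},
\]
so the number of relevant cosets is at most $\#\{\alpha \in \text{Mod}_{g,n}\cdot\gamma : (\mathbf{a}\cdot\alpha)/L \in W_\epsilon\}$. Since $V$ is an open Lebesgue-class continuity subset and the level sets of $\ell_{(\cdot)}(X)$ are $\mu_{\text{Thu}}$-null, $W_\epsilon$ is a $\mu_{\text{Thu}}$-continuity set; the Portmanteau-type argument underlying Lemma \ref{lemma:count_bd_ac}, applied directly to $W_\epsilon$ rather than to a length ball, gives
\[
\limsup_{L\to\infty} \frac{\#\{\alpha \in \text{Mod}_{g,n}\cdot\gamma : (\mathbf{a}\cdot\alpha)/L \in W_\epsilon\}}{L^{6g-6+2n}} \leq C \cdot \mu_{\text{Thu}}(W_\epsilon).
\]
Homogeneity of $\mu_{\text{Thu}}$ of degree $6g-6+2n$ gives $\mu_{\text{Thu}}(W_\epsilon) = (e^{(6g-6+2n)\epsilon} - e^{-(6g-6+2n)\epsilon})\cdot B_V(X) \leq C\epsilon\cdot B_V(X)$ for small $\epsilon$. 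Combining all of the above,
\[
\widehat{\zeta}_{\gamma,\mathbf{a}}^L(\Pi(U_X(\epsilon)\times V)) \leq C \cdot \epsilon \cdot B_V(X) \cdot L^{6g-6+2n} \cdot \frac{\epsilon^{6g-6+2n-1}}{L} = C \cdot B_V(X) \cdot \epsilon^{6g-6+2n} \cdot L^{6g-6+2n-1}.
\]
Dividing by $n_{\gamma,\mathbf{a}}^L \asymp L^{6g-6+2n-1}$ (Proposition \ref{prop:total_horosphere_meas}) and then applying Lemmas \ref{lemma:thu_ball_bound} and \ref{lemma:bv_measure_comparison} to convert $B_V(X)\cdot\epsilon^{6g-6+2n}$ into a constant multiple of $\nu_{\text{Mir}}(U_X(\epsilon)\times V)$ closes the estimate for $L$ sufficiently large; a trivial uniform bound handles small $L$.

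The main obstacle is the length-shell counting step: Lemma \ref{lemma:count_bd_ac} as stated only produces a limsup over full length balls, and subtracting two such limsups does not give the desired factor of $\epsilon$. One must instead reapply the definition (\ref{ML_counting_measure}) of the Thurston measure as a limit of counting measures directly to the annular continuity set $W_\epsilon$, which is where the Lebesgue-class continuity hypothesis on $V$ is essential. Everything else in the argument is a routine adaptation of the horoball case, with the Mirzakhani bound supplying the critical $1/L$ savings that let a codimension-one horosphere compete with the full-dimensional horoball on the same footing.
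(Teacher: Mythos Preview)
Your proposal is correct and follows essentially the same three-step architecture as the paper: the paper packages your first step as Lemma \ref{lemma:quot_bound_1} (unfolding plus the Mirzakhani bound), your length-shell count as Lemma \ref{lemma:count_bd_ac_1}, and your conversion $\epsilon^{6g-6+2n}\cdot B_V(X)\leq C\cdot\nu_{\text{Mir}}(U_X(\epsilon)\times V)$ as Lemma \ref{lemma:bv_measure_comparison_1}, combining them with Proposition \ref{prop:total_horosphere_meas} exactly as you do. The only cosmetic slip is that the index $[\text{Stab}(\mathbf{a}\cdot\gamma):\text{Stab}(\gamma)]$ should enter when you pass from cosets in $\text{Mod}_{g,n}/\text{Stab}(\gamma)$ to elements of the orbit $\text{Mod}_{g,n}\cdot(\mathbf{a}\cdot\gamma)$, not in front of the coset sum itself; this does not affect the argument.
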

$ $

To prove Proposition \ref{prop:ac_key_estimate_1} we mimic the three step proof of Proposition \ref{prop:ac_key_estimate}; significant changes need to be introduced and will be discussed below. By Lemma \ref{lemma:hor_meas_comp_1}, it is enough to prove Proposition \ref{prop:ac_key_estimate_1} for the sequence of  probability measures $\{\widehat{\zeta}_{\gamma,\mathbf{a}}^L/n_{\gamma,\mathbf{a}}^L\}_{L>0}$ on $P^1\mathcal{M}_{g,n}$. It will be convenient to consider $\epsilon_0 > 0$ small enough so that for every $0 < \epsilon < \epsilon_0$,
\[
1-2\epsilon < e^{-\epsilon} < e^\epsilon < 1+2\epsilon.
\]
$ $

Let $X \in \mathcal{T}_{g,n}$, $\epsilon>0$, and $V \subseteq P\mathcal{ML}_{g,n}$ be a Borel measurable subset. For every $L > 0$ consider the counting function
\[
s(X,\mathbf{a}\cdot\gamma,L,\epsilon,V) := \# \{\alpha \in \text{Mod}_{g,n} \cdot (\mathbf{a} \cdot \gamma) \ | \ e^{-\epsilon}L \leq \ell_{\alpha}(X) \leq e^\epsilon L, \ [\alpha] \in V \},
\]
where $\mathbf{a} \cdot \gamma \in \mathcal{ML}_{g,n}(\mathbf{Q})$ is as in (\ref{eq:weight_curve}). The first step in the proof of Proposition \ref{prop:ac_key_estimate_1} corresponds to the following analogue of Lemma \ref{lemma:quot_bound} for horosphere sector measures. The proof of this analogue uses Proposition \ref{prop:mir_bd_og_0}, the Mirzakhani bound, in a crucial way. Proposition \ref{prop:mir_bd_og_0} will be proved later in this section.\\

\begin{lemma}
	\label{lemma:quot_bound_1}
	Let $K \subseteq \mathcal{T}_{g,n}$ be a compact subset. There exist constants $C > 0$ and $\epsilon_0 > 0$ such that for every $X \in K$, every $0 < \epsilon < \epsilon_0$, every Borel measurable subset $V \subseteq P\mathcal{ML}_{g,n}$, and every $L > 0$, 
	\[
	\widehat{\zeta}_{\gamma,\mathbf{a}}^{L}(\Pi(U_X(\epsilon) \times V)) \leq C \cdot s(X,\mathbf{a} \cdot \gamma,L,\epsilon,V) \cdot \frac{\epsilon^{6g-6+2n-1}}{L}.
	\]
\end{lemma}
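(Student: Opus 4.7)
The plan is to follow the three-step template used in the proof of Lemma \ref{lemma:quot_bound}, replacing the trivial bound $\mu_{\text{wp}}(U_X(\epsilon))$ by the sharper horosphere estimate supplied by Proposition \ref{prop:mir_bd_og_0}. The counting function $s(X,\mathbf{a}\cdot\gamma,L,\epsilon,V)$ sits over an annular, not a ball-like, length condition precisely because horosphere measures only see a codimension-$1$ level set, so the Thurston-metric comparison on lengths must be applied with both sides of $L$.

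First I would unravel the pushforwards. Writing $\Pi = P \circ Q$ with $Q\colon P^1\mathcal{T}_{g,n} \to P^1\mathcal{T}_{g,n}/\text{Stab}(\gamma)$ and $P$ the subsequent quotient, and using that $\widehat{\zeta}_{\gamma,\mathbf{a}}^L$ is the pushforward of $\widetilde{\zeta}_{\gamma,\mathbf{a}}^L$ under $P$, we get
\[
P^{-1}(\Pi(U_X(\epsilon)\times V)) = \bigcup_{[\phi]\in \text{Stab}(\gamma)\backslash\text{Mod}_{g,n}} Q(\phi\cdot(U_X(\epsilon)\times V)).
\]
Subadditivity plus the bijection $\text{Stab}(\gamma)\backslash\text{Mod}_{g,n}\to \text{Mod}_{g,n}/\text{Stab}(\gamma)$ given by $[\phi]\mapsto[\phi^{-1}]$, together with the fact that $\widetilde{\zeta}_{\gamma,\mathbf{a}}^L$ is the local pushforward of $\zeta_{\gamma,\mathbf{a}}^L$, yield
\[
\widehat{\zeta}_{\gamma,\mathbf{a}}^L(\Pi(U_X(\epsilon)\times V)) \leq \sum_{[\phi]\in \text{Mod}_{g,n}/\text{Stab}(\gamma)} \zeta_{\gamma,\mathbf{a}}^L(\phi^{-1}\cdot(U_X(\epsilon)\times V)).
\]

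Next I would exploit mapping-class-group equivariance. The invariance of $\mu_{\text{wp}}$ under $\text{Mod}_{g,n}$ and the naturality of the disintegration for $\zeta_{\gamma,\mathbf{a}}^L$ give $\zeta_{\gamma,\mathbf{a}}^L(\phi^{-1}\cdot A) = \zeta_{\phi\cdot\gamma,\mathbf{a}}^L(A)$ for every Borel $A\subseteq P^1\mathcal{T}_{g,n}$ and every $\phi\in\text{Mod}_{g,n}$. A summand $\zeta_{\phi\cdot\gamma,\mathbf{a}}^L(U_X(\epsilon)\times V)$ is nonzero only if $[\mathbf{a}\cdot(\phi\cdot\gamma)]\in V$ and if $S_{\phi\cdot\gamma,\mathbf{a}}^L\cap U_X(\epsilon)\neq\emptyset$; the latter condition forces some $Y\in U_X(\epsilon)$ with $\ell_{\mathbf{a}\cdot(\phi\cdot\gamma)}(Y)=L$, and then the definition of the symmetric Thurston metric yields $e^{-\epsilon}L\leq \ell_{\mathbf{a}\cdot(\phi\cdot\gamma)}(X)\leq e^{\epsilon}L$. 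Passing from $\text{Mod}_{g,n}$-orbits of $\gamma$ to orbits of $\mathbf{a}\cdot\gamma$ costs the index $[\text{Stab}(\mathbf{a}\cdot\gamma):\text{Stab}(\gamma)]<\infty$, so the number of nonzero summands is at most this index times $s(X,\mathbf{a}\cdot\gamma,L,\epsilon,V)$.

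Finally, for each nonzero summand the disintegration formula gives $\zeta_{\phi\cdot\gamma,\mathbf{a}}^L(U_X(\epsilon)\times V) \leq \eta_{\phi\cdot\gamma,\mathbf{a}}^L(U_X(\epsilon))$, and Proposition \ref{prop:mir_bd_og_0} provides constants $C'>0$ and $\epsilon_0>0$, depending only on the compact set $K$, such that
\[
\eta_{\phi\cdot\gamma,\mathbf{a}}^L(U_X(\epsilon)) \leq C'\cdot\frac{\epsilon^{6g-6+2n-1}}{L}
\]
uniformly in $\phi\in\text{Mod}_{g,n}$, $X\in K$, and $0<\epsilon<\epsilon_0$. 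Setting $C := [\text{Stab}(\mathbf{a}\cdot\gamma):\text{Stab}(\gamma)]\cdot C'$ and combining the inequalities above finishes the proof. The whole argument is routine modulo the uniform estimate of Proposition \ref{prop:mir_bd_og_0}; indeed, the genuine obstacle is not in this lemma but in proving that uniform Mirzakhani bound over the entire $\text{Mod}_{g,n}$-orbit, since a naive application of the Weil-Petersson ball volume estimate (Lemma \ref{lemma:thu_ball_bound}) would only give $\epsilon^{6g-6+2n}$ rather than the sharper $\epsilon^{6g-6+2n-1}/L$ required to make the resulting density matchup with $\nu_{\text{Mir}}$ when combined with the counting estimate from Lemma \ref{lemma:count_bd_ac} and Proposition \ref{prop:total_horosphere_meas}.
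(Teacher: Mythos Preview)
Your proof is correct and follows essentially the same approach as the paper's own proof: unravel the pushforwards as in Lemma~\ref{lemma:quot_bound} to obtain the sum over $\text{Mod}_{g,n}/\text{Stab}(\gamma)$, count the nonzero summands via the annular length condition $e^{-\epsilon}L\leq \ell_{\phi\cdot(\mathbf{a}\cdot\gamma)}(X)\leq e^{\epsilon}L$ and the index $[\text{Stab}(\mathbf{a}\cdot\gamma):\text{Stab}(\gamma)]$, and bound each surviving term using Proposition~\ref{prop:mir_bd_og_0}. In fact you have written out in full the step the paper abbreviates as ``mimicking the proof of Lemma~\ref{lemma:quot_bound}''.
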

$ $

\begin{proof}
	Let $C_1 > 0$ and $\epsilon_0 >0$ be the constants provided by Proposition \ref{prop:mir_bd_og_0}. Fix $X \in \mathcal{T}_{g,n}$, $0 < \epsilon < \epsilon_0$, $V \subseteq P\mathcal{ML}_{g,n}$ Borel measurable subset, and $L > 0$. Mimicking the proof of Lemma \ref{lemma:quot_bound} one can show that
	\begin{equation}
	\label{eq:measure_sum_1}
	\widehat{\zeta}_{\gamma,\mathbf{a}}^L(\Pi(U_X(\epsilon) \times V)) \leq \sum_{[\phi] \in \text{Mod}_{g,n}/\text{Stab}(\gamma)} \zeta_{\phi \cdot \gamma,\mathbf{a}}^{L}( U_X(\epsilon) \times V).
	\end{equation}
	Notice that if $\zeta_{\phi \cdot \gamma,\mathbf{a}}^{L}(U_X(\epsilon) \times V) \neq 0$ then $[\phi \cdot (\mathbf{a} \cdot \gamma)] \in V$ and $S_{\phi \cdot \gamma,\mathbf{a}}^{L} \cap U_X(\epsilon) \neq \emptyset$. By the definition of the horoball $S_{\phi \cdot \gamma,\mathbf{a}}^{L}$ and of the symmetric Thurston metric $d_{\text{Thu}}$, the second condition implies
	\[
	e^{-\epsilon} L \leq \ell_{\phi\cdot (\mathbf{a} \cdot \gamma)}(X) \leq e^\epsilon L.
	\]
	It follows that in the sum on the right hand side of (\ref{eq:measure_sum_1}) at most 
	\[[\text{Stab}(\mathbf{a} \cdot \gamma):\text{Stab}(\gamma)] \cdot s(X,\mathbf{a}\cdot\gamma,L,\epsilon,V)\] 
	summands are non-zero. For each one of these summands, Proposition \ref{prop:mir_bd_og_0} ensures 
	\[
	\zeta_{\phi \cdot \gamma,\mathbf{a}}^{L}( U_X(\epsilon) \times V) \leq \eta_{\phi \cdot \gamma,\mathbf{a}}^L(U_X(\epsilon)) \leq C_1 \cdot \frac{\epsilon^{6g-6g+2n-1}}{L}.
	\]
	Putting things together we deduce 
	\[
	\widehat{\zeta}_{\gamma,\mathbf{a}}^{a,L}(\Pi(U_X(\epsilon) \times V)) \leq C \cdot s(X,\mathbf{a} \cdot \gamma,L,\epsilon,V) \cdot \frac{\epsilon^{6g-6+2n-1}}{L},
	\]
	where 
	\[
	C:= [\text{Stab}(\mathbf{a} \cdot \gamma):\text{Stab}(\gamma)] \cdot C_1.
	\]
	This finishes the proof.
\end{proof}
$ $

The following result, corresponding to the second step in the proof of Proposition \ref{prop:ac_key_estimate_1}, is an analogue of Lemma \ref{lemma:count_bd_ac_1}; it can be proved using similar arguments.\\

\begin{lemma}
	\label{lemma:count_bd_ac_1}
	There exist constants $C > 0$ and $\epsilon_0 > 0$ such that for every $X \in \mathcal{T}_{g,n}$, every $0 < \epsilon < \epsilon_0$, and every $V \subseteq P \mathcal{ML}_{g,n}$ open continuity subset of the Lebesgue measure class,
	\[
	\limsup_{L \to \infty} \frac{s(X,\mathbf{a} \cdot \gamma,L,\epsilon,V)}{L^{6g-6+2n}} \leq C \cdot \epsilon \cdot B_V(X).
	\]
\end{lemma}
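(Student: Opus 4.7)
The plan is to mirror the proof of Lemma \ref{lemma:count_bd_ac} but replace the ball $\{\ell_\lambda(X)\leq L\}$ by a thin shell and extract the factor $\epsilon$ from the homogeneity of $\mu_{\text{Thu}}$.

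First, I would clear the denominators in $\mathbf{a}$ by picking $N\in\mathbf{Z}_{>0}$ with $N\mathbf{a}\cdot\gamma\in\mathcal{ML}_{g,n}(\mathbf{Z})$. Multiplication by $N$ is a $\text{Mod}_{g,n}$-equivariant injection of $\text{Mod}_{g,n}\cdot(\mathbf{a}\cdot\gamma)$ into $\mathcal{ML}_{g,n}(\mathbf{Z})$ which scales hyperbolic lengths by $N$ and fixes projective classes, so
\[
s(X,\mathbf{a}\cdot\gamma,L,\epsilon,V)\leq \#\{\beta\in\mathcal{ML}_{g,n}(\mathbf{Z}):e^{-\epsilon}NL\leq\ell_\beta(X)\leq e^\epsilon NL,\ [\beta]\in V\}.
\]
Writing $d:=6g-6+2n$ and
\[
A_\epsilon(X,V):=\{\lambda\in\mathcal{ML}_{g,n}:e^{-\epsilon}\leq\ell_\lambda(X)\leq e^\epsilon,\ [\lambda]\in V\},
\]
the right-hand side equals $(NL)^d\cdot\mu^{NL}(A_\epsilon(X,V))$ for $\mu^{NL}$ the counting measure of (\ref{ML_counting_measure}).

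Next, I would apply Portmanteau's theorem to the convergence $\mu^{L}\to\mu_{\text{Thu}}$. The boundary of $A_\epsilon(X,V)$ lies in the union of the level sets $\{\ell_\lambda(X)=e^{\pm\epsilon}\}$ and the cone over $\partial V$; both sets are $\mu_{\text{Thu}}$-null, since length functions are continuous with no critical points while $\mu_{\text{Thu}}$ is Lebesgue-class on train-track charts and $V$ is a continuity subset of the Lebesgue measure class on $P\mathcal{ML}_{g,n}$. Since $A_\epsilon(X,V)$ is bounded in $\mathcal{ML}_{g,n}$ (the length function $\ell_\bullet(X)$ is proper), it is a bounded $\mu_{\text{Thu}}$-continuity set, and Portmanteau gives
\[
\limsup_{L\to\infty}\frac{s(X,\mathbf{a}\cdot\gamma,L,\epsilon,V)}{L^d}\leq N^d\cdot\mu_{\text{Thu}}(A_\epsilon(X,V)).
\]

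Finally, since $\{[\lambda]\in V\}$ is a cone in $\mathcal{ML}_{g,n}$, the homogeneity of $\mu_{\text{Thu}}$ of degree $d$ yields $\mu_{\text{Thu}}(\{\ell_\lambda(X)\leq t,\,[\lambda]\in V\})=t^d\cdot B_V(X)$ for every $t>0$; subtracting the two sublevel sets at $t=e^{\pm\epsilon}$ produces
\[
\mu_{\text{Thu}}(A_\epsilon(X,V))=(e^{\epsilon d}-e^{-\epsilon d})\cdot B_V(X).
\]
Choosing $\epsilon_0>0$ so that $e^{\epsilon d}-e^{-\epsilon d}\leq 3d\epsilon$ on $(0,\epsilon_0)$ and setting $C:=3d\cdot N^d$ completes the argument. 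I do not expect a substantive obstacle: the proof is a direct shell-refinement of Lemma \ref{lemma:count_bd_ac}, where the entire gain over the ball case is the extra factor $\epsilon$ coming from the homogeneity computation $e^{\epsilon d}-e^{-\epsilon d}=O(\epsilon)$. The only subtlety is the $\mu_{\text{Thu}}$-null boundary check required for Portmanteau, which is handled cleanly by the Lebesgue-class description of $\mu_{\text{Thu}}$ in train-track coordinates.
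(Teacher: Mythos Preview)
Your proposal is correct and matches the paper's intended argument: the paper does not write out a proof, saying only that the lemma ``can be proved using similar arguments'' to Lemma~\ref{lemma:count_bd_ac}, i.e., via Portmanteau's theorem applied to the convergence $\mu^L\to\mu_{\text{Thu}}$ together with the rationality of $\mathbf{a}$. Your shell refinement---computing $\mu_{\text{Thu}}(A_\epsilon(X,V))=(e^{\epsilon d}-e^{-\epsilon d})B_V(X)$ by homogeneity and extracting the factor $\epsilon$---is exactly the extra step the paper has in mind, consistent with its choice of $\epsilon_0$ so that $1-2\epsilon<e^{-\epsilon}<e^\epsilon<1+2\epsilon$.
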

$ $

The following result, corresponding to the third step in the proof of Proposition \ref{prop:ac_key_estimate_1}, is an analogue of Lemma \ref{lemma:bv_measure_comparison}. It is a direct consequence of Lemma \ref{lemma:bv_measure_comparison} and Lemma \ref{lemma:thu_ball_bound}.\\

\begin{lemma}
	\label{lemma:bv_measure_comparison_1}
	Let $K \subseteq \mathcal{T}_{g,n}$ be a compact subset. There exist constants $C > 0$ and $\epsilon_0 > 0$ such that for every $X \in K$, every $0 < \epsilon < \epsilon_0$, and every Borel measurable subset $V \subseteq P \mathcal{ML}_{g,n}$, 
	\[
	\epsilon^{6g-6+2n} \cdot B_V(X) \leq C \cdot \nu_{\text{Mir}}(U_X(\epsilon) \times V).
	\]
\end{lemma}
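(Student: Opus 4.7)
The plan is to chain the two already-stated lemmas in a completely elementary way; there is no real obstacle here, since Lemma \ref{lemma:bv_measure_comparison_1} is essentially Lemma \ref{lemma:bv_measure_comparison} with the factor $\mu_{\text{wp}}(U_X(\epsilon))$ replaced by its two-sided comparison with $\epsilon^{6g-6+2n}$.

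More concretely, I would fix the compact subset $K \subseteq \mathcal{T}_{g,n}$ and begin by applying Lemma \ref{lemma:thu_ball_bound} to $K$ to obtain constants $C_1 > 0$ and $\epsilon_1 > 0$ such that, for every $X \in K$ and every $0 < \epsilon < \epsilon_1$,
\[
\epsilon^{6g-6+2n} \leq C_1 \cdot \mu_{\text{wp}}(U_X(\epsilon)).
\]
Only the lower-bound half of Lemma \ref{lemma:thu_ball_bound} is actually used here. Next, I would apply Lemma \ref{lemma:bv_measure_comparison} to the same compact set $K$ with $\epsilon_0 := \epsilon_1$, producing a constant $C_2 > 0$ such that for every $X \in K$, every $0 < \epsilon < \epsilon_1$, and every Borel measurable $V \subseteq P\mathcal{ML}_{g,n}$,
\[
\mu_{\text{wp}}(U_X(\epsilon)) \cdot B_V(X) \leq C_2 \cdot \nu_{\text{Mir}}(U_X(\epsilon) \times V).
\]

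Multiplying the first inequality by the nonnegative quantity $B_V(X)$ and then chaining with the second inequality yields
\[
\epsilon^{6g-6+2n} \cdot B_V(X) \leq C_1 \cdot \mu_{\text{wp}}(U_X(\epsilon)) \cdot B_V(X) \leq C_1 C_2 \cdot \nu_{\text{Mir}}(U_X(\epsilon) \times V)
\]
for every $X \in K$, every $0 < \epsilon < \epsilon_1$, and every Borel measurable $V \subseteq P\mathcal{ML}_{g,n}$. Setting $C := C_1 C_2$ and $\epsilon_0 := \epsilon_1$ gives the desired estimate. Since every step is a direct substitution, there is no hard step; the only thing to verify is that the two lemmas can be applied with a common $\epsilon_0$ on the same compact $K$, which is immediate as both tolerate any sufficiently small threshold.
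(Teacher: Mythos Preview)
Your proof is correct and matches the paper's approach exactly: the paper states that Lemma \ref{lemma:bv_measure_comparison_1} ``is a direct consequence of Lemma \ref{lemma:bv_measure_comparison} and Lemma \ref{lemma:thu_ball_bound},'' and your chaining of the lower bound from Lemma \ref{lemma:thu_ball_bound} into Lemma \ref{lemma:bv_measure_comparison} is precisely how that deduction goes.
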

$ $

Proposition \ref{prop:ac_key_estimate_1} now follows directly from Lemmas \ref{lemma:quot_bound_1}, \ref{lemma:count_bd_ac_1}, and \ref{lemma:bv_measure_comparison_1}, and Proposition \ref{prop:total_horosphere_meas}.\\

\textit{No escape of mass}. The following result is an analogue of Proposition \ref{prop:hor_meas_nem_0} for horosphere sector measures.\\

\begin{proposition}
	\label{prop:hor_meas_nem_1}
	Any weak-$\star$ limit point of the sequence of probability measures $\{\widehat{\zeta}_{\gamma,\mathbf{a}}^{f,L}/n_{\gamma,\mathbf{a}}^{f,L}\}_{L > 0}$ on $P^1\mathcal{M}_{g,n}$ is a probability measure.
\end{proposition}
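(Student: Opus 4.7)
The plan is to mimic the proof of Proposition \ref{prop:hor_meas_nem_0} by first establishing a quantitative non-escape of mass bound analogous to Proposition \ref{prop:hor_meas_qnem}: for every $\delta > 0$ there exists $\epsilon > 0$ such that
\[
\liminf_{L \to \infty} \frac{\widehat{\zeta}_{\gamma,\mathbf{a}}^{f,L}(P^1 K_\epsilon)}{n_{\gamma,\mathbf{a}}^{f,L}} \geq 1 - \delta.
\]
From such a bound, Proposition \ref{prop:hor_meas_nem_1} follows by Portmanteau's theorem applied to the compact sets $P^1 K_\epsilon$. Using the disintegration formula $d\zeta_{\gamma,\mathbf{a}}^{f,L}(X,\lambda) = d\delta_{\mathbf{a}\cdot\gamma/\ell_{\mathbf{a}\cdot\gamma}(X)}(\lambda) \, d\eta_{\gamma,\mathbf{a}}^{f,L}(X)$, the estimate reduces to proving the corresponding bound for $\widehat{\eta}_{\gamma,\mathbf{a}}^{f,L}(K_\epsilon)$ on the base $\mathcal{M}_{g,n}$, and Lemma \ref{lemma:hor_meas_comp_1} further reduces to the uniform case where $f \equiv 1$, i.e., to estimating $\widehat{\eta}_{\gamma,\mathbf{a}}^L(K_\epsilon)/n_{\gamma,\mathbf{a}}^L$.

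Next, I would pass to the intermediate cover $\mathcal{T}_{g,n}/\text{Stab}_0(\gamma)$ and apply the cut and glue fibration of Theorem \ref{theo:cut_glue_fib}, contracted along the level set $S_{\gamma,\mathbf{a}}^L = \ell_{\mathbf{a}\cdot\gamma}^{-1}(L)$. This expresses $\dot{\eta}_{\gamma,\mathbf{a}}^L$ as an integral over the codimension $1$ simplex $L \cdot \Delta_\mathbf{a}$ in the $\ell$-coordinates against the same polynomial integrand $\prod_{j=1}^c V_{g_j,n_j}^{b_j}(\mathbf{L}_j) \cdot \ell_1 \cdots \ell_k$ of degree $6g-6+2n-k$ that appears in the horoball case. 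I then decompose the bad set $(\mathcal{T}_{g,n}/\text{Stab}_0(\gamma)) \setminus \dot{K_\epsilon}$ into the same three subsets $A_1, A_2, A_3$ as in the proof of Proposition \ref{prop:hor_meas_qnem}: those $(X,\alpha)$ where some $\ell_{\alpha_i}(X) < \epsilon$, those where one of the cut pieces $X(\alpha)_j$ fails to lie in $\mathcal{M}_{g_j,n_j}^{b_j}(\mathbf{L}_j)_\epsilon$, and those where the bad twist condition of Proposition \ref{prop:twist_tori} applies. The factor $\epsilon^2$ produced on $A_1$ (from shrinking one integration variable) and on $A_2$ (from Proposition \ref{prop:bad_pt_control}), together with the factor $\delta$ produced on $A_3$ (from Proposition \ref{prop:twist_tori}), carry over unchanged.

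The essential difference with the horoball case lies in the dimensions of integration: integrating the polynomial integrand over the codimension $1$ simplex $L \cdot \Delta_\mathbf{a}$ rather than the full box $[0,aL]^k$ yields a leading contribution of order $L^{6g-6+2n-1}$ instead of $L^{6g-6+2n}$. This matches the asymptotic $n_{\gamma,\mathbf{a}}^L \asymp L^{6g-6+2n-1}$ given by Proposition \ref{prop:total_horosphere_meas}, so dividing by the total mass produces the same ratio estimates as in the horoball proof: the $A_1$ and $A_2$ contributions are $O(\epsilon^2 / L^2)$ relative to the denominator and vanish as $L \to \infty$, while the $A_3$ contribution is $O(\delta)$. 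I expect the main obstacle to be the careful bookkeeping involved in writing down the restriction of Wolpert's magic formula to the level set of $\ell_{\mathbf{a}\cdot\gamma}$, verifying that contracting by a vector field dual to $d\ell_{\mathbf{a}\cdot\gamma}$ produces the Lebesgue-type measure $\eta_\mathbf{a}^L$ on the simplex multiplied by the expected polynomial factor; once this is in place, the volume estimates from the proof of Proposition \ref{prop:hor_meas_qnem} transfer with only cosmetic changes.
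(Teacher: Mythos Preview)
Your proposal is correct and follows essentially the same approach as the paper, which simply states that Proposition~\ref{prop:hor_meas_nem_1} ``can be proved in a similar way to Proposition~\ref{prop:hor_meas_nem_0}'' using Theorem~\ref{theo:mw_quant_rec}, Proposition~\ref{prop:twist_tori}, and Theorem~\ref{theo:cut_glue_fib}. You have in fact supplied more detail than the paper does, correctly identifying the key modification: integration over the codimension~$1$ simplex $L\cdot\Delta_{\mathbf a}$ rather than a box, so that the leading order is $L^{6g-6+2n-1}$ in agreement with Proposition~\ref{prop:total_horosphere_meas}.
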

$ $

Proposition \ref{prop:hor_meas_nem_1} can be proved in a similar way to Proposition \ref{prop:hor_meas_nem_0}. Theorem \ref{theo:mw_quant_rec}, its consequence Proposition \ref{prop:twist_tori}, and Theorem \ref{theo:cut_glue_fib} play an important role in the proof.\\

\textit{The Mirzakhani bound.} To finish the proof of Theorem \ref{theo:horosphere_equid}, it remains to prove Proposition \ref{prop:mir_bd_og_0}, the Mirzakhani bound. We will actually prove a stronger result. Recall the definition of the horosphere measures $\eta_{\lambda}^L$ on $\mathcal{T}_{g,n}$ for arbitrary measured geodesic laminations $\lambda \in \mathcal{ML}_{g,n}$ introduced in \S 2.\\

\begin{proposition}
	\label{prop:mir_bd_og}
	Let $K \subseteq \mathcal{T}_{g,n}$ be a compact subset. There exist constants $C > 0$ and $\epsilon_0 > 0$ such that for every $X \in K$, every $0 < \epsilon < \epsilon_0$, every $\lambda \in \mathcal{ML}_{g,n}$, and every $L >0$,
	\[
	\eta_{\lambda}^L(U_X(\epsilon)) \leq C \cdot \frac{\epsilon^{6g-6+2n-1}}{L}.
	\]
\end{proposition}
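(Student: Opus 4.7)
The plan is to bound $\eta_\lambda^L(U_X(\epsilon))$ by flowing the slice $S_\lambda^L \cap U_X(\epsilon)$ transversally to the level sets of $\ell_\lambda$, producing a one-parameter family of pairwise disjoint parallel copies that all lie inside $U_X(2\epsilon)$, and then comparing their total Weil-Petersson volume with that of $U_X(2\epsilon)$. A first scaling reduction: the definition of the contracted form $\omega_\lambda^L$ shows that replacing $\lambda \to c\lambda$ and $L \to cL$ rescales $\omega_\lambda^L$ by $c^{-1}$, so $\eta_\lambda^L = L^{-1}\eta_{\lambda/L}^1$. We may assume $S_\lambda^L \cap U_X(\epsilon) \neq \emptyset$, and then $\mu := \lambda/L$ satisfies $\ell_\mu(Y) = 1$ for some $Y \in U_X(\epsilon)$. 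Setting $K' := \{Y \in \mathcal{T}_{g,n} : d_{\text{Thu}}(Y, K) \leq 2\epsilon_0\}$ (compact by properness of $d_{\text{Thu}}$), it suffices to produce a constant $C$ for which $\eta_\mu^1(U_X(\epsilon)) \leq C \epsilon^{6g-7+2n}$ for every $X \in K$, $0 < \epsilon < \epsilon_0$, and $\mu \in \mathcal{K}_K := \{\mu \in \mathcal{ML}_{g,n} : \ell_\mu(Y) = 1 \text{ for some } Y \in K'\}$. Compactness of $\mathcal{K}_K$ follows from compactness of $P\mathcal{ML}_{g,n}$ together with the continuity of $\ell$ in Theorem \ref{theo:smooth_length}, since $\mathcal{K}_K$ is the continuous image of $P\mathcal{ML}_{g,n} \times K'$ under the assignment $([\mu], Y) \mapsto \mu'/\ell_{\mu'}(Y)$.

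For $\mu \in \mathcal{K}_K$, let $F_\mu$ be the Weil-Petersson gradient of $\ell_\mu$ divided by its squared Weil-Petersson norm; this is smooth on a neighborhood $\Omega$ of $K'$ and satisfies $d\ell_\mu(F_\mu) \equiv 1$. That the gradient never vanishes follows from the duality (\ref{eq:hamilton}): if $d\ell_\mu$ vanished at some point then the twist vector field $E_\mu$ would sit in the kernel of the non-degenerate form $\omega_{\text{wp}}$ there, contradicting $E_\mu \neq 0$ for $\mu \neq 0$. By Theorem \ref{theo:smooth_length}, $\mu \mapsto \ell_\mu$ is continuous in the $C^\infty$-topology on compact subsets of $\mathcal{T}_{g,n}$, so $\mu \mapsto F_\mu$ is continuous in the $C^1$-topology on $\Omega$. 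After multiplication by a smooth cutoff supported in $\Omega$ and equal to $1$ on $K'$, we may assume $F_\mu$ is a compactly supported smooth vector field on $\mathcal{T}_{g,n}$, unchanged on $K'$. Compactness of $\mathcal{K}_K$ together with the bi-Lipschitz equivalence of $d_{\text{Thu}}$ and the Weil-Petersson Riemannian metric on $K'$ then yields uniform constants $M_1, M_2 > 0$ with $\|F_\mu\|_{\text{Thu}} \leq M_1$ and $|\text{div}_{\text{wp}}(F_\mu)| \leq M_2$ on $K'$ for every $\mu \in \mathcal{K}_K$.

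Let $\varphi_s^\mu$ denote the flow of the cutoff $F_\mu$ and set $s_0 := \epsilon/M_1 \leq \epsilon_0/M_1$. By Lemma \ref{lemma:len_var}, $d_{\text{Thu}}(Y, \varphi_s^\mu(Y)) \leq M_1|s| \leq \epsilon$ for $|s| \leq s_0$; writing $A := S_\mu^1 \cap U_X(\epsilon)$, we have $\varphi_s^\mu(A) \subseteq U_X(2\epsilon) \cap S_\mu^{1+s} \subseteq K'$, and the family $\{\varphi_s^\mu(A)\}_{|s|\leq s_0}$ consists of pairwise disjoint subsets of $U_X(2\epsilon)$. Combining the disintegration identity of Proposition \ref{prop:meas_disint} with the Jacobian formula of Proposition \ref{prop:vol_var} gives
\[
\mu_{\text{wp}}(U_X(2\epsilon)) \geq \int_{-s_0}^{s_0} \eta_\mu^{1+s}(\varphi_s^\mu(A))\, ds = \int_{-s_0}^{s_0} \int_A |\det(\varphi_s^\mu)|\, d\eta_\mu^1\, ds,
\]
and the divergence bound yields $|\det(\varphi_s^\mu)| \geq e^{-M_2|s|} \geq e^{-M_2\epsilon_0/M_1} =: c_0 > 0$ uniformly. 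Hence the right-hand side is at least $(2c_0/M_1)\, \epsilon \cdot \eta_\mu^1(A)$; applying Lemma \ref{lemma:thu_ball_bound} to the left-hand side (shrinking $\epsilon_0$ if necessary) yields $\eta_\mu^1(U_X(\epsilon)) \leq C\epsilon^{6g-7+2n}$, and the scaling reduction completes the proof.

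The main obstacle is the uniform construction of the vector fields $F_\mu$ in the second step: one must pass from the $C^\infty$-continuity of $\mu \mapsto \ell_\mu$ (Theorem \ref{theo:smooth_length}) and compactness of $P\mathcal{ML}_{g,n}$ to genuinely uniform bounds on both $\|F_\mu\|_{\text{Thu}}$ and $|\text{div}_{\text{wp}}(F_\mu)|$ on $K'$. Once this uniform control is in place the rest of the argument is a formal combination of the disintegration and Jacobian formulas (Propositions \ref{prop:meas_disint} and \ref{prop:vol_var}) with the volume estimate for Thurston balls (Lemma \ref{lemma:thu_ball_bound}).
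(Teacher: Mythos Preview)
Your proof is correct and follows essentially the same strategy as the paper: reduce by scaling to $L=1$, restrict to a compact family of laminations, build a transverse vector field $F_\mu$ normalized so that $d\ell_\mu(F_\mu)\equiv 1$, flow the slice $S_\mu^1\cap U_X(\epsilon)$ into $U_X(2\epsilon)$, and combine Propositions~\ref{prop:meas_disint} and~\ref{prop:vol_var} with Lemma~\ref{lemma:thu_ball_bound}. The only cosmetic differences are that the paper uses an arbitrary complete Riemannian metric rather than the Weil--Petersson metric (harmless, since everything happens on a compact set), flows only forward in time, and bounds the Jacobian $|\det(\varphi_t^\lambda)|$ directly by continuity rather than via a uniform divergence bound; your divergence argument is a clean alternative once the $C^1$-continuity of $\mu\mapsto F_\mu$ is established from Theorem~\ref{theo:smooth_length}.
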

$ $

We begin by reformulating Proposition \ref{prop:mir_bd_og} to incorporate the dependence in $L$ in a more natural way. For the rest of this discussion, $\omega := v_{\text{wp}}$ will denote the Weil-Petersson volume form on $\mathcal{T}_{g,n}$. Let $\lambda \in \mathcal{ML}_{g,n}$ and $L > 0$ be arbitrary. Notice that, although as sets the horospheres $S_\lambda^L$ and $S_{\lambda/L}^1$ are equal, they carry different horosphere volume forms $\omega_\lambda^L := \omega_{\ell_\lambda}^L$ and $\omega_{\lambda/L}^1 := \omega_{\ell_{\lambda/L}}^1$, and different horosphere measures $\eta_\lambda^L := \eta_{\ell_\lambda}^L$ and $\eta_{\lambda/L}^1 := \eta_{\ell_{\lambda/L}}^1$. The following lemma describes the relation between these volume forms and measures.\\

\begin{lemma}
	\label{lem:vol_form_rel}
	For every $\lambda \in \mathcal{ML}_{g,n}$ and every $L > 0$, 
	\[
	\omega_{\lambda/L}^1 = L \cdot \omega_{\lambda}^L \quad, \quad
	\eta_{\lambda/L}^1 = L \cdot \eta_{\lambda}^L.
	\]
\end{lemma}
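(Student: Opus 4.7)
The plan is to reduce everything to the defining property of the contraction construction in Section 2, using only two inputs: first, the scaling relation $\ell_{\lambda/L} = \ell_\lambda / L$ (linearity of hyperbolic length in the transverse measure of a measured geodesic lamination), and second, the fact that the induced volume form $\omega_f^L$ on a regular level set is independent of the choice of vector field $F$ with $df(F) \equiv 1$.

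First I would fix $\lambda \in \mathcal{ML}_{g,n}$ and $L > 0$ and set $f_1 := \ell_\lambda$ and $f_2 := \ell_{\lambda/L}$. By the scaling of length, $f_2 = f_1/L$, so in particular the level sets agree as subsets of $\mathcal{T}_{g,n}$:
\[
f_2^{-1}(1) = f_1^{-1}(L) = S_\lambda^L = S_{\lambda/L}^1.
\]
Choose any smooth vector field $F$ on $\mathcal{T}_{g,n}$ with $df_1(F) \equiv 1$; such a vector field exists, for instance $F := \nabla f_1/\langle \nabla f_1, \nabla f_1\rangle$ for a Riemannian metric on $\mathcal{T}_{g,n}$. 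Then $df_2(F) = df_1(F)/L \equiv 1/L$, so the rescaled field $F' := L \cdot F$ satisfies $df_2(F') \equiv 1$ and is therefore an admissible contracting field for $f_2$.

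Next I would compute directly. By the defining formula, $\omega_\lambda^L$ is the restriction of $\iota_F \omega_{\mathrm{wp}}$ to $S_\lambda^L$, while $\omega_{\lambda/L}^1$ is the restriction of $\iota_{F'} \omega_{\mathrm{wp}}$ to $S_{\lambda/L}^1$. Since contraction is linear in the vector field and $F' = L \cdot F$, we have $\iota_{F'}\omega_{\mathrm{wp}} = L \cdot \iota_F \omega_{\mathrm{wp}}$; restricting to the common level set yields
\[
\omega_{\lambda/L}^1 = L \cdot \omega_{\lambda}^L.
\]
Both sides are well-defined by the independence-of-$F$ statement recalled in Section 2, so no choice was actually made. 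Taking absolute values and using $L > 0$ gives the corresponding identity of measures
\[
\eta_{\lambda/L}^1 = |\omega_{\lambda/L}^1| = L \cdot |\omega_\lambda^L| = L \cdot \eta_\lambda^L,
\]
which completes the proof. There is no real obstacle here; the only thing to be mildly careful about is invoking $\ell_{\lambda/L} = \ell_\lambda/L$, which is built into the definition of the measured lamination scaling and of the length function.
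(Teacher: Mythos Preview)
Your proof is correct and follows essentially the same approach as the paper: both choose a vector field $F$ with $d\ell_\lambda(F)\equiv 1$, observe that $L\cdot F$ then satisfies $d\ell_{\lambda/L}(L\cdot F)\equiv 1$, and use linearity of contraction to conclude. The only difference is cosmetic notation.
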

$ $

\begin{proof}
	Let $F \colon \mathcal{T}_{g,n} \to T \mathcal{T}_{g,n}$ be a vector field on $\mathcal{T}_{g,n}$ satisfying $d \ell_{\lambda}(F) \equiv 1$. By definition, $\omega_{\lambda}^L := (\iota_{F}\omega)|_{S_{\lambda}^L}$. Notice that $L \cdot F \colon \mathcal{T}_{g,n}\to T \mathcal{T}_{g,n}$ is a vector field on $\mathcal{T}_{g,n}$ satisfying $d \ell_{\lambda/L}(L \cdot F) \equiv 1$. By definition, $\omega_{\lambda/L}^1 := (\iota_{L\cdot F}\omega)|_{S_{\lambda/L}^1}$. It follows that
	\[
	\omega_{\lambda/L}^1 = (\iota_{L\cdot F}\omega)|_{S_{\lambda/L}^1}  = L \cdot (\iota_F\omega)|_{S_{\lambda}^L} = L \cdot \omega_{\lambda}^L.
	\]
	In particular,
	\[
	\eta_{\lambda/L}^1  = \big|\omega_{\lambda/L}^1\big| =  L \cdot \big|\omega_{\lambda}^L\big| = L \cdot \eta_{\lambda}^L.
	\]
	This finishes the proof.
\end{proof}
$ $

It follows directly from Lemma \ref{lem:vol_form_rel} that  Proposition \ref{prop:mir_bd_og} admits the following equivalent reformulation, which gets rid of the dependence in $L$ in the original statement.\\

\begin{proposition}
	\label{prop:mir_bd_ref}
	Let $K \subseteq \mathcal{T}_{g,n}$ be a compact subset. There exist constants $C > 0$ and $\epsilon_0 > 0$ such that for every $X \in K$, every $0 < \epsilon < \epsilon_0$, and every $\lambda \in \mathcal{ML}_{g,n}$, 
	\[
	\eta_{\lambda}^1(U_X(\epsilon)) \leq C \cdot \epsilon^{6g-6+2n-1}.
	\]
\end{proposition}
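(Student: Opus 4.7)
The plan is to implement the strategy sketched after Proposition \ref{prop:mir_bd_og_0}. For fixed $X \in K$ and $\lambda \in \mathcal{ML}_{g,n}$, I would translate the intersection $V := S_\lambda^1 \cap U_X(\epsilon)$ by a short forward flow $\psi_t$ satisfying $d\ell_\lambda(\partial_t \psi_t|_{t=0}) \equiv 1$, thereby sweeping out a solid set inside $U_X(2\epsilon)$ foliated by parallel copies $\psi_t(V)$ of $V$, one for each level set $S_\lambda^{1+t}$. Comparing the Weil-Petersson mass of this set both to $\eta_\lambda^1(V) \cdot \epsilon$, via Propositions \ref{prop:meas_disint} and \ref{prop:vol_var}, and to $\epsilon^{6g-6+2n}$, via Lemma \ref{lemma:thu_ball_bound}, will give the desired bound.

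The first step is to construct, uniformly in $\lambda$, the vector field generating $\psi_t$. Fix a continuous section $\sigma \colon P\mathcal{ML}_{g,n} \to \mathcal{ML}_{g,n}$ and write $\lambda = t_\lambda\, \sigma(\bar\lambda)$ with $\bar\lambda := [\lambda] \in P\mathcal{ML}_{g,n}$. Choose a compact neighborhood $K' \subseteq \mathcal{T}_{g,n}$ of $K$ together with $\epsilon_0 > 0$ small enough that $U_X(2\epsilon) \subseteq K'$ for all $X \in K$ and $0 < \epsilon < \epsilon_0$. Using any fixed smooth Riemannian metric on $\mathcal{T}_{g,n}$, set $F_{\bar\lambda} := \nabla \ell_{\sigma(\bar\lambda)} / |\nabla \ell_{\sigma(\bar\lambda)}|^2$. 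Theorem \ref{theo:smooth_length}, the nonvanishing of $d\ell_\mu$ for every $\mu \in \mathcal{ML}_{g,n}$ guaranteed by (\ref{eq:hamilton}), and the compactness of $P\mathcal{ML}_{g,n} \times K'$ together force $|\nabla \ell_{\sigma(\bar\lambda)}|$ to be bounded away from zero on $K'$ uniformly in $\bar\lambda$; hence both $\|F_{\bar\lambda}\|_{K'}$ and the Weil-Petersson divergence of $F_{\bar\lambda}$ are uniformly bounded over $\bar\lambda \in P\mathcal{ML}_{g,n}$. After multiplying by a smooth cutoff, let $\varphi_t^{\bar\lambda}$ denote the induced flow on $\mathcal{T}_{g,n}$, and set $\psi_t := \varphi_{t/t_\lambda}^{\bar\lambda}$, which by the linearity $\ell_{c\mu} = c\ell_\mu$ satisfies $d\ell_\lambda(\partial_t \psi_t|_{t=0}) \equiv 1$ on $K'$.

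The second step is to verify that $\psi_t$ can be applied over a time interval of length $\sim \epsilon$ independently of $\lambda$. If $\eta_\lambda^1(U_X(\epsilon)) > 0$, pick any $Y \in V$; then $t_\lambda = 1/\ell_{\sigma(\bar\lambda)}(Y)$ lies in a compact subinterval of $(0,\infty)$ depending only on $K$ and $\epsilon_0$, again by compactness of $K' \times P\mathcal{ML}_{g,n}$. Consequently the infinitesimal generator $F_{\bar\lambda}/t_\lambda$ of $\psi_t$ has Thurston norm on $K'$ bounded by some constant $C_1 > 0$ depending only on $K$, so Lemma \ref{lemma:len_var} gives $\psi_t(V) \subseteq U_X(2\epsilon)$ for all $0 \leq t \leq \epsilon/C_1$. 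A first-order Taylor expansion of $\det(\psi_t)$ together with the uniform bound on the divergence of $F_{\bar\lambda}/t_\lambda$ ensures $\det(\psi_t) \geq 1/2$ on $V$ throughout this range, after shrinking $\epsilon_0$ once more. The third step is the volume comparison: Propositions \ref{prop:meas_disint} and \ref{prop:vol_var} combine to give
\[
\mu_{\text{wp}}(U_X(2\epsilon)) \geq \int_0^{\epsilon/C_1} \eta_\lambda^{1+t}(\psi_t(V)) \, dt = \int_0^{\epsilon/C_1} \int_V \det(\psi_t) \, d\eta_\lambda^1 \, dt \geq \frac{\epsilon}{2C_1} \cdot \eta_\lambda^1(V),
\]
and invoking Lemma \ref{lemma:thu_ball_bound} finishes the proof, since $\eta_\lambda^1(U_X(\epsilon)) = \eta_\lambda^1(V)$.

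I expect the principal difficulty to lie in the first step: producing estimates that are genuinely uniform over the noncompact space $\mathcal{ML}_{g,n}$ requires one to convert the scaling identity $\ell_{c\mu} = c\ell_\mu$ into bounds on $F_{\bar\lambda}$ that remain controlled as $\lambda$ explores all of $\mathcal{ML}_{g,n}$, and it is here that the compactness of $P\mathcal{ML}_{g,n}$ plays its decisive role. A minor secondary concern is verifying that the section $\sigma$ can be chosen continuously; this follows from the fact that the quotient map $\mathcal{ML}_{g,n}\setminus\{0\} \to P\mathcal{ML}_{g,n}$ admits continuous sections (e.g.\ by normalizing the hyperbolic length against a fixed marked hyperbolic structure).
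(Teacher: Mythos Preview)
Your proposal is correct and follows essentially the same strategy as the paper: flow the slice $V = S_\lambda^1 \cap U_X(\epsilon)$ along a vector field normalized so that $d\ell_\lambda$ evaluates to $1$, use Propositions \ref{prop:meas_disint} and \ref{prop:vol_var} to compare $\eta_\lambda^1(V)\cdot\epsilon$ with $\mu_{\mathrm{wp}}(U_X(2\epsilon))$, and conclude via Lemma \ref{lemma:thu_ball_bound}. The only packaging difference is how uniformity in $\lambda$ is obtained: the paper first observes (Lemma \ref{lem:compact_ML}) that only $\lambda$ in the compact set $\mathcal{ML}_{g,n}(K(\epsilon_0))$ matter, builds $\lambda$-dependent constants $C_2(\lambda), C_3(\lambda)$ for the displacement and Jacobian bounds, and then maximizes over this compact set using Theorem \ref{theo:smooth_length}; you instead parametrize $\lambda = t_\lambda\,\sigma(\bar\lambda)$ via a continuous section of $P\mathcal{ML}_{g,n}$ and extract uniform bounds directly from compactness of $P\mathcal{ML}_{g,n}\times K'$, which amounts to the same thing.
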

$ $

The proof of Proposition \ref{prop:mir_bd_ref} reduces to the study of a compact subset of $\mathcal{ML}_{g,n}$, as we now explain. Let $K \subseteq \mathcal{T}_{g,n}$ be a compact subset and $\epsilon_0 > 0$ be fixed. Consider the compact neighborhood $K(\epsilon_0) \subseteq \mathcal{T}_{g,n}$ of $K$ given by all points in $\mathcal{T}_{g,n}$ which are at distance $\leq \epsilon_0$ from $K$ with respect to $d_\text{Thu}$. Consider also the subset $\mathcal{ML}_{g,n}(K(\epsilon_0)) \subseteq \mathcal{ML}_{g,n}$ given by
\[
\mathcal{ML}_{g,n}(K(\epsilon_0)) := \{\lambda \in \mathcal{ML}_{g,n} \colon S_\lambda^1 \cap K(\epsilon_0) \neq \emptyset\}.
\]
Notice that for every $X \in K$, every $0 < \epsilon < \epsilon_0$, and every $\lambda \in \mathcal{ML}_{g,n} \setminus \mathcal{ML}_{g,n}(K(\epsilon_0))$, 
\[
\eta_{\lambda}^1(U_X(\epsilon)) = 0.
\]
In particular, Proposition \ref{prop:mir_bd_ref} holds trivially for such measured geodesic laminations. It remains to prove the desired bound for $\lambda \in \mathcal{ML}_{g,n}(K(\epsilon_0))$. Conveniently enough, this set is compact, as the following lemma shows. This lemma is a consequence of the continuity of the hyperbolic length function $\ell \colon \mathcal{ML}_{g,n} \times \mathcal{T}_{g,n} \to \mathbf{R}_{>0}$ and of the compactness of the space $P\mathcal{ML}_{g,n}$ of projective measured geodesic laminations on $S_{g,n}$.\\

\begin{lemma}
	\label{lem:compact_ML}
	Let $K \subseteq \mathcal{T}_{g,n}$ be a compact subset. Then, the set
	\[
	\mathcal{ML}_{g,n}(K) := \{\lambda \in \mathcal{ML}_{g,n} \colon S_\lambda^1 \cap K \neq \emptyset \}
	\]
	is a compact subset of $\mathcal{ML}_{g,n}$.\\
\end{lemma}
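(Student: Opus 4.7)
The plan is to prove compactness of $\mathcal{ML}_{g,n}(K)$ via sequential compactness. Take an arbitrary sequence $\{\lambda_n\}_{n \in \mathbf{N}} \subseteq \mathcal{ML}_{g,n}(K)$ and, by definition, pick for each $n$ a point $X_n \in K$ with $\ell_{\lambda_n}(X_n) = 1$. The goal is to extract a subsequence converging in $\mathcal{ML}_{g,n}$ to some $\lambda_\infty$ that still meets the horosphere condition, i.e., $\ell_{\lambda_\infty}(X_\infty) = 1$ for some $X_\infty \in K$.

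First, since $K$ is compact, pass to a subsequence so that $X_n \to X_\infty \in K$. Next, invoking the compactness of $P\mathcal{ML}_{g,n}$, pass to a further subsequence so that the projective classes $[\lambda_n]$ converge to some $[\mu] \in P\mathcal{ML}_{g,n}$. The projectivization map $\mathcal{ML}_{g,n} \setminus \{0\} \to P\mathcal{ML}_{g,n}$ admits a continuous local section, obtained by normalizing hyperbolic length at a fixed basepoint $X_0 \in \mathcal{T}_{g,n}$; concretely, write $\lambda_n = s_n \cdot \mu_n$ with $\ell_{\mu_n}(X_0) = 1$. Then $\mu_n$ lies in the section, which is compact (homeomorphic to $P\mathcal{ML}_{g,n}$), so along a further subsequence $\mu_n \to \mu_\infty$ in $\mathcal{ML}_{g,n}$, where $\mu_\infty$ represents $[\mu]$ and $\ell_{\mu_\infty}(X_0) = 1$.

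Now the only remaining task is to control the scaling factors $s_n > 0$. The constraint $\ell_{\lambda_n}(X_n) = 1$ becomes $s_n \cdot \ell_{\mu_n}(X_n) = 1$. By the joint continuity of the length function $\ell \colon \mathcal{ML}_{g,n} \times \mathcal{T}_{g,n} \to \mathbf{R}_{>0}$ (i.e., Theorem \ref{theo:smooth_length} applied in this weaker form), one has $\ell_{\mu_n}(X_n) \to \ell_{\mu_\infty}(X_\infty)$, and this limit is strictly positive because $\mu_\infty \neq 0$ and $X_\infty \in \mathcal{T}_{g,n}$. Hence $s_n \to s_\infty := 1/\ell_{\mu_\infty}(X_\infty) > 0$, and therefore $\lambda_n = s_n \mu_n \to s_\infty \mu_\infty =: \lambda_\infty$ in $\mathcal{ML}_{g,n}$. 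The limit satisfies $\ell_{\lambda_\infty}(X_\infty) = s_\infty \cdot \ell_{\mu_\infty}(X_\infty) = 1$, so $X_\infty \in S_{\lambda_\infty}^1 \cap K$ and $\lambda_\infty \in \mathcal{ML}_{g,n}(K)$. This establishes sequential compactness.

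There is essentially no hard step: the argument is a standard combination of the compactness of $P\mathcal{ML}_{g,n}$ with the continuity and positivity of hyperbolic length. The only point requiring a small amount of care is the passage from projective convergence to convergence in $\mathcal{ML}_{g,n}$ itself, which is handled by choosing a continuous normalization at a fixed basepoint and then using the horosphere constraint $\ell_{\lambda_n}(X_n) = 1$ together with continuity of $\ell$ to extract the limiting scale.
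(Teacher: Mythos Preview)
Your proof is correct and follows precisely the approach the paper indicates: the paper does not give a detailed argument but simply notes that the lemma is a consequence of the continuity of the hyperbolic length function $\ell \colon \mathcal{ML}_{g,n} \times \mathcal{T}_{g,n} \to \mathbf{R}_{>0}$ and the compactness of $P\mathcal{ML}_{g,n}$, which are exactly the two ingredients you use. Your sequential-compactness argument is the natural way to flesh this out, and the normalization at a fixed basepoint to pass from projective convergence to genuine convergence in $\mathcal{ML}_{g,n}$ is handled correctly.
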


We are now ready to prove Proposition \ref{prop:mir_bd_ref}. Theorem \ref{theo:smooth_length} will play an important role in the proof. \\

\begin{proof}[Proof of Proposition \ref{prop:mir_bd_ref}]
	It remains to show there exist constants $C> 0$ and $\epsilon_0 > 0$ such that for every $X \in K$, every $ 0 < \epsilon < \epsilon_0$, and every $\lambda \in \mathcal{ML}_{g,n}(K(\epsilon_0))$,
	\[
	\eta_{\lambda}^1(U_X(\epsilon)) \leq C \cdot \epsilon^{6g-6+2n-1}.
	\]
	$ $
	
	Let $C_1 > 0$ and $\epsilon_0 > 0$ be the constants provided by Lemma \ref{lemma:thu_ball_bound}. Then, 
	\[
	\mu_{\text{wp}}(U_X(\epsilon)) \leq C_1 \cdot \epsilon^{6g-6+2n}
	\]
	for every $X \in K$ and every $0 < \epsilon < \epsilon_0$. In particular, 
	\begin{equation}
	\label{eq:ball_meas_bound}
	\mu_{\text{wp}}(U_X(2\epsilon)) \leq C_1 \cdot 2^{6g-6+2n} \cdot \epsilon^{6g-6+2n},
	\end{equation}
	for every $X \in K$ and every $0 < \epsilon < \epsilon_0/2$.\\
	
	Fix a complete Riemannian metric $\langle\cdot,\cdot\rangle$ on $\mathcal{T}_{g,n}$ and a smooth non-negative function $\phi \colon \mathcal{T}_{g,n} \to \mathbf{R}_{\geq 0}$ such that $\phi \equiv 1$ on $K(\epsilon_0)$ and $\phi \equiv 0$ outside of $K(2\epsilon_0)$. Let $X \in K$, $0 < \epsilon < \epsilon_0/2$, and $\lambda \in \mathcal{ML}_{g,n}(K(\epsilon_0/2))$ be such that $S_\lambda^1 \cap U_X(\epsilon) \neq \emptyset$. On $\mathcal{T}_{g,n}$ consider the vector field 
	\[
	F^{\lambda} := \phi \cdot \frac{\nabla \ell_{\lambda}}{\langle\nabla \ell_{\lambda},\nabla \ell_{\lambda}\rangle}.
	\]
	Notice that $d \ell_{\lambda}(F^\lambda) \equiv 1$ on $K(\epsilon_0)$ and that $F^\lambda$ vanishes outside of $K(2\epsilon_0)$. In the terminology of \S 2, $(K(\epsilon_0), K(2 \epsilon_0), F^\lambda)$ is a flow datum of $\ell_{\lambda}$ on $\mathcal{T}_{g,n}$. Let $\{\varphi_t^\lambda \colon \mathcal{T}_{g,n} \to \mathcal{T}_{g,n}\}_{t \in \mathbf{R}}$ be the one-parameter group of diffeomorphisms induced by $F^\lambda$ on $\mathcal{T}_{g,n}$.\\
	
	Let $\|\cdot\|$ be the continuous family of fiberwise asymmetric norms on $T\mathcal{T}_{g,n}$ inducing the (Finsler) asymmetric Thurston metric $d_\text{Thu}'$ on $\mathcal{T}_{g,n}$. Consider 
	\[
	C_2(\lambda) := \max \left\lbrace \sup_{Z \in K(2\epsilon_0)} \|\pm F^{\lambda}_Z\| \ , \ 1 \right \rbrace < +\infty.
	\]
	Lemma \ref{lemma:len_var} ensures
	\[
	d_{\text{Thu}}(Y,\varphi_\lambda^t Y) \leq C_2(\lambda) \cdot t
	\]
	for every $Y \in K(2\epsilon_0)$ and every $t \geq 0$. In particular, by the triangle inequality,
	\begin{equation}
	\label{eq:unit_speed_2}
	\varphi^\lambda_t(Y) \in U_X(2\epsilon) \subseteq K(\epsilon_0)
	\end{equation}
	for every $Y \in U_X(\epsilon)$ and every $0 \leq t \leq \epsilon/C_2(\lambda) \leq \epsilon_0/2$.\\

	Consider the set $V := S_\lambda^1 \cap U_X(\epsilon) \neq \emptyset$. Our goal is to establish an appropriate upper bound for $\eta_{\lambda}^1(V)$. Notice that, by (\ref{eq:unit_speed_2}), $\varphi^\lambda_t(V) \subseteq K(\epsilon_0)$ for every $0 \leq t \leq \epsilon/C_2(\lambda)$. In the terminology of \S 2, $(1,V,[0,\epsilon/C_2(\lambda)])$ is a unit speed triple with respect to the flow datum $(K(\epsilon_0), K(2 \epsilon_0), F_\lambda)$. By Proposition \ref{prop:vol_var}, 
	\[
	\eta_\lambda^{1+t} (\varphi^\lambda_t(V)) = (|\text{det}(\varphi^\lambda_t)| \cdot \eta_{\lambda}^1) (V)
	\]
	for every $0 \leq t \leq \epsilon/C_2(\lambda)$. In particular,
	\begin{equation}
	\label{eq:slice_measure_bound}
	\eta_\lambda^{1+t} (\varphi^\lambda_t(V)) \geq C_3(\lambda) \cdot \eta_{\lambda}^1 (V)
	\end{equation}
	for every $0 \leq t \leq \epsilon/C_2(\lambda)$, where 
	\[
	C_3(\lambda) := \inf_{\substack{Z \in K(\epsilon_0),\\ t \in [0,\epsilon_0/2]}} |\text{det}(\varphi^\lambda_t)(Z)| > 0.
	\]
	$ $
	
	By Proposition \ref{prop:meas_disint},
	\[
	\mu_{\text{wp}}(U_X(2\epsilon)) = \int_{0}^\infty \eta_\lambda^r(U_X(2\epsilon)) \ dr.
	\]
	As a consequence of (\ref{eq:unit_speed_2}), $\varphi^\lambda_t(V) \subseteq U_X(2 \epsilon)$ for every $0 \leq t \leq \epsilon/C_2(\lambda)$. It follows that
	\[
	\int_{0}^\infty \eta_\lambda^r(U_X(2\epsilon)) \ dr \geq \int_0^{{\epsilon/C_2(\lambda)}} \eta_\lambda^{1+t}(\varphi^\lambda_t(V)) \ dt.
	\]
	Using (\ref{eq:slice_measure_bound}) we can bound 
	\[
	\int_0^{{\epsilon/C_2(\lambda)}} \eta_\lambda^{1+t}(\varphi^\lambda_t(V)) \ dt \geq 
	C_3(\lambda) \cdot \frac{\epsilon}{C_2(\lambda)} \cdot \eta_\lambda^{1}(V).
	\]
	By (\ref{eq:ball_meas_bound}),
	\[
	\mu_{\text{wp}}(U_X(2\epsilon)) \leq C_1 \cdot 2^{6g-6+2n} \cdot \epsilon^{6g-6+2n}.
	\]
	Putting things together we deduce
	\[
	\eta_\lambda^{1}(V) \leq \frac{C_1 \cdot 2^{6g-6+2n} \cdot C_2(\lambda)}{C_3(\lambda)} \cdot \epsilon^{6g-6+2n-1}.
	\]
	$ $
	
	By Theorem \ref{theo:smooth_length}, the function $C \colon \mathcal{ML}_{g,n} \to \mathbf{R}_{>0}$ which to every $\lambda \in \mathcal{ML}_{g,n}$ assigns the value
	\[
	C(\lambda):= \frac{C_1 \cdot 2^{6g-6+2n} \cdot C_2(\lambda)}{C_3(\lambda)}
	\]
	is continuous. Moreover, as $\mathcal{ML}_{g,n}(K(\epsilon_0)) \subseteq \mathcal{ML}_{g,n}$ is compact, the function $C$ attains a maximum on such set. Letting
	\[
	C:= \max_{\lambda \in \mathcal{ML}_{g,n}(K(\epsilon_0))} C(\lambda)
	\]
	finishes the proof.
\end{proof}
$ $


\bibliographystyle{amsalpha}


\bibliography{bibliography}

$ $

\end{document}